\title{Simulation of
minimal effective dynamical systems on the Cantor sets 
by minimal $\Z^3$-SFT}
\author{Silvère Gangloff, Mathieu Sablik}
\newcounter{quest}
\newcounter{squest}
        {\begin{list}
        {\bfseries #1.\arabic{quest}.}
        {\usecounter{quest}
        \setlength{\listparindent}{0cm}
        \setlength{\labelwidth}{1.5cm}
        \setlength{\leftmargin}{1cm}
        \setlength{\parsep}{0.5ex plus0.2ex minus0.1ex}
        \setlength{\itemsep}{0.7ex plus0.2ex minus0.1ex}}
        }{\end{list}}
       {\begin{list}
        {\bfseries \alph{squest}.}
        {\usecounter{squest}}
        }{\end{list}}
\newtheorem{remark}{Remark}
\def\N{\mathbb N}
\def\Z{\mathbb Z}
\newcommand{\A}{\mathcal{A}}
\newcommand{\s}{\sigma}
\renewcommand{\vec}[1]{\textbf{#1}}
\newcommand{\U}{\mathbb{U}}
\newcommand{\define}[1]{\textbf{#1}}
\newcommand{\supp}[1]{\textrm{supp}\left(#1\right)}
\newtheorem*{rep@theorem}{\rep@title}
\newcommand{\newreptheorem}[2]{%
\newenvironment{rep#1}[1]{%
 \def\rep@title{#2 \ref{##1}}%
 \begin{rep@theorem}}%
 {\end{rep@theorem}}}
\newtheorem{theorem}{Theorem}
\newtheorem{proposition}{Proposition}
\newtheorem{definition}{Definition}
\newtheorem{lemma}{Lemma}
\newtheorem{example}{Example}
\newcommand{\robionegauche}[2]{
\draw (#1,#2) rectangle (#1+2,#2+2);
\draw [-latex] (#1+2,#2+1) -- (#1+0,#2+1);
\draw [-latex] (#1+1,#2+0) -- (#1+1,#2+1); 
\draw [-latex] (#1+1,#2+2) -- (#1+1,#2+1);}
\newcommand{\robionedroite}[2]{
\draw (#1,#2) rectangle (#1+2,#2+2);
\draw [-latex] (#1+0,#2+1) -- (#1+2,#2+1);
\draw [-latex] (#1+1,#2+0) -- (#1+1,#2+1); 
\draw [-latex] (#1+1,#2+2) -- (#1+1,#2+1);}
\newcommand{\robionebas}[2]{
\draw (#1,#2) rectangle (#1+2,#2+2);
\draw [-latex] (#1+1,#2+0) -- (#1+1,#2+2);
\draw [-latex] (#1+0,#2+1) -- (#1+1,#2+1); 
\draw [-latex] (#1+2,#2+1) -- (#1+1,#2+1);}
\newcommand{\robionehaut}[2]{
\draw (#1,#2) rectangle (#1+2,#2+2);
\draw [-latex] (#1+1,#2+2) -- (#1+1,#2+0);
\draw [-latex] (#1+0,#2+1) -- (#1+1,#2+1); 
\draw [-latex] (#1+2,#2+1) -- (#1+1,#2+1);}
\newcommand{\robitwobas}[2]{
\draw (#1,#2) rectangle (#1+2,#2+2);
\draw [-latex] (#1+1,#2+2) -- (#1+1,#2+0) ;
\draw [-latex] (#1+0,#2+1) -- (#1+1,#2+1) ; 
\draw [-latex] (#1+0,#2+0.5) -- (#1+1,#2+0.5) ; 
\draw [-latex] (#1+2,#2+1) -- (#1+1,#2+1) ;
\draw [-latex] (#1+2,#2+0.5) -- (#1+1,#2+0.5) ;}
\newcommand{\robitwohaut}[2]{
\draw (#1,#2) rectangle (#1+2,#2+2);
\draw [-latex] (#1+1,#2+0) -- (#1+1,#2+2) ;
\draw [-latex] (#1+0,#2+1) -- (#1+1,#2+1) ; 
\draw [-latex] (#1+0,#2+1.5) -- (#1+1,#2+1.5) ; 
\draw [-latex] (#1+2,#2+1) -- (#1+1,#2+1) ;
\draw [-latex] (#1+2,#2+1.5) -- (#1+1,#2+1.5) ;}
\newcommand{\robitwodroite}[2]{
\draw (#1,#2) rectangle (#1+2,#2+2);
\draw [-latex] (#1+0,#2+1) -- (#1+2,#2+1) ;
\draw [-latex] (#1+1,#2+0) -- (#1+1,#2+1) ; 
\draw [-latex] (#1+1.5,#2+0) -- (#1+1.5,#2+1) ; 
\draw [-latex] (#1+1,#2+2) -- (#1+1,#2+1) ;
\draw [-latex] (#1+1.5,#2+2) -- (#1+1.5,#2+1) ;}
\newcommand{\robitwogauche}[2]{
\draw (#1,#2) rectangle (#1+2,#2+2);
\draw [-latex] (#1+2,#2+1) -- (#1+0,#2+1) ;
\draw [-latex] (#1+1,#2+0) -- (#1+1,#2+1) ; 
\draw [-latex] (#1+0.5,#2+0) -- (#1+0.5,#2+1) ; 
\draw [-latex] (#1+1,#2+2) -- (#1+1,#2+1) ;
\draw [-latex] (#1+0.5,#2+2) -- (#1+0.5,#2+1) ;}
\newcommand{\robithreehaut}[2]{
\draw (#1,#2) rectangle (#1+2,#2+2) ;
\draw [-latex] (#1+1,#2+0) -- (#1+1,#2+2) ;
\draw [-latex] (#1+0.5,#2+0) -- (#1+0.5,#2+2) ; 
\draw [-latex] (#1+0,#2+1) -- (#1+0.5,#2+1) ; 
\draw [-latex] (#1+2,#2+1) -- (#1+1,#2+1) ;}
\newcommand{\robithreedroite}[2]{
\draw (#1,#2) rectangle (#1+2,#2+2) ;
\draw [-latex] (#1+0,#2+1) -- (#1+2,#2+1) ;
\draw [-latex] (#1+0,#2+0.5) -- (#1+2,#2+0.5) ; 
\draw [-latex] (#1+1,#2+0) -- (#1+1,#2+0.5) ; 
\draw [-latex] (#1+1,#2+2) -- (#1+1,#2+1) ;}
\newcommand{\robifourhaut}[2]{
\draw (#1,#2) rectangle (#1+2,#2+2) ;
\draw [-latex] (#1+1,#2+0) -- (#1+1,#2+2) ;
\draw [-latex] (#1+1.5,#2+0) -- (#1+1.5,#2+2) ; 
\draw [-latex] (#1+0,#2+1) -- (#1+1,#2+1) ; 
\draw [-latex] (#1+2,#2+1) -- (#1+1.5,#2+1) ;}
\newcommand{\robisixbas}[2]{
\draw (#1,#2) rectangle (#1+2,#2+2) ;
\draw [-latex] (#1+1,#2+2) -- (#1+1,#2+0) ;
\draw [-latex] (#1+0.5,#2+2) -- (#1+0.5,#2+0) ; 
\draw [-latex] (#1+0,#2+1) -- (#1+0.5,#2+1) ; 
\draw [-latex] (#1+2,#2+1) -- (#1+1,#2+1) ;
\draw [-latex] (#1+0,#2+0.5) -- (#1+0.5,#2+0.5) ; 
\draw [-latex] (#1+2,#2+0.5) -- (#1+1,#2+0.5) ;}
\newcommand{\robisixgauche}[2]{
\draw (#1,#2) rectangle (#1+2,#2+2) ;
\draw [-latex] (#1+2,#2+1) -- (#1+0,#2+1) ;
\draw [-latex] (#1+2,#2+0.5) -- (#1+0,#2+0.5) ; 
\draw [-latex] (#1+1,#2+0) -- (#1+1,#2+0.5) ; 
\draw [-latex] (#1+1,#2+2) -- (#1+1,#2+1) ;
\draw [-latex] (#1+0.5,#2+0) -- (#1+0.5,#2+0.5) ; 
\draw [-latex] (#1+0.5,#2+2) -- (#1+0.5,#2+1) ;}
\newcommand{\robisixhaut}[2]{
\draw (#1,#2) rectangle (#1+2,#2+2) ;
\draw [-latex] (#1+1,#2+0) -- (#1+1,#2+2) ;
\draw [-latex] (#1+0.5,#2+0) -- (#1+0.5,#2+2) ; 
\draw [-latex] (#1+0,#2+1) -- (#1+0.5,#2+1) ; 
\draw [-latex] (#1+2,#2+1) -- (#1+1,#2+1) ;
\draw [-latex] (#1+0,#2+1.5) -- (#1+0.5,#2+1.5) ; 
\draw [-latex] (#1+2,#2+1.5) -- (#1+1,#2+1.5) ;}
\newcommand{\robisixdroite}[2]{
\draw (#1,#2) rectangle (#1+2,#2+2) ;
\draw [-latex] (#1+0,#2+1) -- (#1+2,#2+1) ;
\draw [-latex] (#1+0,#2+0.5) -- (#1+2,#2+0.5) ; 
\draw [-latex] (#1+1,#2+0) -- (#1+1,#2+0.5) ; 
\draw [-latex] (#1+1,#2+2) -- (#1+1,#2+1) ;
\draw [-latex] (#1+1.5,#2+0) -- (#1+1.5,#2+0.5) ; 
\draw [-latex] (#1+1.5,#2+2) -- (#1+1.5,#2+1) ;}
\newcommand{\robisevenbas}[2]{
\draw (#1,#2) rectangle (#1+2,#2+2) ;
\draw [-latex] (#1+1,#2+2) -- (#1+1,#2+0) ;
\draw [-latex] (#1+1.5,#2+2) -- (#1+1.5,#2+0) ; 
\draw [-latex] (#1+0,#2+1) -- (#1+1,#2+1) ; 
\draw [-latex] (#1+2,#2+1) -- (#1+1.5,#2+1) ;
\draw [-latex] (#1+0,#2+0.5) -- (#1+1,#2+0.5) ; 
\draw [-latex] (#1+2,#2+0.5) -- (#1+1.5,#2+0.5) ;}
\newcommand{\robisevengauche}[2]{
\draw (#1,#2) rectangle (#1+2,#2+2) ;
\draw [-latex] (#1+2,#2+1) -- (#1+0,#2+1) ;
\draw [-latex] (#1+2,#2+1.5) -- (#1+0,#2+1.5) ; 
\draw [-latex] (#1+1,#2+0) -- (#1+1,#2+1) ; 
\draw [-latex] (#1+1,#2+2) -- (#1+1,#2+1.5) ;
\draw [-latex] (#1+0.5,#2+0) -- (#1+0.5,#2+1) ; 
\draw [-latex] (#1+0.5,#2+2) -- (#1+0.5,#2+1.5) ;}
\newcommand{\robisevenhaut}[2]{
\draw (#1,#2) rectangle (#1+2,#2+2) ;
\draw [-latex] (#1+1,#2+0) -- (#1+1,#2+2) ;
\draw [-latex] (#1+1.5,#2+0) -- (#1+1.5,#2+2) ; 
\draw [-latex] (#1+0,#2+1) -- (#1+1,#2+1) ; 
\draw [-latex] (#1+2,#2+1) -- (#1+1.5,#2+1) ;
\draw [-latex] (#1+0,#2+1.5) -- (#1+1,#2+1.5) ; 
\draw [-latex] (#1+2,#2+1.5) -- (#1+1.5,#2+1.5) ;}
\newcommand{\robisevendroite}[2]{
\draw (#1,#2) rectangle (#1+2,#2+2) ;
\draw [-latex] (#1+0,#2+1) -- (#1+2,#2+1) ;
\draw [-latex] (#1+0,#2+1.5) -- (#1+2,#2+1.5) ; 
\draw [-latex] (#1+1,#2+0) -- (#1+1,#2+1) ; 
\draw [-latex] (#1+1,#2+2) -- (#1+1,#2+1.5) ;
\draw [-latex] (#1+1.5,#2+0) -- (#1+1.5,#2+1) ; 
\draw [-latex] (#1+1.5,#2+2) -- (#1+1.5,#2+1.5) ;}
\newcommand{\robibluebastgauche}[2]{
\fill[blue!40] (#1+0.5,#2+0.5) rectangle (#1+1,#2+2) ;
\fill[blue!40] (#1+0.5,#2+0.5) rectangle (#1+2,#2+1) ;
\draw (#1,#2) rectangle (#1+2,#2+2) ;
\draw [-latex] (#1+0.5,#2+0.5) -- (#1+0.5,#2+2) ; 
\draw [-latex] (#1+0.5,#2+0.5) -- (#1+2,#2+0.5) ;
\draw [-latex] (#1+1,#2+1) -- (#1+1,#2+2) ; 
\draw [-latex] (#1+1,#2+1) -- (#1+2,#2+1) ; 
\draw [-latex] (#1+0.5,#2+1) -- (#1+0,#2+1) ; 
\draw [-latex] (#1+1,#2+0.5) -- (#1+1,#2+0) ;}
\newcommand{\robibluebastdroite}[2]{
\fill[blue!40] (#1+1.5,#2+0.5) rectangle (#1+1,#2+2) ;
\fill[blue!40] (#1+1.5,#2+0.5) rectangle (#1+0,#2+1) ;
\draw (#1,#2) rectangle (#1+2,#2+2) ;
\draw [-latex] (#1+1.5,#2+0.5) -- (#1+1.5,#2+2) ; 
\draw [-latex] (#1+1.5,#2+0.5) -- (#1+0,#2+0.5) ;
\draw [-latex] (#1+1,#2+1) -- (#1+1,#2+2) ; 
\draw [-latex] (#1+1,#2+1) -- (#1+0,#2+1) ; 
\draw [-latex] (#1+1.5,#2+1) -- (#1+2,#2+1) ; 
\draw [-latex] (#1+1,#2+0.5) -- (#1+1,#2+0) ;}
\newcommand{\robibluehauttgauche}[2]{
\fill[blue!40] (#1+2,#2+1) rectangle (#1+0.5,#2+1.5) ;
\fill[blue!40] (#1+1,#2+0) rectangle (#1+0.5,#2+1.5) ;
\draw (#1,#2) rectangle (#1+2,#2+2) ;
\draw [-latex] (#1+0.5,#2+1.5) -- (#1+0.5,#2+0) ; 
\draw [-latex] (#1+0.5,#2+1.5) -- (#1+2,#2+1.5) ;
\draw [-latex] (#1+1,#2+1) -- (#1+1,#2+0) ; 
\draw [-latex] (#1+1,#2+1) -- (#1+2,#2+1) ; 
\draw [-latex] (#1+0.5,#2+1) -- (#1+0,#2+1) ; 
\draw [-latex] (#1+1,#2+1.5) -- (#1+1,#2+2) ;}
\newcommand{\robibluehauttdroite}[2]{
\fill[blue!40] (#1+0,#2+1) rectangle (#1+1.5,#2+1.5) ;
\fill[blue!40] (#1+1,#2+0) rectangle (#1+1.5,#2+1.5) ;
\draw (#1,#2) rectangle (#1+2,#2+2) ;
\draw [-latex] (#1+1.5,#2+1.5) -- (#1+1.5,#2+0) ; 
\draw [-latex] (#1+1.5,#2+1.5) -- (#1+0,#2+1.5) ;
\draw [-latex] (#1+1,#2+1) -- (#1+1,#2+0) ; 
\draw [-latex] (#1+1,#2+1) -- (#1+0,#2+1) ; 
\draw [-latex] (#1+1.5,#2+1) -- (#1+2,#2+1) ; 
\draw [-latex] (#1+1,#2+1.5) -- (#1+1,#2+2) ;}
\newcommand{\robiredbasgauche}[2]{
\fill[red!40] (#1+0.5,#2+0.5) rectangle (#1+1,#2+2) ;
\fill[red!40] (#1+0.5,#2+0.5) rectangle (#1+2,#2+1) ;
\draw (#1,#2) rectangle (#1+2,#2+2) ;
\draw [-latex] (#1+0.5,#2+0.5) -- (#1+0.5,#2+2) ; 
\draw [-latex] (#1+0.5,#2+0.5) -- (#1+2,#2+0.5) ;
\draw [-latex] (#1+1,#2+1) -- (#1+1,#2+2) ; 
\draw [-latex] (#1+1,#2+1) -- (#1+2,#2+1) ; 
\draw [-latex] (#1+0.5,#2+1) -- (#1+0,#2+1) ; 
\draw [-latex] (#1+1,#2+0.5) -- (#1+1,#2+0) ;}
\newcommand{\robiredbasdroite}[2]{
\fill[red!40] (#1+1.5,#2+0.5) rectangle (#1+1,#2+2) ;
\fill[red!40] (#1+1.5,#2+0.5) rectangle (#1+0,#2+1) ;
\draw (#1,#2) rectangle (#1+2,#2+2) ;
\draw [-latex] (#1+1.5,#2+0.5) -- (#1+1.5,#2+2) ; 
\draw [-latex] (#1+1.5,#2+0.5) -- (#1+0,#2+0.5) ;
\draw [-latex] (#1+1,#2+1) -- (#1+1,#2+2) ; 
\draw [-latex] (#1+1,#2+1) -- (#1+0,#2+1) ; 
\draw [-latex] (#1+1.5,#2+1) -- (#1+2,#2+1) ; 
\draw [-latex] (#1+1,#2+0.5) -- (#1+1,#2+0) ;}
\newcommand{\robiredhautgauche}[2]{
\fill[red!40] (#1+2,#2+1) rectangle (#1+0.5,#2+1.5) ;
\fill[red!40] (#1+1,#2+0) rectangle (#1+0.5,#2+1.5) ;
\draw (#1,#2) rectangle (#1+2,#2+2) ;
\draw [-latex] (#1+0.5,#2+1.5) -- (#1+0.5,#2+0) ; 
\draw [-latex] (#1+0.5,#2+1.5) -- (#1+2,#2+1.5) ;
\draw [-latex] (#1+1,#2+1) -- (#1+1,#2+0) ; 
\draw [-latex] (#1+1,#2+1) -- (#1+2,#2+1) ; 
\draw [-latex] (#1+0.5,#2+1) -- (#1+0,#2+1) ; 
\draw [-latex] (#1+1,#2+1.5) -- (#1+1,#2+2) ;}
\newcommand{\robiredhautdroite}[2]{
\fill[red!40] (#1+0,#2+1) rectangle (#1+1.5,#2+1.5) ;
\fill[red!40] (#1+1,#2+0) rectangle (#1+1.5,#2+1.5) ;
\draw (#1,#2) rectangle (#1+2,#2+2) ;
\draw [-latex] (#1+1.5,#2+1.5) -- (#1+1.5,#2+0) ; 
\draw [-latex] (#1+1.5,#2+1.5) -- (#1+0,#2+1.5) ;
\draw [-latex] (#1+1,#2+1) -- (#1+1,#2+0) ; 
\draw [-latex] (#1+1,#2+1) -- (#1+0,#2+1) ; 
\draw [-latex] (#1+1.5,#2+1) -- (#1+2,#2+1) ; 
\draw [-latex] (#1+1,#2+1.5) -- (#1+1,#2+2) ;}
\begin{document}
\maketitle
\begin{abstract}
In this text, we prove then that any minimal effective dynamical 
system on a Cantor set $\mathcal{A}^{\N}$ can be simulated by a minimal 
$\Z^3$-SFT, in a sense that we explicit here. This notion 
is a generalization of simulation by factor and sub-action 
defined~\cite{Hochman-2009}.
\end{abstract} 

\section{Introduction}

\subsection{Embedding computations into multidimensional SFT}

A subshift of finite type (SFT) 
is a discrete 
dynamical system defined as the shift action on a set of 
layouts of symbols in a finite set on an infinite regular grid ($\Z^d$) 
and local rules on symbols.
Subshifts of finite type are central objects in the symbolic dynamics 
field and are studied through various dynamical properties: possible values of 
topological invariants, existence of periodic orbits, possible periods, etc.
When $d=1$ their properties 
are related to algebra. Many results provided evidence that the properties 
of multidimensional SFT ($d \ge 2$) are related to computability theory.
This evidence comes from characterization results of topological invariants 
using embedding Turing computations in multidimensional SFT~\cite{Hochman-Meyerovitch-2010}~\cite{Meyerovitch2011}, 
using methods developed by R. Berger~\cite{Berger66} and R. Robinson~\cite{R71} in order to prove undecidability results.

A notable result by M. Hochman~\cite{Hochman-2009} states the shift actions 
obtained by factor and one-dimensional projective sub-action of tridimensional SFT 
are exactly the effective $\Z$-subshifts. This result was proved to be still true 
for bidimensional SFT independently by N. Aubrun and M. Sablik~\cite{Aubrun-Sablik-2010} and 
B. Durand and A. Romashchenko~\cite{DRS}. Intuitively, multidimensional subshifts can simulate 
any effective $\Z$-subshift, by factor and sub-action. 

\subsection{Effect of dynamical constraints}

This result was proved more recently to be robust under the constraint of minimality by 
B. Durand and A. Romashchenko~\cite{DR17}: they proved that any minimal effective $\Z$-subshift 
can be simulated in the same way by minimal multidimensional SFT. This is not however a characterization, since 
the sub-action of a minimal SFT is 
not forced to be minimal.
This result follows a recent trend in symbolic dynamics which consists in understanding 
to which extent the constructions embedding Turing computations in multidimensional SFT are robust 
under dynamical constraints. This question originates in~\cite{Hochman-Meyerovitch-2010}, 
where the authors wonder if their result is still true under transitivity constraint. The effect 
of a stronger constraint, block gluing, 
was notably studied by R. Pavlov and M. Schraudner~\cite{Pavlov-Schraudner-2014}, who provided 
a realization of a sub-class of the computable numbers as entropy of block gluing multidimensional subshift. 
This constraint imposes that any two patterns in the language of the subshift appear on any couple 
of positions in some configuration, provided that the space between the two patterns is greater 
than a constant. 
The characterization of these numbers is still an open problem. 
On the other hand, the authors of the present text provided some method in order to prove 
the robustness of the characterization in~\cite{Hochman-Meyerovitch-2010} 
to a linear version (where the minimal distance imposed 
between the pattern depends linearly on the size of these patterns) of the block gluing~\cite{GS17}.

\subsection{Extended abstract and statement of the result}

In this text, we define a notion of simulation of an effective dynamical 
system on the Cantor sets $\mathcal{A}^{\N}$ by a multidimensional SFT in order 
to study the possible effective dynamical systems (not only subshifts) that multidimensional SFT can 
simulate. This definition consists in the existence of a computable map which commutes 
with the shift action in only one direction. It was 
proved in~\cite{Hochman-2009} that tridimensional subshifts 
simulate effective dynamical systems on the 
Cantor sets $\mathcal{A}^{\N}$. We prove 
here that this statement 
is robust to the minimality property: 

\begin{theorem}
\label{theorem.main.introduction}
Any minimal effective dynamical system $(Z,f)$
on a Cantor set $\mathcal{A}^{\N}$, where $\mathcal{A}$ is 
a finite set, can be simulated 
by a minimal $\Z^3$-SFT. 
\end{theorem}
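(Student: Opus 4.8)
The plan is to follow the architecture of the Durand–Romashchenko argument for minimal SFT simulation of minimal effective $\Z$-subshifts~\cite{DR17}, but adapted to the setting where the simulated object is an arbitrary minimal effective dynamical system $(Z,f)$ on a Cantor set $\A^{\N}$, rather than a subshift, and where the notion of simulation only requires a computable map commuting with the shift in one direction. The starting point will be the classical self-simulating fixed-point construction: I would build a $\Z^3$-SFT as a hierarchy of nested square structures (macro-tiles made of micro-tiles, as in the Robinson/Berger tradition and its fixed-point variant due to Durand–Romashchenko–Shen), where each level of the hierarchy both enforces the geometric synchronization of the grid and carries on some of its cells a computation. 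The two-dimensional layers of the construction host the aperiodic hierarchical skeleton, and the third dimension is used as the "time/tape" direction along which a Turing machine runs, decompressing the description of $(Z,f)$ and verifying, layer by layer, the admissible trajectories under $f$. The decorative \verb|supertile| and \verb|robi| macros defined above suggest the concrete self-similar geometry that the formal construction instantiates.

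The first main step is to encode, at each level of the hierarchy, a finite-precision view of a point $z \in Z$ together with a finite number of iterates of $f$ applied to it. Since $(Z,f)$ is effective, there is a Turing machine which, given a precision and a cylinder, semi-decides whether a word is an admissible prefix of a point of $Z$ and whether a given transition is consistent with $f$; I would embed this machine into the computation zones of the macro-tiles, using the standard trick of letting larger macro-tiles allocate more computation time and tape so that any halting computation eventually fits. The third dimension then lets me stack successive iterates of $f$ so that the local rules check consistency of the $f$-dynamics between adjacent time-slices. The key structural requirement is that each configuration of the SFT projects (via the simulation map, which reads off the encoded precision-$n$ approximation at the base of the hierarchy and commutes with $f$ in the relevant direction) onto a genuine trajectory of $(Z,f)$, and conversely that every trajectory is realized.

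The decisive and most delicate step is \textbf{enforcing minimality of the constructed SFT}. This is exactly the point where the Durand–Romashchenko technique is essential and where the bulk of the difficulty lies. Minimality of an SFT means every admissible pattern reappears within bounded gaps in every configuration; in a hierarchical construction this does not come for free, because the freedom to encode arbitrary admissible trajectories of $Z$ could in principle create patterns that do not recur. The remedy, following~\cite{DR17}, is to make the encoded data \emph{self-referential and densely repeated}: each macro-tile must carry not only a slice of a trajectory of $(Z,f)$ but also, interleaved, the program describing $(Z,f)$ itself, and the construction must force the bits of any point of $Z$ to be "spread out" quasi-periodically across the hierarchy so that every finite pattern is syncronized to reappear with bounded gap. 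Concretely I expect to use the minimality of $(Z,f)$ — which gives, for every precision $n$, a bound $R(n)$ such that every length-$R(n)$ factor of a trajectory contains every length-$n$ factor — and to propagate this recurrence bound up through the levels of the self-similar structure, so that the combinatorial recurrence of the simulated system is inherited by the spatial recurrence of the SFT.

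I expect the main obstacle to be precisely this propagation of minimality through the fixed-point hierarchy while simultaneously keeping the local rules of finite type and keeping the simulation map computable and shift-commuting in one direction. The tension is that making the SFT minimal pushes towards rigidly quasi-periodic, highly constrained configurations, whereas simulating \emph{all} trajectories of $(Z,f)$ — and in particular not collapsing distinct trajectories — pushes towards configurational freedom; reconciling the two requires a careful budgeting of which data is "free" (the choice of trajectory, localized and forced to recur via the $R(n)$ bounds) and which is "rigid" (the hierarchical grid and the embedded self-description). I would handle this by the standard layering argument: verify first that the geometric skeleton alone is minimal, then show that adding the recurrence-controlled trajectory data preserves minimality because the added freedom is itself quasi-periodic by construction, and finally verify that the simulation map, restricted to the base layer of the hierarchy, recovers $(Z,f)$ with the required one-directional equivariance.
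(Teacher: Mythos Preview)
Your proposal takes a genuinely different route from the paper. You outline a Durand--Romashchenko fixed-point (self-simulating) construction, with minimality enforced by propagating the uniform recurrence bound $R(n)$ of $(Z,f)$ through the hierarchy. The paper instead builds on an explicit rigid Robinson structure stacked in the $\vec{e}^3$ direction: the $n$th bit of a point of $Z$ lives on the level-$n$ cells, and the $\vec{e}^3$ direction is time for $f$. Minimality is obtained not by a recurrence-bound propagation but by a counter mechanism: each cell carries a \emph{linear counter} (governing machine behaviours) and odd-level cells carry a \emph{system counter} (simulating a full shift), both engineered to have pairwise-coprime Fermat-number periods, so that by the Chinese remainder argument one can realise any combination of counter values by shifting; the genuine system bits on even levels are then recovered using the minimality of $(Z,f)$ via Lemma~\ref{lemma.minimality.one.dimensional}.

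Your sketch has a real gap at the step you yourself flag as decisive. You write that ``adding the recurrence-controlled trajectory data preserves minimality because the added freedom is itself quasi-periodic by construction'', but this is exactly where the difficulty lies and your proposal does not resolve it. Two concrete obstacles are missing from your plan. First, the Turing machine implementation itself breaks minimality: different configurations can exhibit different partial space-time diagrams, and in particular the infinite/limit cells of the hierarchy carry computations that no finite cell witnesses. The paper devotes substantial machinery (the linear counter cycling through \emph{all} possible machine behaviours, error signals, $\texttt{on}/\texttt{off}$ column markings) to forcing every such behaviour to occur in every configuration; your ``standard layering argument'' does not account for this. Second, and more seriously, the bits of $z\in Z$ attached to infinite cells are not under the control of any machine, so an infinite stack of such cells supports a full shift on $\mathcal{A}$---a non-minimal component that your recurrence bound $R(n)$ (which concerns the $f$-orbit, i.e.\ the $\vec{e}^3$ direction) cannot tame. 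The paper's solution is to \emph{simulate} this degenerate full shift on odd levels via the system counter, and to route the relevant information through level-dependent ``random channels'' so that locally one cannot tell whether a bit is free or counter-determined. Without an analogue of this mechanism, your construction will have configurations in which some infinite-level bit is frozen and the pattern witnessing a different value of that bit never appears, so the SFT is not minimal.
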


\subsection{Specific information processing phenomena}

The construction involved in the proof of this theorem relies on R. Robinson's and M. Hochman, 
T. Meyerovitch's constructions. 
However, it exhibits specific information processing 
phenomena.

Some of them can be observed 
already in the literature. In 
particular, the computing units in which are embedded Turing 
computations are decomposed in sub-units used for specific functions, as in B. Durand and A. Romashchenko's constructions. Thus the construction exhibits functional 
specialisation of the computing units. However, this functional specialisation
does not rely on a universal machine. 

Some others are specific to the construction presented 
here:

\begin{enumerate}
\item The natural way to support effective dynamical systems implies specific degenerated behaviors 
which consist of the presence of a full shift 
supported by some infinite computing units. A significant part of the construction 
is devoted to the simulation of these degenerated behaviors. 
\item As in the authors' construction in~\cite{GS17}, the density of information exchanges is higher than in 
the constructions of B. Durand 
and A. Romashchenko. This density imposes that the information exchanges between the sub-units can not follow 
a tight ''$U$'' shape, in order to avoid breaking the minimality property. 
\item For the same reason, some information characterizing the behavior of the computing units are determined by a hierarchical signaling process which is 
\textit{imposed to avoid some regions} of the construction. 
\item We also provide to some computing sub-units the 
choice of direction of transport 
of information. In~\cite{GS17ED},
this phenomenon appears for the 
propagation of an error signal triggered 
by the machines. Here it is used for 
the attribution of a value to some 
bits related to the simulation of the 
system. This information has to be transported through 
''random'' channels.
\item Moreover, we use, as in~\cite{GS17ED}, Fermat numbers in order 
to desynchronize some counters involved in the construction. This time 
we have to include counters having 
different functions.
\end{enumerate}

\subsection{Comparison with existing constructions}

It is noteworthy that the construction of~\cite{GS17ED} and 
the construction of this present text are really different, 
although using common mechanisms for the machines and the linear 
counters ruling the machines. 
Thus, they exhibit a strong difference with 
constructions of~\cite{Aubrun-Sablik-2010} and \cite{DRS},
in which the characterization of the possible values 
of the entropy derives from the simulation of effective 
subshifts by factor and sub-action.
The reason is that these deal with different degenerated 
behaviors. We don't know if there is a method from which derive 
the simulation theorem and the characterization
of the entropy dimensions under constraint of minimality.

Moreover, the specificity of the proof 
of Theorem~\ref{theorem.main.introduction} is the simulation 
of any effective system on a Cantor set (not only 
subshifts), with the cost of increasing the 
dimension. However, it seems reasonable that the same principles 
of simulation of degenerated behaviors and 
suspended counters can be applied on Hochman's 
reformulation of B. Durand, A. Romashchenko's 
and N. Aubrun, M. Sablik's result in~\cite{Hochmanwords}
to recover the result of~\cite{DR17}. At least it is direct 
to recover the simulation of minimal effective one-dimensional subshifts 
by minimal $\Z^3$-SFT by factor and projective sub-action (instead of 
$\Z^2$-SFT).

\begin{remark}
Another consequence of Theorem~\ref{theorem.main.introduction}, is that one can not decide 
if a $\Z^3$-SFT is minimal or not. 
\end{remark}

\section{Definitions}\label{sec.definition}

In this section we recall some definitions on $\Z^d$-subshifts dynamics. 
Denote $\vec{e}^1 , ... , \vec{e}^d$ the canonical generators 
of $\Z^d$.

\subsection{Subshifts as dynamical systems}

Let $d\ge 1$.
Let $\A$ be a finite set (alphabet). 
A \define{configuration} $x$ is an element of $\A^{\Z^d}$. 
The space $\A^{\Z^d}$ is endowed with the 
product topology derived from the discrete topology on $\A$. 
For this topology,  $\A^{\Z^d}$ is a compact space. 
This \define{shift action} of $\Z^d$ on $\A^{\Z^d}$ is defined, for 
all $\vec{u} \in \Z^d$ and $x \in \A^{\Z^d}$, by 
\[\s^{\vec{u}} (x)_{\vec{v}} = x_{\vec{u}+\vec{v}}.\]
For all $N \ge 1$, the 
\define{$N$th subaction of the shift} 
is the action $\s_N$ defined for all 
$\vec{u} \in \Z^d$ and $x \in \A^{\Z^d}$, by 
\[\s_N ^{\vec{u}} (x)_{\vec{v}} = 
x_{N.\vec{u}+\vec{v}}.\]

For any finite subset $\U$ of $\Z^d$, we denote 
$x_{\U}$ the \define{restriction} of $x\in\A^{\Z^d}$ to $\U$. 
A \define{pattern} $P$ on \define{support} $\U$ is an element of $\A^{\U}$.
The support of the pattern $P$ is denoted $\supp{P}$.
A block $P$ \textbf{appears} in a configuration $x$ when 
there exists some $\vec{u} \in \Z^d$ such that $x_{\vec{u}+\supp{P}}=P$.
Let us denote, for all $n$, $\U_n^d =\llbracket 
0;n-1 \rrbracket ^d$. A pattern on support $\U_n^d$ is a called 
a \define{n-block}. 
A $n$-block $P$ appears at position $\vec{u} \in\Z^d$ in a 
configuration $x\in\A^{\Z^d}$ 
when $x_{\vec{u}+\U_n^d}=P$. We denote this $P\sqsubset x$.  
A pattern $P$ on support 
$\mathbb{U}$ is a \define{sub-pattern} 
of a pattern $Q$ on support $\mathbb{V}$ when $\mathbb{U} \subset \mathbb{V}$ 
and $Q_{\mathbb{U}}=P$.

A \define{$d$-dimensional subshift} (or $\Z^d$-subshift) 
$X$ is a closed subset of $\A^{\Z^d}$ which 
is invariant under the action of the shift. This means that  
$\sigma (X) \subset X$. The couple $(X,\sigma)$ is a dynamical system. 
Any subshift $X$ can be defined by a set of \define{forbidden 
patterns}: this means that $X$ is the set of configurations 
where none of the forbidden patterns appears. 
Formally, there exists $\mathcal{F}$ a set of patterns 
such that :
$$X=X_{\mathcal{F}}:=\left\{x\in\A^{\Z^d}: \textrm{ for 
all $P\in\mathcal{F}$, $P \not \sqsubset x$}\right\}.$$
We also use the term \define{local rule} (or rule) the act of forbidding a pattern. 
Sometimes we forbid patterns by imposing the rule that 
patterns on a given support are in a restricted set of patterns on this support. 

If the subshift can be defined by a finite set of forbidden patterns, it is called 
a \define{subshift of finite type} (SFT for short). The \textbf{order} of a 
SFT is the smallest $r$ such 
that it can be defined by forbidden $r$-blocks.

A pattern \define{appears} in a subshift $X$ if there is a configuration of $X$ in which 
it appears. The set of patterns which appear in $X$ 
is called the \define{language} of $X$, denoted $\mathcal{L}(X)$. 
As well, the alphabet of the subshift $X$ is often denoted $\A_X$. \bigskip

A subshift is said to be \define{minimal} if 
every pattern in the language of $X$ appears in 
every configuration of $X$. This means that the 
trajectory of any point of the subshift under the 
shift action is dense in the subshift. \bigskip

In this article, we construct subshifts on some alphabet $\A$ which is 
a product of alphabets $\A = \A_1 \times ... \times \A_k$. 
We call informally the $i^{\textrm{th
}}$ layer of this subshift the space of the projections of a configuration written on the $i^{
\textrm{th}}$ alphabet $\A_i$, and we describe the subshifts layer by layer, 
giving the symbols, the rules that are internal to the layer, 
and the rules making interact the layer with the previous ones. Sometimes, for clarity 
of the construction, a layer is described as a product of layers : we call it a sublayer 
of the product subshift.

We say that a \textbf{symbol} on a position $\vec{u} \in \Z^d$ 
is transmitted through a rule to a neighbor when the rule impose that 
the symbol on this neighbor is the same.

\subsection{Simulation of effective dynamical systems 
on the Cantor set}

A Cantor set is a set $\A^{\N}$, where $\A$ is some finite set, 
that we consider here as a topological space 
with the power of the discrete topology.
A \textbf{cylinder} is some set 
\[\{x \in \A^{\N} \ : \ \forall i \le n-1, \
x_i = p_i\},\]
where $n\ge 1$ and $p$ is some length $n$ 
word on the alphabet $\mathcal{A}$. This set is denoted $[p]$.

\begin{definition}
Let $Z$ be a closed subset of $\A^{\N}$ and $f$ a continuous function $f: Z \rightarrow Z$. 
The dynamical system $(Z,f)$ is said to be 
\textbf{minimal} 
when for any 
\begin{enumerate}
\item configuration $x \in Z$, 
\item $[q]$ a cylinder such that 
$[q] \cap Z \neq \emptyset$, 
\end{enumerate} 
there exists some $n \ge 0$ such that 
\[f^{n} (x) \cap [q] \neq \emptyset.\]
\end{definition}

\begin{definition}
Let $Z$ be a closed subset of $\A^{\N}$. This set is said to be 
\textbf{effective} 
when there is a Turing machine which on input $n$ outputs 
some word $p_n$ on alphabet $\A$ such that 
\[Z^c = \bigcup_n [p_n].\]
\end{definition}

\begin{definition}
Let $Z$ be an effective closed subset of $\A^{\N}$ and 
$f$ be a continuous function $Z \rightarrow Z$. The 
dynamical system $(Z,f)$ is said to be 
\textbf{effective} when there exists a Turing machine 
which on input $n$ outputs some word $p_n$ such that 
\[\{(x_i,f(x)_i)_i \ : \ x \in Z\}^c = \bigcup_n [p_n].\]
\end{definition}

Let us choose for the following an effective encodings 
of patterns on an alphabet $\mathcal{A}$ on $\Z^d$ into $\N$. Any algorithm 
manipulating patterns (when taken as input or output) manipulates their 
representation in $\N$.

\begin{definition}
Let $\varphi$ be a function $X \rightarrow X'$, where $X,X'$ are 
two effective subshifts respectively on $\Z^d$ and $\Z^{d'}$. We say that $\varphi$ is \textbf{computable}
when there exists an algorithm that taking as input an integer $k$ 
computes some $\psi (k)$ and outputs $\varphi (x)_{\llbracket -k , k \rrbracket ^{d'}}$ 
out of knowledge of the pattern $x_{\llbracket -\psi (k), \psi (k) \rrbracket ^d}$.
\end{definition}

\begin{definition}
Let $(Z,f)$ be
an effective dynamical system such that $f$ is onto,
and $X$ be a $\Z^d$-SFT. We say that $X$ \define{simulates} 
the dynamical system $(Z,f)$ if there exists some computable onto function 
$\varphi : X \rightarrow \A^{\N}$ such that the following diagram commutes : 
\[\xymatrix{
  X \ar[d]_{\varphi} \ar[r]^{\sigma^{{\vec{e}}^d}}  & X \ar[d]^{\varphi} \\
  Z \ar[r]_{f} & Z }
\]
The function $\varphi$ is called the \define{simulation function}.
\end{definition}

\begin{example}
A \textit{factor map} between two subshifts is a map which commutes with 
the shift action. The \textit{projective sub-action} on $\Z^{d'}$, $d'\le d$ of 
a $\Z^d$-subshift $X$ is the projection of $X$ on the subshift $Z$ of $\mathcal{A}^{\Z^{d'}}$, where 
$\mathcal{A}$ is the alphabet of $X$, and  
\[Z= \left\{z \in \mathcal{A}^{\Z^{d'}} : \exists x \in X : \forall i \le d', z_i = x_i\right\}\]
Both factor and projective sub-action are simulation functions for $f$ the shift map.
\end{example}

As a consequence, one can formulate the following theorem:

\begin{theorem}[\cite{Hochman-2009},\cite{Aubrun-Sablik-2010},\cite{DRS}]
Let $d \ge 2$. The subshifts that can be simulated by a $\Z^d$-SFT by factor and projective sub-action are 
the effective subshifts on $\Z^k$ for some $k \le d-1$.
\end{theorem}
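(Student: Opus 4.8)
The plan is to prove the two inclusions separately: on the one hand (\emph{necessity}), that every subshift obtained from a $\Z^d$-SFT by a factor map followed by a projective sub-action is an effective subshift on some $\Z^k$ with $k \le d-1$; on the other hand (\emph{sufficiency}), that conversely every effective $\Z^k$-subshift with $k \le d-1$ arises in this way. The first inclusion is a soft argument; the second is the substantial one, and is precisely the technique whose refinements occupy the rest of this paper.

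For necessity, let $X \subseteq \A^{\Z^d}$ be an SFT with finite forbidden set $\mathcal{F}$, let $\pi$ be a factor map given by a sliding block code of some radius $r$ onto a $\Z^d$-subshift $Y'$, and let $Y$ be the projective sub-action of $Y'$ onto a coordinate subspace $\Z^k$. I would show $Y$ is effective by exhibiting a Turing machine enumerating its forbidden patterns. A $\Z^k$-pattern $Q$ fails to belong to $\mathcal{L}(Y)$ exactly when no configuration of $X$ projects, through $\pi$ and restriction to the $\Z^k$-slice, to a configuration containing $Q$. By compactness of $\A^{\Z^d}$ and K\"onig's lemma this non-extendability is witnessed at a finite scale: there is some $n$ for which no $\mathcal{F}$-admissible pattern on the box $\llbracket -n,n\rrbracket^d$ has image under the block code extending $Q$. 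Since $\mathcal{F}$-admissibility on a finite box and the action of the block code are both decidable, one searches over $n$ and over the finitely many candidate patterns; the search halts precisely on the forbidden $Q$. Hence the complement of $\mathcal{L}(Y)$ is recursively enumerable, so $Y$ is effective, placed by the sub-action on $\Z^k$ with $k \le d-1$.

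For sufficiency, I would fix an effective $\Z^k$-subshift $Z$ together with the machine enumerating its forbidden patterns, and build a $\Z^{k+1}$-SFT (then pad trivially to reach any $d \ge k+1$) realizing $Z$ as a factor of a projective sub-action. The construction superimposes three layers. First, a self-similar hierarchical layer of Robinson type produces, from the local rules alone, an aperiodic structure of nested computation zones of unbounded size at every scale. Second, a computation layer embeds in these zones the machine enumerating the forbidden patterns of $Z$, the hierarchy guaranteeing that each finite forbidden pattern is eventually checked in some zone large enough to host the corresponding finite computation. Third, a content layer carries a candidate configuration read constantly along the extra coordinate, so that its $\Z^k$-slices are fed to the computation layer, and a local rule declares a tiling inconsistent whenever the embedded machine detects a forbidden pattern in the content it reads. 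The SFT is the set of configurations in which no such inconsistency occurs; the simulation function reads off the content layer and projects onto $\Z^k$, and its image is exactly $Z$.

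The main obstacle is this sufficiency construction, specifically arranging the self-similar structure so that it both forces aperiodicity (making computation space of every size available at the right density) and propagates the content to be checked without creating spurious configurations. One must verify \emph{soundness} --- every configuration of the SFT projects into $Z$, because each forbidden pattern is eventually tested in a sufficiently large zone and so never overlooked --- and \emph{completeness} --- every point of $Z$ lifts to a valid configuration, because the computation never blocks a legal content and the hierarchy can always be completed. Balancing the geometry of the nested zones against the space--time requirements of the embedded machine, and synchronising the reading of the content with the computations across scales, is the delicate bookkeeping carried out in the constructions of Hochman and of Aubrun--Sablik / Durand--Romashchenko, and extended in the present paper to the minimal and non-subshift setting.
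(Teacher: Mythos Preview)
The paper does not contain a proof of this theorem: it is stated in Section~\ref{sec.definition} as a known result with citations to \cite{Hochman-2009}, \cite{Aubrun-Sablik-2010} and \cite{DRS}, and no argument is given. Your sketch is a faithful high-level outline of the strategy in those cited works --- the necessity direction via compactness and recursive enumeration of forbidden patterns, and the sufficiency direction via a hierarchical Robinson-type structure carrying embedded Turing computations that check a content layer against an enumeration of forbidden patterns --- so there is nothing in the paper itself to compare it against. One small remark: your sufficiency sketch builds a $\Z^{k+1}$-SFT directly, which corresponds to the sharper Aubrun--Sablik/Durand--Romashchenko bound; Hochman's original construction required $\Z^{k+2}$, and the statement as written (allowing any $d \ge 2$) bundles both.
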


\section{Simulation 
of minimal effective dynamical 
systems on the Cantor sets by minimal $\Z^3$-SFT.}

The aim of this section is to give 
an outline of the proof of 
Theorem~\ref{theorem.main.introduction}.

\subsection{Statement and outline}

\begin{reptheorem}{theorem.main.introduction}
Let $\A$ be some finite set, and 
$f : \A^{\N} \rightarrow \A^{\N}$ any effective 
function, such that the dynamical system $(f,\A^{\N})$ is minimal. 
Then there exists some minimal $\Z^3$-SFT $X$ 
which simulates this system.
\end{reptheorem}

The structure of the subshifts constructed for the proof is an infinite 
stack, in direction $\vec{e}^3$, of a rigid version of the Robinson subshift presented earlier in~\cite{GS17}.
Most of the mechanisms are described in sections of $\Z^3$ 
orthogonal to this direction, since this is the direction 
of time for the simulated system.

The idea is to encode configurations 
of $\mathcal{A}^{\N}$ into the hierarchical structures of
the Robinson subshift in each section of $\Z^3$ orthogonal 
to $\vec{e}^3$: each level $n$ of the hierarchy supports 
the $n$th bit of the configuration. We implement 
Turing computations in each of the copies in order to ensure 
that the configuration $y$ in any copy is equal to $f(x)$, 
where $x$ is the configuration in the copy just below in 
the direction $\sigma^{\vec{e}^d}$. Moreover, it verifies 
that $x$ is in $Z$. Both these tasks are possible since the 
dynamical system $(Z,f)$ is effective.

There are two obstacles to the minimality property in this scheme. 
The first one 
comes from the implementation of the machines.
This is solved 
as in~\cite{GS17ED}: we use counters that alternate all the possible 
computations and use error signals in order to take into account 
only well initialized ones. The main difference with construction of~\cite{GS17ED} 
is that the counters and machines 
are implemented in a two dimensional section. 
Since we can not superimpose the counters and computations of the machines 
without breaking the minimality, we subdivide the computing units into a finite 
set of sub-units. Each of the sub-units has a specific function: information 
transport, incrementation of the counter, computations, etc. This decomposition 
is illustrated on Figure~\ref{figure.functional.diagram.outline}.

The main other obstacle, which is specific to this construction, 
comes from the encoding
of configurations in $\mathcal{A}^{\N}$. The bits supported by 
infinite computing units are not under control 
of the computing machines. This implies that an infinite stack of these computing units 
supports a full shift on the alphabet $\mathcal{A}$. We use the idea of simulation 
of degenerated behaviors
used previously in~\cite{GS17} and~\cite{GS17ED} and underlying the construction 
of~\cite{DR17} and simulate the one dimensional full shift on 
a larger alphabet $\mathcal{E}$ on the computing units having odd 
levels.
The simulated effective dynamical system is supported by the even ones. 
Since the information of simulation can not meet the bits generating 
the full shift or the effective dynamical system, we use an avoiding hierarchical 
signaling process in order to give access to each of the computing units 
to the information of the parity of its level. Moreover, 
a sub-unit of each computing unit supports a counter which is incremented 
in direction $\vec{e}^3$ and synchronized in the other directions. We choose 
the cardinality of $\mathcal{E}$ so that this 
counter has a Fermat number has period in order 
to keep the minimality property, in the same spirit as in~\cite{GS17ED}. 
This counter is used to determine the system bits in odd levels, 
through transport of information to the location of the system bits. 
We need two possible channels for this transport, only 
one of them being used, so that over infinite 
computing units, 
even if no information flux is detected locally through the border, 
it is still possible to see this area as part of a computing unit where 
the bits are simulated by the counter. The choice of the channel 
is done according to the class of the level modulo $4$, which 
is transported to the counter area in the same way as the parity.
Moreover, this counter is synchronized over each section 
of $\Z^3$ orthogonal to $\vec{e}^3$. The same channels are used for 
this purpose, in order to synchronize these bits, which is sufficient 
to ensure the synchronization of the counters.
We do not synchronize these counters by transporting their values 
since this is not possible to guarantee 
that the values of this counter along direction $\vec{e}^3$ are coherent along 
the transport. 

Another noteworthy aspect of this construction 
is that the counters used to simulate 
the degenerated behaviors that appear 
here are incremented in a more compact 
way than it was in~\cite{GS17ED}. In 
this setting, the counter is not robust 
to the minimality, because of the suspension 
mechanism. We thus naturally interpret 
this version of the mechanism as \textit{dominant} 
with respect to the other one, since 
it can replace it in 
the construction of~\cite{GS17ED}.

\subsection{Description of the layers}

Let $\mathcal{A}$ be a finite set and $(Z,f)$ a minimal 
effective dynamical system on $\mathcal{A}^{\N}$.
Let us construct a minimal $\Z^3$-SFT $X_{(Z,f)}$ 
which simulates this system.

This subshift is constructed as a product of various layers, 
as follows :

\begin{itemize}
\item \textbf{\textit{Structure layer [\ref{section.structure}]:}}
This layer has  a first sublayer having alphabet ${\mathcal{A}}_{\texttt{R}}$, the alphabet 
of $X_{\texttt{R}}$, the rigid version of the Robinson subshift, whose description 
can be found in Annex~\ref{sec.valued.robinson}.

The symbols are transported along $\vec{e}^3$ and follow the rules of the subshift $X_{\texttt{R}}$ 
in the other directions. 

When not specified, the configurations in 
the other layers are described in a section 
\[\Z_c^2:=\Z.\vec{e}^1 + \Z.\vec{e}^2+ c.\vec{e}^3\]
of $\Z^3$, with $c \in \Z$, and identified on $\Z_{c+1}^2$ and $\Z_{c}^2$ 
for all $c \in \Z$, using local rules.

Moreover, in all the sections, 
the order $n \ge 3$ cells are 
subdivided into $64$ parts, using 
a hierarchical signaling process, in a second sublayer. These subcells (or organites) are
coded by two elements of $\Z/8\Z$, thought as projective coordinates: 
the first element of this pair
corresponds to the position 
of the sub-unit in the unit in direction $\vec{e}^1$. The other 
one corresponds to the position in direction $\vec{e}^2$. Each 
of the sub-units is attributed with specific functions amongst the following ones:
\begin{itemize} 
\item computations, 
\item incrementation of the linear counter,
\item information transport,
\item deviation of the information transfer, or 
extraction of some information (demultiplexers),
\item support the system counter. 
\end{itemize}
See Figure~\ref{figure.functional.diagram.outline} for 
an illustration.
The linear counter is the counter which codes for the behaviors 
of the computing machines. Its mechanism is similar to the linear 
counter in~\cite{GS17ED}. The system counter is the one 
which is used to simulate the full shift on $\mathcal{A}$ on odd 
levels. The symbols in this layer are identified in the 
direction $\vec{e}^3$.

\begin{figure}[ht]
\[\begin{tikzpicture}[scale=0.6]
\begin{scope}
\fill[gray!5] (0,0) rectangle (8,8);
\fill[blue!30] (0,2) rectangle (1,3);
\fill[yellow!50] (1,2) rectangle (2,3);
\fill[purple!50] (4,4) rectangle (5,5);
\fill[blue!30] (2,2) rectangle (3,3);
\fill[blue!30] (4,6) rectangle (5,7);
\fill[blue!30] (4,2) rectangle (5,3);
\fill[blue!30] (2,6) rectangle (3,7);
\fill[blue!30] (2,4) rectangle (3,5);
\fill[blue!30] (6,6) rectangle (7,7);
\fill[blue!30] (6,4) rectangle (7,5);
\fill[blue!30] (6,2) rectangle (7,3);

\fill[blue!30] (0,6) rectangle (1,7);

\draw[line width = 0.55mm,color=blue!30] (2.5,2.5) -- (2.5,6.5);
\draw[line width = 0.55mm,color=blue!30] (-0.5,6.5) -- (0.5,6.5);

\draw[line width = 0.55mm,color=blue!30,-latex] (0.5,6.5) -- (0.5,2.5);

\draw[line width = 0.55mm,color=blue!30,-latex] (6.75,2.5) -- (8.5,2.5);
\draw[line width = 0.55mm,color=blue!30,-latex] (6.75,6.5) -- (8.5,6.5);

\draw[line width = 0.55mm,color=blue!30] (-0.5,2.5) -- (1,2.5);
\draw[-latex,line width = 0.55mm,color=blue!30] 
(2.5,6.5) -- (2.5,8.5);
\draw[-latex,line width = 0.55mm,color=blue!30] (2.5,-0.5) 
-- (2.5,2.5);
\draw[line width = 0.55mm,color=blue!30] (2.5,6.5) -- (7,6.5) ;
\node at (1.5,2.5) {$+1$};
\draw (0,0) grid (8,8);

\draw[line width = 0.55mm,color=blue!30] (2,2.5) -- (7,2.5);
\draw[-latex,line width = 0.55mm,color=blue!30] (4.5,2.5) -- (4.5,4);
\draw[-latex,line width = 0.55mm,color=blue!30] (4.5,6.5) -- (4.5,5);
\draw[-latex,line width = 0.55mm,color=blue!30] (2.5,4.5) -- (4,4.5);

\draw[line width = 0.55mm,color=blue!30] (6.5,6.5) -- (6.5,2.5);
\draw[line width = 0.55mm,color=blue!30,-latex] (6.5,4.5) -- (5,4.5);

\draw[line width = 0.25mm] (2,2) -- (3,3);
\draw[line width = 0.25mm] (4,2) -- (5,3);
\draw[line width = 0.25mm] (2,6) -- (3,7);
\draw[line width = 0.25mm] (4,7) -- (5,6);
\draw[line width = 0.25mm] (0,2) -- (1,3);
\draw[line width = 0.25mm] (2,4) -- (3,5);
\draw[line width = 0.25mm] (0,6) -- (1,7);
\draw[line width = 0.25mm] (6,2) -- (7,3);
\draw[line width = 0.25mm] (6,4) -- (7,5);
\draw[line width = 0.25mm] (6,6) -- (7,7);

\node at (7.5,-0.75) {$\overline{i} \in \Z/8\Z$};
\node at (-1.75,7.5) {$\overline{j} \in \Z/8\Z$};
\end{scope}

\begin{scope}[xshift= 11cm]

\fill[gray!5] (0,0) rectangle (8,8);
\fill[YellowGreen!30] (1,1) rectangle (7,7);
\fill[red!50] (2,5) rectangle (3,6);
\fill[orange!50] (5,2) rectangle (6,3);

\draw (0,0) grid (8,8);

\draw[YellowGreen, line width=0.55mm,-latex] (4,8) -- (4,4);

\draw[line width = 0.55mm,color=red,dashed,-latex] (2,-0.5) -- (2,8.5);
\draw[line width = 0.55mm,color=red,dashed,-latex] (-0.5,4) -- (8.5,4);
\draw[line width = 0.55mm,color=orange,dashed,-latex] (-0.5,1) -- (8.5,1);
\draw[line width = 0.55mm,color=orange,dashed,-latex] (5,-0.5) -- (5,8.5);

\node[red,scale=1.5] at (0.5,8.5) {$\overline{1}$};
\node[orange,scale=1.5] at (8.5,0.5) {$\overline{3}$};

\end{scope}

\end{tikzpicture}\]
\caption{\label{figure.functional.diagram.outline} Illustration 
of the subdivision of a computing unit into sub-units and 
preview of the functional diagram. The arrows designate 
information transports: the blue ones 
correspond to the linear counter, the orange ones 
to the system counters in order to synchronize them, 
and the green ones to the transport of the modularity mark.
The colored sub-units 
execute specific functions: the yellow 
one $((\overline{i},
\overline{j})=(\overline{2},\overline{3})$) supports 
the incrementation of the linear 
counter, the purple one $((\overline{i},
\overline{j})=(\overline{6},\overline{5})$) supports 
computations. The blue ones are demultiplexers (extracting 
information). On the right, the orange area (sub-units $(\overline{2},\overline{5}), (\overline{5},\overline{2})$) is the support 
of the incrementation of the system counter. 
The dashed arrows are potential transports of information. The green area 
is the localisation of the modularity mark inside the cell.}
\end{figure}

\item \textbf{\textit{System bits layer [\ref{section.system.bits}]:}}
This layer supports the system bits, which generate 
the full shift or the simulated effective dynamical system. These bits
are synchronized over a level. We use only two sides of the computing 
units in order to transport these bits for the synchronization, as illustrated 
on Figure~\ref{figure.synchronization.system.bits.outline}.

\begin{figure}[ht]

\[\begin{tikzpicture}[scale=0.3]

\fill[gray!20] (0,0) rectangle (6,6);
\draw (0,0) rectangle (6,6);
\draw (0.75,0.75) rectangle (2.25,2.25);
\draw (3.75,3.75) rectangle (5.25,5.25);
\draw (0.75,3.75) rectangle (2.25,5.25);
\draw (3.75,0.75) rectangle (5.25,2.25);

\draw[dashed, line width=0.5mm] (0,-2) -- (0,8);
 \draw[dashed, line width=0.5mm] (-2,0) -- (8,0);
\end{tikzpicture}\]
\caption{\label{figure.synchronization.system.bits.outline} Preview 
of the synchronization mechanism of system bits. The gray square 
designates a computing units and the dashed lines the locations of 
the system bits.}
\end{figure}
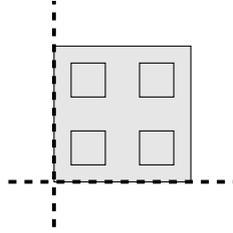

\item \textbf{\textit{Functional areas layer [\ref{section.functional.areas}]:}} 
In this layer, we attribute a 
function to the proper positions 
of the computing units: transport 
of information, step of 
computation. This allows 
the localization of the various 
information processes in the cells, 
outside the machine sub-unit.
For this 
purpose, we
use a hierarchical signaling process.
Another process, similar to the 
one used in~\cite{R71}, is used 
to separate inside the machine sub-unit 
the columns and lines into two sets: active and inactive ones. 
Only the active ones are allowed to 
transport information. We use 
this in order to ensure the minimality.

\item \textbf{\textit{Modularity marks [\ref{section.parity.marks}]:}}
In this layer, we superimpose marks on some parts 
of the border of the computing units. This mark 
is $\overline{n} [4]$, the arithmetic class of $n$ 
modulo $4$, where $n$ is the level of the computing unit. 
The hierarchical signaling process which allows this 
marking is different from the one used for the 
functional areas since the information transport 
is permitted only from some unit to the one 
just above in the hierarchy, amongst those who share 
this one as parent. See an illustration 
on Figure~\ref{figure.hierarchical.process.parity.marks.outline}.
Moreover, we impose the 
mark $\overline{0}$ on order $0$ units. All the other marks are 
determined by the hierarchical process. 
The modularity mark serves to impose the presence of the system counter when 
the level is odd. It prevents this counter when the level is even. When the 
level is odd, it also serves to determine the channels by which the value of the counter 
is transported.

\begin{figure}[ht]

\[\begin{tikzpicture}[scale=0.3]

\fill[gray!20] (0,0) rectangle (6,6);
\draw (0,0) rectangle (6,6);
\draw (0.75,0.75) rectangle (2.25,2.25);
\draw (3.75,3.75) rectangle (5.25,5.25);
\draw (0.75,3.75) rectangle (2.25,5.25);
\draw (3.75,0.75) rectangle (5.25,2.25);

\draw (6.75,6.75) rectangle (8.25,8.25);
\draw (-2.25,-2.25) rectangle (-0.75,-0.75);

\draw (6.75,3.75) rectangle (8.25,5.25);
\draw (6.75,0.75) rectangle (8.25,2.25);
\draw (6.75,-2.25) rectangle (8.25,-0.75);

\draw (3.75,-2.25) rectangle (5.25,-0.75);
\draw (0.75,-2.25) rectangle (2.25,-0.75);

\draw (3.75,6.75) rectangle (5.25,8.25);
\draw (0.75,6.75) rectangle (2.25,8.25);
\draw (-2.25,6.75) rectangle (-0.75,8.25);

\draw (-2.25,3.75) rectangle (-0.75,5.25);
\draw (-2.25,0.75) rectangle (-0.75,2.25);

\draw[dashed,-latex] (4.5,5.25) -- (4.5,6);
\draw[dashed,-latex] (5.25,4.5) -- (6,4.5);

\draw[line width =0.4mm] (6,6) -- (3,6);
\draw[line width =0.4mm] (6,6) -- (6,3);

\draw[line width =0.4mm] (8.25,8.25) -- (7.5,8.25);
\draw[line width =0.4mm] (8.25,8.25) -- (8.25,7.5);
\draw[line width =0.4mm] (8.25,5.25) -- (7.5,5.25);
\draw[line width =0.4mm] (8.25,5.25) -- (8.25,4.5);
\draw[line width =0.4mm] (8.25,2.25) -- (7.5,2.25);
\draw[line width =0.4mm] (8.25,2.25) -- (8.25,1.5);
\draw[line width =0.4mm] (8.25,-0.75) -- (7.5,-0.75);
\draw[line width =0.4mm] (8.25,-0.75) -- (8.25,-1.5);

\draw[line width =0.4mm] (5.25,8.25) -- (4.5,8.25);
\draw[line width =0.4mm] (5.25,8.25) -- (5.25,7.5);
\draw[line width =0.4mm] (5.25,5.25) -- (4.5,5.25);
\draw[line width =0.4mm] (5.25,5.25) -- (5.25,4.5);
\draw[line width =0.4mm] (5.25,2.25) -- (4.5,2.25);
\draw[line width =0.4mm] (5.25,2.25) -- (5.25,1.5);
\draw[line width =0.4mm] (5.25,-0.75) -- (4.5,-0.75);
\draw[line width =0.4mm] (5.25,-0.75) -- (5.25,-1.5);

\draw[line width =0.4mm] (2.25,8.25) -- (1.5,8.25);
\draw[line width =0.4mm] (2.25,8.25) -- (2.25,7.5);
\draw[line width =0.4mm] (2.25,5.25) -- (1.5,5.25);
\draw[line width =0.4mm] (2.25,5.25) -- (2.25,4.5);
\draw[line width =0.4mm] (2.25,2.25) -- (1.5,2.25);
\draw[line width =0.4mm] (2.25,2.25) -- (2.25,1.5);
\draw[line width =0.4mm] (2.25,-0.75) -- (1.5,-0.75);
\draw[line width =0.4mm] (2.25,-0.75) -- (2.25,-1.5);

\draw[line width =0.4mm] (-0.75,8.25) -- (-1.5,8.25);
\draw[line width =0.4mm] (-0.75,8.25) -- (-0.75,7.5);
\draw[line width =0.4mm] (-0.75,5.25) -- (-1.5,5.25);
\draw[line width =0.4mm] (-0.75,5.25) -- (-0.75,4.5);
\draw[line width =0.4mm] (-0.75,2.25) -- (-1.5,2.25);
\draw[line width =0.4mm] (-0.75,2.25) -- (-0.75,1.5);
\draw[line width =0.4mm] (-0.75,-0.75) -- (-1.5,-0.75);
\draw[line width =0.4mm] (-0.75,-0.75) -- (-0.75,-1.5);

\draw[dashed] (3,3) -- (3,9); 
\draw[dashed] (3,3) -- (9,3);
\draw[-latex] (3,6) -- (3,9);
\draw[-latex] (6,3) -- (9,3);
\end{tikzpicture}\]
\caption{\label{figure.hierarchical.process.parity.marks.outline} Preview 
of the hierarchical signaling process 
which allows modularity marking. The thick lines
designates the location of the marking and the 
arrows designate the direction of communication.}
\end{figure}
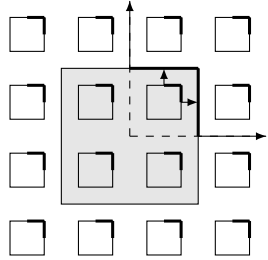

\item \textbf{\textit{Linear counter layer [\ref{section.linear.counter}]:}}
This layer supports the incrementation mechanism of the linear 
counter. The value of this counter is a word on 
the alphabet 
\[\mathcal{A}_c = \mathcal{A}' \times \mathcal{Q}^3 \times \{\rightarrow,\leftarrow\}
\times \mathcal{D},\] 
where $\mathcal{A}$' is the alphabet of the Turing machine used in 
the construction, after completing this alphabet so that its cardinality 
is $2^{2^l}$ for some integer $l$. The set $\mathcal{Q}$ is 
the state set of the machine after similar completion, so that its 
cardinality is $2^{2^l}$. We complete these two sets so that the states 
and letters of the initial machine can not be transformed into 
the additional letters and states. We need this so that the machine 
behaves as the initial one when it is initialized in the initial state and 
with empty tape. The arrows designate the propagation direction of 
an error signal triggered by the machine when it has a head ending 
computation in error state. The last set has cardinality 
$2^{4.2^{l}-2}$. This set is included in the product 
so that $\mathcal{A}_c$ has cardinality a double 
power of $2$: $2^{8.2^{l}}=2^{2^{l+3}}$.

The incrementation mechanism of this counter 
consists of an adding machine. This incrementation 
goes in direction $\vec{e}^1$, and the counter 
is synchronized in direction $\vec{e}^2$, $\vec{e}^3$.
It is suspended for one step when the counter reaches 
its maximal value. As a consequence the period of 
the counter attached to level $n \ge 3$ computing units
is 
\[2^{2^{l+3+n+1-3}}+1 = 
2^{2^{l+n+1}}+1.\] 
Thus these periods are 
different Fermat numbers. We use this fact in 
order to ensure the minimality property.

\item \textbf{\textit{System counter layer 
[\ref{section.system.counter}]:}}
This layer supports the incrementation mechanism of 
the system counter, 
supported by 
the system counter areas 
when the level is odd. When 
the modularity mark is $\overline{1}$ 
the sub-unit is $(\overline{2},\overline{5})$ 
and it is $(\overline{5},\overline{2})$ when the mark is $\overline{3}$. 

The value 
of this 
counter is the product of
two words on an alphabet
$\mathcal{E} ^2$, where $\mathcal{E}$ 
has cardinality $2^{2^{m}}$ and is such that
such that $\mathcal{A} \subset \mathcal{E}$ 
and $m \ge 0$.

In direction $\vec{e}^3$ from 
each $\Z_c^2$ to $\Z_{c+1}^2$, the first word is incremented 
by an adding machine. The second word 
is considered as a word on a discrete torus which consists of the concatenation 
of two words on $\mathcal{E}$, and is at each step rotated as on Figure~\ref{figure.rotation.preview}.

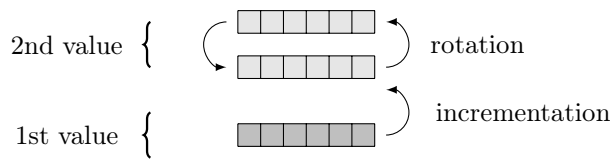
\begin{figure}[ht]
\[\begin{tikzpicture}[scale=0.3]
\fill[gray!20] (0,0) rectangle (6,1);
\draw (0,0) grid (6,1);

\fill[gray!20] (0,-2) rectangle (6,-1);
\draw (0,-2) grid (6,-1);

\fill[gray!50] (0,-5) rectangle (6,-4);
\draw (0,-5) grid (6,-4);

\draw[-latex] (6.5,-1.5) arc (-90:90:1);
\draw[-latex] (6.5,-4.5) arc (-90:90:1);
\draw[-latex] (-0.5,0.5) arc (90:270:1);
\node at (10.5,-0.5) {rotation};
\node at (12.5,-3.5) {incrementation};
\node at (-7.5,-0.5) {2nd value};
\draw[decorate,decoration={brace,raise=0.4cm},line width=0.3mm] 
(-2.5,-1.5) -- (-2.5,0.5);
\node at (-7.5,-4.5) {1st value};
\draw[decorate,decoration={brace,raise=0.4cm},line width=0.3mm] 
(-2.5,-5.5) -- (-2.5,-3.5);
\end{tikzpicture}\]
\caption{\label{figure.rotation.preview} Preview of the system counter mechanism.}
\end{figure}

Moreover, it is incremented as the concatenation of the two words on a segment 
each time the first value reaches its maximal value. 
When the two values are maximal at the same time, the rotation and the incrementation 
mechanisms are suspended for one step. 

The values of this counter are synchronized over a level, using channels 
which depend on the value of the modularity mark. 
They are designated by orange arrows on 
Figure~\ref{figure.functional.diagram.outline}. These channels transport 
the first bit of the second value only.

Like the linear counters, the period of these counters are different Fermat numbers.

Moreover, the first bit of the second value is identified to the system bit 
after transport through another random channel. As a consequence, 
between times when the first value reaches its maximal value in direction $\vec{e}^3$, 
the bits of the second value are listed in this direction along the direction 
of rotation. This means that the sequence of system bits over odd levels consists 
of a concatenation of blocks of the same word, where these words form a complete lists 
of all the words on alphabet $\mathcal{E}$ and having length $2^n$ where $n$ is the level. 

The construction of these counters 
implies that any sequence of 
bits in $\mathcal{E}$ appears 
in the sequence of system bits 
of any odd and sufficiently great 
level. This fact is used to prove 
the minimality property.

\item \textbf{\textit{Machines layer [\ref{section.machines}]:}}
This layer supports the computations of the machines. 
Here we use a model of computing machines which includes 
multiple heads~\cite{GS17ED}: the machine starts computation 
with multiple heads on its tape and any machine head 
can enter on the two sides of the tape and at any time 
in any state. We manage collisions between the machine heads 
by fusion of the heads into a head in error state. 
If there are machine heads in error state, an error signal 
is triggered in the direction specified in the 
value of the linear counter. This signal propagates 
towards one end of the initial tape, where signaling 
mechanisms verify if the tape was well initialized and 
that no head enters in one of the two sides of the tape 
for all time. When this is the case, we forbid 
the error signal to reach its destination. 
In this case, the machine have the aimed behaviors, 
which means that the sequence of systems bits 
of \textbf{even levels} to be in $Z$ and that the 
two sequence in a section $\Z_{c+1}^2$ is the 
image by $f$ of the sequence in the section $\Z_c^2$.

\item \textbf{\textit{Information transport layer [\ref{section.information.transport}]:}}
This layer supports all the information transports: 
\begin{enumerate}
\item transport of the linear 
counter value between incrementation 
regions;
\item extraction of information from the linear 
counter to the machine area; 
\item transport of the first bit of the rotating part of the system counter
\item and of information between cells having the same level.
\end{enumerate}
These transports follow fixed roads through the sub-units, 
except for the fourth one, which is done according to the
modularity mark.
\end{itemize}

The main arguments in the proof 
of the minimality property of this subshift 
are the following ones: 

\begin{enumerate}
\item Any pattern $P$ can be completed 
into a pattern $P'$ which is a stack of 
two-dimensional cells in direction $\vec{e}^3$. Hence it is sufficient 
to prove 
that any such pattern appears in any configuration. 
Such a pattern
is characterized by the values of 
the linear counters of a sequence of two-dimensional cells 
included in its support, intersecting 
all the intermediate levels, the sequences of simulated system 
bits in direction $\vec{e}^3$ for the same sequence of 
two-dimensional cells, and 
the sequence of non-simulated ones. 
\item Considering any configuration $x$ of 
the SFT, one can find back any sequence of 
values for the linear counters starting 
from any stack of two-dimensional cells, by jumping from 
one stack to the next one having the same level in direction 
$\vec{e}^1$. This is possible since the periods of the counters 
are co-prime. The system bits are preserved. 
\item For the same reason, by jumping in direction $\vec{e}^3$, one can 
find back any sequences 
of simulated system bits. At this point the sequence of non simulated 
system bits are changed, but the linear counter are preserved. 
\item Moreover, from the fact that 
a factor of a minimal dynamical system is also minimal, and 
using Lemma~\ref{lemma.minimality.one.dimensional} with 
integer $N$ equal to the product of Fermat numbers of the system counters, 
one can find back the sequences of non simulated system bits without 
changing the other bits.
\end{enumerate}
This is illustrated on Figure~\ref{fig.minimality.proof.intro}. 
On this figure, $\mathcal{T}$ is the shift to the next stack of cells having the same level 
in direction $\vec{e}^1$. \bigskip

In the following, we describe the 
mechanisms specific to this construction. 
Some mechanisms are well known or already described 
in other articles written by the authors. Since 
these mechanisms are although not standard, they 
are described in annexes for completeness.

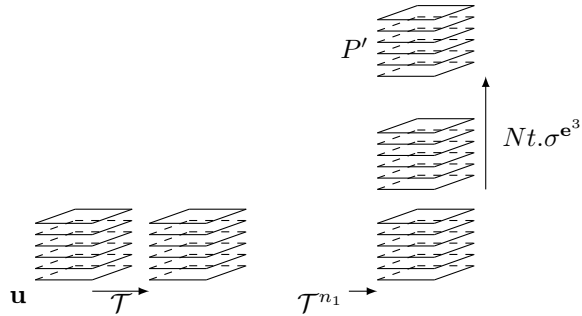
\begin{figure}[ht]
\[\begin{tikzpicture}[scale=0.15]
\begin{scope}
\draw (0,0) -- (5,0) ;
\draw[dashed] (0,0) -- (3.5,1.25) ;
\draw[dashed] (3.5,1.25) -- (8.5,1.25);
\draw (8.5,1.25) -- (5,0);

\draw (0,1) -- (5,1) ;
\draw[dashed] (0,1) -- (3.5,2.25) ;
\draw[dashed] (3.5,2.25) -- (8.5,2.25);
\draw (8.5,2.25) -- (5,1);

\draw (0,2) -- (5,2);
\draw[dashed] (0,2) -- (3.5,3.25) ;
\draw[dashed] (3.5,3.25) -- (8.5,3.25);
\draw (8.5,3.25) -- (5,2);

\draw (0,3) -- (5,3);
\draw[dashed] (0,3) -- (3.5,4.25) ;
\draw[dashed] (3.5,4.25) -- (8.5,4.25);
\draw (8.5,4.25) -- (5,3);

\draw (0,4) -- (5,4) ;
\draw[dashed] (0,4) -- (3.5,5.25) ;
\draw[dashed] (3.5,5.25) -- (8.5,5.25);
\draw (8.5,5.25) -- (5,4);

\draw (0,5) -- (5,5) ;
\draw (0,5) -- (3.5,6.25) ;
\draw(3.5,6.25) -- (8.5,6.25);
\draw (8.5,6.25) -- (5,5);

\end{scope}

\begin{scope}[yshift=8cm]
\draw (0,0) -- (5,0) ;
\draw[dashed] (0,0) -- (3.5,1.25) ;
\draw[dashed] (3.5,1.25) -- (8.5,1.25);
\draw (8.5,1.25) -- (5,0);

\draw (0,1) -- (5,1) ;
\draw[dashed] (0,1) -- (3.5,2.25) ;
\draw[dashed] (3.5,2.25) -- (8.5,2.25);
\draw (8.5,2.25) -- (5,1);

\draw (0,2) -- (5,2);
\draw[dashed] (0,2) -- (3.5,3.25) ;
\draw[dashed] (3.5,3.25) -- (8.5,3.25);
\draw (8.5,3.25) -- (5,2);

\draw (0,3) -- (5,3);
\draw[dashed] (0,3) -- (3.5,4.25) ;
\draw[dashed] (3.5,4.25) -- (8.5,4.25);
\draw (8.5,4.25) -- (5,3);

\draw (0,4) -- (5,4) ;
\draw[dashed] (0,4) -- (3.5,5.25) ;
\draw[dashed] (3.5,5.25) -- (8.5,5.25);
\draw (8.5,5.25) -- (5,4);

\draw (0,5) -- (5,5) ;
\draw (0,5) -- (3.5,6.25) ;
\draw (3.5,6.25) -- (8.5,6.25);
\draw (8.5,6.25) -- (5,5);

\draw[-latex] (9.5,0) -- (9.5,10);
\node at (14.5,5) {$Nt.\sigma^{\vec{e}^3}$};

\end{scope}

\begin{scope}[yshift=18cm]

\node at (-2,2.5) {$P'$};
\draw (0,0) -- (5,0) ;
\draw[dashed] (0,0) -- (3.5,1.25) ;
\draw[dashed] (3.5,1.25) -- (8.5,1.25);
\draw (8.5,1.25) -- (5,0);

\draw (0,1) -- (5,1) ;
\draw[dashed] (0,1) -- (3.5,2.25) ;
\draw[dashed] (3.5,2.25) -- (8.5,2.25);
\draw (8.5,2.25) -- (5,1);

\draw (0,2) -- (5,2);
\draw[dashed] (0,2) -- (3.5,3.25) ;
\draw[dashed] (3.5,3.25) -- (8.5,3.25);
\draw (8.5,3.25) -- (5,2);

\draw (0,3) -- (5,3);
\draw[dashed] (0,3) -- (3.5,4.25) ;
\draw[dashed] (3.5,4.25) -- (8.5,4.25);
\draw (8.5,4.25) -- (5,3);

\draw (0,4) -- (5,4) ;
\draw[dashed] (0,4) -- (3.5,5.25) ;
\draw[dashed] (3.5,5.25) -- (8.5,5.25);
\draw (8.5,5.25) -- (5,4);

\draw (0,5) -- (5,5) ;
\draw (0,5) -- (3.5,6.25) ;
\draw (3.5,6.25) -- (8.5,6.25);
\draw (8.5,6.25) -- (5,5);
\end{scope}

\begin{scope}[xshift=-20cm]
\draw (0,0) -- (5,0) ;
\draw[dashed] (0,0) -- (3.5,1.25) ;
\draw[dashed] (3.5,1.25) -- (8.5,1.25);
\draw (8.5,1.25) -- (5,0);

\draw (0,1) -- (5,1) ;
\draw[dashed] (0,1) -- (3.5,2.25) ;
\draw[dashed] (3.5,2.25) -- (8.5,2.25);
\draw (8.5,2.25) -- (5,1);

\draw (0,2) -- (5,2);
\draw[dashed] (0,2) -- (3.5,3.25) ;
\draw[dashed] (3.5,3.25) -- (8.5,3.25);
\draw (8.5,3.25) -- (5,2);

\draw (0,3) -- (5,3);
\draw[dashed] (0,3) -- (3.5,4.25) ;
\draw[dashed] (3.5,4.25) -- (8.5,4.25);
\draw (8.5,4.25) -- (5,3);

\draw (0,4) -- (5,4) ;
\draw[dashed] (0,4) -- (3.5,5.25) ;
\draw[dashed] (3.5,5.25) -- (8.5,5.25);
\draw (8.5,5.25) -- (5,4);

\draw (0,5) -- (5,5) ;
\draw (0,5) -- (3.5,6.25) ;
\draw (3.5,6.25) -- (8.5,6.25);
\draw (8.5,6.25) -- (5,5);
\draw[-latex] (17.5,-1) -- (20,-1);
\node at (15,-2) {${\mathcal{T}}^{n_1}$};
\end{scope}

\begin{scope}[xshift=-30cm]
\draw (0,0) -- (5,0) ;
\draw[dashed] (0,0) -- (3.5,1.25) ;
\draw[dashed] (3.5,1.25) -- (8.5,1.25);
\draw (8.5,1.25) -- (5,0);

\draw (0,1) -- (5,1) ;
\draw[dashed] (0,1) -- (3.5,2.25) ;
\draw[dashed] (3.5,2.25) -- (8.5,2.25);
\draw (8.5,2.25) -- (5,1);

\draw (0,2) -- (5,2);
\draw[dashed] (0,2) -- (3.5,3.25) ;
\draw[dashed] (3.5,3.25) -- (8.5,3.25);
\draw (8.5,3.25) -- (5,2);

\draw (0,3) -- (5,3);
\draw[dashed] (0,3) -- (3.5,4.25) ;
\draw[dashed] (3.5,4.25) -- (8.5,4.25);
\draw (8.5,4.25) -- (5,3);

\draw (0,4) -- (5,4) ;
\draw[dashed] (0,4) -- (3.5,5.25) ;
\draw[dashed] (3.5,5.25) -- (8.5,5.25);
\draw (8.5,5.25) -- (5,4);

\draw (0,5) -- (5,5) ;
\draw (0,5) -- (3.5,6.25) ;
\draw (3.5,6.25) -- (8.5,6.25);
\draw (8.5,6.25) -- (5,5);
\node at (-1.5,-1.5) {$\vec{u}$};
\draw[-latex] (5,-1) -- (10,-1);
\node at (7.5,-2) {$\mathcal{T}$};
\end{scope}

\end{tikzpicture}\]
\caption{\label{fig.minimality.proof.intro} Schema of the 
proof for the minimality property of $X_{(Z,f)}$.}
\end{figure}

\section{\label{section.structure} Structure layer}

This layer is composed of two sublayers. The first one 
is the three-dimensional rigid version of the Robinson subshift 
presented briefly in the abstract of the construction. 
With the second one, we subdivide the
cells in the copies of the Robinson subshift orthogonal 
to $\vec{e}^3$ into $64$ similar 
parts, called \textbf{organites}, by analogy with 
the functional division of living cells. This sublayer is described as follows.

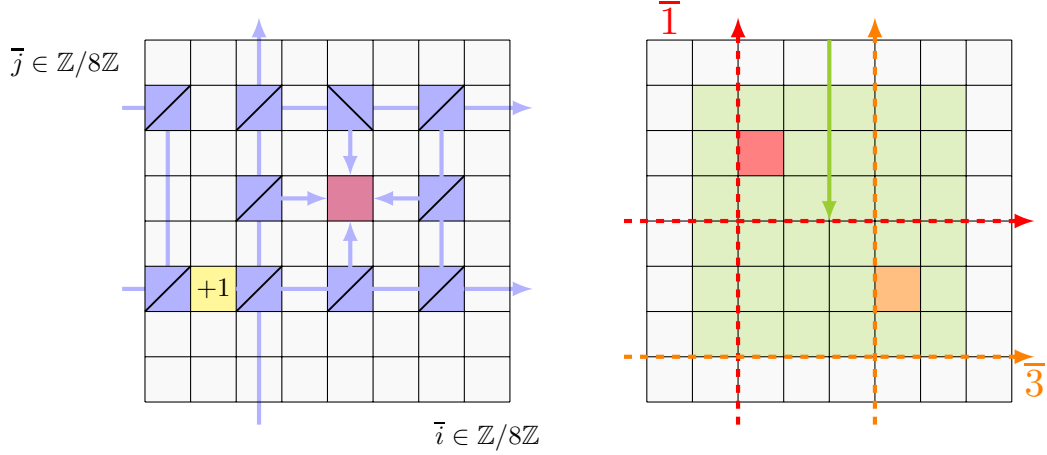
\begin{figure}[ht]
\[\begin{tikzpicture}[scale=0.6]
\begin{scope}
\fill[gray!5] (0,0) rectangle (8,8);
\fill[blue!30] (0,2) rectangle (1,3);
\fill[yellow!50] (1,2) rectangle (2,3);
\fill[purple!50] (4,4) rectangle (5,5);
\fill[blue!30] (2,2) rectangle (3,3);
\fill[blue!30] (4,6) rectangle (5,7);
\fill[blue!30] (4,2) rectangle (5,3);
\fill[blue!30] (2,6) rectangle (3,7);
\fill[blue!30] (2,4) rectangle (3,5);
\fill[blue!30] (6,6) rectangle (7,7);
\fill[blue!30] (6,4) rectangle (7,5);
\fill[blue!30] (6,2) rectangle (7,3);

\fill[blue!30] (0,6) rectangle (1,7);

\draw[line width = 0.55mm,color=blue!30] (2.5,2.5) -- (2.5,6.5);
\draw[line width = 0.55mm,color=blue!30] (-0.5,6.5) -- (0.5,6.5);

\draw[line width = 0.55mm,color=blue!30,-latex] (0.5,6.5) -- (0.5,2.5);

\draw[line width = 0.55mm,color=blue!30,-latex] (6.75,2.5) -- (8.5,2.5);
\draw[line width = 0.55mm,color=blue!30,-latex] (6.75,6.5) -- (8.5,6.5);

\draw[line width = 0.55mm,color=blue!30] (-0.5,2.5) -- (1,2.5);
\draw[-latex,line width = 0.55mm,color=blue!30] 
(2.5,6.5) -- (2.5,8.5);
\draw[-latex,line width = 0.55mm,color=blue!30] (2.5,-0.5) 
-- (2.5,2.5);
\draw[line width = 0.55mm,color=blue!30] (2.5,6.5) -- (7,6.5) ;
\node at (1.5,2.5) {$+1$};
\draw (0,0) grid (8,8);

\draw[line width = 0.55mm,color=blue!30] (2,2.5) -- (7,2.5);
\draw[-latex,line width = 0.55mm,color=blue!30] (4.5,2.5) -- (4.5,4);
\draw[-latex,line width = 0.55mm,color=blue!30] (4.5,6.5) -- (4.5,5);
\draw[-latex,line width = 0.55mm,color=blue!30] (2.5,4.5) -- (4,4.5);

\draw[line width = 0.55mm,color=blue!30] (6.5,6.5) -- (6.5,2.5);
\draw[line width = 0.55mm,color=blue!30,-latex] (6.5,4.5) -- (5,4.5);

\draw[line width = 0.25mm] (2,2) -- (3,3);
\draw[line width = 0.25mm] (4,2) -- (5,3);
\draw[line width = 0.25mm] (2,6) -- (3,7);
\draw[line width = 0.25mm] (4,7) -- (5,6);
\draw[line width = 0.25mm] (0,2) -- (1,3);
\draw[line width = 0.25mm] (2,4) -- (3,5);
\draw[line width = 0.25mm] (0,6) -- (1,7);
\draw[line width = 0.25mm] (6,2) -- (7,3);
\draw[line width = 0.25mm] (6,4) -- (7,5);
\draw[line width = 0.25mm] (6,6) -- (7,7);

\node at (7.5,-0.75) {$\overline{i} \in \Z/8\Z$};
\node at (-1.75,7.5) {$\overline{j} \in \Z/8\Z$};
\end{scope}

\begin{scope}[xshift= 11cm]

\fill[gray!5] (0,0) rectangle (8,8);
\fill[YellowGreen!30] (1,1) rectangle (7,7);
\fill[red!50] (2,5) rectangle (3,6);
\fill[orange!50] (5,2) rectangle (6,3);

\draw (0,0) grid (8,8);

\draw[YellowGreen, line width=0.55mm,-latex] (4,8) -- (4,4);

\draw[line width = 0.55mm,color=red,dashed,-latex] (2,-0.5) -- (2,8.5);
\draw[line width = 0.55mm,color=red,dashed,-latex] (-0.5,4) -- (8.5,4);
\draw[line width = 0.55mm,color=orange,dashed,-latex] (-0.5,1) -- (8.5,1);
\draw[line width = 0.55mm,color=orange,dashed,-latex] (5,-0.5) -- (5,8.5);

\node[red,scale=1.5] at (0.5,8.5) {$\overline{1}$};
\node[orange,scale=1.5] at (8.5,0.5) {$\overline{3}$};

\end{scope}

\end{tikzpicture}\]
\caption{\label{figure.functional.diagram} Functional diagram of 
odd level $n \ge 3$ cells.}
\end{figure}

This sublayer creates the subdivision
of the cells. \bigskip

\noindent \textbf{\textit{Symbols:}}

Elements of $\Z/4\Z$, $\Z/4\Z ^2$, $\Z / 4 \Z ^3$, and 
symbols $\begin{tikzpicture}[scale=0.3]
\fill[gray!50] (0,0) rectangle (1,1);
\draw (0,0) rectangle (1,1);
\end{tikzpicture}$, $\begin{tikzpicture}[scale=0.3]
\draw (0,0) rectangle (1,1);
\end{tikzpicture}$. \bigskip

\noindent \textbf{\textit{Local rules:}}

\begin{itemize}
\item \textbf{Localization:} 
the petals are superimposed with non blank symbols, 
and other positions with blank one.
\item \textbf{Transmission:}
the symbols are transmitted through the petals 
except on \textbf{transformation positions}, 
defined to be the positions where a support petal 
intersects the transmission petal just above in the hierarchy.
\item \textbf{Transformation:} 

\begin{itemize}
\item On 
a transformation position which is not in an order $\le 2$ cell,
the symbol on the position is equal to the positions in the 
same support petal in the neighborhood. 
Moreover, if the symbol in the first sublayer is 
\[\begin{tikzpicture}[scale=0.3]
\draw (0,0) rectangle (2,2) ;
\draw [-latex] (0,1) -- (2,1) ;
\draw [-latex] (0,1.5) -- (2,1.5) ; 
\draw [-latex] (1,0) -- (1,1) ; 
\draw [-latex] (1,2) -- (1,1.5) ;
\draw [-latex] (1.5,0) -- (1.5,1) ; 
\draw [-latex] (1.5,2) -- (1.5,1.5) ;
\end{tikzpicture}, \ \text{or} \ \begin{tikzpicture}[scale=0.3]
\draw (0,0) rectangle (2,2) ;
\draw [-latex] (1,2) -- (1,0) ;
\draw [latex-] (0.5,0) -- (0.5,2) ; 
\draw [-latex] (0,1) -- (0.5,1) ; 
\draw [-latex] (2,1) -- (1,1) ;
\draw [-latex] (0,1.5) -- (0.5,1.5) ; 
\draw [-latex] (2,1.5) -- (1,1.5) ;
\end{tikzpicture},\]
meaning that the position is in the north west part of the cell,
then color of the transmission petal positions in the 
neighborhood is according to the following rules : 
\begin{enumerate}
\item if the orientation symbol is $\begin{tikzpicture}[scale=0.2] 
\fill[black] (0.5,0) rectangle (1,1.5);
\fill[black] (1,1.5) rectangle (2,1);
\draw (0,0) rectangle (2,2);
\end{tikzpicture}$, then the symbol of this layer is transformed into
$\overline{0}$.
\item if the orientation symbol is   $\begin{tikzpicture}[scale=0.2] 
\fill[black] (1.5,0) rectangle (1,1.5);
\fill[black] (1,1.5) rectangle (0,1);
\draw (0,0) rectangle (2,2);
\end{tikzpicture}$, then the transformed symbol is 
\begin{tikzpicture}[scale=0.3]
\fill[gray!50] (0,0) rectangle (1,1);
\draw (0,0) rectangle (1,1);
\end{tikzpicture}.
\item if the orientation symbol is  
$\begin{tikzpicture}[scale=0.2] 
\fill[black] (1.5,2) rectangle (1,0.5);
\fill[black] (1,0.5) rectangle (0,1);
\draw (0,0) rectangle (2,2);
\end{tikzpicture}$, and the symbol on the position is \begin{tikzpicture}[scale=0.3]
\fill[gray!50] (0,0) rectangle (1,1);
\draw (0,0) rectangle (1,1);
\end{tikzpicture}, then it is transformed into \begin{tikzpicture}[scale=0.3]
\fill[gray!50] (0,0) rectangle (1,1);
\draw (0,0) rectangle (1,1);
\end{tikzpicture}. If the symbol is $i \in \Z/4\Z$, 
then it is transformed into $(i,\overline{0}) \in \Z/4\Z ^2$.
If the symbol is $(i_1,i_2) \in \Z/4\Z ^2$, it is transformed 
into $(i_1,i_2,\overline{0}) \in \Z/4\Z ^3$ (see 
Figure~\ref{figure.split.transformation.1} for these rules). 
If in $\Z/4\Z^3$, the symbol is transmitted (see Figure~\ref{figure.split.transformation.2}).
\item if the orientation symbol is 
$\begin{tikzpicture}[scale=0.2] 
\fill[black] (0.5,2) rectangle (1,0.5);
\fill[black] (1,0.5) rectangle (2,1);
\draw (0,0) rectangle (2,2);
\end{tikzpicture}$, then the symbol 
is transformed into $\begin{tikzpicture}[scale=0.3]
\fill[gray!50] (0,0) rectangle (1,1);
\draw (0,0) rectangle (1,1);
\end{tikzpicture}$.
\end{enumerate}
Similar rules are imposed for the other types of transformation positions.

\item When the transformation position is 
in an order $\le 2$ cell, then we change the 
last set of rules by that \begin{tikzpicture}[scale=0.3]
\fill[gray!50] (0,0) rectangle (1,1); 
\draw (0,0) rectangle (1,1);
\end{tikzpicture} can not be transformed, even when re-entering 
inside the cell.
\end{itemize}
\end{itemize}

\begin{figure}[h]
\[\begin{tikzpicture}[scale=0.075]
\fill[gray!90] (16,16) rectangle (48,48); 
\fill[white] (16.5,16.5) rectangle (47.5,47.5);
\fill[gray!20] (32,0) rectangle (32.5,32.5);
\fill[gray!20] (32,32.5) rectangle (64,32);

\fill[gray!20] (8,8) rectangle (8.5,24); 
\fill[gray!20] (8,8) rectangle (24,8.5);
\fill[gray!20] (8,23.5) rectangle (24,24);  
\fill[gray!20] (23.5,8) rectangle (24,24);

\fill[gray!20] (40,8) rectangle (40.5,24); 
\fill[gray!20] (40,8) rectangle (56,8.5);
\fill[gray!20] (40,23.5) rectangle (56,24);  
\fill[gray!20] (55.5,8) rectangle (56,24);

\fill[gray!20] (8,40) rectangle (8.5,56); 
\fill[gray!20] (8,40) rectangle (24,40.5);
\fill[gray!20] (8,55.5) rectangle (24,56);  
\fill[gray!20] (23.5,40) rectangle (24,56);

\fill[gray!20] (40,40) rectangle (40.5,56); 
\fill[gray!20] (40,40) rectangle (56,40.5);
\fill[gray!20] (40,55.5) rectangle (56,56);  
\fill[gray!20] (55.5,40) rectangle (56,56);

\fill[gray!90] (4,4) rectangle (4.5,12); 
\fill[gray!90] (4,4) rectangle (12,4.5); 
\fill[gray!90] (4.5,11.5) rectangle (12,12); 
\fill[gray!90] (11.5,4.5) rectangle (12,12); 

\fill[gray!90] (4,20) rectangle (4.5,28); 
\fill[gray!90] (4,20) rectangle (12,20.5); 
\fill[gray!90] (4.5,27.5) rectangle (12,28); 
\fill[gray!90] (11.5,20.5) rectangle (12,28); 

\fill[gray!90] (4,36) rectangle (4.5,44); 
\fill[gray!90] (4,36) rectangle (12,36.5); 
\fill[gray!90] (4.5,43.5) rectangle (12,44); 
\fill[gray!90] (11.5,36.5) rectangle (12,44);

\fill[gray!90] (4,52) rectangle (4.5,60); 
\fill[gray!90] (4,52) rectangle (12,52.5); 
\fill[gray!90] (4.5,59.5) rectangle (12,60); 
\fill[gray!90] (11.5,52.5) rectangle (12,60);

\fill[gray!90] (20,4) rectangle (20.5,12); 
\fill[gray!90] (20,4) rectangle (28,4.5); 
\fill[gray!90] (20.5,11.5) rectangle (28,12); 
\fill[gray!90] (27.5,4.5) rectangle (28,12); 

\fill[gray!90] (20,20) rectangle (20.5,28); 
\fill[gray!90] (20,20) rectangle (28,20.5); 
\fill[gray!90] (20.5,27.5) rectangle (28,28); 
\fill[gray!90] (27.5,20.5) rectangle (28,28); 

\fill[gray!90] (20,36) rectangle (20.5,44); 
\fill[gray!90] (20,36) rectangle (28,36.5); 
\fill[gray!90] (20.5,43.5) rectangle (28,44); 
\fill[gray!90] (27.5,36.5) rectangle (28,44);

\fill[gray!90] (20,52) rectangle (20.5,60); 
\fill[gray!90] (20,52) rectangle (28,52.5); 
\fill[gray!90] (20.5,59.5) rectangle (28,60); 
\fill[gray!90] (27.5,52.5) rectangle (28,60);

\fill[gray!90] (36,4) rectangle (36.5,12); 
\fill[gray!90] (36,4) rectangle (44,4.5); 
\fill[gray!90] (36.5,11.5) rectangle (44,12); 
\fill[gray!90] (43.5,4.5) rectangle (44,12); 

\fill[gray!90] (36,20) rectangle (36.5,28); 
\fill[gray!90] (36,20) rectangle (44,20.5); 
\fill[gray!90] (36.5,27.5) rectangle (44,28); 
\fill[gray!90] (43.5,20.5) rectangle (44,28); 

\fill[gray!90] (36,36) rectangle (36.5,44); 
\fill[gray!90] (36,36) rectangle (44,36.5); 
\fill[gray!90] (36.5,43.5) rectangle (44,44); 
\fill[gray!90] (43.5,36.5) rectangle (44,44);

\fill[gray!90] (36,52) rectangle (36.5,60); 
\fill[gray!90] (36,52) rectangle (44,52.5); 
\fill[gray!90] (36.5,59.5) rectangle (44,60); 
\fill[gray!90] (43.5,52.5) rectangle (44,60);

\fill[gray!90] (52,4) rectangle (52.5,12); 
\fill[gray!90] (52,4) rectangle (60,4.5); 
\fill[gray!90] (52.5,11.5) rectangle (60,12); 
\fill[gray!90] (59.5,4.5) rectangle (60,12); 

\fill[gray!90] (52,20) rectangle (52.5,28); 
\fill[gray!90] (52,20) rectangle (60,20.5); 
\fill[gray!90] (52.5,27.5) rectangle (60,28); 
\fill[gray!90] (59.5,20.5) rectangle (60,28); 

\fill[gray!90] (52,36) rectangle (52.5,44); 
\fill[gray!90] (52,36) rectangle (60,36.5); 
\fill[gray!90] (52.5,43.5) rectangle (60,44); 
\fill[gray!90] (59.5,36.5) rectangle (60,44);

\fill[gray!90] (52,52) rectangle (52.5,60); 
\fill[gray!90] (52,52) rectangle (60,52.5); 
\fill[gray!90] (52.5,59.5) rectangle (60,60); 
\fill[gray!90] (59.5,52.5) rectangle (60,60);
\node at (24,40) {$\overline{0}$};
\node at (24,24) {$\overline{3}$};
\node at (40,40) {$\overline{1}$};
\node at (40,24) {$\overline{2}$};
\draw[->] (66,32) -- (48,32);
\node at (88,32) {$\vec{i} \in \Z/4\Z \cup \Z/4\Z^2$};
\node at (8,8) {$\vec{i},\overline{3}$};
\node at (56,56) {$\vec{i},\overline{1}$};
\node at (8,56) {$\vec{i},\overline{0}$};
\node at (56,8) {$\vec{i},\overline{2}$};
\end{tikzpicture}\]
\caption{\label{figure.split.transformation.1} Schematic illustration 
of the transformation rules when the symbol is 
$\vec{i} \in \Z/4\Z$ or $\vec{i} \in \Z/4\Z^2$.}
\end{figure}
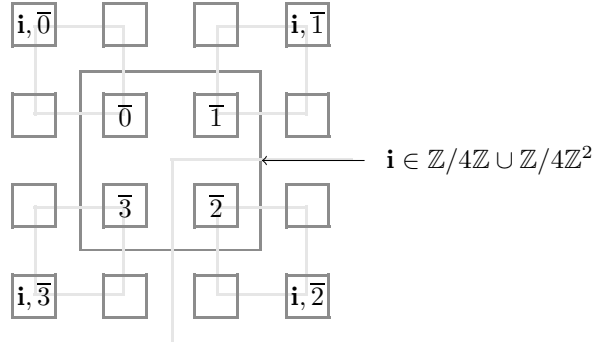

\begin{figure}[ht]
\[\begin{tikzpicture}[scale=0.075]
\fill[gray!90] (16,16) rectangle (48,48); 
\fill[white] (16.5,16.5) rectangle (47.5,47.5);
\fill[gray!20] (32,0) rectangle (32.5,32.5);
\fill[gray!20] (32,32.5) rectangle (64,32);

\fill[gray!20] (8,8) rectangle (8.5,24); 
\fill[gray!20] (8,8) rectangle (24,8.5);
\fill[gray!20] (8,23.5) rectangle (24,24);  
\fill[gray!20] (23.5,8) rectangle (24,24);

\fill[gray!20] (40,8) rectangle (40.5,24); 
\fill[gray!20] (40,8) rectangle (56,8.5);
\fill[gray!20] (40,23.5) rectangle (56,24);  
\fill[gray!20] (55.5,8) rectangle (56,24);

\fill[gray!20] (8,40) rectangle (8.5,56); 
\fill[gray!20] (8,40) rectangle (24,40.5);
\fill[gray!20] (8,55.5) rectangle (24,56);  
\fill[gray!20] (23.5,40) rectangle (24,56);

\fill[gray!20] (40,40) rectangle (40.5,56); 
\fill[gray!20] (40,40) rectangle (56,40.5);
\fill[gray!20] (40,55.5) rectangle (56,56);  
\fill[gray!20] (55.5,40) rectangle (56,56);

\fill[gray!90] (4,4) rectangle (4.5,12); 
\fill[gray!90] (4,4) rectangle (12,4.5); 
\fill[gray!90] (4.5,11.5) rectangle (12,12); 
\fill[gray!90] (11.5,4.5) rectangle (12,12); 

\fill[gray!90] (4,20) rectangle (4.5,28); 
\fill[gray!90] (4,20) rectangle (12,20.5); 
\fill[gray!90] (4.5,27.5) rectangle (12,28); 
\fill[gray!90] (11.5,20.5) rectangle (12,28); 

\fill[gray!90] (4,36) rectangle (4.5,44); 
\fill[gray!90] (4,36) rectangle (12,36.5); 
\fill[gray!90] (4.5,43.5) rectangle (12,44); 
\fill[gray!90] (11.5,36.5) rectangle (12,44);

\fill[gray!90] (4,52) rectangle (4.5,60); 
\fill[gray!90] (4,52) rectangle (12,52.5); 
\fill[gray!90] (4.5,59.5) rectangle (12,60); 
\fill[gray!90] (11.5,52.5) rectangle (12,60);

\fill[gray!90] (20,4) rectangle (20.5,12); 
\fill[gray!90] (20,4) rectangle (28,4.5); 
\fill[gray!90] (20.5,11.5) rectangle (28,12); 
\fill[gray!90] (27.5,4.5) rectangle (28,12); 

\fill[gray!90] (20,20) rectangle (20.5,28); 
\fill[gray!90] (20,20) rectangle (28,20.5); 
\fill[gray!90] (20.5,27.5) rectangle (28,28); 
\fill[gray!90] (27.5,20.5) rectangle (28,28); 

\fill[gray!90] (20,36) rectangle (20.5,44); 
\fill[gray!90] (20,36) rectangle (28,36.5); 
\fill[gray!90] (20.5,43.5) rectangle (28,44); 
\fill[gray!90] (27.5,36.5) rectangle (28,44);

\fill[gray!90] (20,52) rectangle (20.5,60); 
\fill[gray!90] (20,52) rectangle (28,52.5); 
\fill[gray!90] (20.5,59.5) rectangle (28,60); 
\fill[gray!90] (27.5,52.5) rectangle (28,60);

\fill[gray!90] (36,4) rectangle (36.5,12); 
\fill[gray!90] (36,4) rectangle (44,4.5); 
\fill[gray!90] (36.5,11.5) rectangle (44,12); 
\fill[gray!90] (43.5,4.5) rectangle (44,12); 

\fill[gray!90] (36,20) rectangle (36.5,28); 
\fill[gray!90] (36,20) rectangle (44,20.5); 
\fill[gray!90] (36.5,27.5) rectangle (44,28); 
\fill[gray!90] (43.5,20.5) rectangle (44,28); 

\fill[gray!90] (36,36) rectangle (36.5,44); 
\fill[gray!90] (36,36) rectangle (44,36.5); 
\fill[gray!90] (36.5,43.5) rectangle (44,44); 
\fill[gray!90] (43.5,36.5) rectangle (44,44);

\fill[gray!90] (36,52) rectangle (36.5,60); 
\fill[gray!90] (36,52) rectangle (44,52.5); 
\fill[gray!90] (36.5,59.5) rectangle (44,60); 
\fill[gray!90] (43.5,52.5) rectangle (44,60);

\fill[gray!90] (52,4) rectangle (52.5,12); 
\fill[gray!90] (52,4) rectangle (60,4.5); 
\fill[gray!90] (52.5,11.5) rectangle (60,12); 
\fill[gray!90] (59.5,4.5) rectangle (60,12); 

\fill[gray!90] (52,20) rectangle (52.5,28); 
\fill[gray!90] (52,20) rectangle (60,20.5); 
\fill[gray!90] (52.5,27.5) rectangle (60,28); 
\fill[gray!90] (59.5,20.5) rectangle (60,28); 

\fill[gray!90] (52,36) rectangle (52.5,44); 
\fill[gray!90] (52,36) rectangle (60,36.5); 
\fill[gray!90] (52.5,43.5) rectangle (60,44); 
\fill[gray!90] (59.5,36.5) rectangle (60,44);

\fill[gray!90] (52,52) rectangle (52.5,60); 
\fill[gray!90] (52,52) rectangle (60,52.5); 
\fill[gray!90] (52.5,59.5) rectangle (60,60); 
\fill[gray!90] (59.5,52.5) rectangle (60,60);
\node at (24,40) {$\overline{0}$};
\node at (24,24) {$\overline{3}$};
\node at (40,40) {$\overline{1}$};
\node at (40,24) {$\overline{2}$};
\draw[->] (66,32) -- (48,32);
\node at (77,32) {$\vec{i} \in \Z/4\Z^3$};
\node at (8,8) {$\vec{i}$};
\node at (56,56) {$\vec{i}$};
\node at (8,56) {$\vec{i}$};
\node at (56,8) {$\vec{i}$};
\end{tikzpicture}\]
\caption{\label{figure.split.transformation.2} Schematic illustration 
of the transformation rules when the symbol is 
$\vec{i} \in \Z/4\Z^3$.}
\end{figure}
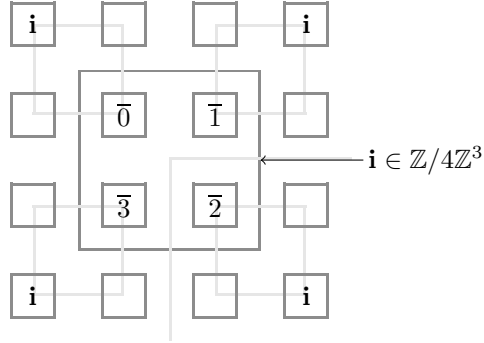

\noindent \textbf{\textit{Global behavior:}} \bigskip

As a consequence of the local rules, each position with 
a blue corner is in an order $ \ge 3$ cell is 
marked with an element of $\Z/4\Z ^3$ or with \begin{tikzpicture}[scale=0.3]
\fill[gray!50] (0,0) rectangle (1,1); 
\draw (0,0) rectangle (1,1);
\end{tikzpicture}. The positions with an element of $\Z/4\Z ^3$ are exactly the functional 
positions, presented later in this text. 
In an order $\le 2$ cell, all the positions with a blue corner in the first sublayer 
are marked with \begin{tikzpicture}[scale=0.3]
\fill[gray!50] (0,0) rectangle (1,1); 
\draw (0,0) rectangle (1,1);
\end{tikzpicture} (we won't use these cells since they are too small 
and can not be subdivided into $64$ parts). Moreover, the areas where a triplet 
can appear in an order $\ge 3$ cell are shown on 
Figure~\ref{figure.subdivision.hierarchical}. For simplicity we 
recode these marks into pairs $(i,j) \in \Z/8\Z ^2$, where $i$ corresponds 
to the horizontal coordinate and $j$ to the vertical one. These two coordinates 
are oriented from west to east and from south to north.  

\begin{figure}[ht]
\[\begin{tikzpicture}[scale=0.3]
\fill[color=gray!5] (0,0) rectangle (16,16);
\draw[step=2] (0,0) grid (16,16);
\draw[line width=0.3mm,step=4] (0,0) grid (16,16);
\draw[line width=0.4mm,step=8] (0,0) grid (16,16);
\node[scale=0.8] at (1,1) {$\overline{333}$};
\node[scale=0.8] at (3,1) {$\overline{332}$};
\node[scale=0.8] at (1,3) {$\overline{330}$};
\node[scale=0.8] at (3,3) {$\overline{331}$};
\node[scale=0.8] at (5,1) {$\overline{323}$};
\node[scale=0.8] at (7,1) {$\overline{322}$};
\node[scale=0.8] at (5,3) {$\overline{320}$};
\node[scale=0.8] at (7,3) {$\overline{321}$};
\node[scale=0.8] at (1,5) {$\overline{303}$};
\node[scale=0.8] at (3,5) {$\overline{302}$};
\node[scale=0.8] at (1,7) {$\overline{300}$};
\node[scale=0.8] at (3,7) {$\overline{301}$};
\node[scale=0.8] at (5,5) {$\overline{313}$};
\node[scale=0.8] at (7,5) {$\overline{312}$};
\node[scale=0.8] at (5,7) {$\overline{310}$};
\node[scale=0.8] at (7,7) {$\overline{311}$};

\node[scale=0.8] at (9,1) {$\overline{233}$};
\node[scale=0.8] at (11,1) {$\overline{232}$};
\node[scale=0.8] at (9,3) {$\overline{230}$};
\node[scale=0.8] at (11,3) {$\overline{231}$};
\node[scale=0.8] at (13,1) {$\overline{223}$};
\node[scale=0.8] at (15,1) {$\overline{222}$};
\node[scale=0.8] at (13,3) {$\overline{220}$};
\node[scale=0.8] at (15,3) {$\overline{221}$};
\node[scale=0.8] at (9,5) {$\overline{203}$};
\node[scale=0.8] at (11,5) {$\overline{202}$};
\node[scale=0.8] at (9,7) {$\overline{200}$};
\node[scale=0.8] at (11,7) {$\overline{201}$};
\node[scale=0.8] at (13,5) {$\overline{213}$};
\node[scale=0.8] at (15,5) {$\overline{212}$};
\node[scale=0.8] at (13,7) {$\overline{210}$};
\node[scale=0.8] at (15,7) {$\overline{211}$};

\node[scale=0.8] at (1,9) {$\overline{033}$};
\node[scale=0.8] at (3,9) {$\overline{032}$};
\node[scale=0.8] at (1,11) {$\overline{030}$};
\node[scale=0.8] at (3,11) {$\overline{031}$};
\node[scale=0.8] at (5,9) {$\overline{023}$};
\node[scale=0.8] at (7,9) {$\overline{022}$};
\node[scale=0.8] at (5,11) {$\overline{020}$};
\node[scale=0.8] at (7,11) {$\overline{021}$};
\node[scale=0.8] at (1,13) {$\overline{003}$};
\node[scale=0.8] at (3,13) {$\overline{002}$};
\node[scale=0.8] at (1,15) {$\overline{000}$};
\node[scale=0.8] at (3,15) {$\overline{001}$};
\node[scale=0.8] at (5,13) {$\overline{013}$};
\node[scale=0.8] at (7,13) {$\overline{012}$};
\node[scale=0.8] at (5,15) {$\overline{010}$};
\node[scale=0.8] at (7,15) {$\overline{011}$};

\node[scale=0.8] at (9,9) {$\overline{133}$};
\node[scale=0.8] at (11,9) {$\overline{132}$};
\node[scale=0.8] at (9,11) {$\overline{130}$};
\node[scale=0.8] at (11,11) {$\overline{131}$};
\node[scale=0.8] at (13,9) {$\overline{123}$};
\node[scale=0.8] at (15,9) {$\overline{122}$};
\node[scale=0.8] at (13,11) {$\overline{120}$};
\node[scale=0.8] at (15,11) {$\overline{121}$};
\node[scale=0.8] at (9,13) {$\overline{103}$};
\node[scale=0.8] at (11,13) {$\overline{102}$};
\node[scale=0.8] at (9,15) {$\overline{100}$};
\node[scale=0.8] at (11,15) {$\overline{101}$};
\node[scale=0.8] at (13,13) {$\overline{113}$};
\node[scale=0.8] at (15,13) {$\overline{112}$};
\node[scale=0.8] at (13,15) {$\overline{110}$};
\node[scale=0.8] at (15,15) {$\overline{111}$};

\node[scale=0.8] at (1,-1) {$\overline{0}$};
\node[scale=0.8] at (3,-1) {$\overline{1}$};
\node[scale=0.8] at (5,-1) {$\overline{2}$};
\node[scale=0.8] at (7,-1) {$\overline{3}$};
\node[scale=0.8] at (9,-1) {$\overline{4}$};
\node[scale=0.8] at (11,-1) {$\overline{5}$};
\node[scale=0.8] at (13,-1) {$\overline{6}$};
\node[scale=0.8] at (15,-1) {$\overline{7}$};

\node[scale=0.8] at (-1,1) {$\overline{0}$};
\node[scale=0.8] at (-1,3) {$\overline{1}$};
\node[scale=0.8] at (-1,5) {$\overline{2}$};
\node[scale=0.8] at (-1,7) {$\overline{3}$};
\node[scale=0.8] at (-1,9) {$\overline{4}$};
\node[scale=0.8] at (-1,11) {$\overline{5}$};
\node[scale=0.8] at (-1,13) {$\overline{6}$};
\node[scale=0.8] at (-1,15) {$\overline{7}$};
\end{tikzpicture}\]
\caption{\label{figure.subdivision.hierarchical} Schema of the correspondence 
between areas in a cell and triplets and recoding the 
addresses of the areas.}
\end{figure}

To each sub-unit correspond functions, 
as follows: 

\begin{enumerate}
\item The sub-unit having coordinates $(\overline{6},\overline{5})$ 
supports computations of the machines.
\item The sub-units $(\overline{1},\overline{3})$, $(\overline{3},\overline{3})$, 
$(\overline{6},\overline{3})$, $(\overline{3},\overline{7})$, 
$(\overline{6},\overline{7})$ are demultiplexers. This means that 
they allow the extraction of information contained in the linear counter. 
\item The sub-unit $(\overline{2},\overline{3})$ supports the incrementation 
of the linear counter.
\item The sub-units $(\overline{3},\overline{3})$, $(\overline{3},\overline{4})$, $(\overline{4},\overline{3})$ and $(\overline{4},\overline{4})$ support the incrementation 
of the system counter.
\item The ones specified by arrows on Figure~\ref{figure.functional.diagram} 
are used for information transport. 
\item The other ones have no function. 
\end{enumerate} 

A schema of these functions 
is shown on Figure~\ref{figure.functional.diagram}.

\section{\label{section.system.bits} System bits layer} 

\noindent \textbf{\textit{Symbols:}}
elements of $\left(\mathcal{A} \cup 
\{\begin{tikzpicture}[scale=0.3]
\draw (0,0) rectangle (1,1);
\end{tikzpicture}\}\right)^2$. \bigskip

\noindent \textbf{\textit{Local rules:}}

\begin{itemize}
\item \textbf{Transmission:} 
\begin{enumerate}
\item The 
first coordinate of the ordered pair 
is transmitted horizontally. This means 
that the symbols of positions $\vec{u}$ 
and $\vec{u}+\vec{e}^1$ have the same first 
coordinate for all $\vec{u}\in \Z^3$.
\item The second coordinate is transmitted 
vertically.
\end{enumerate}
\item \textbf{Synchronization:} on 
a corner symbol in the structure layer, 
the two coordinates are equal.
\item \textbf{Localization:} when 
the corner has $0,1$-counter value
equal to $0$, then the two coordinates 
are blank. When it is equal to $1$, 
the two coordinates are non blank.
\end{itemize}

\noindent \textbf{\textit{Global behavior:}}

In each of the sections $\Z_c^2$ and 
each $n$, the order $n$ cells and the 
lines connecting their corners
are attached with the same bit, called 
system bit. The sequence of these bits, 
denoted $(s^{(n)}_c)_n$ 
depends on the section $\Z_c^2$. 
The transmission petals 
are superimposed with the blank symbol.

See Figure~\ref{fig.basis3}.

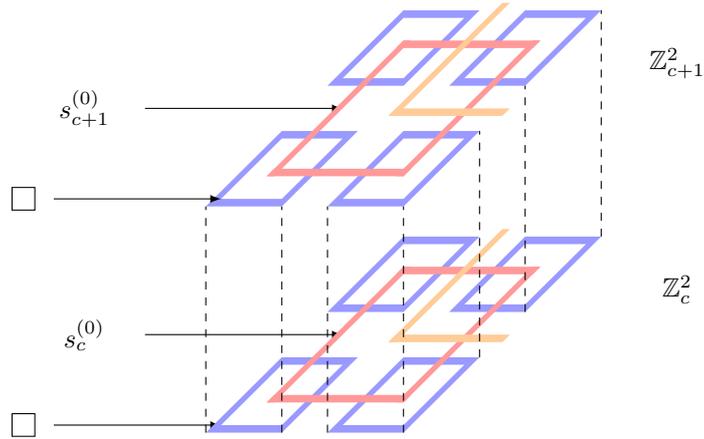
\begin{figure}[h]
\[\begin{tikzpicture}[every node/.style={minimum size=1cm},on grid, scale=0.2]
\begin{scope}[every node/.append style={xslant=1},xslant=1]
\fill[blue!40] (8.5,0.5) rectangle (13.5,5.5);
\fill[blue!40] (8.5,8.5) rectangle (13.5,13.5);
\fill[blue!40] (0.5,8.5) rectangle (5.5,13.5);
\fill[white] (9,1) rectangle (13,5);
\fill[white] (9,9) rectangle (13,13);
\fill[white] (1,9) rectangle (5,13);

\draw[-latex] (-10,7) -- (3,7);
\draw[-latex] (-10,1) -- (1,1);

\fill[blue!40] (0.5,0.5) rectangle (5.5,5.5);
\fill[white] (1,1) rectangle (5,5);

\fill[red!40] (2.5,2.5) rectangle (3,11.5);
\fill[red!40] (2.5,2.5) rectangle (11.5,3);
\fill[red!40] (2.5,11) rectangle (11.5,11.5);
\fill[red!40] (11,2.5) rectangle (11.5,11.5);

\fill[orange!40] (6.5,6.5) rectangle (7,14); 
\fill[orange!40] (6.5,6.5) rectangle (14,7);
\end{scope}

\begin{scope}[yshift=15cm,every node/.append style={xslant=1},xslant=1]
\fill[blue!40] (8.5,0.5) rectangle (13.5,5.5);
\fill[blue!40] (8.5,8.5) rectangle (13.5,13.5);
\fill[blue!40] (0.5,8.5) rectangle (5.5,13.5);
\fill[white] (9,1) rectangle (13,5);
\fill[white] (9,9) rectangle (13,13);
\fill[white] (1,9) rectangle (5,13);

\fill[blue!40] (0.5,0.5) rectangle (5.5,5.5);
\fill[white] (1,1) rectangle (5,5);

\draw[-latex] (-10,7) -- (3,7);
\draw[-latex] (-10,1) -- (1,1);

\fill[red!40] (2.5,2.5) rectangle (3,11.5);
\fill[red!40] (2.5,2.5) rectangle (11.5,3);
\fill[red!40] (2.5,11) rectangle (11.5,11.5);
\fill[red!40] (11,2.5) rectangle (11.5,11.5);

\fill[orange!40] (6.5,6.5) rectangle (7,14); 
\fill[orange!40] (6.5,6.5) rectangle (14,7);
\end{scope}

\begin{scope}
\node at (-11,1) {$\begin{tikzpicture}[scale=0.3] \draw (0,0) rectangle (1,1);
\end{tikzpicture}$};
\node at (-11,16) {$\begin{tikzpicture}[scale=0.3] \draw (0,0) rectangle (1,1);
\end{tikzpicture}$};
\node at (-7,7) {$s^{(0)}_c$};
\node at (-7,22) {$s^{(0)}_{c+1}$};
\end{scope}

\begin{scope}
\draw[dashed] (1,0.5) -- (1,15.5);
\draw[dashed] (9,0.5) -- (9,15.5);
\draw[dashed] (6,0.5) -- (6,15.5);
\draw[dashed] (14,0.5) -- (14,15.5);
\draw[dashed] (19,5.5) -- (19,20.5);
\draw[dashed] (22,8.5) -- (22,23.5);
\draw[dashed] (27,13.5) -- (27,28.5);
\end{scope}

\begin{scope}[xshift=37cm]
\draw node at (-5,25) {$\Z_{c+1}^2$};
\draw node at (-5,10) {$\Z_c^2$};
\end{scope}
\end{tikzpicture}\]
\caption{\label{fig.basis3} Illustration of the basis layer rules}
\end{figure}

\section{\label{section.parity.marks} Modularity marks}

In this section, we describe how to give access to each cell 
to the class of its level modulo $4$ so that the road followed to transport 
this information never meet the location of system bits. This layer 
has two sub-layers. The first one is described 
in the annexes, in Section~\ref{section.hierarchy.orientation}. Let us describe 
the second one.

\begin{figure}[ht]

\[\begin{tikzpicture}[scale=0.4]

\fill[gray!20] (0,0) rectangle (6,6);
\draw (0,0) rectangle (6,6);
\draw (0.75,0.75) rectangle (2.25,2.25);
\draw (3.75,3.75) rectangle (5.25,5.25);
\draw (0.75,3.75) rectangle (2.25,5.25);
\draw (3.75,0.75) rectangle (5.25,2.25);

\draw (6.75,6.75) rectangle (8.25,8.25);
\draw (-2.25,-2.25) rectangle (-0.75,-0.75);

\draw (6.75,3.75) rectangle (8.25,5.25);
\draw (6.75,0.75) rectangle (8.25,2.25);
\draw (6.75,-2.25) rectangle (8.25,-0.75);

\draw (3.75,-2.25) rectangle (5.25,-0.75);
\draw (0.75,-2.25) rectangle (2.25,-0.75);

\draw (3.75,6.75) rectangle (5.25,8.25);
\draw (0.75,6.75) rectangle (2.25,8.25);
\draw (-2.25,6.75) rectangle (-0.75,8.25);

\draw (-2.25,3.75) rectangle (-0.75,5.25);
\draw (-2.25,0.75) rectangle (-0.75,2.25);

\draw[line width =0.5mm,color=blue!50] (4.5,5.25) -- (4.5,6);
\draw[line width =0.5mm,color=blue!50] (5.25,4.5) -- (6,4.5);

\draw[line width =0.5mm,color=red] (6,6) -- (3,6);
\draw[line width =0.5mm,color=red] (6,6) -- (6,3);

\draw[line width =0.5mm,color=red] (8.25,8.25) -- (7.5,8.25);
\draw[line width =0.5mm,color=red] (8.25,8.25) -- (8.25,7.5);
\draw[line width =0.5mm,color=red] (8.25,5.25) -- (7.5,5.25);
\draw[line width =0.5mm,color=red] (8.25,5.25) -- (8.25,4.5);
\draw[line width =0.5mm,color=red] (8.25,2.25) -- (7.5,2.25);
\draw[line width =0.5mm,color=red] (8.25,2.25) -- (8.25,1.5);
\draw[line width =0.5mm,color=red] (8.25,-0.75) -- (7.5,-0.75);
\draw[line width =0.5mm,color=red] (8.25,-0.75) -- (8.25,-1.5);

\draw[line width =0.5mm,color=red] (5.25,8.25) -- (4.5,8.25);
\draw[line width =0.5mm,color=red] (5.25,8.25) -- (5.25,7.5);
\draw[line width =0.5mm,color=red] (5.25,5.25) -- (4.5,5.25);
\draw[line width =0.5mm,color=red] (5.25,5.25) -- (5.25,4.5);
\draw[line width =0.5mm,color=red] (5.25,2.25) -- (4.5,2.25);
\draw[line width =0.5mm,color=red] (5.25,2.25) -- (5.25,1.5);
\draw[line width =0.5mm,color=red] (5.25,-0.75) -- (4.5,-0.75);
\draw[line width =0.5mm,color=red] (5.25,-0.75) -- (5.25,-1.5);

\draw[line width =0.5mm,color=red] (2.25,8.25) -- (1.5,8.25);
\draw[line width =0.5mm,color=red] (2.25,8.25) -- (2.25,7.5);
\draw[line width =0.5mm,color=red] (2.25,5.25) -- (1.5,5.25);
\draw[line width =0.5mm,color=red] (2.25,5.25) -- (2.25,4.5);
\draw[line width =0.5mm,color=red] (2.25,2.25) -- (1.5,2.25);
\draw[line width =0.5mm,color=red] (2.25,2.25) -- (2.25,1.5);
\draw[line width =0.5mm,color=red] (2.25,-0.75) -- (1.5,-0.75);
\draw[line width =0.5mm,color=red] (2.25,-0.75) -- (2.25,-1.5);

\draw[line width =0.5mm,color=red] (-0.75,8.25) -- (-1.5,8.25);
\draw[line width =0.5mm,color=red] (-0.75,8.25) -- (-0.75,7.5);
\draw[line width =0.5mm,color=red] (-0.75,5.25) -- (-1.5,5.25);
\draw[line width =0.5mm,color=red] (-0.75,5.25) -- (-0.75,4.5);
\draw[line width =0.5mm,color=red] (-0.75,2.25) -- (-1.5,2.25);
\draw[line width =0.5mm,color=red] (-0.75,2.25) -- (-0.75,1.5);
\draw[line width =0.5mm,color=red] (-0.75,-0.75) -- (-1.5,-0.75);
\draw[line width =0.5mm,color=red] (-0.75,-0.75) -- (-0.75,-1.5);

\draw[line width =0.5mm,color=blue!50] (3,3) -- (3,9); 
\draw[line width =0.5mm,color=blue!50] (3,3) -- (9,3);
\draw[line width =0.5mm,color=blue!50] (3,6) -- (3,9);
\draw[line width =0.5mm,color=blue!50] (6,3) -- (9,3);

\draw[line width =0.5mm,color=YellowGreen,dashed] (0,-3) -- (0,9);
\draw[line width =0.5mm,color=YellowGreen,dashed] (-3,0) -- (9,0);

\fill[gray!90] (2.8,5.8) rectangle (3.2,6.2);
\fill[gray!90] (5.8,2.8) rectangle (6.2,3.2);

\fill[gray!90] (4.35,5.1) rectangle (4.65,5.4);
\fill[gray!90] (5.1,4.35) rectangle (5.4,4.65);
\end{tikzpicture}\]
\caption{\label{figure.hierarchical.process.parity.marks} Localization 
of the modularity marks. Here level $n$ cells 
and the level $n+1$ cell above are represented. 
The locations of level $n+1$ system bits are colored green and the locations 
of parity marks are colored blue and red. Observation: the system bits 
never cross parity marks for the same level $n+1$.}
\end{figure}
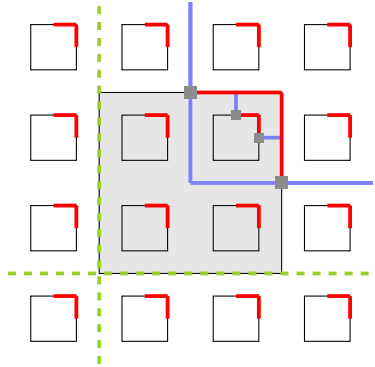

\noindent \textbf{\textit{Symbols:}} 
elements of $\Z/4\Z$, $\{(\overline{n},\overline{n}+\overline{1}), n \in \Z/4\Z\}$ and a blank symbol. \bigskip

\noindent \textbf{\textit{Local rules:}} 

\begin{enumerate}
\item \textbf{Localization:} 
the non-blank symbols are superimposed on and only on the east half of 
the north side of a cell, the north half of the east side of a cell, and the segments of 
transmission petals connecting them. See an illustration on 
Figure~\ref{figure.hierarchical.process.parity.marks}. Formally, this 
corresponds to positions having the following symbols in the structure layer: 
\begin{enumerate}
\item the symbols 
\[\begin{tikzpicture}[scale=0.9]
\draw (0,0) rectangle (1.2,1.2) ;
\draw [latex-] (0.6,1.2) -- (0.6,0) ;
\draw [latex-] (0.3,1.2) -- (0.3,0) ; 
\draw [-latex] (0,0.6) -- (0.3,0.6) ; 
\draw [-latex] (0.8,0.6) -- (0.6,0.6) ;
\draw (1.2,0.6) -- (1,0.6);
\node[scale=1,color=gray!90,scale=0.9] at (0.45,0.9) {\textbf{0}};
\node[scale=1,color=gray!90,scale=0.9] at (0.9,0.6) {\textbf{0}};
\end{tikzpicture}, \ \ \begin{tikzpicture}[scale=0.9]
\draw (0,0) rectangle (1.2,1.2) ;
\draw [latex-] (0.6,1.2) -- (0.6,0) ;
\draw [latex-] (0.3,1.2) -- (0.3,0) ; 
\draw [-latex] (0,0.6) -- (0.3,0.6) ; 
\draw [-latex] (0.8,0.6) -- (0.6,0.6) ;
\draw (1.2,0.6) -- (1,0.6);
\node[scale=1,color=gray!90,scale=0.9] at (0.45,0.9) {\textbf{0}};
\node[scale=1,color=gray!90,scale=0.9] at (0.9,0.6) {\textbf{1}};
\end{tikzpicture}, \ \ \begin{tikzpicture}[scale=0.9]
\draw (0,0) rectangle (1.2,1.2) ;
\draw [latex-] (1.2,0.6) -- (0,0.6) ;
\draw [latex-] (1.2,0.3) -- (0,0.3) ; 
\draw [-latex] (0.6,0) -- (0.6,0.3) ; 
\draw [-latex] (0.6,0.8) -- (0.6,0.6) ;
\draw (0.6,1.2) -- (0.6,1.1);
\node[scale=1,color=gray!90,scale=0.9] at (0.9,0.45) {\textbf{0}};
\node[scale=1,color=gray!90,scale=0.9] at (0.6,0.9) {\textbf{0}};
\end{tikzpicture}, \ \ \begin{tikzpicture}[scale=0.9]
\draw (0,0) rectangle (1.2,1.2) ;
\draw [latex-] (1.2,0.6) -- (0,0.6) ;
\draw [latex-] (1.2,0.3) -- (0,0.3) ; 
\draw [-latex] (0.6,0) -- (0.6,0.3) ; 
\draw [-latex] (0.6,0.8) -- (0.6,0.6) ;
\draw (0.6,1.2) -- (0.6,1.1);
\node[scale=1,color=gray!90,scale=0.9] at (0.9,0.45) {\textbf{0}};
\node[scale=1,color=gray!90,scale=0.9] at (0.6,0.9) {\textbf{1}};
\end{tikzpicture}, \ \ \begin{tikzpicture}[scale=0.9]
\fill[red!40] (0.3,0.3) rectangle (0.6,1.2) ;
\fill[red!40] (0.3,0.3) rectangle (1.2,0.6) ;
\draw (0,0) rectangle (1.2,1.2) ;
\draw [-latex] (0.3,0.3) -- (0.3,1.2) ; 
\draw [-latex] (0.3,0.3) -- (1.2,0.3) ;
\draw [-latex] (0.6,0.6) -- (0.6,1.2) ; 
\draw [-latex] (0.6,0.6) -- (1.2,0.6) ; 
\draw [-latex] (0.3,0.6) -- (0,0.6) ; 
\draw [-latex] (0.6,0.3) -- (0.6,0) ;
\node[scale=1,color=gray!90,scale=0.9] at (0.9,0.9) {\textbf{0}};
\end{tikzpicture}, \ \ \begin{tikzpicture}[scale=0.9]
\fill[blue!40] (0.3,0.3) rectangle (0.6,1.2) ;
\fill[blue!40] (0.3,0.3) rectangle (1.2,0.6) ;
\draw (0,0) rectangle (1.2,1.2) ;
\draw [-latex] (0.3,0.3) -- (0.3,1.2) ; 
\draw [-latex] (0.3,0.3) -- (1.2,0.3) ;
\draw [-latex] (0.6,0.6) -- (0.6,1.2) ; 
\draw [-latex] (0.6,0.6) -- (1.2,0.6) ; 
\draw [-latex] (0.3,0.6) -- (0,0.6) ; 
\draw [-latex] (0.6,0.3) -- (0.6,0) ;
\node[scale=1,color=gray!90,scale=0.9] at (0.9,0.9) {\textbf{0}};
\end{tikzpicture}\] superimposed with the orientation symbol \begin{tikzpicture}[scale=0.2] 
\fill[black] (1.5,0) rectangle (1,1.5);
\fill[black] (1,1.5) rectangle (0,1);
\draw (0,0) rectangle (2,2);
\end{tikzpicture}. These positions are colored blue on Figure~\ref{figure.hierarchical.process.parity.marks} 
and are superimposed with an element of $\Z/4\Z$ in the present layer.
\item the symbols \[\begin{tikzpicture}[scale=0.9]
\draw (0,0) rectangle (1.2,1.2) ;
\draw [-latex] (0.6,1.2) -- (0.6,0) ;
\draw [-latex] (0.9,1.2) -- (0.9,0) ; 
\draw (0,0.6) -- (0.2,0.6);
\draw [-latex] (0.4,0.6) -- (0.6,0.6) ; 
\draw [-latex] (1.2,0.6) -- (0.9,0.6) ;
\node[scale=1,color=gray!90,scale=0.9] at (0.75,0.9) {\textbf{1}};
\node[scale=1,color=gray!90,scale=0.9] at (0.3,0.6) {\textbf{0}};
\end{tikzpicture}, \ \  \begin{tikzpicture}[scale=0.9]
\draw (0,0) rectangle (1.2,1.2) ;
\draw [-latex] (0.6,1.2) -- (0.6,0) ;
\draw [-latex] (0.9,1.2) -- (0.9,0) ; 
\draw (0,0.6) -- (0.2,0.6);
\draw [-latex] (0.4,0.6) -- (0.6,0.6) ; 
\draw [-latex] (1.2,0.6) -- (0.9,0.6) ;
\node[scale=1,color=gray!90,scale=0.9] at (0.75,0.9) {\textbf{1}};
\node[scale=1,color=gray!90,scale=0.9] at (0.3,0.6) {\textbf{1}};
\end{tikzpicture}, \ \  \begin{tikzpicture}[scale=0.9]
\draw (0,0) rectangle (1.2,1.2) ;
\draw [-latex] (1.2,0.6) -- (0,0.6) ;
\draw [-latex] (1.2,0.9) -- (0,0.9) ; 
\draw (0.6,0) -- (0.6,0.05);
\draw [-latex] (0.6,0.4) -- (0.6,0.6) ; 
\draw [-latex] (0.6,1.2) -- (0.6,0.9) ;
\node[scale=1,color=gray!90,scale=0.9] at (0.9,0.725) {\textbf{1}};
\node[scale=1,color=gray!90,scale=0.9] at (0.6,0.2) {\textbf{0}};
\end{tikzpicture}, \ \  \begin{tikzpicture}[scale=0.9]
\draw (0,0) rectangle (1.2,1.2) ;
\draw [-latex] (1.2,0.6) -- (0,0.6) ;
\draw [-latex] (1.2,0.9) -- (0,0.9) ; 
\draw (0.6,0) -- (0.6,0.05);
\draw [-latex] (0.6,0.4) -- (0.6,0.6) ; 
\draw [-latex] (0.6,1.2) -- (0.6,0.9) ;
\node[scale=1,color=gray!90,scale=0.9] at (0.9,0.725) {\textbf{1}};
\node[scale=1,color=gray!90,scale=0.9] at (0.6,0.2) {\textbf{1}};
\end{tikzpicture}, \ \ \begin{tikzpicture}[scale=0.9]
\draw (0,0) rectangle (1.2,1.2) ;
\draw [-latex] (0.6,1.2) -- (0.6,0) ;
\draw [-latex] (0.9,1.2) -- (0.9,0) ; 
\draw [-latex] (0,0.6) -- (0.6,0.6) ; 
\draw [-latex] (1.2,0.6) -- (0.9,0.6) ;
\draw [-latex] (0,0.3) -- (0.6,0.3) ; 
\draw [-latex] (1.2,0.3) -- (0.9,0.3) ;
\node[scale=1,color=gray!90,scale=0.9] at (0.3,0.45) {\textbf{0}};
\node[scale=1,color=gray!90,scale=0.9] at (0.75,0.9) {\textbf{1}};
\end{tikzpicture}, \ \ \begin{tikzpicture}[scale=0.9]
\draw (0,0) rectangle (1.2,1.2) ;
\draw [-latex] (0.6,1.2) -- (0.6,0) ;
\draw [-latex] (0.9,1.2) -- (0.9,0) ; 
\draw [-latex] (0,0.6) -- (0.6,0.6) ; 
\draw [-latex] (1.2,0.6) -- (0.9,0.6) ;
\draw [-latex] (0,0.3) -- (0.6,0.3) ; 
\draw [-latex] (1.2,0.3) -- (0.9,0.3) ;
\node[scale=1,color=gray!90,scale=0.9] at (0.3,0.45) {\textbf{1}};
\node[scale=1,color=gray!90,scale=0.9] at (0.75,0.9) {\textbf{1}};
\end{tikzpicture},\ \ \begin{tikzpicture}[scale=0.9]
\fill[red!40] (0.9,0.9) rectangle (0.6,0) ;
\fill[red!40] (0.9,0.9) rectangle (0,0.6) ;
\draw (0,0) rectangle (1.2,1.2) ;
\draw [-latex] (0.9,0.9) -- (0.9,0) ; 
\draw [-latex] (0.9,0.9) -- (0,0.9) ;
\draw [-latex] (0.6,0.6) -- (0.6,0) ; 
\draw [-latex] (0.6,0.6) -- (0,0.6) ;
\draw [-latex] (0.9,0.6) -- (1.2,0.6) ; 
\draw [-latex] (0.6,0.9) -- (0.6,1.2) ;
\node[scale=1,color=gray!90,scale=0.9] at (0.3,0.3) {\textbf{1}};
\end{tikzpicture}, \ \ \begin{tikzpicture}[scale=0.9]
\draw (0,0) rectangle (1.2,1.2) ;
\draw [-latex] (1.2,0.6) -- (0,0.6) ;
\draw [-latex] (1.2,0.9) -- (0,0.9) ; 
\draw [-latex] (0.6,0) -- (0.6,0.6) ; 
\draw [-latex] (0.6,1.2) -- (0.6,0.9) ;
\draw [-latex] (0.3,0) -- (0.3,0.6) ; 
\draw [-latex] (0.3,1.2) -- (0.3,0.9) ;
\node[scale=1,color=gray!90,scale=0.9] at (0.45,0.3) {\textbf{0}};
\node[scale=1,color=gray!90,scale=0.9] at (0.9,0.75) {\textbf{1}};
\end{tikzpicture}, \ \ \begin{tikzpicture}[scale=0.9]
\draw (0,0) rectangle (1.2,1.2) ;
\draw [-latex] (1.2,0.6) -- (0,0.6) ;
\draw [-latex] (1.2,0.9) -- (0,0.9) ; 
\draw [-latex] (0.6,0) -- (0.6,0.6) ; 
\draw [-latex] (0.6,1.2) -- (0.6,0.9) ;
\draw [-latex] (0.3,0) -- (0.3,0.6) ; 
\draw [-latex] (0.3,1.2) -- (0.3,0.9) ;
\node[scale=1,color=gray!90,scale=0.9] at (0.45,0.3) {\textbf{1}};
\node[scale=1,color=gray!90,scale=0.9] at (0.9,0.75) {\textbf{1}};
\end{tikzpicture} \] on which are superimposed symbols in $\Z/4\Z$. These 
positions are colored red on Figure~\ref{figure.hierarchical.process.parity.marks}.
\item the symbols
\[\begin{tikzpicture}[scale=0.9]
\draw (0,0) rectangle (1.2,1.2) ;
\draw [latex-] (0.6,1.2) -- (0.6,0) ;
\draw [latex-] (0.3,1.2) -- (0.3,0) ; 
\draw [-latex] (0,0.6) -- (0.3,0.6) ;
\draw[-latex] (0,0.9) -- (0.3,0.9); 
\draw [-latex] (1.2,0.6) -- (0.6,0.6) ;
\draw [-latex] (1.2,0.9) -- (0.6,0.9) ;
\node[scale=1,color=gray!90,scale=0.9] at (0.45,0.9) {\textbf{0}};
\node[scale=1,color=gray!90,scale=0.9] at (0.9,0.75) {\textbf{1}};
\end{tikzpicture}, \ \ \begin{tikzpicture}[scale=0.9]
\draw (0,0) rectangle (1.2,1.2) ;
\draw [latex-] (1.2,0.6) -- (0,0.6) ;
\draw [latex-] (1.2,0.3) -- (0,0.3) ; 
\draw [-latex] (0.6,0) -- (0.6,0.3) ;
\draw[-latex] (0.9,0) -- (0.9,0.3); 
\draw [-latex] (0.6,1.2) -- (0.6,0.6) ;
\draw [-latex] (0.9,1.2) -- (0.9,0.6) ;
\node[scale=1,color=gray!90,scale=0.9] at (0.9,0.45) {\textbf{0}};
\node[scale=1,color=gray!90,scale=0.9] at (0.75,0.9) {\textbf{1}};
\end{tikzpicture}.\] These ones are the \textbf{transformation positions}, and 
are superimposed with an element of $\{(\overline{n},\overline{n}+\overline{1}): \overline{n} \in \Z/4\Z\}$. They 
are designated by dark gray squares on Figure~\ref{figure.hierarchical.process.parity.marks}.
\end{enumerate}
\item \textbf{Initialization of the process:}
\item \textbf{Transformation:}
On the transformation positions, the first symbol of the ordered pair 
is equal to the symbol of the position below (respectively on the 
left) when the structure symbol 
is \[\begin{tikzpicture}[scale=0.9,baseline=4mm]
\draw (0,0) rectangle (1.2,1.2) ;
\draw [latex-] (0.6,1.2) -- (0.6,0) ;
\draw [latex-] (0.3,1.2) -- (0.3,0) ; 
\draw [-latex] (0,0.6) -- (0.3,0.6) ;
\draw[-latex] (0,0.9) -- (0.3,0.9); 
\draw [-latex] (1.2,0.6) -- (0.6,0.6) ;
\draw [-latex] (1.2,0.9) -- (0.6,0.9) ;
\node[scale=1,color=gray!90,scale=0.9] at (0.45,0.9) {\textbf{0}};
\node[scale=1,color=gray!90,scale=0.9] at (0.9,0.75) {\textbf{1}};
\end{tikzpicture}, \ \text{resp.} \ \begin{tikzpicture}[scale=0.9,baseline=4mm]
\draw (0,0) rectangle (1.2,1.2) ;
\draw [latex-] (1.2,0.6) -- (0,0.6) ;
\draw [latex-] (1.2,0.3) -- (0,0.3) ; 
\draw [-latex] (0.6,0) -- (0.6,0.3) ;
\draw[-latex] (0.9,0) -- (0.9,0.3); 
\draw [-latex] (0.6,1.2) -- (0.6,0.6) ;
\draw [-latex] (0.9,1.2) -- (0.9,0.6) ;
\node[scale=1,color=gray!90,scale=0.9] at (0.9,0.45) {\textbf{0}};
\node[scale=1,color=gray!90,scale=0.9] at (0.75,0.9) {\textbf{1}};
\end{tikzpicture}.\]
The second symbol is equal to the symbol of the position 
on the right (resp. above). 
\end{enumerate}

\noindent \textbf{\textit{Global behavior:}} \bigskip

A hierarchical signaling process is implemented which 
allows the coloration of north east quarter of 
each cell's border with the class of its level modulo $4$. In order to avoid this information to 
cross the system bit of the same level, the communication is allowed only from the north 
east order $n$ cell inside an order $n+1$ cell to this cell. The signal is transmitted 
only through the south west quarter of the transmission petal connecting them.

\section{\label{section.functional.areas} Functional areas}

In the construction, we use a functional division of 
the cytoplasm of each of the computing units. 
We use two different mechanisms. The first one works 
on the whole cells and is a hierarchical signaling process 
similar to a 
substitution. This mechanism is described in Section~\ref{subsection.functional.areas.whole.cell}. 
It allows the recognition 
of the free columns and free lines of the cytoplasm 
in order to localize the information and its transports 
outside the machine areas, so that there is no possible error 
in the process. The second one works on the machine areas 
and allows the localization of information transport 
inside them. This mechanism is similar to the one 
used in~\cite{R71} and uses signals 
from border
to border of the area. Errors are allowed in the process, 
that trigger error signals along the border of the area. 
When these error occur, the computations of the machine are not 
taken into account. The freedom in the constitution 
of the functional areas is used to ensure the minimality property. In this section, we describe 
the second mechanism. \bigskip

\noindent \textbf{\textit{Symbols:}} \bigskip

Elements of $\{\texttt{on},\texttt{off}\}^2$, 
of $\{\texttt{on},\texttt{off}\}$ 
and a blank symbol. \bigskip

\noindent \textbf{\textit{Local rules:}} \bigskip

\begin{itemize}
\item \textbf{Localization rules:}

\begin{itemize}
\item 
the non-blank symbols are superimposed 
on the positions having a blue symbol 
in the Robinson layer, and a blue or arrow symbol in 
the first sublayer of the present layer, and 
are in the $(\overline{6},\overline{5})$ sub-unit, 
corresponding to the machines.
\item the ordered pairs are superimposed on 
the positions having a blue symbol in the first sublayer.
\end{itemize}
\item \textbf{Transmission rule:}
the symbol is transmitted 
along lines/columns. On 
the intersections the second symbol 
is equal to the symbol on the column. 
The first one is equal 
to the symbol on the line.
\end{itemize}

\noindent \textbf{\textit{Global 
behavior:}} \bigskip

In the cytoplasm of each computing 
unit in the machine area, the 
free columns and free lines are colored with 
a symbol in $\{\texttt{on},\texttt{off}\}$. 
We call 
columns (resp.lines) colored 
with $\texttt{on}$ \textbf{computation-active}
columns (resp. lines). 

\section{\label{section.system.counter} System counter}

In this section, we explain how the system 
counter works. The corresponding 
layer is subdivided into two
sublayers. The first one supports the 
transport of the modularity mark to the 
system counter area~\ref{subsection.parity.mark.transport}.
The second one supports the incrementation of the system 
counter~\ref{subsection.incrementation.system.counter}.

\subsection{\label{subsection.parity.mark.transport} 
Transport of the modularity mark}

\noindent \textbf{\textit{Symbols:}} \bigskip

The symbols of this first sublayer 
are the elements of $\Z/4\Z$ and a blank symbol. \bigskip

\noindent \textbf{\textit{Local rules:}} 

\begin{itemize}
\item \textbf{Localization:} 
the non-blank symbols are superimposed 
on and only on the cytoplasm positions 
in the sub-units that are not on the border, 
meaning the sub-units $(\overline{k},\overline{l})$ with 
$k,l \in \llbracket 1, 6 \rrbracket$, and 
$(\overline{4},\overline{7})$.
\item \textbf{Transport of information:}
two adjacent such positions have the same symbol.
\item \textbf{Nature of the information:}
on the top line of the
$(\overline{4},\overline{7})$ sub-unit, the symbol of this layer 
is equal to the modularity mark on 
the line just above.
\end{itemize}

\noindent \textbf{\textit{Global behavior:}} \bigskip

The consequence of local rules 
is that the information of the modularity
mark is transported through a path 
avoiding the system bit location 
to the system counter area to the surrounding of the system counter area. \bigskip

\subsection{\label{subsection.incrementation.system.counter}
Incrementation of the system counter}

Recall that $\mathcal{A}$ is the alphabet of the simulated 
system. Let us complete this alphabet into 
an alphabet $\mathcal{E}$ having cardinality
$2^{2^{m}}$ for some $m \ge 0$.

Let us fix $t$ some cyclic permutation 
of the set $\mathcal{E}$, and 
$e_{\texttt{max}}$ some 
element of $\mathcal{E}$.
As well, we consider $\overline{t}$ 
a permutation of $\mathcal{E}^2$ 
and $\overline{e}_{\texttt{max}}$ 
an element of $\mathcal{E}^2$. \bigskip

\noindent \textbf{\textit{Symbols:}} \bigskip

The elements of the sets
\[({\mathcal{E}}^2 \times \{0,1\} \times \{\rightarrow\} ) \times ({\mathcal{E}} \times \{0,1\} \times \{\rightarrow\} ) \times ({\mathcal{E}} \times \{0,1\} \times \{\leftarrow\} ) 
\times \left( \left\{\begin{tikzpicture}[scale=0.3] 
\fill[Salmon] (0,0) rectangle (1,1);
\draw (0,0) rectangle (1,1);
\end{tikzpicture}, \begin{tikzpicture}[scale=0.3]
\fill[YellowGreen] (0,0) rectangle (1,1);
\draw (0,0) rectangle (1,1);
\end{tikzpicture}\right\} \right)^3,\]
\[({\mathcal{E}}^2 \times \{0,1\} \times \{\rightarrow\} ) \times ({\mathcal{E}} \times \{0,1\} \times \{\rightarrow\} ) \times ({\mathcal{E}} \times \{0,1\} \times \{\leftarrow\} ) 
\times \left( \left\{\begin{tikzpicture}[scale=0.3] 
\fill[Salmon] (0,0) rectangle (1,1);
\draw (0,0) rectangle (1,1);
\end{tikzpicture}, \begin{tikzpicture}[scale=0.3]
\fill[YellowGreen] (0,0) rectangle (1,1);
\draw (0,0) rectangle (1,1);
\end{tikzpicture}\right\} \right)^3 \times \left\{\begin{tikzpicture}[scale=0.3] 
\fill[gray!10] (0,0) rectangle (1,1);
\draw (0,0) rectangle (1,1);
\end{tikzpicture}, \begin{tikzpicture}[scale=0.3]
\fill[purple!80] (0,0) rectangle (1,1);
\draw (0,0) rectangle (1,1);
\end{tikzpicture}\right\}\]
and 
\[\{0,1\}^3 \times  \left( \left\{\begin{tikzpicture}[scale=0.3] 
\fill[Salmon] (0,0) rectangle (1,1);
\draw (0,0) rectangle (1,1);
\end{tikzpicture}, \begin{tikzpicture}[scale=0.3]
\fill[YellowGreen] (0,0) rectangle (1,1);
\draw (0,0) rectangle (1,1);
\end{tikzpicture}\right\} \right)^3.\]

The elements of $({\mathcal{E}} \times \{0,1\} \times \{\rightarrow\})$
are thought as the following tiles. 
The first symbol represents the south 
symbol in the tile, and the second one 
representing the west symbol in the tile. The arrow represent the 
direction of propagation of the increment signal. 
\[\begin{tikzpicture}[scale=1.5]
\draw[gray!80,line width=0.3mm] (0.35,0.5) -- (0.65,0.5);
\draw[gray!80,line width=0.3mm] (0,0.5) -- (0.15,0.5);
\draw[gray!80,line width=0.3mm,-latex] (0.85,0.5) -- (1,0.5);
\draw (0,0) rectangle (1,1);
\draw (0,0) -- (1,1);
\draw (1,0) -- (0,1);
\node at (0.5,0.15) {$\vec{e}$};
\node at (0.5,0.85) {$t(\vec{e})$};
\node at (0.75,0.5) {$0$};
\node at (0.25,0.5) {$1$};
\end{tikzpicture}, \quad
\begin{tikzpicture}[scale=1.5]
\draw[gray!80,line width=0.3mm] (0.35,0.5) -- (0.65,0.5);
\draw[gray!80,line width=0.3mm] (0,0.5) -- (0.15,0.5);
\draw[gray!80,line width=0.3mm,-latex] (0.85,0.5) -- (1,0.5);
\draw (0,0) rectangle (1,1);
\draw (0,0) -- (1,1);
\draw (1,0) -- (0,1);
\node at (0.5,0.15) {$\vec{e}$};
\node at (0.5,0.85) {$\vec{e}$};
\node at (0.25,0.5) {$0$};
\node at (0.75,0.5) {$0$};
\end{tikzpicture},\]
for $\vec{e} \neq \vec{e}_{\texttt{max}}$, 
and 
\[\begin{tikzpicture}[scale=1.5]
\draw[gray!80,line width=0.3mm] (0.35,0.5) -- (0.65,0.5);
\draw[gray!80,line width=0.3mm] (0,0.5) -- (0.15,0.5);
\draw[gray!80,line width=0.3mm,-latex] (0.85,0.5) -- (1,0.5);
\draw (0,0) rectangle (1,1);
\draw (0,0) -- (1,1);
\draw (1,0) -- (0,1);
\node at (0.5,0.15) {$\vec{e}$};
\node at (0.5,0.85) {$t(\vec{e})$};
\node at (0.25,0.5) {$1$};
\node at (0.75,0.5) {$1$};
\end{tikzpicture},\]
for $\vec{e} = \vec{e}_{\texttt{max}}$. \bigskip

The elements of ${\mathcal{E}} \times \{0,1\} \times \{\leftarrow\}$ 
are thought as similar tiles, with reversed arrow and pairs of west and east symbols: 

\[\begin{tikzpicture}[scale=1.5]
\draw[gray!80,line width=0.3mm] (0.35,0.5) -- (0.65,0.5);
\draw[gray!80,line width=0.3mm,latex-] (0,0.5) -- (0.15,0.5);
\draw[gray!80,line width=0.3mm] (0.85,0.5) -- (1,0.5);
\draw (0,0) rectangle (1,1);
\draw (0,0) -- (1,1);
\draw (1,0) -- (0,1);
\node at (0.5,0.15) {$\vec{e}$};
\node at (0.5,0.85) {$t(\vec{e})$};
\node at (0.75,0.5) {$1$};
\node at (0.25,0.5) {$0$};
\end{tikzpicture}, \quad
\begin{tikzpicture}[scale=1.5]
\draw[gray!80,line width=0.3mm] (0.35,0.5) -- (0.65,0.5);
\draw[gray!80,line width=0.3mm,latex-] (0,0.5) -- (0.15,0.5);
\draw[gray!80,line width=0.3mm] (0.85,0.5) -- (1,0.5);
\draw (0,0) rectangle (1,1);
\draw (0,0) -- (1,1);
\draw (1,0) -- (0,1);
\node at (0.5,0.15) {$\vec{e}$};
\node at (0.5,0.85) {$\vec{e}$};
\node at (0.25,0.5) {$0$};
\node at (0.75,0.5) {$0$};
\end{tikzpicture},\]
for $\vec{e} \neq \vec{e}_{\texttt{max}}$, 
and 
\[\begin{tikzpicture}[scale=1.5]
\draw[gray!80,line width=0.3mm] (0.35,0.5) -- (0.65,0.5);
\draw[gray!80,line width=0.3mm,latex-] (0,0.5) -- (0.15,0.5);
\draw[gray!80,line width=0.3mm] (0.85,0.5) -- (1,0.5);
\draw (0,0) rectangle (1,1);
\draw (0,0) -- (1,1);
\draw (1,0) -- (0,1);
\node at (0.5,0.15) {$\vec{e}$};
\node at (0.5,0.85) {$t(\vec{e})$};
\node at (0.25,0.5) {$1$};
\node at (0.75,0.5) {$1$};
\end{tikzpicture},\]
for $\vec{e} = \vec{e}_{\texttt{max}}$. \bigskip

The elements of $({\mathcal{E}}^2 \times \{0,1\} \times \{\rightarrow\} )$ are thought as similar tiles, except that elements of 
$\mathcal{E}$ are replaced by elements 
of $\mathcal{E}^2$, $e_{\texttt{max}}$ 
by $\overline{e}_{\texttt{max}}$ and the 
permutation $t$ by $\overline{t}$. \bigskip

Let us note that the arrow is, as the tile, not necessary but 
aimed to make the construction more readable.
The elements of $\left\{\begin{tikzpicture}[scale=0.3] 
\fill[Salmon] (0,0) rectangle (1,1);
\draw (0,0) rectangle (1,1);
\end{tikzpicture}, \begin{tikzpicture}[scale=0.3]
\fill[YellowGreen] (0,0) rectangle (1,1);
\draw (0,0) rectangle (1,1);
\end{tikzpicture}\right\}$ are thought as the symbols of a detecting signal, 
aimed to detect when all the south symbols of the tiles are $e_{\texttt{max}}$ 
or $\overline{e}_{\texttt{max}}$.
The symbols in $\left\{\begin{tikzpicture}[scale=0.3] 
\fill[gray!10] (0,0) rectangle (1,1);
\draw (0,0) rectangle (1,1);
\end{tikzpicture}, \begin{tikzpicture}[scale=0.3]
\fill[purple!80] (0,0) rectangle (1,1);
\draw (0,0) rectangle (1,1);
\end{tikzpicture}\right\}$ are called the {\bf{freezing symbols}}.
The freezing symbol serves to suspend the incrementation mechanism for one step. \bigskip

\noindent \textbf{\textit{Local rules:}} \bigskip

\begin{itemize}
\item \textbf{Localization rules:}

\begin{itemize}
\item The non-blank symbols are 
superimposed on positions of bottom line of the sub-unit $(\overline{2},\overline{5})$
of order $\ge 3$ cells when the 
modularity mark is $\overline{1}$. 
\item When the modularity mark is $\overline{3}$ the support sub-unit is $(\overline{2},\overline{5})$.
\item For the other modularity marks, the symbols over these lines are blank.

\item The 
elements of the set 
\[({\mathcal{E}}^2 \times \{0,1\} \times \{\rightarrow\} ) \times (\mathcal{E}  \times \{0,1\} \times \{\rightarrow\}) \times (\mathcal{E}  \times \{0,1\} \times \{\leftarrow\})
\times \{\begin{tikzpicture}[scale=0.3]
\fill[Salmon] (0,0) rectangle (1,1); 
\draw (0,0) rectangle (1,1); \end{tikzpicture}, 
\begin{tikzpicture}[scale=0.3] 
\fill[YellowGreen] (0,0) rectangle (1,1);
\draw (0,0) rectangle (1,1); \end{tikzpicture}\}^3\] appear on 
the computation positions of the line except the leftmost one.
On this one, the symbol is in the set 
\[({\mathcal{E}}^2 \times \{0,1\} \times \{\rightarrow\} ) \times (\mathcal{E}  \times \{0,1\} \times \{\rightarrow\}) \times (\mathcal{E}  \times \{0,1\} \times \{\leftarrow\})
\times \{\begin{tikzpicture}[scale=0.3]
\fill[Salmon] (0,0) rectangle (1,1); 
\draw (0,0) rectangle (1,1); \end{tikzpicture}, 
\begin{tikzpicture}[scale=0.3] 
\fill[YellowGreen] (0,0) rectangle (1,1);
\draw (0,0) rectangle (1,1); \end{tikzpicture}\}^3 \times \{\begin{tikzpicture}[scale=0.3]
\fill[gray!10] (0,0) rectangle (1,1); 
\draw (0,0) rectangle (1,1); \end{tikzpicture}, 
\begin{tikzpicture}[scale=0.3] 
\fill[purple!80] (0,0) rectangle (1,1);
\draw (0,0) rectangle (1,1); \end{tikzpicture}\}.\]

\item The other positions 
have a symbol in the set 
\[\{0,1\}^3
\times \{\begin{tikzpicture}[scale=0.3]
\fill[Salmon] (0,0) rectangle (1,1); 
\draw (0,0) rectangle (1,1); \end{tikzpicture}, 
\begin{tikzpicture}[scale=0.3] 
\fill[YellowGreen] (0,0) rectangle (1,1);
\draw (0,0) rectangle (1,1); \end{tikzpicture}\}^3.\]
See a schema of these rules on Figure~\ref{figure.schema.localization.system.counter}.
\end{itemize}

\begin{figure}[ht]
\[\begin{tikzpicture}[scale=0.3]
\fill[gray!20] (0,0) rectangle (24,8);
\draw (0,6) -- (0,0) -- (24,0) -- (24,6);
\draw[dashed] (0,6) -- (0,7);
\draw[dashed] (24,6) -- (24,7);
\node at (12,4) {$(\overline{2},\overline{5})$ or $(\overline{5},\overline{2}) $};
\draw[decorate,decoration={brace,raise=0.4cm},line width=0.3mm] 
(27,-3) -- (27,4);

\fill[blue!30] (0,0) rectangle (1,1); 
\fill[blue!30] (3,0) rectangle (4,1); 
\fill[blue!30] (8,0) rectangle (9,1); 
\fill[blue!30] (11,0) rectangle (12,1); 
\fill[blue!30] (12,0) rectangle (13,1); 
\fill[blue!30] (15,0) rectangle (16,1); 
\fill[blue!30] (20,0) rectangle (21,1); 
\fill[blue!30] (23,0) rectangle (24,1);

\draw (0,0) grid (24,1);

\fill[blue!30] (28,1) rectangle (29,2); 
\fill[blue!30] (31,1) rectangle (32,2); 
\fill[blue!30] (36,1) rectangle (37,2); 
\fill[blue!30] (39,1) rectangle (40,2); 
\fill[blue!30] (40,1) rectangle (41,2); 
\fill[blue!30] (43,1) rectangle (44,2); 
\fill[blue!30] (48,1) rectangle (49,2); 
\fill[blue!30] (51,1) rectangle (52,2); 

\draw[-latex] (28,1.5) -- (29,1.5);
\draw[-latex] (31,1.5) -- (32,1.5);
\draw[-latex] (36,1.5) -- (37,1.5); 
\draw[-latex] (39,1.5) -- (40,1.5); 
\draw[-latex] (40,1.5) -- (41,1.5);
\draw[-latex] (43,1.5) -- (44,1.5); 
\draw[-latex]  (48,1.5) -- (49,1.5);
\draw[-latex] (51,1.5) -- (52,1.5); 

\draw (28,1) grid (52,2);

\begin{scope}[yshift=2cm]

\fill[blue!30] (28,1) rectangle (29,2); 
\fill[blue!30] (31,1) rectangle (32,2); 
\fill[blue!30] (36,1) rectangle (37,2); 
\fill[blue!30] (39,1) rectangle (40,2); 
\fill[blue!30] (40,1) rectangle (41,2); 
\fill[blue!30] (43,1) rectangle (44,2); 
\fill[blue!30] (48,1) rectangle (49,2); 
\fill[blue!30] (51,1) rectangle (52,2); 

\draw[latex-] (28,1.5) -- (29,1.5);
\draw[latex-] (31,1.5) -- (32,1.5);
\draw[latex-] (36,1.5) -- (37,1.5); 
\draw[latex-] (39,1.5) -- (40,1.5); 
\draw[latex-] (40,1.5) -- (41,1.5);
\draw[latex-] (43,1.5) -- (44,1.5); 
\draw[latex-]  (48,1.5) -- (49,1.5);
\draw[latex-] (51,1.5) -- (52,1.5); 

\draw (28,1) grid (52,2);
\end{scope}

\begin{scope}[yshift=-2cm]

\fill[blue!30] (28,1) rectangle (29,2); 
\fill[blue!30] (31,1) rectangle (32,2); 
\fill[blue!30] (36,1) rectangle (37,2); 
\fill[blue!30] (39,1) rectangle (40,2); 
\fill[blue!30] (40,1) rectangle (41,2); 
\fill[blue!30] (43,1) rectangle (44,2); 
\fill[blue!30] (48,1) rectangle (49,2); 
\fill[blue!30] (51,1) rectangle (52,2); 

\draw[-latex] (28,1.5) -- (29,1.5);
\draw[-latex] (31,1.5) -- (32,1.5);
\draw[-latex] (36,1.5) -- (37,1.5); 
\draw[-latex] (39,1.5) -- (40,1.5); 
\draw[-latex] (40,1.5) -- (41,1.5);
\draw[-latex] (43,1.5) -- (44,1.5); 
\draw[-latex]  (48,1.5) -- (49,1.5);
\draw[-latex] (51,1.5) -- (52,1.5); 

\draw (28,1) grid (52,2);

\end{scope}

\draw (26.5,1.5) -- (26.5,6);

\draw[-latex] (31.5,-4) -- (31.5,-1);

\node at (33.5,-5) {$\mathcal{E}^2 \times 
\{0,1\} \times \{\begin{tikzpicture}[scale=0.3]
\fill[Salmon] (0,0) rectangle (1,1);
\draw (0,0) rectangle (1,1);
\end{tikzpicture}, \begin{tikzpicture}[scale=0.3]
\fill[YellowGreen] (0,0) rectangle (1,1);
\draw (0,0) rectangle (1,1);
\end{tikzpicture}\}$};

\draw[-latex] (26.5,1.5) -- (27.5,1.5);
\draw[-latex] (26.5,3.5) -- (27.5,3.5);
\draw[-latex] (31.5,6) -- (31.5,4.5);
\node at (31,7) {$\mathcal{E} \times \{0,1\} \times \{\begin{tikzpicture}[scale=0.3]
\fill[Salmon] (0,0) rectangle (1,1);
\draw (0,0) rectangle (1,1);
\end{tikzpicture}, \begin{tikzpicture}[scale=0.3]
\fill[YellowGreen] (0,0) rectangle (1,1);
\draw (0,0) rectangle (1,1);
\end{tikzpicture}\}$};

\draw[-latex] (42.5,-2.5) -- (42.5,-0.5);
\draw[-latex] (44.5,-2.5) -- (44.5,1.5);
\draw[-latex] (45.5,-2.5) -- (45.5,3.5);
\node at (44,-4) {$\{0,1\} \times \{\begin{tikzpicture}[scale=0.3]
\fill[Salmon] (0,0) rectangle (1,1);
\draw (0,0) rectangle (1,1);
\end{tikzpicture}, \begin{tikzpicture}[scale=0.3]
\fill[YellowGreen] (0,0) rectangle (1,1);
\draw (0,0) rectangle (1,1);
\end{tikzpicture}\}$};

\fill[gray!10] (28.75,-3) rectangle (29.75,-2);
\draw (28.75,-3) rectangle (29.75,-2);

\fill[purple!80] (27.25,-3) rectangle (28.25,-2);
\draw (27.25,-3) rectangle (28.25,-2);

\end{tikzpicture}\]
\caption{\label{figure.schema.localization.system.counter}
Schema of the localization rules for the system counter. The 
pattern on the bottom line of the sub-unit is considered as the superposition 
of three words and a symbol on the leftmost position.}
\end{figure}
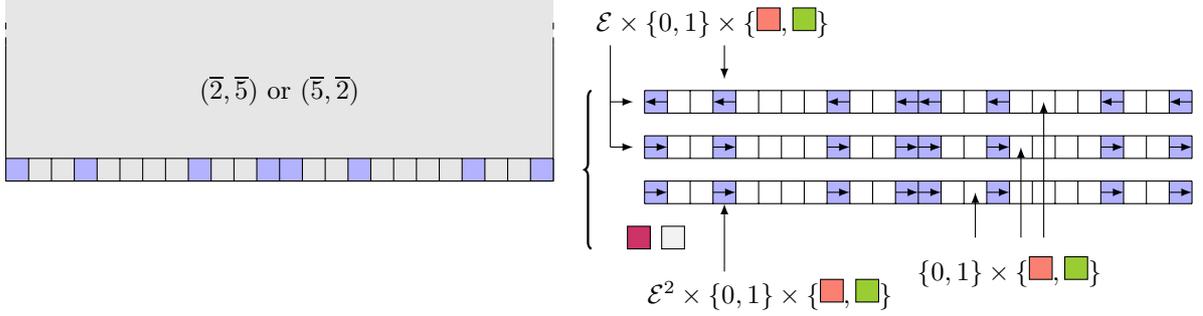

A \textbf{value} of the counter is a possible 
sequence of symbols of the 
alphabet $\mathcal{E}^4$ that can appear 
on these lines, on the computation positions. A value is thought as 
the superposition of three words and a letter in $\{\begin{tikzpicture}[scale=0.3]
\fill[gray!10] (0,0) rectangle (1,1);
\draw (0,0) rectangle (1,1);
\end{tikzpicture}, \begin{tikzpicture}[scale=0.3]
\fill[purple!80] (0,0) rectangle (1,1);
\draw (0,0) rectangle (1,1);
\end{tikzpicture}\}$
 on the leftmost position.
The first word has alphabet
\[\mathcal{E} \times \{0,1\} \times \{\leftarrow\}\times \{\begin{tikzpicture}[scale=0.3]
\fill[Salmon] (0,0) rectangle (1,1);
\draw (0,0) rectangle (1,1);
\end{tikzpicture}, \begin{tikzpicture}[scale=0.3]
\fill[YellowGreen] (0,0) rectangle (1,1);
\draw (0,0) rectangle (1,1);
\end{tikzpicture}\}\]
on the computation positions and alphabet 
$\{0,1\} \times \{\begin{tikzpicture}[scale=0.3]
\fill[Salmon] (0,0) rectangle (1,1);
\draw (0,0) rectangle (1,1);
\end{tikzpicture}, \begin{tikzpicture}[scale=0.3]
\fill[YellowGreen] (0,0) rectangle (1,1);
\draw (0,0) rectangle (1,1);
\end{tikzpicture}\}$ on the others. The other two words are similar, except for 
the orientation of the arrows, and 
the alphabet $\mathcal{E}$ is replaced 
by $\mathcal{E}^2$ in the third one.

\item \textbf{Detection signals:}

\begin{itemize}
\item 
\begin{enumerate}
\item On the third word, on the rightmost position, 
the detecting symbol is $\begin{tikzpicture}[scale=0.3] 
\fill[YellowGreen] (0,0) rectangle (1,1);
\draw (0,0) rectangle (1,1);
\end{tikzpicture}$ if and only if the south symbol of the tile 
is $\overline{e}_{\texttt{max}}$. 
\item The symbol $\begin{tikzpicture}[scale=0.3] 
\fill[YellowGreen] (0,0) rectangle (1,1);
\draw (0,0) rectangle (1,1);
\end{tikzpicture}$ propagates to the right while on the support line
and propagates to the left until meeting a tile with 
south symbol different from $\overline{e}_{\texttt{max}}$.
In this case, the symbol on the left is $\begin{tikzpicture}[scale=0.3] 
\fill[Salmon] (0,0) rectangle (1,1);
\draw (0,0) rectangle (1,1);
\end{tikzpicture}$. Moreover, the symbol $\begin{tikzpicture}[scale=0.3] 
\fill[YellowGreen] (0,0) rectangle (1,1);
\draw (0,0) rectangle (1,1);
\end{tikzpicture}$ can appear on a tile only if the south symbol 
is $\overline{e}_{\texttt{max}}$.
\item The symbol $\begin{tikzpicture}[scale=0.3] 
\fill[Salmon] (0,0) rectangle (1,1);
\draw (0,0) rectangle (1,1);
\end{tikzpicture}$ propagates to the left.
\end{enumerate}
\item On the second word, the rules are similar, 
except that the orientation of the line 
is reversed.
\item On the first word, the orientation 
is not reversed, but the first rule 
of the set of rules is changed: the rightmost symbol 
is $\begin{tikzpicture}[scale=0.3] 
\fill[Salmon] (0,0) rectangle (1,1);
\draw (0,0) rectangle (1,1);
\end{tikzpicture}$ if the rightmost symbol of the second word is also $\begin{tikzpicture}[scale=0.3] 
\fill[Salmon] (0,0) rectangle (1,1);
\draw (0,0) rectangle (1,1);
\end{tikzpicture}$ or if the south symbol on the corresponding tile is not $e_{\texttt{max}}$. 
Else it is $\begin{tikzpicture}[scale=0.3] 
\fill[YellowGreen] (0,0) rectangle (1,1);
\draw (0,0) rectangle (1,1);
\end{tikzpicture}$. As a consequence, one can think the two first words, 
from the point of view of the detecting signal, as a unique word. This word is the concatenation 
of the two words on the right side.
\item \textbf{Coherence rule:} 
If the symbol on a position $\vec{u}$ of 
the line in the first word is $\begin{tikzpicture}[scale=0.3] 
\fill[YellowGreen] (0,0) rectangle (1,1);
\draw (0,0) rectangle (1,1);
\end{tikzpicture}$, then the corresponding 
symbol in the second word is also 
$\begin{tikzpicture}[scale=0.3] 
\fill[YellowGreen] (0,0) rectangle (1,1);
\draw (0,0) rectangle (1,1);
\end{tikzpicture}$.
\end{itemize}

Here is a typical configuration of the detecting signals:

\[\begin{tikzpicture}[scale=0.3]
\fill[YellowGreen] (0,1) rectangle (24,2);
\fill[YellowGreen] (24,2) rectangle (18,3);
\fill[Salmon] (18,2) rectangle (0,3);
\draw (0,1) grid (24,3);
\fill[Salmon] (0,-1) rectangle (14,0);
\fill[YellowGreen] (14,-1) rectangle (24,0);
\draw (0,-1) grid (24,0);
\end{tikzpicture}\]

\item \textbf{Freezing signal:}

Recall that the freezing symbol is localized on 
the leftmost position of the line, denoted 
in this rule $\vec{u}$. 
\begin{itemize}
\item If the 
detecting signal on this position 
is $\begin{tikzpicture}[scale=0.3] 
\fill[YellowGreen] (0,0) rectangle (1,1);
\draw (0,0) rectangle (1,1);
\end{tikzpicture}$ for the three words, 
then the freezing symbol on positions 
$\vec{u}$ and $\vec{u}+\vec{e}^3$ 
are different, meaning 
that if the symbol 
on $\vec{u}$ is $\begin{tikzpicture}[scale=0.3] 
\fill[gray!10] (0,0) rectangle (1,1);
\draw (0,0) rectangle (1,1);
\end{tikzpicture}$, then it is changed 
into $\begin{tikzpicture}[scale=0.3] 
\fill[purple!80] (0,0) rectangle (1,1);
\draw (0,0) rectangle (1,1);
\end{tikzpicture}$ on the next position 
in direction $\vec{e}^3$, 
and the symbol $\begin{tikzpicture}[scale=0.3] 
\fill[purple!80] (0,0) rectangle (1,1);
\draw (0,0) rectangle (1,1);
\end{tikzpicture}$ is changed into 
$\begin{tikzpicture}[scale=0.3] 
\fill[gray!10] (0,0) rectangle (1,1);
\draw (0,0) rectangle (1,1);
\end{tikzpicture}$
\item In the other case, meaning 
when one of the detecting signals is 
$\begin{tikzpicture}[scale=0.3] 
\fill[Salmon] (0,0) rectangle (1,1);
\draw (0,0) rectangle (1,1);
\end{tikzpicture}$, then the freezing 
symbol is not changed.
\item \textbf{Coherence rule:} 
The symbol freezing 
symbol can be $\begin{tikzpicture}[scale=0.3] 
\fill[purple!80] (0,0) rectangle (1,1);
\draw (0,0) rectangle (1,1);
\end{tikzpicture}$ only if the three 
detecting signals are $\begin{tikzpicture}[scale=0.3] 
\fill[YellowGreen] (0,0) rectangle (1,1);
\draw (0,0) rectangle (1,1);
\end{tikzpicture}$.
\end{itemize}

\item \textbf{Incrementation of the counter:} 

\begin{itemize}
\item \textbf{Triggering the 
incrementation:} 
\begin{itemize}
\item \textbf{For the third word:}
On the leftmost position of the line, 
the west symbol of the 
tile in the third word is $0$ if and only if the 
freezing signal is $\begin{tikzpicture}[scale=0.3]
\fill[gray!10] (0,0) rectangle (1,1);
\draw (0,0) rectangle (1,1);
\end{tikzpicture}$ and 
the detecting signals are all $\begin{tikzpicture}[scale=0.3] 
\fill[YellowGreen] (0,0) rectangle (1,1);
\draw (0,0) rectangle (1,1);
\end{tikzpicture}$.
\item \textbf{For the other two words:}
On the leftmost position of the line, 
the west symbol of the tile in the second 
word is $1$ only when 
the detecting signal of the third word is 
$\begin{tikzpicture}[scale=0.3] 
\fill[YellowGreen] (0,0) rectangle (1,1);
\draw (0,0) rectangle (1,1);
\end{tikzpicture}$ and the one on the 
first one is $\begin{tikzpicture}[scale=0.3] 
\fill[Salmon] (0,0) rectangle (1,1);
\draw (0,0) rectangle (1,1);
\end{tikzpicture}$, or 
the freezing signal is 
$\begin{tikzpicture}[scale=0.3] 
\fill[purple!80] (0,0) rectangle (1,1);
\draw (0,0) rectangle (1,1);
\end{tikzpicture}$. This rule 
means that the value which consists 
of the two first words is incremented 
exactly when the third word 
reaches its maximal value, except 
when the whole counter is suspended.
\end{itemize}

\item \textbf{Transmission of increment:} 
\begin{itemize}
\item On a computation position 
except the leftmost, the west symbol 
of the tile is equal to the symbol in $\{0,1\}$ 
of the position on the left. A similar rule 
is true for the east symbol. 
\item Between two computation positions, 
the symbol in $\{0,1\}$ is 
transported.
\item On the rightmost position of the line, 
the east symbols of the two tiles of the first 
and second words are equal. This means 
that the increment is transmitted from the 
second to the first word.
\end{itemize}

\item \textbf{Transfer the transformed value 
in direction $\vec{e}^3$:} 
On each of the positions $\vec{u}$ in 
the line, for the
third word, the north (resp. south) symbol 
is equal to the south (resp. north) symbol 
of the position $\vec{u}+\vec{e}^3$ (resp. $\vec{u}-\vec{e}^3$) of the same word. 
\end{itemize}
\item \textbf{Rotation mechanism:}
\begin{itemize}
\item 
For all $\vec{u}$ in the line, except the 
rightmost (resp. leftmost) one, the south symbol 
of the tile in the first (resp. second) word 
on position $\vec{u}+\vec{e}^3$ is equal 
to the north symbol of the first (resp. second) word 
on position $\vec{u}+\vec{e}^1$ (resp. $\vec{u}-\vec{e}^1$).
\item When $\vec{u}$ is the rightmost (resp. leftmost) position of the line, the south symbol of the 
tile in the first (resp. second) word is 
equal to the north symbol of the tile 
in the second (resp. first) word on position 
$\vec{u}$. 
\end{itemize}
\end{itemize}

\noindent \textbf{\textit{Global behavior:}} \bigskip

In some particular sub-units of the odd 
order cells is supported a counter called 
system counter. The localization 
of this sub-unit depends on 
the class of the level modulo $4$: 
if this class is $\overline{1}$, 
the sub-unit is $(\overline{2},\overline{5})$
and $(\overline{5},\overline{2})$ 
when the class is $\overline{3}$. 

The value of the counter is the product of
three words on the same alphabet 
$\mathcal{E}$. The third word 
is incremented in direction 
$\vec{e}^3$ in each of the sections (except 
for the suspension step) and 
when it reaches the maximal value, 
it triggers the incrementation of the second 
value, which consists in the concatenation 
of the two other words. 
Moreover, this value is rotated between 
each couple of sections $\Z_c^2$ 
and $\Z_{c+1}^2$.
Since the number of column in each of the sub-units 
is $2^{n-3}$, 
the period of the system counter 
is 
\[2^{4. 2^m.2^{n-3}} +1 = 2^{2^{m+n-1}}+1.\]

\section{\label{section.information.transport} Information transports}

In this section we describe the various information transfers in this construction. 
In Section~\ref{section.diagonals} we describe how to color 
the diagonal of some of the sub-units, in order to change 
the direction of information transport. This mechanism will be used in the 
following subsections. In Section~\ref{subsection.transport.information.system.counter} we 
describe how the system counter information is transported to the walls of each cell.
Section~\ref{section.transport.linear.counter} is devoted to the description 
of information transfers relative to the linear counter inside the cells and 
Section~\ref{section.transport.intercellular} to information transfers between 
cells having the same level.

\subsection{\label{section.diagonals} Structure for direction changes 
of information transport}

\noindent \textbf{\textit{Symbols:}}

The symbols of this sublayer are 
$\begin{tikzpicture}[scale=0.4,baseline = 1mm]
\draw[line width=0.4mm] (0,0) -- (1,1);
\draw (0,0) rectangle (1,1);
\end{tikzpicture}$ and $\begin{tikzpicture}[scale=0.4,baseline = 1mm]
\draw (0,0) rectangle (1,1);
\end{tikzpicture}$. \bigskip

\noindent \textbf{\textit{Local rules:}}

\begin{itemize}
\item \textbf{Localization:} 
the petals are superimposed with non blank symbols, 
and other positions with blank one.
\item \textbf{Transmission:}
the symbols are transmitted through the petals 
except on \textbf{transformation positions}, 
defined to be the positions where a support petal 
intersects the transmission petal just above in the hierarchy.
\item \textbf{Transformation:} 

\begin{itemize}
\item When the symbol in the functional specialization sublayer is in $\Z/4\Z$ or $\Z/4\Z^2$, 
the symbol in the present layer is $\begin{tikzpicture}[scale=0.4,baseline = 1mm]
\draw[line width=0.4mm] (0,0) -- (1,1);
\draw (0,0) rectangle (1,1);
\end{tikzpicture}$. When it is $\begin{tikzpicture}[scale=0.3]
\fill[gray!90] (0,0) rectangle (1,1);
\draw (0,0) rectangle (1,1); \end{tikzpicture}$, the symbol 
in this layer is $\begin{tikzpicture}[scale=0.4,baseline = 1mm]
\draw (0,0) rectangle (1,1);
\end{tikzpicture}$.
\item When the symbol is in $\Z/4\Z^3$, the symbol can 
be $\begin{tikzpicture}[scale=0.4,baseline = 1mm]
\draw[line width=0.4mm] (0,0) -- (1,1);
\draw (0,0) rectangle (1,1);
\end{tikzpicture}$ or $\begin{tikzpicture}[scale=0.4,baseline = 1mm]
\draw (0,0) rectangle (1,1);
\end{tikzpicture}$. 
On a transformation position,
the symbol on the position is equal to the positions in the 
same support petal in the neighborhood. If both the symbol 
on this position and on the neighbors 
positions in the transmission petal in the functional specialization sublayer is in 
 $\Z/4\Z^3$, then the symbol on the transformation position is as follows: 

\begin{enumerate}
\item If the symbol in the Robinson layer is 
\[\begin{tikzpicture}[scale=0.3]
\draw (0,0) rectangle (2,2) ;
\draw [-latex] (0,1) -- (2,1) ;
\draw [-latex] (0,1.5) -- (2,1.5) ; 
\draw [-latex] (1,0) -- (1,1) ; 
\draw [-latex] (1,2) -- (1,1.5) ;
\draw [-latex] (1.5,0) -- (1.5,1) ; 
\draw [-latex] (1.5,2) -- (1.5,1.5) ;
\end{tikzpicture}, \ \text{or} \ \begin{tikzpicture}[scale=0.3]
\draw (0,0) rectangle (2,2) ;
\draw [-latex] (1,2) -- (1,0) ;
\draw [latex-] (0.5,0) -- (0.5,2) ; 
\draw [-latex] (0,1) -- (0.5,1) ; 
\draw [-latex] (2,1) -- (1,1) ;
\draw [-latex] (0,1.5) -- (0.5,1.5) ; 
\draw [-latex] (2,1.5) -- (1,1.5) ;
\end{tikzpicture},\]
meaning that the transformation position is in the north west 
part of the support petal,
and the orientation symbol is $\begin{tikzpicture}[scale=0.2] 
\fill[black] (1.5,2) rectangle (1,0.5);
\fill[black] (1,0.5) rectangle (0,1);
\draw (0,0) rectangle (2,2);
\end{tikzpicture}$, then the symbol on the transmission petal 
is $\begin{tikzpicture}[scale=0.4,baseline = 1mm]
\draw (0,0) rectangle (1,1);
\end{tikzpicture}$. For the orientations, the transformation is already determined 
by the first rule.
\item When the transformation position is in the south east part 
and the orientation is $\begin{tikzpicture}[scale=0.2] 
\fill[black] (0.5,0) rectangle (1,1.5);
\fill[black] (1,1.5) rectangle (2,1);
\draw (0,0) rectangle (2,2);
\end{tikzpicture}$, the symbol on the transmission petal is also 
$\begin{tikzpicture}[scale=0.4,baseline = 1mm]
\draw (0,0) rectangle (1,1);
\end{tikzpicture}$.
\item When the transformation position is in the south west part and 
the orientation is $\begin{tikzpicture}[scale=0.2] 
\fill[black] (1.5,0) rectangle (1,1.5);
\fill[black] (1,1.5) rectangle (0,1);
\draw (0,0) rectangle (2,2);
\end{tikzpicture}$ (or north east and $\begin{tikzpicture}[scale=0.2] 
\fill[black] (0.5,2) rectangle (1,0.5);
\fill[black] (1,0.5) rectangle (2,1);
\draw (0,0) rectangle (2,2);
\end{tikzpicture}$) then the symbol on the transmission petal is 
$\begin{tikzpicture}[scale=0.4,baseline = 1mm]
\draw[line width=0.4mm] (0,0) -- (1,1);
\draw (0,0) rectangle (1,1);
\end{tikzpicture}$. See an illustration of these rules on Figure~\ref{figure.structure.information.transport}.
\end{enumerate}
\end{itemize}
\end{itemize}

\begin{figure}[h]
\[\begin{tikzpicture}[scale=0.075]
\fill[gray!90] (16,16) rectangle (48,48); 
\fill[white] (16.5,16.5) rectangle (47.5,47.5);
\fill[gray!20] (32,0) rectangle (32.5,32.5);
\fill[gray!20] (32,32.5) rectangle (64,32);

\fill[gray!20] (8,8) rectangle (8.5,24); 
\fill[gray!20] (8,8) rectangle (24,8.5);
\fill[gray!20] (8,23.5) rectangle (24,24);  
\fill[gray!20] (23.5,8) rectangle (24,24);

\fill[gray!20] (40,8) rectangle (40.5,24); 
\fill[gray!20] (40,8) rectangle (56,8.5);
\fill[gray!20] (40,23.5) rectangle (56,24);  
\fill[gray!20] (55.5,8) rectangle (56,24);

\fill[gray!20] (8,40) rectangle (8.5,56); 
\fill[gray!20] (8,40) rectangle (24,40.5);
\fill[gray!20] (8,55.5) rectangle (24,56);  
\fill[gray!20] (23.5,40) rectangle (24,56);

\fill[gray!20] (40,40) rectangle (40.5,56); 
\fill[gray!20] (40,40) rectangle (56,40.5);
\fill[gray!20] (40,55.5) rectangle (56,56);  
\fill[gray!20] (55.5,40) rectangle (56,56);

\fill[gray!90] (4,4) rectangle (4.5,12); 
\fill[gray!90] (4,4) rectangle (12,4.5); 
\fill[gray!90] (4.5,11.5) rectangle (12,12); 
\fill[gray!90] (11.5,4.5) rectangle (12,12); 

\fill[gray!90] (4,20) rectangle (4.5,28); 
\fill[gray!90] (4,20) rectangle (12,20.5); 
\fill[gray!90] (4.5,27.5) rectangle (12,28); 
\fill[gray!90] (11.5,20.5) rectangle (12,28); 

\fill[gray!90] (4,36) rectangle (4.5,44); 
\fill[gray!90] (4,36) rectangle (12,36.5); 
\fill[gray!90] (4.5,43.5) rectangle (12,44); 
\fill[gray!90] (11.5,36.5) rectangle (12,44);

\fill[gray!90] (4,52) rectangle (4.5,60); 
\fill[gray!90] (4,52) rectangle (12,52.5); 
\fill[gray!90] (4.5,59.5) rectangle (12,60); 
\fill[gray!90] (11.5,52.5) rectangle (12,60);

\fill[gray!90] (20,4) rectangle (20.5,12); 
\fill[gray!90] (20,4) rectangle (28,4.5); 
\fill[gray!90] (20.5,11.5) rectangle (28,12); 
\fill[gray!90] (27.5,4.5) rectangle (28,12); 

\fill[gray!90] (20,20) rectangle (20.5,28); 
\fill[gray!90] (20,20) rectangle (28,20.5); 
\fill[gray!90] (20.5,27.5) rectangle (28,28); 
\fill[gray!90] (27.5,20.5) rectangle (28,28); 

\fill[gray!90] (20,36) rectangle (20.5,44); 
\fill[gray!90] (20,36) rectangle (28,36.5); 
\fill[gray!90] (20.5,43.5) rectangle (28,44); 
\fill[gray!90] (27.5,36.5) rectangle (28,44);

\fill[gray!90] (20,52) rectangle (20.5,60); 
\fill[gray!90] (20,52) rectangle (28,52.5); 
\fill[gray!90] (20.5,59.5) rectangle (28,60); 
\fill[gray!90] (27.5,52.5) rectangle (28,60);

\fill[gray!90] (36,4) rectangle (36.5,12); 
\fill[gray!90] (36,4) rectangle (44,4.5); 
\fill[gray!90] (36.5,11.5) rectangle (44,12); 
\fill[gray!90] (43.5,4.5) rectangle (44,12); 

\fill[gray!90] (36,20) rectangle (36.5,28); 
\fill[gray!90] (36,20) rectangle (44,20.5); 
\fill[gray!90] (36.5,27.5) rectangle (44,28); 
\fill[gray!90] (43.5,20.5) rectangle (44,28); 

\fill[gray!90] (36,36) rectangle (36.5,44); 
\fill[gray!90] (36,36) rectangle (44,36.5); 
\fill[gray!90] (36.5,43.5) rectangle (44,44); 
\fill[gray!90] (43.5,36.5) rectangle (44,44);

\fill[gray!90] (36,52) rectangle (36.5,60); 
\fill[gray!90] (36,52) rectangle (44,52.5); 
\fill[gray!90] (36.5,59.5) rectangle (44,60); 
\fill[gray!90] (43.5,52.5) rectangle (44,60);

\fill[gray!90] (52,4) rectangle (52.5,12); 
\fill[gray!90] (52,4) rectangle (60,4.5); 
\fill[gray!90] (52.5,11.5) rectangle (60,12); 
\fill[gray!90] (59.5,4.5) rectangle (60,12); 

\fill[gray!90] (52,20) rectangle (52.5,28); 
\fill[gray!90] (52,20) rectangle (60,20.5); 
\fill[gray!90] (52.5,27.5) rectangle (60,28); 
\fill[gray!90] (59.5,20.5) rectangle (60,28); 

\fill[gray!90] (52,36) rectangle (52.5,44); 
\fill[gray!90] (52,36) rectangle (60,36.5); 
\fill[gray!90] (52.5,43.5) rectangle (60,44); 
\fill[gray!90] (59.5,36.5) rectangle (60,44);

\fill[gray!90] (52,52) rectangle (52.5,60); 
\fill[gray!90] (52,52) rectangle (60,52.5); 
\fill[gray!90] (52.5,59.5) rectangle (60,60); 
\fill[gray!90] (59.5,52.5) rectangle (60,60);
\draw[->] (66,32) -- (48,32);
\node at (84,32) {$\vec{i} \in \Z/4\Z^3$};
\node at (8,8) {$\vec{i}$};
\node at (56,56) {$\vec{i}$};
\node at (8,56) {$\vec{i}$};
\node at (56,8) {$\vec{i}$};
\node at (24,24) {$\overline{3}$};
\node at (40,24) {$\overline{2}$};
\node at (40,40) {$\overline{1}$};
\node at (24,40) {$\overline{0}$};

\begin{scope}[xshift=-70cm]

\fill[gray!90] (16,16) rectangle (48,48); 
\fill[white] (16.5,16.5) rectangle (47.5,47.5);
\fill[gray!20] (32,0) rectangle (32.5,32.5);
\fill[gray!20] (32,32.5) rectangle (64,32);

\fill[gray!20] (8,8) rectangle (8.5,24); 
\fill[gray!20] (8,8) rectangle (24,8.5);
\fill[gray!20] (8,23.5) rectangle (24,24);  
\fill[gray!20] (23.5,8) rectangle (24,24);

\fill[gray!20] (40,8) rectangle (40.5,24); 
\fill[gray!20] (40,8) rectangle (56,8.5);
\fill[gray!20] (40,23.5) rectangle (56,24);  
\fill[gray!20] (55.5,8) rectangle (56,24);

\fill[gray!20] (8,40) rectangle (8.5,56); 
\fill[gray!20] (8,40) rectangle (24,40.5);
\fill[gray!20] (8,55.5) rectangle (24,56);  
\fill[gray!20] (23.5,40) rectangle (24,56);

\fill[gray!20] (40,40) rectangle (40.5,56); 
\fill[gray!20] (40,40) rectangle (56,40.5);
\fill[gray!20] (40,55.5) rectangle (56,56);  
\fill[gray!20] (55.5,40) rectangle (56,56);

\fill[gray!90] (4,4) rectangle (4.5,12); 
\fill[gray!90] (4,4) rectangle (12,4.5); 
\fill[gray!90] (4.5,11.5) rectangle (12,12); 
\fill[gray!90] (11.5,4.5) rectangle (12,12); 

\fill[gray!90] (4,20) rectangle (4.5,28); 
\fill[gray!90] (4,20) rectangle (12,20.5); 
\fill[gray!90] (4.5,27.5) rectangle (12,28); 
\fill[gray!90] (11.5,20.5) rectangle (12,28); 

\fill[gray!90] (4,36) rectangle (4.5,44); 
\fill[gray!90] (4,36) rectangle (12,36.5); 
\fill[gray!90] (4.5,43.5) rectangle (12,44); 
\fill[gray!90] (11.5,36.5) rectangle (12,44);

\fill[gray!90] (4,52) rectangle (4.5,60); 
\fill[gray!90] (4,52) rectangle (12,52.5); 
\fill[gray!90] (4.5,59.5) rectangle (12,60); 
\fill[gray!90] (11.5,52.5) rectangle (12,60);

\fill[gray!90] (20,4) rectangle (20.5,12); 
\fill[gray!90] (20,4) rectangle (28,4.5); 
\fill[gray!90] (20.5,11.5) rectangle (28,12); 
\fill[gray!90] (27.5,4.5) rectangle (28,12); 

\fill[gray!90] (20,20) rectangle (20.5,28); 
\fill[gray!90] (20,20) rectangle (28,20.5); 
\fill[gray!90] (20.5,27.5) rectangle (28,28); 
\fill[gray!90] (27.5,20.5) rectangle (28,28); 

\fill[gray!90] (20,36) rectangle (20.5,44); 
\fill[gray!90] (20,36) rectangle (28,36.5); 
\fill[gray!90] (20.5,43.5) rectangle (28,44); 
\fill[gray!90] (27.5,36.5) rectangle (28,44);

\fill[gray!90] (20,52) rectangle (20.5,60); 
\fill[gray!90] (20,52) rectangle (28,52.5); 
\fill[gray!90] (20.5,59.5) rectangle (28,60); 
\fill[gray!90] (27.5,52.5) rectangle (28,60);

\fill[gray!90] (36,4) rectangle (36.5,12); 
\fill[gray!90] (36,4) rectangle (44,4.5); 
\fill[gray!90] (36.5,11.5) rectangle (44,12); 
\fill[gray!90] (43.5,4.5) rectangle (44,12); 

\fill[gray!90] (36,20) rectangle (36.5,28); 
\fill[gray!90] (36,20) rectangle (44,20.5); 
\fill[gray!90] (36.5,27.5) rectangle (44,28); 
\fill[gray!90] (43.5,20.5) rectangle (44,28); 

\fill[gray!90] (36,36) rectangle (36.5,44); 
\fill[gray!90] (36,36) rectangle (44,36.5); 
\fill[gray!90] (36.5,43.5) rectangle (44,44); 
\fill[gray!90] (43.5,36.5) rectangle (44,44);

\fill[gray!90] (36,52) rectangle (36.5,60); 
\fill[gray!90] (36,52) rectangle (44,52.5); 
\fill[gray!90] (36.5,59.5) rectangle (44,60); 
\fill[gray!90] (43.5,52.5) rectangle (44,60);

\fill[gray!90] (52,4) rectangle (52.5,12); 
\fill[gray!90] (52,4) rectangle (60,4.5); 
\fill[gray!90] (52.5,11.5) rectangle (60,12); 
\fill[gray!90] (59.5,4.5) rectangle (60,12); 

\fill[gray!90] (52,20) rectangle (52.5,28); 
\fill[gray!90] (52,20) rectangle (60,20.5); 
\fill[gray!90] (52.5,27.5) rectangle (60,28); 
\fill[gray!90] (59.5,20.5) rectangle (60,28); 

\fill[gray!90] (52,36) rectangle (52.5,44); 
\fill[gray!90] (52,36) rectangle (60,36.5); 
\fill[gray!90] (52.5,43.5) rectangle (60,44); 
\fill[gray!90] (59.5,36.5) rectangle (60,44);

\fill[gray!90] (52,52) rectangle (52.5,60); 
\fill[gray!90] (52,52) rectangle (60,52.5); 
\fill[gray!90] (52.5,59.5) rectangle (60,60); 
\fill[gray!90] (59.5,52.5) rectangle (60,60);

\node at (8,8) {$\begin{tikzpicture}[scale=0.3,baseline = 1mm]
\draw[line width=0.4mm] (0,0) -- (1,1);
\draw (0,0) rectangle (1,1);
\end{tikzpicture}$};
\node at (56,56) {$\begin{tikzpicture}[scale=0.3,baseline = 1mm]
\draw[line width=0.4mm] (0,0) -- (1,1);
\draw (0,0) rectangle (1,1);
\end{tikzpicture}$};
\node at (8,56) {$\begin{tikzpicture}[scale=0.3,baseline = 1mm]
\draw (0,0) rectangle (1,1);
\end{tikzpicture}$};
\node at (56,8) {$\begin{tikzpicture}[scale=0.3,baseline = 1mm]
\draw (0,0) rectangle (1,1);
\end{tikzpicture}$};

\node at (24,24) {$\begin{tikzpicture}[scale=0.3,baseline = 1mm]
\draw[line width=0.4mm] (0,0) -- (1,1);
\draw (0,0) rectangle (1,1);
\end{tikzpicture}$};

\node at (40,40) {$\begin{tikzpicture}[scale=0.3,baseline = 1mm]
\draw[line width=0.4mm] (0,0) -- (1,1);
\draw (0,0) rectangle (1,1);
\end{tikzpicture}$};

\node at (24,40) {$\begin{tikzpicture}[scale=0.3,baseline = 1mm]
\draw[line width=0.4mm] (0,0) -- (1,1);
\draw (0,0) rectangle (1,1);
\end{tikzpicture}$};

\node at (40,24) {$\begin{tikzpicture}[scale=0.3,baseline = 1mm]
\draw[line width=0.4mm] (0,0) -- (1,1);
\draw (0,0) rectangle (1,1);
\end{tikzpicture}$};

\end{scope}

\end{tikzpicture}\]
\caption{\label{figure.structure.information.transport} Schematic illustration 
of special case of the transformation rules for the structures of direction 
changes when the order of the central cell is $\ge 3$.}
\end{figure}
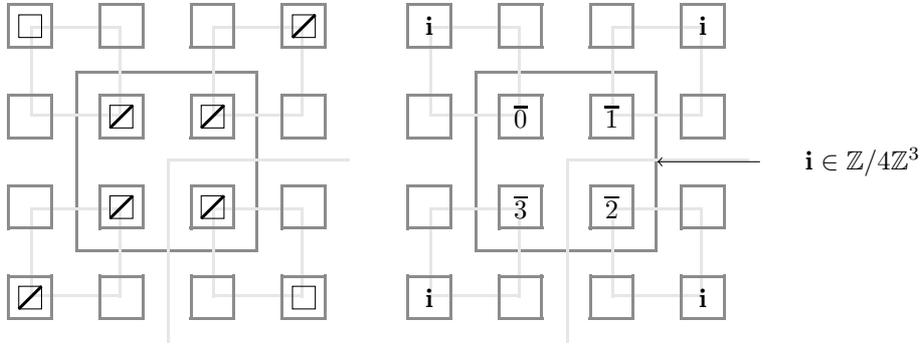

\noindent \textbf{\textit{Global behavior:}} \bigskip

As a consequence of the hierarchical signaling process 
described by the local rules, each position 
superimposed with a blue corner in the Robinson layer 
is superimposed with $\begin{tikzpicture}[scale=0.4,baseline = 1mm]
\draw[line width=0.4mm] (0,0) -- (1,1);
\draw (0,0) rectangle (1,1);
\end{tikzpicture}$ or $\begin{tikzpicture}[scale=0.4,baseline = 1mm]
\draw (0,0) rectangle (1,1);
\end{tikzpicture}$. The positions having the first symbol 
are on the diagonal of a sub-unit of an order $\ge 3$ cell.

\subsection{\label{subsection.transport.information.system.counter}
Random channels}

In this section we give details on the transport of the system counter symbols 
through channels which depend on the modularity of the level of cells.

We describe in Section~\ref{section.modularity.mark.one.transport} how the information is 
transported to the border of the green area on Figure~\ref{figure.functional.diagram}.
In 
Section~\ref{section.modularity.mark.three.transport} we describe the transport to 
the border of the cell. This is where the transport depends on the modularity mark.

\subsubsection{\label{section.modularity.mark.one.transport} 
Transport of the simulating bit to the border of the modularity mark area}

\noindent \textbf{\textit{Symbols:}} \bigskip

This sublayer has symbols in $\mathcal{E}$ and 
a blank symbol.

\bigskip

\noindent \textbf{\textit{Local rules:}}

\begin{itemize}
\item \textbf{Localization:} 
\begin{itemize}
\item The non-blank symbols are superimposed on and only on sub-units $(\overline{k},\overline{4})$ 
for $k \in \llbracket 1 , 6 \rrbracket$ and $(\overline{2},\overline{k})$ for 
$k \in \llbracket 1,6\rrbracket$ when 
the modularity mark is $\overline{1}$. 
When the modularity mark 
is $\overline{3}$, these sub-units 
are $(\overline{k},\overline{1})$ 
for $k \in \llbracket 1 , 6 \rrbracket$ and $(\overline{5},\overline{k})$ for 
$k \in \llbracket 1,6\rrbracket$.
When the modularity mark is $\overline{0}$ or $\overline{2}$, all 
the symbols are blank.

\item On the sub-units $(\overline{k},\overline{4})$ or 
$(\overline{k},\overline{1})$ the non-blank symbols are on 
the leftmost column. On sub-units $(\overline{2},\overline{k})$ or 
$(\overline{5},\overline{k})$ they are on the bottommost row.
\end{itemize}
\item \textbf{Transmission:}
The symbol is transmitted through sub-units $(\overline{k},\overline{4})$ or 
$(\overline{k},\overline{1})$ and through sub-units $(\overline{2},\overline{k})$ or 
$(\overline{5},\overline{k})$.
\item \textbf{Synchronization:}
On the sub-unit $(\overline{5},\overline{2})$ or $(\overline{2},\overline{5})$, on 
the leftmost and bottommost position, the symbol is equal to the symbols 
on the top, bottom, left and right in this layer, and to the first bit 
of the second value of the system counter. 
\end{itemize}

\subsubsection{\label{section.modularity.mark.three.transport}
Transport to the border of the cell}

In this layer we describe how this simulating bit is transported to the 
border of the cells. This transport use channels which depend on the modularity mark, 
so that it is 'not known' locally, on the location of the system bits, if these bits are simulated 
by the system counter, since while the counter is not 'seen' locally it is still possible that the 
counter undertook the other channel.

\noindent \textbf{\textit{Symbols:}} \bigskip

This sublayer has symbols in $\mathcal{E}$ and 
a blank symbol.

\bigskip

\noindent \textbf{\textit{Local rules:}}

\begin{itemize}
\item \textbf{Localization:} 
The non-blank symbols can be superimposed only on the bottom line 
of sub-units $(\overline{0},\overline{1})$, $(\overline{0},\overline{4})$, 
$(\overline{7},\overline{1})$, $(\overline{7},\overline{4})$ and the 
leftmost column of sub-units $(\overline{2},\overline{0})$, 
$(\overline{2},\overline{7})$, $(\overline{5},\overline{0})$, $(\overline{5},\overline{7})$.
\item \textbf{Transmission:}
Through the border of these sub-units with the 
modularity mark area, the symbol is transmitted.
\item \textbf{Evaluation:}
Through the border with the cell, the symbol 
is equal to the system bit if not blank.
\end{itemize}

\subsubsection{Global behavior}

\begin{figure}[ht]
\[\begin{tikzpicture}[scale=0.4]

\fill[YellowGreen!30] (1,1) rectangle (7,7);
\fill[orange!50] (2,5) rectangle (3,6);

\draw (0,0) grid (8,8);

\draw[orange,-latex,line width=0.4mm] 
(2,-0.5) -- (2,8.5);
\draw[orange,-latex,line width=0.4mm] 
(-0.5,4) -- (8.5,4);

\node at (4,-2) {Modularity mark $\overline{1}$};

\begin{scope}[xshift=12cm]

\fill[YellowGreen!30] (1,1) rectangle (7,7);
\fill[orange!50] (5,2) rectangle (6,3);

\draw (0,0) grid (8,8);

\draw[orange,-latex,line width=0.4mm] 
(-0.5,1) -- (8.5,1);
\draw[orange,-latex,line width=0.4mm] 
(5,-0.5) -- (5,8.5);

\node at (4,-2) {Modularity mark $\overline{3}$};

\end{scope}
\end{tikzpicture}\]
\caption{\label{figure.information.transfer.to.the.border} Random channels 
and communication of the system counter information to the border of the cell}
\end{figure}
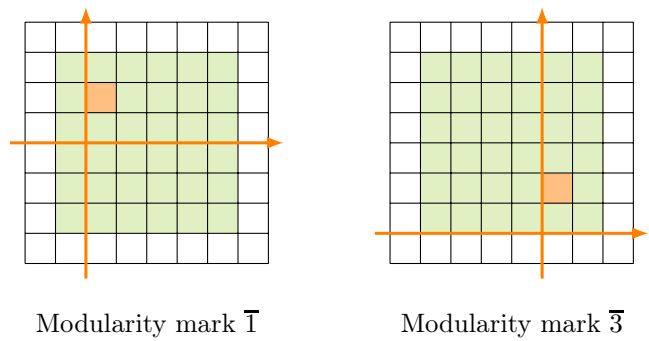

The consequence of the local rules is that the symbols of the system counter layer are transported through 
channels depending on the modularity mark, as on Figure~\ref{figure.information.transfer.to.the.border}.
Moreover, when crossing the location of the system bit, meaning the south or the west border of the cell, 
the system bit is determined by the counter value, according to the specified position in the counter.

\subsection{\label{section.transport.linear.counter} Transport of the linear counter}

In this section we describe the transport of information relative to the linear counter:
simple transport in Section~\ref{section.transport.linear.counter} 
and extraction towards the machine area in Section~\ref{section.extraction.linear.counter}.

\subsubsection{\label{section.transport.linear.counter} Transmission of the counter information}

\noindent \textbf{\textit{Symbols:}} \bigskip

The symbols in this layer are elements of the sets $\mathcal{A}_c \times \{\begin{tikzpicture}[scale=0.3]
\fill[Salmon] (0,0) rectangle (1,1);
\draw (0,0) rectangle (1,1);
\end{tikzpicture}, \begin{tikzpicture}[scale=0.3]
\draw (0,0) rectangle (1,1);
\end{tikzpicture}\}$, $\mathcal{A}_c ^2 \times \{\begin{tikzpicture}[scale=0.3]
\fill[Salmon] (0,0) rectangle (1,1);
\draw (0,0) rectangle (1,1);
\end{tikzpicture}, \begin{tikzpicture}[scale=0.3]
\draw (0,0) rectangle (1,1);
\end{tikzpicture}\}$ 
and $\{\begin{tikzpicture}[scale=0.3]
\fill[Salmon] (0,0) rectangle (1,1);
\draw (0,0) rectangle (1,1);
\end{tikzpicture}, \begin{tikzpicture}[scale=0.3]
\draw (0,0) rectangle (1,1);
\end{tikzpicture}\}$ and a blank symbol. The set $\mathcal{A}_c$ is described in Annex~\ref{section.linear.counter}.
\bigskip

\noindent \textbf{\textit{Local rules:}}

\begin{itemize}
\item \textbf{Localization:}
\begin{itemize}
\item The non-blank symbols are superimposed on sub-units $(\overline{0},\overline{3})$, 
$(\overline{0},\overline{7})$, $(\overline{1},\overline{3})$, $(\overline{1},\overline{4})$, 
$(\overline{1},\overline{5})$, $(\overline{1},\overline{6})$, $(\overline{1},\overline{7})$, 
$(\overline{3},\overline{k})$ for all $k \in \llbracket 0,7\rrbracket$, $(\overline{4},\overline{3})$, 
$(\overline{5},\overline{3})$, $(\overline{6},\overline{3})$, $(\overline{7},\overline{3})$, 
$(\overline{7},\overline{4})$, $(\overline{7},\overline{5})$, $(\overline{7},\overline{6})$, 
$(\overline{7},\overline{7})$, $(\overline{6},\overline{7})$, $(\overline{5},\overline{7})$ and 
$(\overline{4},\overline{7})$.
\item On these sub-units, all the positions of the cytoplasm are superimposed with a color in 
$\{\begin{tikzpicture}[scale=0.3]
\fill[Salmon] (0,0) rectangle (1,1);
\draw (0,0) rectangle (1,1);
\end{tikzpicture}, \begin{tikzpicture}[scale=0.3]
\draw (0,0) rectangle (1,1);
\end{tikzpicture}\}$.
\item On sub-units $(\overline{0},\overline{3})$, 
$(\overline{0},\overline{7})$, $(\overline{4},\overline{3})$, 
$(\overline{4},\overline{7})$, $(\overline{6},\overline{3})$, 
$(\overline{6},\overline{7})$,
the information transfer lines are superimposed also with an element 
of $\mathcal{A}_c$.
\item On sub-units $(\overline{1},\overline{3})$, 
$(\overline{1},\overline{7})$, $(\overline{3},\overline{3})$, $(\overline{3},\overline{5})$
$(\overline{3},\overline{7})$, $(\overline{5},\overline{3})$, 
$(\overline{5},\overline{7})$, $(\overline{7},\overline{3})$, $(\overline{7},\overline{5})$
$(\overline{7},\overline{7})$ the computation positions are superimposed with an 
element of $\mathcal{A}_c ^2$ and the other information transfer lines and columns are superimposed 
with an element of $\mathcal{A}_c$.
\item On the other sub-units in the first localization rule, the information transfer columns are superimposed 
with an element of $\mathcal{A}_c$.
\end{itemize}
\item \textbf{Transmission:}
\begin{itemize}
\item The symbols in $\mathcal{A}_c$ are transmitted along the information transfer lines 
and columns. In sub-units of the third localization rule, the propagation is stopped 
at computation positions. 
\item On these positions, the first symbol of $\mathcal{A}_c ^2$ is transmitted vertically and 
the other one horizontally.
\item Across the border of two horizontally (resp. vertically) adjacent of these sub-units, the 
second (resp. first symbol) or the unique symbol in $\mathcal{A}_c$ is transmitted. 
\end{itemize}
\item \textbf{Deviation:}
On the positions marked with \begin{tikzpicture}[scale=0.4,baseline = 0.5mm]
\draw[line width = 0.3mm] (0,0) -- (1,1);
\draw (0,0) rectangle (1,1);
\end{tikzpicture}, on the sub-units of the third localization 
rule, the two elements of the pair $\mathcal{A}_c^2$ are equal.
\end{itemize}

\noindent \textbf{\textit{Global behavior:}}

The information of the linear counter is transported through a circuit 
in all the cells, which is represented by blue arrows on Figure~\ref{figure.functional.diagram}.

\subsubsection{\label{section.extraction.linear.counter} Extraction mechanism}

\noindent \textbf{\textit{Symbols:}} \bigskip

Elements of $\mathcal{A}' \times \mathcal{Q} \times \{\texttt{on},\texttt{off}\}$, of 
$\mathcal{Q} \times \{\texttt{on},\texttt{off}\}$, of $\mathcal{Q}$, of $\{\rightarrow,\leftarrow\}$ and a blank symbol.

\bigskip

\noindent \textbf{\textit{Local rules:}} 

\begin{itemize}
\item \textbf{Localization:} the non-blank symbols are superimposed on 
information transfer lines of sub-units $(\overline{4},\overline{5})$ (elements 
of $\mathcal{Q} \times \{\texttt{on},\texttt{off}\}$) and $(\overline{6},\overline{5})$ (elements of $\mathcal{Q}$)
and on the information transfer columns of sub-units $(\overline{5},\overline{4})$ (elements 
of $\mathcal{A}' \times \mathcal{Q} \times \{\texttt{on},\texttt{off}\}$) 
and $(\overline{5},\overline{6})$ (elements of $\{\rightarrow,\leftarrow\}$).
\item \textbf{Transmission:} 
Across the border of these sub-units with sub-units of the last sub-layer, the symbol 
is equal to the corresponding part of $\mathcal{A}_c$ in the first (resp. second) symbol of the pair for 
sub-units $(\overline{5},\overline{4})$ and $(\overline{5},\overline{6})$ (resp. $(\overline{4},\overline{5})$
and $(\overline{6},\overline{5})$).  
\end{itemize}

\subsection{\label{section.transport.intercellular} 
Intercellular transport}

The last transport mechanism to describe 
is the intercellular transport, which allows the 
counters values to be synchronized between cells 
having the same level and in the same section $\Z_c^2$. \bigskip

\noindent \textbf{\textit{Symbols:}} 

The symbols of this layer are symbols in $\mathcal{G}$ or $\mathcal{G}^2$, 
where symbols of $\mathcal{G}$ are pairs of a symbol in the alphabet of the linear counter 
together with symbol in the simulating bit layer. We add also a blank symbol. \bigskip

\noindent \textbf{\textit{Local rules:}} 

\begin{itemize}
\item \textbf{Localization:}
The non blank symbols are superimposed on and only on positions 
with a blue corner in the structure layer and a non-blue symbol 
in the functional areas layer [Annex~\ref{subsection.functional.areas.whole.cell}], meaning 
non-computation positions. On the positions with an arrow, the symbol 
is in $\mathcal{G}$, and on the position with a light gray symbol, 
\begin{tikzpicture}[scale=0.3]
\fill[gray!20] (0,0) rectangle (1,1);
\draw (0,0) rectangle (1,1);
\end{tikzpicture}, the symbol is in $\mathcal{G}^2$.
\item \textbf{Transmission:} 
\begin{itemize}
\item On the positions with 
\begin{tikzpicture}[scale=0.3]
\fill[gray!20] (0,0) rectangle (1,1);
\draw (0,0) rectangle (1,1);
\end{tikzpicture}, the first (resp. second) symbol is transmitted to the next 
functional positions vertically (resp. horizontally).
\item On positions with arrow symbol, the symbol is transmitted to the next 
position in direction orthogonal to the arrow.
\end{itemize}
\end{itemize}

\noindent \textbf{\textit{Global behavior:}} \bigskip

The consequence of the local rules is that the information 
of the linear counter of each cell , as well as 
the simulating bit if any, is transmitted to the next 
cells having the same order in directions $\vec{e}^1$ and $\vec{e}^2$. 
Thus, these informations are synchronized for cells having the same level 
over a section.

\section{Proof of Theorem~\ref{theorem.main.introduction}}

\subsection{Simulation}

In this section we prove that for any 
of the dynamical systems $(Z,f)$, the subshift $X_{(Z,f)}$ simulates 
$(Z,f)$, by defining a function $\varphi$. \bigskip

Given a configuration $x$ of $X_{(Z,f)}$, $\varphi(x)$ is the sequence 
$(\epsilon_n)_n$ such that for all $n$, $\epsilon_n$ is the system bit 
written on order $2n$ two-dimensional cells in section $\Z^2_{0}$. 

This function is \textbf{computable}: in order to compute $\varphi(x)$, 
consider successively the patterns $x_{\llbracket -N,N\rrbracket ^3 \cap \Z^2_{0}}$ 
for all $N$ and for all $n \ge N$ search for a complete order $2n$ two-dimensional cell. 
If there is one, then $\varphi(x)_n$ is the system bit with which the border of the cell 
is colored. 

Moreover, this function is \textbf{onto}. Indeed, from any configuration $z$ in $Z$, 
one can construct some configuration $x$ in $X_{(Z,f)}$ whose image by $\varphi$ is $z$, 
by writing the sequences $f^{c} (z)$ in sections $\Z^2_c$. This is possible since $f$ is 
onto. 

\subsection{Minimality}

\subsubsection{Completing patterns}

\begin{proposition}
\label{proposition.completing.into.stacks}
Any pattern in the language of $X_{(Z,f)}$ is sub-pattern of 
a pattern on some $\llbracket n_1 , m_1 \rrbracket \times \llbracket n_2 , m_2 \rrbracket  \times 
\llbracket n_3 , m_3 \rrbracket $ in the language of $X_{(Z,f)}$
whose projection on a section $\Z^2_c$ is a cell 
of the Robinson subshift.
\end{proposition}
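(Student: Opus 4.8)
The plan is to realize the completion purely at the level of the structure (Robinson) layer and then lift it to $X_{(Z,f)}$ by restriction. Start from a pattern $P$ in $\mathcal{L}(X_{(Z,f)})$ on support $\mathbb{U}$; by definition there is a configuration $x \in X_{(Z,f)}$ with $x_{\mathbb{U}} = P$. Since the support of any pattern is finite, I may enlarge $\mathbb{U}$ to a box $\llbracket a_1,b_1\rrbracket \times \llbracket a_2,b_2\rrbracket \times \llbracket a_3,b_3\rrbracket$ and replace $P$ by the restriction of $x$ to this box: the restriction of a configuration is always in the language and still admits $P$ as a sub-pattern, so it suffices to treat the case where $\supp{P}$ is itself a box.

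The heart of the argument is the hierarchical structure of the rigid Robinson subshift recalled in Annex~\ref{sec.valued.robinson}. In the structure layer the Robinson symbols are transported along $\vec{e}^3$, so the projection $\bar{x}$ of $x$ onto any section $\Z_c^2$ is one and the same Robinson configuration. I will use the standard property of Robinson-type subshifts that every pattern of $\mathcal{L}(X_{\texttt{R}})$ can be completed into a \emph{faultless} configuration, that is, one in which the horizontal projection of $\supp{P}$ is contained in a complete cell (supertile) of some arbitrarily large order $N$. Concretely, the order $n$ cells are nested, each being one of the four order $n-1$ cells sitting inside an order $n+1$ cell together with its connecting arms, and the only way a bounded region fails to lie inside a single cell is to straddle a fault, which is a global choice made at infinity. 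Since $P$ is in the language its internal structure is consistent with the hierarchy, so one may always extend it to a nested configuration in which supertiles of every order surround $\supp{P}$; picking one of order $N$ large enough that its footprint contains the horizontal projection of $\mathbb{U}$ yields a configuration $x'$ with $x'_{\mathbb{U}} = P$ whose Robinson structure has a complete order $N$ cell around the projection of $\mathbb{U}$.

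It then remains to read off the box. Let $\llbracket n_1,m_1\rrbracket \times \llbracket n_2,m_2\rrbracket$ be the planar support of this order $N$ cell, and let $\llbracket n_3,m_3\rrbracket$ be the $\vec{e}^3$-extent of $\mathbb{U}$. The pattern $Q := x'_{\llbracket n_1,m_1\rrbracket \times \llbracket n_2,m_2\rrbracket \times \llbracket n_3,m_3\rrbracket}$ is the restriction of an actual configuration, hence lies in $\mathcal{L}(X_{(Z,f)})$; it admits $P$ as a sub-pattern by construction; and since the structure layer is constant along $\vec{e}^3$, its projection on each section $\Z_c^2$ is exactly the order $N$ cell we chose. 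This is the required pattern.

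The step I expect to be the main obstacle is the faultless-completion property of the Robinson layer, that is, showing that \emph{any} language pattern, not merely a generic one, can be surrounded by a complete cell of arbitrarily large order while preserving its content on $\mathbb{U}$. This is where the precise combinatorics of the rigid version of the Robinson subshift of Annex~\ref{sec.valued.robinson} enter: one must verify that the local constraints carried by $P$ never force a fault through $\supp{P}$ at every order, so that the upward nesting can always be performed. Once this structural fact is in hand, the passage to $X_{(Z,f)}$ is immediate, since the auxiliary layers impose no obstruction to restriction.
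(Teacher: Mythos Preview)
Your argument has a genuine gap in the passage from the Robinson layer to the full subshift. You write that ``the passage to $X_{(Z,f)}$ is immediate, since the auxiliary layers impose no obstruction to restriction,'' but the operation you need is not restriction: it is \emph{extension}. You must produce a configuration $x' \in X_{(Z,f)}$ that agrees with $P$ on $\mathbb{U}$ \emph{and} whose structure layer contains a complete order-$N$ cell around the horizontal projection of $\mathbb{U}$. When $P$ happens to sit inside a single infinite supertile of $x$, the original $x$ already does the job and your restriction argument is fine. But in the case you yourself flag --- when the projection of $P$ straddles the fault between infinite supertiles --- the configuration $x$ contains no such cell, and you have to build the surrounding cell by hand. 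At that moment the structure layer outside $\mathbb{U}$ is being changed, and every other layer (functional areas, modularity marks, linear counter, system counter, machine layer, information transport, system bits) must be filled in consistently with both the new structure and with what $P$ already fixes on $\mathbb{U}$. Nothing you have said shows this is possible.

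This completion of the auxiliary layers is in fact the bulk of the paper's proof. It is not routine: the order $n_0$ of the enclosing cell cannot be chosen freely but is constrained by whatever fragments of the modularity mark or of the random channels are already visible in $P$; the machine sub-unit must be completed so that the error/empty-tape/empty-sides signals remain coherent (done by setting the unknown $\texttt{on}/\texttt{off}$ columns to $\texttt{off}$ and choosing the error-propagation arrow accordingly); the counter areas require case analysis on whether $P$ sees the freezing signal; and the system bits for the new outermost level must be chosen compatibly with the (possibly partially visible) system counter. Your proposal correctly isolates the Robinson-layer completion (which is Lemma~\ref{prop.complete.rob}) but then stops exactly where the real work begins.
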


Let $P$ some $n$ block in the language of $X_{(Z,f)}$ which appears in some 
configuration $x$.
Let us prove that it can be completed into a stack of two-dimensional cells.
We follow some order in the layers for the completion. 
First we complete the pattern in the structure layer, 
then in the functional areas layer, the linear counter and machine layers 
and then the system counter.

\paragraph{Completion of the structure and functional areas}

When the projection of the pattern $P$ on a section $\Z^2_c$ 
and on the Robinson sublayer is a sub-pattern of an infinite supertile of $x$
in $\Z^2_c$, 
this is clear that the projection of $P$ into 
the structure layer can be completed into a finite supertile in 
the configuration $x$.

When this projection crosses the separating area 
between the supports of the infinite supertiles, as 
in the proof of Proposition~\ref{prop.complete.rob}, 
we can still complete 
the projection of $P$ over the structure layer into great enough supertile, 
this time the supertile does not appear directly in the configuration $x$.

This supertile can be completed then into a cell. The order of this cell is 
denoted $n_0$. When the part of the cell containing the modularity mark of 
this cell is known, we choose $n_0$ according to this mark (there is no contradiction 
with other information since the system bits are not known in this part). 
When this is not the case, we choose $n_0$ to be odd, and the modularity 
is chosen according to the location of the random channel if there is any 
in the pattern $P$. 

Then we complete the functional areas layer according to the hierarchical signaling process.

At this point, we have a completion of the pattern $P$ in the structure layer 
and functional areas layer as a stack of cells.

\paragraph{Completion of the counters and machines computations}

In the following paragraphs, we tell 
how to complete the pattern over this stack of cells
in the linear counter, system counter and machines layers. 
For the order $n < n_0$ cell, these layers can be completed according 
to the configuration $x$. The difficulty comes from completing 
these layers over the proper positions of the order $n_0$ cell.

The projection of the pattern $P$ over a section $\Z^2_c$ intersects 
at most four sub-units. We have to explain how to complete the pattern 
in a section according to the cases when it intersects (with increasing difficulty) information transport 
sub-units, demultiplexers, counters incrementation sub-units, machine sub-units. 
We can consider these intersections independently since when the pattern intersects two 
adjacent sub-units, the connection rule between the sub-units is verified inside the 
pattern $P$ and we complete the two sub-units so that the rules are verified.

\paragraph{Intersection with information transfer areas}

In the case when the pattern intersects information transfer areas, the only restriction 
is that the added symbols in the direction of transfer agree with the symbols 
in $P$, which is always possible.

\paragraph{Intersection with a demultiplexer}

When the pattern intersects a demultiplexer, there are two possible cases: the 
pattern intersects the synchronizing diagonal or not. Let us consider the second case first. 
In this case, we can consider without loss of generality that 
the known part of the demultiplexer is the south east part. In this case, we have 
to complete the leftmost columns and the topmost rows according to the pattern $P$. 
This is possible since $P$ does not intersect these columns and rows. All the other 
symbols can be chosen freely. In the first case, one the only restriction 
is that the added columns and rows agree on the diagonal, and again this is possible 
since the pattern $P$ does not intersect them.

\paragraph{Intersection with the linear counter incrementation sub-unit} 

Considering the intersection with the linear counter incrementation sub-unit, 
the completion depends on where the pattern intersects it.

\begin{enumerate}
\item when the pattern intersects the east part or 
only the inside of the area, the completion 
is as for information transfer areas;
\item when knowing a part 
of the west, the symbols are added according to the freezing signal 
and the incrementation signal. 
If the freezing signal on the two sides of the leftmost column 
is different, the freezing signal is imposed to be \begin{tikzpicture}[scale=0.3]
\fill[Salmon] (0,0) rectangle (1,1);
\draw (0,0) rectangle (1,1);
\end{tikzpicture} all along the column and the symbols 
are forced to be $c_{\texttt{max}}$. When this is not 
the case, the symbols 
can be chosen freely except that one 
of them has to be different from $c_{\texttt{max}}$. On the bottom however, if there is 
a freezing signal \begin{tikzpicture}[scale=0.3]
\fill[Salmon] (0,0) rectangle (1,1);
\draw (0,0) rectangle (1,1);
\end{tikzpicture} in the known part, the symbols 
have to be $c_{\texttt{max}}$, which is coherent 
with the knowledge the pattern has. If not, then these symbols 
can also be chosen freely.
\end{enumerate} 

The completion of the system counter area is similar. The main 
difference is the rotation mechanism which implies no 
difficulty for the completion. 

\paragraph{Intersection with the machine area} 

This part is similar to the corresponding part in~\cite{GS17ED} and 
we refer to Annex~\ref{section.completing.machines}.

\paragraph{Intersection with the system bits}

When the pattern intersects the system bits location, these bits 
are chosen according to the configuration $x$. When this 
is not the case, $n_0$ is chosen odd and we choose the bits 
for the order $n_0$ cell according to the 
value of the system counter. 

\subsubsection{Recovering system bits and counter values}

In this section we prove that the subshift $X_{(Z,f)}$ is minimal. 
This proof relies first on the following lemma: 

\begin{lemma}[Globach's theorem] \label{lm.fermat.numbers}
The numbers $F_i = 2^{2^i}+1$, $n \ge 0$ are coprime. 
\end{lemma}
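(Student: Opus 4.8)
The plan is to reduce the pairwise coprimality to the classical telescoping product identity for Fermat numbers, namely
\[
\prod_{i=0}^{n-1} F_i = F_n - 2 \qquad (n \ge 1).
\]
First I would establish this identity by induction on $n$. The base case $n=1$ reads $F_0 = 3 = F_1 - 2 = 5 - 2$. For the inductive step the key algebraic fact is the factorization $2^{2^n} - 1 = \bigl(2^{2^{n-1}} - 1\bigr)\bigl(2^{2^{n-1}} + 1\bigr)$, which rewrites as $F_n - 2 = (F_{n-1} - 2)\,F_{n-1}$; combining this with the induction hypothesis $F_{n-1} - 2 = \prod_{i=0}^{n-2} F_i$ yields the claimed product.

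Granting the identity, I would finish as follows. Fix indices $i < n$ and set $d = \gcd(F_i, F_n)$. Since $F_i$ is one of the factors of $\prod_{j=0}^{n-1} F_j = F_n - 2$, it divides $F_n - 2$, hence so does $d$. As $d$ also divides $F_n$, it divides the difference $F_n - (F_n - 2) = 2$, so $d \in \{1,2\}$. But every Fermat number is odd, since $2^{2^k}$ is even and $F_k = 2^{2^k}+1$; therefore $d$ is odd, and $d = 1$. This shows that $F_i$ and $F_n$ are coprime whenever $i \neq n$.

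The argument is entirely elementary, so there is no genuine obstacle: the only points requiring a moment's care are pinning down the base case of the induction correctly and invoking the parity of the Fermat numbers to exclude $d = 2$. This coprimality is precisely what is needed later, where the periods of the linear and system counters are distinct Fermat numbers and their pairwise coprimality underlies the Chinese-remainder style argument used to recover arbitrary sequences of counter values in the proof of minimality.
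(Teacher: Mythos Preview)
Your proof is correct and is in fact the classical argument usually attributed to Goldbach. The paper takes a slightly different route: rather than proving the product identity $\prod_{i<n} F_i = F_n - 2$ by induction, it fixes $i>j$ and expands $F_i = (F_j - 1)^{2^{i-j}} + 1$ via the binomial theorem to obtain $F_i = F_j\cdot(\text{integer}) + 2$ directly. Both arguments arrive at the same key congruence $F_i \equiv 2 \pmod{F_j}$ and then finish identically by noting that Fermat numbers are odd. Your telescoping approach has the mild advantage of yielding the product formula as a byproduct, while the paper's binomial expansion avoids induction at the cost of a messier sum; neither is materially shorter or more general than the other. Your closing remark about how this coprimality feeds into the Chinese-remainder-type argument for the counters is exactly how the lemma is used downstream.
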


\begin{proof}
Let $i > j \ge 0$. Then 
\[F_i = 2^{2^i}+1 = (2^{2^j} + 1 - 1)^{2^{i-j}} +1 = \sum_{k=0}^{2^{i-j}} \binom{2^{i-j}}{k} 
(-1)^{-k} F_n^k + 1 \]
\[= F_j \sum_{k=1}^{2^{i-j}} \binom{2^{i-j}}{k} (-1)^{k} F_n^{k-1} + 2 \]
This means that a common divisor of $F_j$ and $F_i$ divide $2$, but $2$ does not divide $F_j$, 
so $F_j$ and $F_i$ are coprime. 
\end{proof}

Consider some block $P$ in the language of 
$X$, and complete it into a pattern $P'$ using Proposition~\ref{proposition.completing.into.stacks}. 
Pick some configuration $x' \in X$ and some stack of order $n_0$ cells 
in this configuration having the same height as $P'$. \bigskip

Let $\mathcal{T}$ be the following application: 
\[\begin{array}{ccccc} \mathcal{T} & : & \Z/p_1 \Z \times ... \times \Z/p_k \Z & \rightarrow & \Z/p_1 \Z \times ... \times \Z/p_k \Z\\
& & (i_1 , ... , i_k) & \mapsto & (i_1 + 4^{(2^k-1)p} , ... , i_k +1) 
\end{array},\] where $p_1$, ... , $p_k$ are 
the periods of $k$ linear counters contained in the 
order $n_0$ cell. This is a minimal application, 
meaning that for all $\vec{i},\vec{j} \in \Z/p_1 \Z \times ... \times \Z/p_k \Z$, 
there exists some $n$ such that $\mathcal{T}^n (\vec{i}) = \vec{j}$.
Indeed, considering some $\vec{i}$, denote $n_1$ the smallest 
positive integer such that $\mathcal{T}^{n_1} (\vec{i}) = \vec{i}$. 
This means that $p_j$ divides $n_1 4^{(2^k-2^j)p}$ for all $j$, 
and because $p_j$ is a Fermat number, it is odd, and 
this implies that $p_j$ divides $n_1$. 
For the numbers $p_j$ are coprime (Lemma~\ref{lm.fermat.numbers}), 
this implies that $p_1 \times ... \times p_j$  
divides $n_1$. Because this number is a period of the 
application $\mathcal{T}$, this means that $p_1 \times ... \times p_j$ is the smallest 
period of every element of $\Z/p_1 \Z \times ... \times \Z/p_k \Z$ under the application $\mathcal{T}$.

As a consequence, for all $\vec{i}$, the finite sequence $(\mathcal{T}^n (\vec{i}))$, for $n$ going 
from $0$ to $p_1 \times ... \times p_k-1$, takes all the possible values 
in $\Z/p_1 \Z \times ... \times \Z/p_k \Z$. \bigskip

By jumping from one order $n_0$ cell to the next one in direction $\vec{e}^2$
multiple time one can find in $x'$ a stack of cells so that 
the values of the linear counters in this pattern are those of $P'$.

By jumping in direction $\vec{e}^3$, and since 
the application \[\begin{array}{ccccc} \mathcal{T}' & : & \Z/q_1 \Z \times ... \times \Z/q_{k'} \Z & \rightarrow & \Z/q_1 \Z \times ... \times \Z/q_{k'} \Z\\
& & (i_1 , ... , i_{k'}) & \mapsto & (i_1 + 1 , ... , i_{k'} +1) 
\end{array},\]
where $q_1, .. , q_{k'}$ are the periods of the system counters contained in 
the pattern $P'$
is also minimal, one can find a stack of $n_0$ order cells 
with the same values for the system counters as in $P'$. Since 
the linear counters are synchronized in this direction, 
the values of the linear counters are also those of $P'$. \bigskip

Now we make use of the following lemma:

\begin{lemma}[\cite{DR17}]
\label{lemma.minimality.one.dimensional}
Let $W$ be a minimal $\Z$-subshift, $w$ one of
its elements and $p$ some pattern that appears in $w$ on position 
$u \in \Z$. For all integer $N>0$, there exists some $t$ integer 
such that $p$ appears in $x$ on position $u+Nt$.
\end{lemma}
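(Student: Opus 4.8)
The plan is to recast the conclusion as a recurrence statement and then exploit the rigidity that minimality of $(W,\sigma)$ forces on the subaction $\sigma^N$. Writing $y:=\sigma^u(w)$, the hypothesis that $p$ occurs in $w$ at position $u$ says exactly that $y$ lies in the relatively open, nonempty set $U:=[p]\cap W$ (the configurations of $W$ carrying $p$ at position $0$); and $p$ occurs in $w$ at position $u+Nt$ precisely when $\sigma^{Nt}(y)$ again lies in $U$. Setting $T:=\sigma^N$, it therefore suffices to show that the $T$-orbit of $y$ returns to the neighbourhood $U$ of $y$. For the statement exactly as written $t=0$ already works, but I will produce infinitely many admissible $t$, hence some $t\neq 0$; this stronger form is what the minimality argument of the previous section actually uses, since shifting by a multiple of $N$ preserves the counters while changing the non-simulated bits.

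The key step is a structural decomposition of $W$ under the $N\Z$-subaction. First I would pick, by Zorn's lemma, a $T$-minimal subset $M\subseteq W$. Since $\sigma$ commutes with $T$, every translate $\sigma^i(M)$ is again $T$-minimal, and the union $\bigcup_{i=0}^{N-1}\sigma^i(M)$ is closed, nonempty and $\sigma$-invariant; minimality of $(W,\sigma)$ then forces this union to equal all of $W$. As two $T$-minimal sets are either equal or disjoint, $W$ is a \emph{finite} disjoint union $W=\bigsqcup_k C_k$ of $T$-minimal sets, and being finitely many pairwise disjoint closed sets covering the compact space $W$, each $C_k$ is clopen. In particular every point of $W$, and so in particular $y$, belongs to one $T$-minimal component $C_{k_0}$.

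It then remains to invoke recurrence inside a minimal system: in the minimal system $(C_{k_0},T)$ the point $y$ is uniformly recurrent, so $T^t(y)$ returns to the neighbourhood $U\cap C_{k_0}$ of $y$ in $C_{k_0}$ for infinitely many integers $t$ (when $C_{k_0}$ is finite, $y$ is $T$-periodic and this is immediate). Each such $t$ gives $\sigma^{u+Nt}(w)\in[p]$, that is, an occurrence of $p$ in $w$ at position $u+Nt$, which proves the lemma, indeed with infinitely many admissible $t$.

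The point that must be handled with care is precisely the decomposition step. The naive worry is that the occurrences of $p$ might be confined to a single residue class modulo $N$ that misses the target class of $u$; a two-point periodic example (the orbit of $\overline{01}$ with $N=2$) shows that this genuinely can happen for an \emph{arbitrary} target, so the conclusion cannot hold in $W$ at large. What rescues the statement is that we anchor at the already-present occurrence $y$ and work inside the $\sigma^N$-minimal clopen piece containing it, rather than in $W$ as a whole; recurrence within that piece keeps us in the correct residue class automatically. Producing the clopen partition of $W$ into $\sigma^N$-minimal components is thus the main obstacle and the device that carries the proof.
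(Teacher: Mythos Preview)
Your argument is correct and complete. The paper does not supply its own proof of this lemma: it is quoted from~\cite{DR17} and used as a black box in the minimality argument for $X_{(Z,f)}$, so there is no in-paper proof to compare against.

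Your route---decomposing $W$ into finitely many clopen $\sigma^N$-minimal components and invoking uniform recurrence within the component containing $y=\sigma^u(w)$---is the standard and essentially the only way to prove this statement. Two remarks worth recording: first, you rightly observe that the lemma \emph{as literally stated} is trivial since $t=0$ works; what the paper actually needs (and what you prove) is the existence of infinitely many such $t$, so that one can shift in direction $\vec{e}^3$ by a nonzero multiple of $N$. Second, your diagnosis of the obstruction is exactly right: the two-point orbit of $\overline{01}$ under $\sigma^2$ shows that occurrences of a given pattern can be confined to a single residue class modulo $N$, so one cannot hope for an arbitrary target position; anchoring at the known occurrence $u$ and working inside the $\sigma^N$-minimal piece containing $\sigma^u(w)$ is precisely the device that keeps the residue class fixed.
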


We take $N= q_1 ... q_k$, and denote $h$ the height of $P'$, 
and $r$ the number of even level of cells 
included in $P'$. Let us consider $W$ the subshift 
on $(\mathcal{A}^r)^h$ such that for a position $\vec{u} \in \Z$, 
the symbol  on $\vec{u}+1$ has its $h-1$ first $r$-uplets 
equal to the $h-1$ last ones of the one on position $\vec{u}$.
Moreover, two consecutive of $r$-uplets have to be possibly completed 
into elements of $Z$ such that the second one 
is the image by $f$ of the first one. In other 
words, $W$ is obtained from $(Z,f)$ by factoring onto 
the $r$ first bits and then by contatenating $h$ consecutive of 
sequences of $r$ bits.

The subshift $W$ is minimal. By applying Lemma~\ref{lemma.minimality.one.dimensional}
to the sequence in $W$ obtained by restricting to the non simulated system bits 
in the columns of $x'$ containing the pattern $P'$, such that 
the position $\vec{0} \in \Z$ corresponds to the bottom section of $P'$, 
one can recover the non-simulated system bits of $P'$ by shifting $Nt$ 
times in direction $\vec{e}^3$, where $t$ the integer given by the lemma. 
This does not change the linear counter values or the system counter values, 
since $N$ is a multiple of all the periods 
of the system counters contained in the stack of cells.

As a consequence, $P'$ appears in $x'$. This proves the minimality.

\begin{figure}[ht]
\[\begin{tikzpicture}[scale=0.15]
\begin{scope}
\draw (0,0) -- (5,0) ;
\draw[dashed] (0,0) -- (3.5,1.25) ;
\draw[dashed] (3.5,1.25) -- (8.5,1.25);
\draw (8.5,1.25) -- (5,0);

\draw (0,1) -- (5,1) ;
\draw[dashed] (0,1) -- (3.5,2.25) ;
\draw[dashed] (3.5,2.25) -- (8.5,2.25);
\draw (8.5,2.25) -- (5,1);

\draw (0,2) -- (5,2);
\draw[dashed] (0,2) -- (3.5,3.25) ;
\draw[dashed] (3.5,3.25) -- (8.5,3.25);
\draw (8.5,3.25) -- (5,2);

\draw (0,3) -- (5,3);
\draw[dashed] (0,3) -- (3.5,4.25) ;
\draw[dashed] (3.5,4.25) -- (8.5,4.25);
\draw (8.5,4.25) -- (5,3);

\draw (0,4) -- (5,4) ;
\draw[dashed] (0,4) -- (3.5,5.25) ;
\draw[dashed] (3.5,5.25) -- (8.5,5.25);
\draw (8.5,5.25) -- (5,4);

\draw (0,5) -- (5,5) ;
\draw (0,5) -- (3.5,6.25) ;
\draw(3.5,6.25) -- (8.5,6.25);
\draw (8.5,6.25) -- (5,5);

\end{scope}

\begin{scope}[yshift=8cm]
\draw (0,0) -- (5,0) ;
\draw[dashed] (0,0) -- (3.5,1.25) ;
\draw[dashed] (3.5,1.25) -- (8.5,1.25);
\draw (8.5,1.25) -- (5,0);

\draw (0,1) -- (5,1) ;
\draw[dashed] (0,1) -- (3.5,2.25) ;
\draw[dashed] (3.5,2.25) -- (8.5,2.25);
\draw (8.5,2.25) -- (5,1);

\draw (0,2) -- (5,2);
\draw[dashed] (0,2) -- (3.5,3.25) ;
\draw[dashed] (3.5,3.25) -- (8.5,3.25);
\draw (8.5,3.25) -- (5,2);

\draw (0,3) -- (5,3);
\draw[dashed] (0,3) -- (3.5,4.25) ;
\draw[dashed] (3.5,4.25) -- (8.5,4.25);
\draw (8.5,4.25) -- (5,3);

\draw (0,4) -- (5,4) ;
\draw[dashed] (0,4) -- (3.5,5.25) ;
\draw[dashed] (3.5,5.25) -- (8.5,5.25);
\draw (8.5,5.25) -- (5,4);

\draw (0,5) -- (5,5) ;
\draw (0,5) -- (3.5,6.25) ;
\draw (3.5,6.25) -- (8.5,6.25);
\draw (8.5,6.25) -- (5,5);

\draw[-latex] (9.5,0) -- (9.5,10);
\node at (14.5,5) {$Nt.\sigma^{\vec{e}^3}$};

\end{scope}

\begin{scope}[yshift=18cm]

\node at (-2,2.5) {$P'$};
\draw (0,0) -- (5,0) ;
\draw[dashed] (0,0) -- (3.5,1.25) ;
\draw[dashed] (3.5,1.25) -- (8.5,1.25);
\draw (8.5,1.25) -- (5,0);

\draw (0,1) -- (5,1) ;
\draw[dashed] (0,1) -- (3.5,2.25) ;
\draw[dashed] (3.5,2.25) -- (8.5,2.25);
\draw (8.5,2.25) -- (5,1);

\draw (0,2) -- (5,2);
\draw[dashed] (0,2) -- (3.5,3.25) ;
\draw[dashed] (3.5,3.25) -- (8.5,3.25);
\draw (8.5,3.25) -- (5,2);

\draw (0,3) -- (5,3);
\draw[dashed] (0,3) -- (3.5,4.25) ;
\draw[dashed] (3.5,4.25) -- (8.5,4.25);
\draw (8.5,4.25) -- (5,3);

\draw (0,4) -- (5,4) ;
\draw[dashed] (0,4) -- (3.5,5.25) ;
\draw[dashed] (3.5,5.25) -- (8.5,5.25);
\draw (8.5,5.25) -- (5,4);

\draw (0,5) -- (5,5) ;
\draw (0,5) -- (3.5,6.25) ;
\draw (3.5,6.25) -- (8.5,6.25);
\draw (8.5,6.25) -- (5,5);
\end{scope}

\begin{scope}[xshift=-20cm]
\draw (0,0) -- (5,0) ;
\draw[dashed] (0,0) -- (3.5,1.25) ;
\draw[dashed] (3.5,1.25) -- (8.5,1.25);
\draw (8.5,1.25) -- (5,0);

\draw (0,1) -- (5,1) ;
\draw[dashed] (0,1) -- (3.5,2.25) ;
\draw[dashed] (3.5,2.25) -- (8.5,2.25);
\draw (8.5,2.25) -- (5,1);

\draw (0,2) -- (5,2);
\draw[dashed] (0,2) -- (3.5,3.25) ;
\draw[dashed] (3.5,3.25) -- (8.5,3.25);
\draw (8.5,3.25) -- (5,2);

\draw (0,3) -- (5,3);
\draw[dashed] (0,3) -- (3.5,4.25) ;
\draw[dashed] (3.5,4.25) -- (8.5,4.25);
\draw (8.5,4.25) -- (5,3);

\draw (0,4) -- (5,4) ;
\draw[dashed] (0,4) -- (3.5,5.25) ;
\draw[dashed] (3.5,5.25) -- (8.5,5.25);
\draw (8.5,5.25) -- (5,4);

\draw (0,5) -- (5,5) ;
\draw (0,5) -- (3.5,6.25) ;
\draw (3.5,6.25) -- (8.5,6.25);
\draw (8.5,6.25) -- (5,5);
\draw[-latex] (17.5,-1) -- (20,-1);
\node at (15,-2) {${\mathcal{T}}^{n_1}$};
\end{scope}

\begin{scope}[xshift=-30cm]
\draw (0,0) -- (5,0) ;
\draw[dashed] (0,0) -- (3.5,1.25) ;
\draw[dashed] (3.5,1.25) -- (8.5,1.25);
\draw (8.5,1.25) -- (5,0);

\draw (0,1) -- (5,1) ;
\draw[dashed] (0,1) -- (3.5,2.25) ;
\draw[dashed] (3.5,2.25) -- (8.5,2.25);
\draw (8.5,2.25) -- (5,1);

\draw (0,2) -- (5,2);
\draw[dashed] (0,2) -- (3.5,3.25) ;
\draw[dashed] (3.5,3.25) -- (8.5,3.25);
\draw (8.5,3.25) -- (5,2);

\draw (0,3) -- (5,3);
\draw[dashed] (0,3) -- (3.5,4.25) ;
\draw[dashed] (3.5,4.25) -- (8.5,4.25);
\draw (8.5,4.25) -- (5,3);

\draw (0,4) -- (5,4) ;
\draw[dashed] (0,4) -- (3.5,5.25) ;
\draw[dashed] (3.5,5.25) -- (8.5,5.25);
\draw (8.5,5.25) -- (5,4);

\draw (0,5) -- (5,5) ;
\draw (0,5) -- (3.5,6.25) ;
\draw (3.5,6.25) -- (8.5,6.25);
\draw (8.5,6.25) -- (5,5);
\node at (-1.5,-1.5) {$\vec{u}$};
\draw[-latex] (5,-1) -- (10,-1);
\node at (7.5,-2) {$\mathcal{T}$};
\end{scope}

\end{tikzpicture}\]
\caption{\label{fig.minimality.proof.intro} Schema of the 
proof for the minimality property of $X_{(Z,f)}$.}
\end{figure}
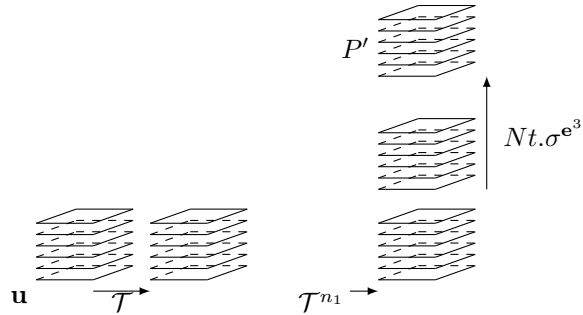

 \appendix

\section{\label{section.rigid.robinson} A rigid three-dimensional version of the  Robinson subshift}

The Robinson subshift was constructed by R. Robinson~\cite{R71} in order to prove 
undecidability results. It has been used by in other constructions 
of subshifts of finite type as a structure layer~\cite{Pavlov-Schraudner-2014}.

In this section, we present a 
version of this subshift which is 
adapted to constructions under the 
dynamical constraints 
that we consider. In order to understand this 
section, it is preferable to read before the description 
of the Robinson subshift done in~\cite{R71}. 
The results of this section are well known. 
We refer to~\cite{R71} and \cite{GS17ED}. 

Let us denote $X_{\texttt{R}}$ this subshift, which is constructed as the product of two layers.
We present the first layer in Section~\ref{sec.valued.robinson}, and then 
describe some hierarchical structures appearing in this layer in Section~\ref{sec.hierarchical.structures}.
In Section~\ref{sec.alignment.positioning}, we describe the second layer. This layer allows 
adding rigidity to 
the first layer, in order 
to enforce dynamical properties.

\subsection{\label{sec.valued.robinson} Robinson subshift}

The first layer has the following {\textit{symbols}}, and their transformation by 
rotations by $\frac{\pi}{2}$, $\pi$ or $\frac{3\pi}{2}$: 

\[\begin{tikzpicture}
\draw (0,0) rectangle (1.2,1.2) ;
\draw (0.6,1.2) -- (0.6,1.1);
\draw [-latex] (0.6,0.7) -- (0.6,0) ;
\draw (0,0.6) -- (0.2,0.6);
\draw [-latex] (0.4,0.6) -- (0.6,0.6) ; 
\draw [-latex] (1.2,0.6) -- (0.6,0.6) ;
\node[scale=1,color=gray!90] at (0.6125,0.9) {\textbf{i}};
\node[scale=1,color=gray!90] at (0.3,0.6) {\textbf{j}};
\end{tikzpicture} \ \
\begin{tikzpicture}
\draw (0,0) rectangle (1.2,1.2) ;
\draw (0.6,1.2) -- (0.6,1.1);
\draw [-latex] (0.6,0.7) -- (0.6,0) ;
\draw [-latex] (0,0.6) -- (0.6,0.6) ; 
\draw [-latex] (0,0.3) -- (0.6,0.3) ; 
\draw [-latex] (1.2,0.6) -- (0.6,0.6) ;
\draw [-latex] (1.2,0.3) -- (0.6,0.3) ;
\node[scale=1,color=gray!90] at (0.6125,0.9) {\textbf{i}};
\node[scale=1,color=gray!90] at (0.3,0.45) {\textbf{j}};
\end{tikzpicture} \ \ \begin{tikzpicture}
\draw (0,0) rectangle (1.2,1.2) ;
\draw [-latex] (0.6,1.2) -- (0.6,0) ;
\draw [-latex] (0.3,1.2) -- (0.3,0) ; 
\draw [-latex] (0,0.6) -- (0.3,0.6) ; 
\draw [-latex] (0.8,0.6) -- (0.6,0.6) ;
\draw (1.2,0.6) -- (1,0.6);
\node[scale=1,color=gray!90] at (0.45,0.9) {\textbf{i}};
\node[scale=1,color=gray!90] at (0.9,0.6) {\textbf{j}};
\end{tikzpicture} \ \  \begin{tikzpicture}
\draw (0,0) rectangle (1.2,1.2) ;
\draw [-latex] (0.6,1.2) -- (0.6,0) ;
\draw [-latex] (0.9,1.2) -- (0.9,0) ; 
\draw (0,0.6) -- (0.2,0.6);
\draw [-latex] (0.4,0.6) -- (0.6,0.6) ; 
\draw [-latex] (1.2,0.6) -- (0.9,0.6) ;
\node[scale=1,color=gray!90] at (0.75,0.9) {\textbf{i}};
\node[scale=1,color=gray!90] at (0.3,0.6) {\textbf{j}};
\end{tikzpicture} \ \ 
\begin{tikzpicture}
\draw (0,0) rectangle (1.2,1.2) ;
\draw [-latex] (0.6,1.2) -- (0.6,0) ;
\draw [-latex] (0.3,1.2) -- (0.3,0) ; 
\draw [-latex] (0,0.6) -- (0.3,0.6) ; 
\draw [-latex] (1.2,0.6) -- (0.6,0.6) ;
\draw [-latex] (0,0.3) -- (0.3,0.3) ; 
\draw [-latex] (1.2,0.3) -- (0.6,0.3) ;
\node[scale=1,color=gray!90] at (0.9,0.45) {\textbf{i}};
\node[scale=1,color=gray!90] at (0.45,0.9) {\textbf{j}};
\end{tikzpicture} \ \
\begin{tikzpicture}
\draw (0,0) rectangle (1.2,1.2) ;
\draw [-latex] (0.6,1.2) -- (0.6,0) ;
\draw [-latex] (0.9,1.2) -- (0.9,0) ; 
\draw [-latex] (0,0.6) -- (0.6,0.6) ; 
\draw [-latex] (1.2,0.6) -- (0.9,0.6) ;
\draw [-latex] (0,0.3) -- (0.6,0.3) ; 
\draw [-latex] (1.2,0.3) -- (0.9,0.3) ;
\node[scale=1,color=gray!90] at (0.3,0.45) {\textbf{i}};
\node[scale=1,color=gray!90] at (0.75,0.9) {\textbf{j}};
\end{tikzpicture} \ \ \begin{tikzpicture}
\fill[blue!40] (0.3,0.3) rectangle (0.6,1.2) ;
\fill[blue!40] (0.3,0.3) rectangle (1.2,0.6) ;
\draw (0,0) rectangle (1.2,1.2) ;
\draw [-latex] (0.3,0.3) -- (0.3,1.2) ; 
\draw [-latex] (0.3,0.3) -- (1.2,0.3) ;
\draw [-latex] (0.6,0.6) -- (0.6,1.2) ; 
\draw [-latex] (0.6,0.6) -- (1.2,0.6) ; 
\draw [-latex] (0.3,0.6) -- (0,0.6) ; 
\draw [-latex] (0.6,0.3) -- (0.6,0) ;
\node[scale=1,color=gray!90] at (0.9,0.9) {\textbf{0}};
\end{tikzpicture} \ \ \begin{tikzpicture}
\fill[red!40] (0.3,0.3) rectangle (0.6,1.2) ;
\fill[red!40] (0.3,0.3) rectangle (1.2,0.6) ;
\draw (0,0) rectangle (1.2,1.2) ;
\draw [-latex] (0.3,0.3) -- (0.3,1.2) ; 
\draw [-latex] (0.3,0.3) -- (1.2,0.3) ;
\draw [-latex] (0.6,0.6) -- (0.6,1.2) ; 
\draw [-latex] (0.6,0.6) -- (1.2,0.6) ; 
\draw [-latex] (0.3,0.6) -- (0,0.6) ; 
\draw [-latex] (0.6,0.3) -- (0.6,0) ;
\node[scale=1,color=gray!90] at (0.9,0.9) {\textbf{0}};
\end{tikzpicture} \ \ \begin{tikzpicture}
\fill[red!40] (0.3,0.3) rectangle (0.6,1.2) ;
\fill[red!40] (0.3,0.3) rectangle (1.2,0.6) ;
\draw (0,0) rectangle (1.2,1.2) ;
\draw [-latex] (0.3,0.3) -- (0.3,1.2) ; 
\draw [-latex] (0.3,0.3) -- (1.2,0.3) ;
\draw [-latex] (0.6,0.6) -- (0.6,1.2) ; 
\draw [-latex] (0.6,0.6) -- (1.2,0.6) ; 
\draw [-latex] (0.3,0.6) -- (0,0.6) ; 
\draw [-latex] (0.6,0.3) -- (0.6,0) ;
\node[scale=1,color=gray!90] at (0.9,0.9) {\textbf{1}};
\end{tikzpicture}\]

The symbols $i$ and $j$ can have 
value $0,1$ and 
are attached respectively to vertical and 
horizontal arrows. In the text, we refer to this value as 
the value of the \textbf{$0,1$-counter}. In 
order to simplify the representations, these 
values will often be omitted on the figures. \bigskip

In the text we will often designate as \textbf{corners} the two last symbols.
The other ones are called \textbf{arrows symbols} and are 
specified by the number of arrows in the symbol. 
For instance a six arrows symbols are the images by rotation of the fifth and sixth symbols. \bigskip

The {\textit{rules}} 
are the following ones: \begin{enumerate} \item the outgoing arrows and incoming ones 
correspond for two adjacent symbols. For instance, 
the pattern 
\[\begin{tikzpicture}[scale=0.7]
\draw (0,1.2) rectangle (1.2,2.4) ;
\draw [-latex] (0.6,2.4) -- (0.6,1.2) ;
\draw [-latex] (0,1.8) -- (0.6,1.8) ; 
\draw [-latex] (0,1.5) -- (0.6,1.5) ; 
\draw [-latex] (1.2,1.8) -- (0.6,1.8) ;
\draw [-latex] (1.2,1.5) -- (0.6,1.5) ;
\draw (0,0) rectangle (1.2,1.2) ;
\draw [-latex] (0.6,1.2) -- (0.6,0) ;
\draw [-latex] (0.3,1.2) -- (0.3,0) ; 
\draw [-latex] (0,0.6) -- (0.3,0.6) ; 
\draw [-latex] (1.2,0.6) -- (0.6,0.6) ;
\end{tikzpicture}\]
is forbidden, but the pattern 
\[\begin{tikzpicture}[scale=0.7]
\draw (0,0) rectangle (1.2,1.2) ;
\draw [-latex] (0.6,1.2) -- (0.6,0) ;
\draw [-latex] (0,0.6) -- (0.6,0.6) ; 
\draw [-latex] (1.2,0.6) -- (0.6,0.6) ;
\draw (0,1.2) rectangle (1.2,2.4) ;
\draw [-latex] (0.6,2.4) -- (0.6,1.2) ;
\draw [-latex] (0,1.8) -- (0.6,1.8) ; 
\draw [-latex] (0,1.5) -- (0.6,1.5) ; 
\draw [-latex] (1.2,1.8) -- (0.6,1.8) ;
\draw [-latex] (1.2,1.5) -- (0.6,1.5) ;
\end{tikzpicture}\]
is allowed. 
\item in every $2 \times 2$ square there is a blue 
symbol and the presence of a blue symbol in position $\vec{u} \in \Z^2$ 
forces the presence of a blue symbol in the positions $\vec{u}+(0,2), \vec{u}-(0,2), 
\vec{u}+(2,0)$ and $\vec{u}-(2,0)$.
\item on a position having 
mark $(i,j)$, the first coordinate is transmitted 
to the horizontally adjacent positions and the second one is transmitted 
to the vertically adjacent positions.
\item on a six arrows symbol, like 
\[\begin{tikzpicture}[scale=0.7]
\draw (0,0) rectangle (1.2,1.2) ;
\draw [-latex] (0.6,1.2) -- (0.6,0) ;
\draw [-latex] (0.3,1.2) -- (0.3,0) ; 
\draw [-latex] (0,0.6) -- (0.3,0.6) ; 
\draw [-latex] (1.2,0.6) -- (0.6,0.6) ;
\draw [-latex] (0,0.3) -- (0.3,0.3) ; 
\draw [-latex] (1.2,0.3) -- (0.6,0.3) ;
\end{tikzpicture},\] 
or a five arrow symbol, 
like 
\[\begin{tikzpicture}[scale=0.7]
\draw (0,0) rectangle (1.2,1.2) ;
\draw [-latex] (0.6,1.2) -- (0.6,0) ;
\draw [-latex] (0,0.6) -- (0.6,0.6) ; 
\draw [-latex] (0,0.3) -- (0.6,0.3) ; 
\draw [-latex] (1.2,0.6) -- (0.6,0.6) ;
\draw [-latex] (1.2,0.3) -- (0.6,0.3) ;
\end{tikzpicture},\]
the marks $i$ and $j$ are different.
\end{enumerate} \bigskip

The Figure~\ref{figure.order2supertile} 
shows some pattern in the language of 
this layer. The subshift on this alphabet 
and generated by these rules is denoted $X_R$: this 
is the Robinson subshift.\bigskip

In the following, we state some properties of 
this subshift. The proofs of these properties 
can also be found in \cite{R71}.

\subsection{\label{sec.hierarchical.structures} 
Hierarchical structures}

In this section we describe some observable hierarchical 
structures in the elements of the Robinson subshift.

Let us recall that for 
all $d \ge 1$ and $k \ge 1$, 
we denote $\mathbb{U}^{(d)}_{k}$
the set $\llbracket 0,k-1 \rrbracket^d$.

\subsubsection{Finite supertiles}

\begin{figure}[h!]
\[\begin{tikzpicture}[scale=0.4]

\begin{scope}
\robibluebastgauche{0}{0};
\robibluebastgauche{8}{0};
\robibluebastgauche{0}{8};
\robibluebastgauche{8}{8};
\robiredbasgauche{2}{2};
\robiredbasgauche{6}{6};
\robibluehauttgauche{0}{4};
\robibluehauttgauche{8}{4};
\robibluehauttgauche{0}{12};
\robibluehauttgauche{8}{12};
\robiredhautgauche{2}{10};
\robibluebastdroite{4}{0};
\robibluebastdroite{12}{0};
\robibluebastdroite{4}{8};
\robibluebastdroite{12}{8};
\robiredbasdroite{10}{2};
\robibluehauttdroite{4}{4};
\robibluehauttdroite{12}{4};
\robibluehauttdroite{4}{12};
\robibluehauttdroite{12}{12};
\robiredhautdroite{10}{10};
\robitwobas{2}{0}
\robitwobas{10}{0}
\robitwobas{6}{2}
\robitwogauche{0}{2}
\robitwogauche{2}{6}
\robitwogauche{0}{10}
\robitwodroite{12}{2}
\robitwodroite{12}{10}
\robitwohaut{2}{12}
\robitwohaut{10}{12}
\robionehaut{6}{0}
\robionehaut{6}{4}
\robionegauche{0}{6}
\robionegauche{4}{6}
\robisixbas{2}{8}
\robisixdroite{4}{2}
\robisixhaut{2}{4}
\robisevendroite{4}{10}
\robithreehaut{6}{8}
\robisixhaut{6}{10}
\robithreehaut{6}{12}
\robisixgauche{8}{2}
\robisevengauche{8}{10}
\robisevenbas{10}{8}
\robisevenhaut{10}{4}
\robisixdroite{10}{6}
\robithreedroite{8}{6}
\robithreedroite{12}{6}
\draw (0,0) rectangle (14,14);
\foreach \x in {1,...,6} \draw (0,2*\x) -- (14,2*\x);
\foreach \x in {1,...,6} \draw (2*\x,0) -- (2*\x,14);
\end{scope}

\begin{scope}[xshift=17.5cm]
\fill[blue!40] (8.5,0.5) rectangle (13.5,5.5);
\fill[blue!40] (8.5,8.5) rectangle (13.5,13.5);
\fill[blue!40] (0.5,8.5) rectangle (5.5,13.5);
\fill[white] (9,1) rectangle (13,5);
\fill[white] (9,9) rectangle (13,13);
\fill[white] (1,9) rectangle (5,13);

\fill[blue!40] (0.5,0.5) rectangle (5.5,5.5);
\fill[white] (1,1) rectangle (5,5);

\fill[red!40] (2.5,2.5) rectangle (3,11.5);
\fill[red!40] (2.5,2.5) rectangle (11.5,3);
\fill[red!40] (2.5,11) rectangle (11.5,11.5);
\fill[red!40] (11,2.5) rectangle (11.5,11.5);

\fill[orange!40] (6.5,6.5) rectangle (7,14); 
\fill[orange!40] (6.5,6.5) rectangle (14,7);

\draw[step=2] (0,0) grid (14,14);
\end{scope}
\end{tikzpicture}\]
\caption{The south west order $2$ supertile denoted $St_{sw}(2)$ 
and petals intersecting it.}\label{figure.order2supertile}
\end{figure}
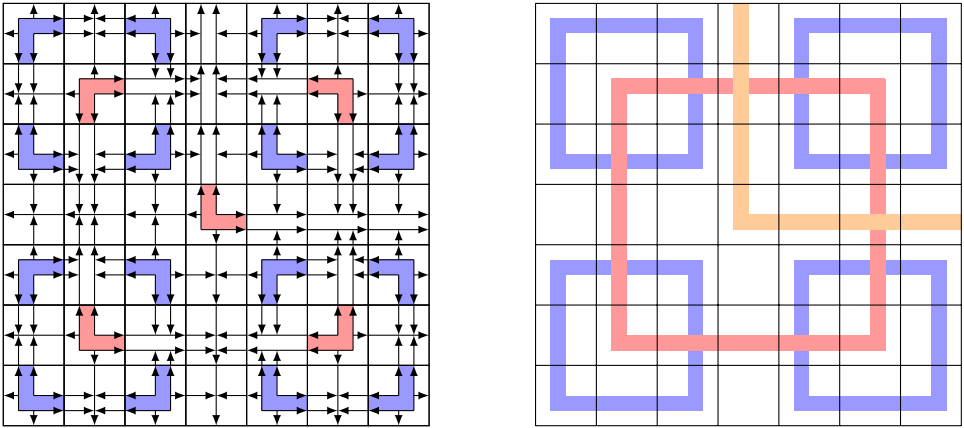 \bigskip

Let us define by induction the south west 
(resp. south east, north west, north east) 
\define{supertile of order $n\in\N$}. For $n=0$, one has
\[
St_{sw} (0)=\vbox to 14pt{\hbox{
\begin{tikzpicture}[scale=0.4]
 \robibluebastgauche{0}{0};
\end{tikzpicture}}},\quad
St_{se} (0)=\vbox to 14pt{\hbox{
\begin{tikzpicture}[scale=0.4]
 \robibluebastdroite{0}{0};
\end{tikzpicture}}},\quad
St_{nw} (0)=\vbox to 14pt{\hbox{
\begin{tikzpicture}[scale=0.4]
 \robibluehauttgauche{0}{0};
\end{tikzpicture}}},\quad
St_{ne} (0)=\vbox to 14pt{\hbox{
\begin{tikzpicture}[scale=0.4]
 \robibluehauttdroite{0}{0};
\end{tikzpicture}}}.
\] 

For $n\in\N$, the support of the supertile $St_{sw} (n+1)$ 
(resp. $St_{se} (n+1)$, $St_{nw} (n+1)$, $St_{ne} (n+1)$) is 
$\U^{(2)}_{2^{n+2}-1}$. 
On position $\vec{u}=(2^{n+1}-1,2^{n+1}-1)$ write
\[
St_{sw} (n+1) _{\vec{u}}=\vbox to 14pt{\hbox{
\begin{tikzpicture}[scale=0.4]
 \robiredbasgauche{0}{0};
\end{tikzpicture}}},\quad
St_{se} (n+1) _{\vec{u}}=\vbox to 14pt{\hbox{
\begin{tikzpicture}[scale=0.4]
 \robiredbasdroite{0}{0};
\end{tikzpicture}}},\quad
St_{nw} (n+1) _{\vec{u}}=\vbox to 14pt{\hbox{
\begin{tikzpicture}[scale=0.4]
 \robiredhautgauche{0}{0};
\end{tikzpicture}}},\quad
St_{ne} (n+1) _{\vec{u}}=\vbox to 14pt{\hbox{
\begin{tikzpicture}[scale=0.4]
 \robiredhautdroite{0}{0};
\end{tikzpicture}}}.
\]

Then complete the supertile such that the restriction 
to $\mathbb{U}^{(2)}_{2^{n+1}-1}$ (resp. 
$(2^{n+1},0)+\mathbb{U}^{(2)}_{2^{n+1}-1}$, 
 $(0,2^{n+1})+\mathbb{U}^{(2)}_{2^{n+1}-1}$,
$(2^{n+1},2^{n+1})+\mathbb{U}^{(2)}_{2^{n+1}-1}$)
is 
$St_{sw}(n)$ (resp. $St_{se} (n)$, $St_{nw} (n)$,
$St_{ne} (n)$).

Then complete the cross with 
the symbol \[\begin{tikzpicture}[scale=0.7]
\draw (0,0) rectangle (1.2,1.2) ;
\draw [-latex] (0.6,1.2) -- (0.6,0) ;
\draw [-latex] (0,0.6) -- (0.6,0.6) ; 
\draw [-latex] (1.2,0.6) -- (0.6,0.6) ;
\end{tikzpicture}\]
or with the symbol
\[\begin{tikzpicture}[scale=0.7]
\draw (0,0) rectangle (1.2,1.2) ;
\draw [-latex] (0.6,1.2) -- (0.6,0) ;
\draw [-latex] (0,0.6) -- (0.6,0.6) ; 
\draw [-latex] (0,0.3) -- (0.6,0.3) ; 
\draw [-latex] (1.2,0.6) -- (0.6,0.6) ;
\draw [-latex] (1.2,0.3) -- (0.6,0.3) ;
\end{tikzpicture}\] in the south vertical arm with the first symbol when there is one incoming arrow, and the second when there are two. 
The other arms are completed in a similar way. For instance, Figure \ref{figure.order2supertile}
shows the south west supertile of order two.

\begin{proposition}[\cite{R71}]
For all configuration $x$, if an order $n$ supertile 
appears in this configuration, 
then there is an order $n+1$ supertile, having 
this order $n$ supertile as sub-pattern, which appears 
in the same configuration.
\end{proposition}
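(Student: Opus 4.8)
The plan is to argue by induction on $n$, relying on the hierarchical rigidity of the corner tiles, which is the structural heart of the Robinson subshift and is established in~\cite{R71}. First I would record the two facts that drive everything: by the second rule the blue corners occupy a single coset of the sublattice $(2\Z)^2$, so that once one blue corner is seen its whole family of blue corners is pinned on a grid of spacing $2$; and each corner carries one of four orientations (\textsf{SW}, \textsf{SE}, \textsf{NW}, \textsf{NE}), namely the pair of directions into which its two long arms point. The arrow--matching rule (rule~1), combined with the sublattice rule (rule~2), then forces these orientations to follow a fixed $2\times 2$ pattern along the grid, every elementary $2\times 2$ block opening \emph{inward}.

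Next I would formulate the inductive claim in terms of \emph{level-$k$ corners}, a level-$k$ corner being the central corner of an order-$k$ supertile (a blue corner when $k=0$, a red corner when $k\ge 1$). The claim is: in any configuration, the level-$k$ corners lie on a coset of $(2^{k+1}\Z)^2$, carry the four orientations in the fixed $2\times 2$ pattern, and every elementary $2\times 2$ block of level-$k$ corners is the set of centres of four order-$k$ supertiles occupying the four quadrants around a common centre, at which the matching of the inward-pointing arms forces a level-$(k+1)$ corner; these data assemble exactly into an order-$(k+1)$ supertile. The base case $k=0$ is the blue-corner statement above, and the inductive step is obtained by transporting the same arm-propagation analysis one scale up: the long arms emitted by a level-$k$ corner run to the boundary of its order-$k$ supertile and can be absorbed consistently only if the neighbouring quadrants carry the three sibling order-$k$ supertiles and the central position carries the level-$(k+1)$ corner, which is precisely the definition of $St_{\bullet}(k+1)$.

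With this in hand the proposition is immediate. Suppose an order-$n$ supertile $T$ (say of type \textsf{sw}, the other types being symmetric) appears in a configuration $x$. Its central cell is a level-$n$ corner of orientation \textsf{SW}; by the inductive claim applied at scale $n$, this corner is the \textsf{SW} member of a uniquely determined $2\times 2$ block of level-$n$ corners, whose centre carries a level-$(n+1)$ corner and around which sit four order-$n$ supertiles in their quadrants, with $T$ itself as the \textsf{SW} quadrant. All of this structure is \emph{forced} by the local rules, hence present in $x$; reading it off yields an order-$(n+1)$ supertile that appears in $x$ and admits $T$ as a sub-pattern, namely its \textsf{SW} quadrant.

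The main obstacle, and the only genuinely delicate point, is the claim that the arms force the companion structure, i.e.\ that a consistent completion of the tiling around a level-$k$ corner is possible \emph{only} when the three sibling order-$k$ supertiles and the central level-$(k+1)$ corner are present in the prescribed positions. This is a finite but intricate case analysis following each outgoing arm of $St_{\bullet}(k)$ to where it must be received and checking that no competing orientation of the surrounding corners is compatible with rule~1; since it is carried out in~\cite{R71}, I would invoke it there rather than reproduce it, and only indicate the one-scale-up bookkeeping that reduces the order-$(k+1)$ case to the order-$k$ case.
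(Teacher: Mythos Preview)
The paper does not give its own proof of this proposition: it is stated with the attribution \cite{R71} and the surrounding text explicitly says ``The proofs of these properties can also be found in \cite{R71}.'' So there is no in-paper argument to compare against; your sketch is essentially the standard Robinson hierarchical-forcing argument, and it is correct in outline.

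Your inductive formulation in terms of level-$k$ corners lying on a coset of $(2^{k+1}\Z)^2$ with a fixed $2\times 2$ orientation pattern is exactly the right invariant, and you are honest about where the real work sits: the finite case analysis showing that the outgoing arms of an order-$k$ supertile can only be absorbed by three siblings and a central level-$(k+1)$ corner. Since you defer that analysis to \cite{R71}, your write-up is really a proof sketch rather than a self-contained proof, but that matches the paper's own treatment. One minor point: in the base case you should also mention that rule~2 forces not just the presence of a blue corner in every $2\times 2$ block but (together with the arrow-matching) the specific inward-opening orientation pattern; this is what bootstraps the induction, and you allude to it but could state it more explicitly.
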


\subsubsection{Infinite supertiles}

\begin{figure}[ht]
\[\begin{tikzpicture}[scale=0.3]
\fill[gray!20] (0,0) rectangle (20,20); 
\fill[white] (1,1) rectangle (19,19);
\fill[gray!20] (9.5,9.5) rectangle (10.5,22.5);
\fill[gray!20] (9.5,9.5) rectangle (22.5,10.5);
\fill[red!40] (9.5,9.5) rectangle (10.5,11.5);
\fill[red!40] (9.5,9.5) rectangle (11.5,10.5);
\fill[red!40] (0,0) rectangle (1,2);
\fill[red!40] (0,0) rectangle (2,1);
\draw (0,2) -- (1,2);
\draw (2,0) -- (2,1);

\fill[red!40] (0,20) rectangle (1,18);
\fill[red!40] (0,20) rectangle (2,19);
\draw (0,18) -- (1,18);
\draw (2,19) -- (2,20);

\fill[red!40] (20,20) rectangle (18,19);
\fill[red!40] (20,20) rectangle (19,18);
\draw (19,18) -- (20,18);
\draw (18,19) -- (18,20);

\fill[red!40] (20,0) rectangle (18,1);
\fill[red!40] (20,0) rectangle (19,2);
\draw (19,2) -- (20,2);
\draw (18,1) -- (18,0);

\draw[-latex] (9.5,10.5) -- (5,10.5);
\draw[-latex] (5,10.5) -- (-2.5,10.5);
\draw[-latex] (10.5,9.5) -- (10.5,5);
\draw[-latex] (10.5,5) -- (10.5,-2.5);
\draw[-latex] (10.5,10.5) -- (10.5,15);
\draw[-latex] (9.5,10.5) -- (9.5,15);
\draw[-latex] (10.5,10.5) -- (15,10.5);
\draw[-latex] (10.5,9.5) -- (15,9.5);
\draw[-latex] (9.5,15) -- (9.5,19);
\draw[-latex] (10.5,15) -- (10.5,19);
\draw[-latex] (9.5,20) -- (9.5,22.5);
\draw[-latex] (10.5,20) -- (10.5,22.5);
\draw[-latex] (8.5,19) -- (9.5,19);
\draw[-latex] (8.5,20) -- (9.5,20);
\draw[-latex] (11.5,19) -- (10.5,19);
\draw[-latex] (11.5,20) -- (10.5,20);
\draw[-latex] (10.5,5) -- (10.5,1);
\draw[-latex] (9.5,1) -- (10.5,1);
\draw[-latex] (9.5,0) -- (10.5,0);
\draw[-latex] (11.5,1) -- (10.5,1);
\draw[-latex] (11.5,0) -- (10.5,0);
\draw[-latex] (0,10.5) -- (0,15);
\draw[-latex] (1,10.5) -- (1,15);
\draw (9.5,11.5) -- (10.5,11.5);
\draw (11.5,9.5) -- (11.5,10.5);
\draw (0,0) rectangle (20,20);
\draw (1,1) rectangle (19,19);
\draw (9.5,9.5) -- (22.5,9.5);
\draw (9.5,9.5) -- (9.5,22.5);
\draw (10.5,10.5) -- (10.5,22.5);
\draw (10.5,10.5) -- (22.5,10.5);
\draw[dashed] (8,8) rectangle (13,13);
\draw[dashed] (8,17.5) rectangle (13,22.5);
\draw[dashed] (8,-2.5) rectangle (13,2.5);
\draw[dashed] (-1.5,12.5) rectangle (3.5,17.5);
\draw[dashed] (8,2.75) rectangle (13,7.75);
\node at (15,5.25) {$(ii),(1)$};
\node at (14,23.5) {$(iii),(2)$};
\node at (6,8) {$(iii),(1)$};
\node at (14,-3.5) {$(iii),(3)$};
\node at (-2.5,11.5) {$(ii),(2)$};
\end{tikzpicture}\]
\caption{\label{fig.infinite.supertiles} Correspondence 
between infinite supertiles and sub-patterns of order $n$ supertiles. The whole picture 
represents a schema of some finite order supertile.}
\end{figure}
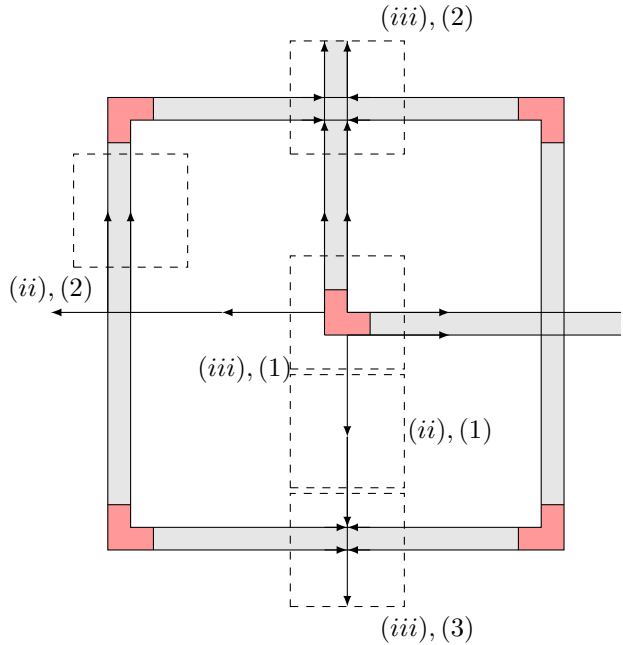

Let $x$ be a configuration in the first layer and consider 
the equivalence relation $\sim_x$ on $\Z^2$ defined by 
$\vec{i} \sim_x \vec{j}$ if there is a finite
supertile in $x$ which contains $\vec{i}$ and $\vec{j}$.
An \define{infinite order} supertile is an infinite 
pattern over an equivalence 
class of this relation. Each configuration is amongst the 
following types (with types corresponding with 
types numbers on Figure~\ref{fig.infinite.supertiles}): 
\begin{itemize}
\item[(i)] A unique infinite order supertile which covers $\Z^2$.
\item[(ii)] Two infinite order supertiles separated by a line or a column with only three-arrows symbols
(1) or only four arrows symbols (2).  
In such a configuration, the order $n$
finite supertiles appearing in the two 
infinite supertiles 
are not necessary aligned, whereas this 
is the case in a type (i) or (iii) configuration.
\item[(iii)] Four infinite order supertiles, separated by a cross, whose center is superimposed with:
\begin{itemize}
\item a red symbol, and arms are filled with arrows symbols induced by the red one. (1)
\item a six arrows symbol, and arms are filled with double arrow symbols 
induced by this one. (2)
\item a five arrow symbol, and arms are filled with double arrow symbols and simple 
arrow symbols induced 
by this one. (3)
\end{itemize}
\end{itemize} 

Informally, the types of infinite 
supertiles correspond to configurations 
that are limits (for type $(ii)$ infinite supertiles 
this will be true after alignment 
[Section~\ref{sec.alignment.positioning}]) of 
a sequence of configurations centered on 
particular sub-patterns of 
finite supertiles of order $n$. 
This correspondence is illustrated on 
Figure~\ref{fig.infinite.supertiles}.
We notice this fact so that it helps 
to understand how 
patterns in configurations having multiple 
infinite supertiles are sub-patterns of 
finite supertiles. \bigskip

We say that a 
pattern $p$ on support $\mathbb{U}$ appears 
periodically in the horizontal (resp. vertical) direction 
in a configuration $x$ of a subshift $X$
when there exists some $T>0$ and 
$\vec{u}_0 \in \Z^2$ 
such that 
for all $k \in \Z$, 
\[x_{\vec{u}_0+\mathbb{U}+kT(1,0)}=p\]
(resp. $x_{\vec{u}_0+\mathbb{U}+kT(0,1)}=p$).
The number $T$ is called the period of this periodic appearance.

\begin{lemma}[\cite{R71}] 
\label{lem.repetition.supertiles}
For all $n$ and $m$ integers 
such that $n \ge m$, any order $m$ supertile 
appears periodically, horizontally and vertically, in 
any supertile of order $n \ge m$ with period $2^{m+2}$.
This is also true inside any infinite supertile. 
\end{lemma}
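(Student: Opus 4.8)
The plan is to translate the statement into a purely combinatorial fact about the positions at which order $m$ supertiles occur inside an order $n$ supertile, and to establish this fact by induction on $n-m$. Fix an orientation $o\in\{sw,se,nw,ne\}$ and, inside a supertile $St_*(n)$ of arbitrary orientation $*$, let $G_o(n)\subset\Z^2$ denote the set of lower-left corners of the occurrences of the order $m$ supertile $St_o(m)$. I would first record the crucial structural observation coming from the inductive definition: for every orientation $*$, the four quadrants of $St_*(n+1)$ carry $St_{sw}(n)$, $St_{se}(n)$, $St_{nw}(n)$, $St_{ne}(n)$ respectively, the parent orientation $*$ affecting only the central red corner and the arrows it induces. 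Hence the set of occurrences of a fixed $St_o(m)$ does not depend on the orientations encountered along the branch of the hierarchy, only on the accumulated relative offsets.

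I would then prove by induction on $n\ge m+1$ that $G_o(n)=\vec{a}_o+2^{m+2}\cdot\big(\{0,\dots,2^{n-m-1}-1\}^2\big)$ for a base offset $\vec{a}_o$ depending only on $o$, with $\vec{a}_{sw}=(0,0)$, $\vec{a}_{se}=(2^{m+1},0)$, $\vec{a}_{nw}=(0,2^{m+1})$ and $\vec{a}_{ne}=(2^{m+1},2^{m+1})$. The base case $n=m+1$ is immediate, since $St_*(m+1)$ contains exactly one supertile of each orientation, located at the positions $\vec{a}_o$. For the inductive step I decompose $St_*(n+1)$ into its four order $n$ quadrants, located at the offsets $\{0,2^{n+1}\}^2$; applying the induction hypothesis inside each quadrant yields four translated copies of the grid, and because $n+1\ge m+2$ forces $2^{n+1}$ to be an integer multiple of $2^{m+2}$, these copies glue into the single grid $\vec{a}_o+2^{m+2}\{0,\dots,2^{n-m}-1\}^2$. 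This gives period $2^{m+2}$ in both the horizontal and vertical directions, which is exactly the claim for finite supertiles.

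For the assertion about infinite supertiles I would use their description as increasing unions of finite supertiles over an equivalence class of $\sim_x$: any occurrence of $St_o(m)$ and its horizontal (resp. vertical) $2^{m+2}$-translate both lie in a common finite order $n$ supertile contained in the infinite one, so the period-$2^{m+2}$ grid structure passes to the limit throughout the support of the infinite supertile. Combined with the finite case, this yields periodic appearance with period $2^{m+2}$ in the sense defined above.

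The main obstacle is the bookkeeping of orientations: one must check carefully that the quadrant structure is genuinely independent of the parent's orientation, so that the offset $\vec{a}_o$ depends only on $o$, and that the per-quadrant grids align into one global grid, which rests on the elementary but essential divisibility $2^{m+2}\mid 2^{n+1}$ valid precisely when $n\ge m+1$. Once these two points are in place the induction is routine; the $0,1$-counter values play no role in the argument, since they are transmitted along rows and columns and therefore agree on translates of a full occurrence.
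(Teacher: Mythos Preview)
The paper does not prove this lemma; it is stated with a citation to Robinson~\cite{R71} and no argument is given. Your inductive proof via the quadrant decomposition is correct and is the standard route: the key observations---that the four sub-supertiles of $St_*(n+1)$ are $St_{sw}(n),St_{se}(n),St_{nw}(n),St_{ne}(n)$ independently of the parent orientation $*$, and that $2^{m+2}\mid 2^{n+1}$ once $n\ge m+1$---are exactly what is needed to glue the four inductive grids into one. The extension to infinite supertiles via the increasing-union description is also fine.

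One small remark: your induction actually only establishes the containment $G_o(n)\supseteq \vec{a}_o+2^{m+2}\{0,\dots,2^{n-m-1}-1\}^2$, because you do not argue that no occurrence of $St_o(m)$ can straddle the central cross of a larger supertile. This does not matter for the lemma, which only asserts periodic appearance with period $2^{m+2}$ and therefore needs just the containment. (Equality does hold, by the uniqueness of the Robinson hierarchy, but that is a separate and deeper fact than what is required here.) Your closing remark on the $0,1$-counter values is correct in spirit: those values are determined by the structural layer via the transmission rules, so once the arrow/corner pattern repeats, the counter values repeat with it.
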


\subsubsection{Petals}

For a configuration 
$x$ of the Robinson subshift some finite subset of $\Z^2$ 
which has the following properties is called 
a \textbf{petal}.

\begin{itemize}
\item this set is minimal with respect to the inclusion, 
\item it contains some 
symbol with more than three arrows, 
\item  if a position is in the petal, 
the next position in the direction, or the opposite one, 
of the double arrows, is also in it,
\item and in the case of a six arrows 
symbol, the previous 
property is true only for one pair of arrows.
\end{itemize}

These sets are represented on 
the figures as squares joining 
four corners when these corners 
have the right orientations.
\bigskip

Petals containing blue symbols are called 
order $0$ petals. Each one intersect 
a unique greater order petal.
The other ones intersect 
four smaller petals and a greater 
one: if the intermediate petal is of order $n \ge 1$, 
then the four smaller are of order $n-1$ and the greatest 
one is of order $n+1$. Hence they 
form a hierarchy, and 
we refer to this in the text as 
the \textbf{petal hierarchy} (or hierarchy).

We usually call 
the petals valued with $1$ 
\textbf{support petals}, 
and the other ones 
are called 
\textbf{transmission petals}.

\begin{lemma}[\cite{R71}]
For all $n$, an order $n$ petal has size $2^{n+1}+1$.
\end{lemma}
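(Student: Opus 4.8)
The plan is to prove the statement by induction on the order $n$, reading off the exact coordinates of the corners of a petal from the explicit supertile construction of Section~\ref{sec.hierarchical.structures}. The first observation I would make is that every petal appears inside a finite supertile of large enough order (or, in the degenerate cases, inside an infinite supertile), by the nesting proposition and the description of the petal hierarchy; hence it suffices to locate, up to translation, the four corners of an order-$n$ petal inside a supertile $St(n+1)$ and to compute the distance between them.

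For the base case $n=0$, I would use the rule forcing blue symbols to reappear at $\vec{u}\pm(2,0)$ and $\vec{u}\pm(0,2)$. In $St(1)$, whose support is $\mathbb{U}^{(2)}_{3}$, the four order-$0$ supertiles $St_{sw}(0),St_{se}(0),St_{nw}(0),St_{ne}(0)$ are single blue corners placed at $(0,0),(2,0),(0,2),(2,2)$ with complementary orientations. Their double-arrow arms point toward one another and close up into the minimal square frame, i.e.\ an order-$0$ petal; each side contains the three lattice points $0,1,2$, so its size is $3=2^{0+1}+1$.

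For the inductive step I would identify the corners of an order-$n$ petal with the four central red corners of the four order-$n$ supertiles filling the quadrants of an order-$(n+1)$ supertile. Concretely, the construction places the red corner of $St(n)$ at $(2^{n}-1,2^{n}-1)$ and builds $St(n+1)$ from translates of $St(n)$ by $(0,0),(2^{n+1},0),(0,2^{n+1}),(2^{n+1},2^{n+1})$, with central red corner at $(2^{n+1}-1,2^{n+1}-1)$. Thus the four sub-supertile red corners lie at the points of $\{2^{n}-1,\ 3\cdot 2^{n}-1\}^2$, and consecutive coordinates differ by $(3\cdot 2^n-1)-(2^n-1)=2^{n+1}$. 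The arrow-continuation rules force the arms joining them to run straight, so these four corners bound a single square frame — an order-$n$ petal — of side $2^{n+1}+1$, centered on the red corner of $St(n+1)$. Equivalently this is the recursion $s_n=2s_{n-1}-1$ with $s_0=3$, solved by $s_n=2^{n+1}+1$.

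Finally I would check the remaining configurations: petals whose central symbol is a five- or six-arrow cross rather than a red corner (types (iii)(2), (iii)(3) and the separating line/column of type (ii)), and petals sitting inside infinite supertiles. In each case the same local arrow rules pin the arms, and after the alignment of Section~\ref{sec.alignment.positioning} the spacing of corners coincides with the finite-supertile case, leaving the size unchanged. The main obstacle I expect is exactly this last bookkeeping: verifying from the local rules alone that adjacent sub-petals meet in a single lattice point on each shared edge — so that the two half-sides of length $s_{n-1}$ overlap in one point, with no gap and no larger overlap — uniformly across all corner types and across the non-aligned infinite supertiles of type (ii).
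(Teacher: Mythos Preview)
The paper does not give its own proof of this lemma; it simply cites Robinson~\cite{R71}. So there is no paper argument to compare against, and the question is just whether your proof stands on its own.

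Your coordinate computation is correct and is the natural approach. The identification of the four corners of an order-$n$ petal with the central corners of the four order-$n$ supertiles inside $St(n+1)$ is right, and the arithmetic $(3\cdot 2^n-1)-(2^n-1)+1=2^{n+1}+1$ is exactly what is needed. The base case is handled correctly as well.

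Your final paragraph is unnecessary. The size of an order-$n$ petal is a property of the petal as a finite pattern, and once you have exhibited one such petal inside a finite supertile and read off its side length, you are done: any other order-$n$ petal, whether it sits in an infinite supertile, straddles a type-(ii) separating line, or is centred on a five- or six-arrow symbol, is locally the same pattern and has the same size. There is no separate ``bookkeeping'' to do, and in particular the alignment layer of Section~\ref{sec.alignment.positioning} plays no role in this lemma. You can safely drop that discussion; the induction on the explicit supertile coordinates already proves the statement in full.
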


We call order $n$ \textbf{two dimensional cell} the part 
of $\Z^2$ which is enclosed in an order 
$2n+1$ petal, for $n \ge 0$.
We also sometimes refer 
to the order $2n+1$ petals 
as the cells borders.

In particular, order $n \ge 0$ 
two-dimensional cells 
have size $4^{n+1}+1$ and 
repeat periodically with 
period $4^{n+2}$, vertically 
and horizontally, 
in every cell or supertile 
having greater order. 

See an illustration on 
Figure~\ref{figure.order2supertile}.

\subsection{ \label{sec.alignment.positioning} 
Alignment positioning
} 

If a configuration of the first layer has two infinite order 
supertiles, then the two sides of the column or line which separates them are 
non dependent. The two infinite order supertiles of this configuration 
can be shifted vertically (resp. horizontally) one from each 
other, while the configuration obtained stays 
an element of the subshift.
This is an obstacle to dynamical properties such as 
minimality or transitivity, 
since a pattern which crosses the separating line can not 
appear in the other configurations.
In this section, we describe additional layers 
that allow aligning all the supertiles 
having the same order and 
eliminate this phenomenon. \bigskip

Here is a description of the second layer: \bigskip

\textit{Symbols:} $nw, ne, sw, se$, 
and a blank symbol. \bigskip

The {\textit{rules}} are 
the following ones: 

\begin{itemize}
\item \textbf{Localization:} 
the symbols $nw$, $ne$ ,$sw$ and $se$
are superimposed only on three arrows and five arrows 
symbols in the Robinson layer.
\item \textbf{Induction 
of the orientation:} on a position with 
a three arrows symbol such 
that the long arrow 
originate in a corner
is superimposed a symbol 
corresponding to the orientation 
of the corner.
\item \textbf{Transmission rule:} 
on a three or five arrows symbol position, 
the symbol in this layer is 
transmitted to the position in the direction pointed 
by the long arrow when the Robinson symbol 
 is a three or five arrows symbol 
with long arrow pointing in the same direction. 
\item \textbf{Synchronization rule:} 
On the pattern 
\[\begin{tikzpicture}[scale=0.4]
\robionedroite{0}{0}
\robionebas{2}{0}
\robionegauche{4}{0}
\end{tikzpicture}\]
or 
\[\begin{tikzpicture}[scale=0.4]
\robionedroite{0}{0}
\robifourhaut{2}{0}
\robionegauche{4}{0}
\end{tikzpicture}\]
in the Robinson layer, if 
the symbol on the left side is $ne$
(resp. $se$), 
then the symbol on the right side is $nw$
(resp. $sw$).
On the images by rotation of these patterns, 
we impose similar rules.
\item \textbf{Coherence rule:} 
the other pairs of symbols are forbidden 
on these patterns.
\end{itemize}
 \bigskip

\textit{Global behavior:} the 
symbols 
$ne,nw,sw,se$ designate 
orientations: 
north east, north west, south 
west and south east. We will 
re-use this symbolisation 
in the following. The 
localization rule implies 
that these symbols 
are superimposed on and only on 
straight paths connecting 
the corners of adjacent order $n$ 
cells for some integer $n$.

The effect of transmission and synchronization rules 
is stated by the following lemma: 

\begin{lemma}[\cite{GS17ED}]
In any configuration $x$ of the subshift $X_{\texttt{R}}$, 
any order $n$ supertile appears 
periodically in the whole configuration, 
with period $2^{n+2}$, horizontally and vertically.
\end{lemma}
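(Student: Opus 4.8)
The plan is to lift the within--supertile periodicity of Lemma~\ref{lem.repetition.supertiles} to a global periodicity, using the alignment layer to remove the only obstruction, namely the relative shift between distinct infinite supertiles. First I would fix a configuration $x \in X_{\texttt{R}}$, an order $n$ supertile $S$ appearing in $x$ at some position $\vec{u}_0$, and recall that by Lemma~\ref{lem.repetition.supertiles} the supertile $S$ reappears with period $2^{n+2}$ horizontally and vertically inside the infinite supertile containing $\vec{u}_0$. Since the second (alignment) layer is entirely determined by the first layer --- the orientation symbols $nw,ne,sw,se$ are induced at corners and then transported deterministically along the arms --- this periodicity of the first layer inside a single infinite supertile automatically lifts to the full product configuration inside that infinite supertile. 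It therefore remains to show that the period $2^{n+2}$ is consistent across the boundaries between distinct infinite supertiles.

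Next I would split into the three types of configurations classified on Figure~\ref{fig.infinite.supertiles}. For type (i) there is a single infinite supertile covering $\Z^2$ and there is nothing more to prove. For type (iii) the four infinite supertiles are locked together by the central cross: their order $n$ supertiles are forced into a common grid by the arms radiating from the center (as already observed in the description of type (iii)), so the period $2^{n+2}$ extends across all four quadrants. The only genuinely delicate case is type (ii), where two infinite supertiles are separated by a line or a column consisting solely of three--arrows (resp. four--arrows) symbols, and where a priori the two sides may be shifted relative to one another along the separator.

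For the type (ii) case I would invoke the alignment layer directly. Consider, say, a vertical separating column of three--arrows symbols. The corners of the cells abutting the column on each side emit, through the induction--of--orientation rule, an orientation symbol that is carried along the horizontal arms by the transmission rule until it reaches the separating column. At the column the two incoming arms form one of the horizontal triple patterns appearing in the synchronization rule, and that rule forces their orientation marks to agree ($ne$ on the left matching $nw$ on the right, and $se$ matching $sw$), while the coherence rule forbids every other pairing. Since the heights at which corners --- and hence orientation marks --- occur along the column are exactly the heights dictated by the supertile hierarchy on each side, requiring these marks to match at every crossing pins the vertical offset between the two infinite supertiles to be zero. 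Once the two sides are aligned in this way, the common period $2^{n+2}$ furnished by Lemma~\ref{lem.repetition.supertiles} on each side becomes a genuine period of $x$ across the column, and symmetrically for a horizontal separating line by the rotational images of the synchronization patterns.

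The main obstacle, and the step requiring the most care, is precisely this last one: proving that the synchronization and coherence rules admit no residual freedom, i.e. that along the whole separating column there is no nonzero vertical shift compatible with matching all the transported orientation marks. This amounts to checking that the pattern of corners on the two sides of the column --- which is fixed by the order $n$ supertile structure and its period $2^{n+2}$ --- can be superimposed consistently only in the aligned position, so that the synchronization constraints become simultaneously satisfiable exactly when the offset vanishes. Establishing this rigidity is the heart of the argument; the remaining bookkeeping (that the resulting period is indeed $2^{n+2}$ in both directions and for every order $n$) is then routine.
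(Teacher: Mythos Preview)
The paper does not actually prove this lemma here: it is stated with the citation [GS17ED] and no argument is given in the present text beyond the sentence ``The effect of transmission and synchronization rules is stated by the following lemma.'' So there is no in-paper proof to compare against line by line.

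That said, your outline is a faithful reconstruction of what the alignment layer is designed to do, and it matches the intent signalled by the paper (which explicitly notes, just before this lemma, that in type~(ii) configurations the two infinite supertiles ``are not necessary aligned'' in the bare Robinson layer, whereas they are in types~(i) and~(iii)). Your case split is the right one, your identification of type~(ii) as the only nontrivial case is correct, and you correctly locate the mechanism --- induction of the orientation mark at corners, transmission along arms, and the synchronization/coherence rules at the separating column --- that is supposed to kill the relative offset.

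One small correction of phrasing: it is not quite that ``the second layer is entirely determined by the first layer''; rather, the second layer is determined by the first layer \emph{together with} the constraint that the synchronization and coherence rules be satisfiable, and it is precisely this additional constraint that restricts which first-layer configurations are admissible in $X_{\texttt{R}}$. Within a single infinite supertile your determinism claim is fine, but the whole point across the separator is that the second layer is imposing a new condition on the first. You clearly understand this from your type~(ii) discussion, so this is only a matter of how the opening paragraph is worded.

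The step you flag as the heart of the argument --- that no nonzero offset along the separating column is compatible with the synchronization rule at \emph{every} level of the hierarchy --- is indeed where the work lies. The key observation to close it is that the arms reaching the column come from corners of \emph{all} orders, so the constraints they impose pin down the offset modulo $2^{m+2}$ for every $m$, forcing it to be zero. Once you make that explicit, the argument is complete.
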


\subsection{Completing blocks}

Let $\chi : \N^{*} \rightarrow \N^{*}$ such 
that for all $n \ge 1$, 
\[\chi(n) = \Bigl\lceil \log_2 (n) \Bigr\rceil + 4.\]
Let us also denote $\chi'$ the function 
such that for all $n \ge 1$,
\[\chi'(n) =  \Bigl\lceil\frac{\Bigl\lceil \log_2 (n) 
\Bigr\rceil}{2} \Bigr\rceil + 2.\]

The following lemma will be extensively used 
in the following of this text, in order to prove 
dynamical properties of the constructed subshifts: 

\begin{lemma}[\cite{GS17ED}] \label{prop.complete.rob} 
For all $n \ge 1$, any $n$-block in the language of 
$X_{\texttt{R}}$ is sub-pattern of some order $\chi(n)$ 
supertile, and is sub-pattern of some order $\chi'(n)$ 
order cell.
\end{lemma}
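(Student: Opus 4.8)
The plan is to show that any $n$-block $P$ occurring in the language of $X_{\texttt{R}}$ is contained in an occurrence of an order $\chi(n)$ supertile, the cell statement being proved the same way with the coarser hierarchy of two-dimensional cells. Write $k = \lceil \log_2 n \rceil$, so that $n \le 2^k$, and recall that an order $m$ supertile has side $2^{m+1}-1$. Fix a configuration $x$ in which $P$ occurs on a window $W = \vec{u} + \U_n^2$. The order $m$ supertiles of all four orientations tile the plane in blocks of side $2^{m+1}-1$ separated by single cross-lines, these partitions being nested as $m$ grows: each order $m+1$ block is a $2\times 2$ array of order $m$ blocks together with the central cross of the order $m+1$ supertile. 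By Lemma~\ref{lem.repetition.supertiles} together with the periodicity of supertiles, the cross columns (resp. rows) of order $\ge m+1$ form, inside each infinite supertile, an arithmetic progression of common difference $2^{m+1}$.

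First I would reduce to the case where $W$ meets only low order crosses. Taking $m = k+1$, the cross columns and the cross rows of order $\ge k+2$ each form progressions of step $2^{k+2} \ge 4n$, so $W$ meets at most one cross column and at most one cross row of order $\ge k+2$. The essential point is that the restriction of $x$ to $W$ does not depend on the exact orders of these two crosses once they are large: within distance $< 2^k$ of the centre or of an arm of a cross, an order $J$ cross is indistinguishable from an order $k+2$ cross, since the two structures differ only at distance $\ge 2^{k+1}$ from the centre. Using the correspondence of Figure~\ref{fig.infinite.supertiles} between neighbourhoods of the separating lines and crosses of type (ii) and (iii) configurations and sub-patterns of large finite supertiles, the same argument also replaces the infinite separating lines by genuine finite crosses. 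Consequently $P$ occurs in some configuration $y$ of $X_{\texttt{R}}$ on a window meeting only cross columns and rows of order $\le k+2$.

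In $y$ the minimal block containing $W$ is then bounded: if $W$ meets no cross of order $\ge k+2$ it lies in a single order $(k+1)$ supertile; otherwise the highest order cross it straddles has order $\le k+2$, and when it straddles both a column and a row cross their intersection is a supertile centre, so $W$ lies in a block of order at most $k+3$, the extra level absorbing the case where the two crosses belong to distinct order $k+2$ supertiles. In all cases $W$ lies in an order $m_0$ supertile with $m_0 \le k+3 < \chi(n)$. Applying the extension proposition (\cite{R71}) $\chi(n)-m_0$ times embeds this supertile, inside a single configuration, into an order $\chi(n)$ supertile in which $P$ appears as a sub-pattern; this proves the first assertion.

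The second assertion is identical, with the hierarchy of order $c$ two-dimensional cells in place of supertiles: these have side $4^{c+1}+1$ and repeat with period $4^{c+2}$, so the relevant progressions of separating lines have step $4^{c+2}$, and choosing $c$ with $4^{c+1} \gtrsim n$, i.e. $c \approx \tfrac12 \log_2 n$, yields the bound $\chi'(n) = \lceil \lceil \log_2 n\rceil /2\rceil + 2$ after the same reduction and extension steps. The main obstacle throughout is the second paragraph: controlling, independently of the possibly unbounded orders of the crosses that $W$ straddles in $x$, the order of the smallest supertile (or cell) into which $W$ can be transported. This is exactly where the self-similarity of the Robinson structure—and the correspondence of Figure~\ref{fig.infinite.supertiles} for configurations carrying several infinite supertiles—must be invoked, and where the additive constant $4$ (resp. $2$) is spent.
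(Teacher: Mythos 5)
First, a point of reference: the paper does not prove this lemma at all --- it is imported from \cite{GS17ED}, and the annex explicitly says the results of that section are known and refers to \cite{R71} and \cite{GS17ED}. So your proposal can only be judged against what a correct proof must contain. For the supertile assertion your architecture is the standard, correct one (locate the window in the order-$(k+1)$ grid, reduce the at most one vertical and one horizontal high-order cross line it meets to bounded order, conclude $m_0\le k+3<\chi(n)$, extend upward with the proposition of \cite{R71}), but the two claims that carry all the content are mis-justified. First, your indistinguishability argument (``the two structures differ only at distance $\ge 2^{k+1}$ from the centre'') only covers windows near the centre of a cross; when $W$ meets an arm at distance much larger than $2^{k+1}$ from the centre of an order $J$ cross, you are exactly in the region where the structures differ, and an order $k+2$ cross has no arm there at all. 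What is actually needed is Lemma~\ref{lem.repetition.supertiles}: the decoration of a long arm by the adjacent order $\le k+1$ structure repeats with period $2^{k+2}$, so the window's content matches a portion of an order $k+2$ or $k+3$ arm at a \emph{different} position relative to its centre; one must also match the $0,1$-counter and the alignment symbols, which is precisely what distinguishes $X_{\texttt{R}}$ from the plain Robinson subshift and what makes your type (ii) step (delegated to Figure~\ref{fig.infinite.supertiles}) true at all --- without the alignment layer that step is false. Second, ``when it straddles both a column and a row cross their intersection is a supertile centre'' is wrong: the intersection of two free lines of order $\ge k+2$ can be a position where vertical arms of order $k+2$ crosses terminate on the horizontal arm of a higher-order cross (a six-arrow position), not a centre. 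Your bound $m_0\le k+3$ survives, but only after redoing this case with the horizontal line of order $k+3$, so your intermediate claim that after reduction all crosses met have order $\le k+2$ is also off by one.

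The cell assertion is where the proposal genuinely breaks. Order $c$ cells have side $4^{c+1}+1$ but repeat with period $4^{c+2}$: they are \emph{sparse}, covering roughly $1/16$ of the plane, unlike order $m$ supertiles, whose blocks of side $2^{m+1}-1$ sit at spacing $2^{m+1}$ and therefore cover everything up to single separating lines. Hence ``the same reduction'' simply does not apply to cells: a window lying in the gap between order $c$ cells avoids every cell boundary without being contained in any order $c$ cell. The correct route is to deduce the cell statement from the supertile statement already proved: the $n$-block lies in an order $m_0\le k+3$ supertile, and by Lemma~\ref{lem.repetition.supertiles} an order $m_0$ supertile appears entirely inside the region enclosed by any order $\chi'(n)$ cell, since that region has side $4^{\chi'(n)+1}+1\ge 2^{k+6}$, which exceeds the repetition period $2^{m_0+2}\le 2^{k+5}$ of order $m_0$ supertiles. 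With that substitution (and the repaired reduction above) your constants $\chi(n)=k+4$ and $\chi'(n)=\lceil k/2\rceil+2$ do check out.
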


\section{Some hierarchical signaling processes}

In this section, we present a layer whose purpose is to 
attribute to the blue corners in the Robinson layer a function 
relative to the counters and machines.

\subsection{\label{section.hierarchy.orientation} Orientation in the hierarchy}

The purpose of the first sublayer is to give access, to 
the support petal of each colored face,
to the orientation of this petal relatively to the 
support petal just above in the hierarchy. \bigskip

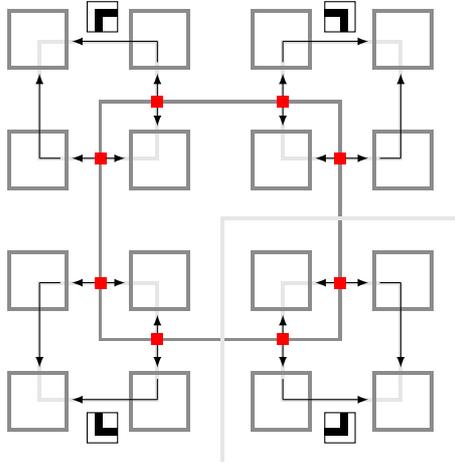
\begin{figure}[ht]
\[\begin{tikzpicture}[scale=0.10]
\fill[gray!90] (16,16) rectangle (48,48); 
\fill[white] (16.5,16.5) rectangle (47.5,47.5);
\fill[gray!20] (32,0) rectangle (32.5,32.5);
\fill[gray!20] (32,32.5) rectangle (64,32);

\fill[gray!20] (8,8) rectangle (8.5,24); 
\fill[gray!20] (8,8) rectangle (24,8.5);
\fill[gray!20] (8,23.5) rectangle (24,24);  
\fill[gray!20] (23.5,8) rectangle (24,24);

\fill[gray!20] (40,8) rectangle (40.5,24); 
\fill[gray!20] (40,8) rectangle (56,8.5);
\fill[gray!20] (40,23.5) rectangle (56,24);  
\fill[gray!20] (55.5,8) rectangle (56,24);

\fill[gray!20] (8,40) rectangle (8.5,56); 
\fill[gray!20] (8,40) rectangle (24,40.5);
\fill[gray!20] (8,55.5) rectangle (24,56);  
\fill[gray!20] (23.5,40) rectangle (24,56);

\fill[gray!20] (40,40) rectangle (40.5,56); 
\fill[gray!20] (40,40) rectangle (56,40.5);
\fill[gray!20] (40,55.5) rectangle (56,56);  
\fill[gray!20] (55.5,40) rectangle (56,56);

\fill[gray!90] (4,4) rectangle (4.5,12); 
\fill[gray!90] (4,4) rectangle (12,4.5); 
\fill[gray!90] (4.5,11.5) rectangle (12,12); 
\fill[gray!90] (11.5,4.5) rectangle (12,12); 

\fill[gray!90] (4,20) rectangle (4.5,28); 
\fill[gray!90] (4,20) rectangle (12,20.5); 
\fill[gray!90] (4.5,27.5) rectangle (12,28); 
\fill[gray!90] (11.5,20.5) rectangle (12,28); 

\fill[gray!90] (4,36) rectangle (4.5,44); 
\fill[gray!90] (4,36) rectangle (12,36.5); 
\fill[gray!90] (4.5,43.5) rectangle (12,44); 
\fill[gray!90] (11.5,36.5) rectangle (12,44);

\fill[gray!90] (4,52) rectangle (4.5,60); 
\fill[gray!90] (4,52) rectangle (12,52.5); 
\fill[gray!90] (4.5,59.5) rectangle (12,60); 
\fill[gray!90] (11.5,52.5) rectangle (12,60);

\fill[gray!90] (20,4) rectangle (20.5,12); 
\fill[gray!90] (20,4) rectangle (28,4.5); 
\fill[gray!90] (20.5,11.5) rectangle (28,12); 
\fill[gray!90] (27.5,4.5) rectangle (28,12); 

\fill[gray!90] (20,20) rectangle (20.5,28); 
\fill[gray!90] (20,20) rectangle (28,20.5); 
\fill[gray!90] (20.5,27.5) rectangle (28,28); 
\fill[gray!90] (27.5,20.5) rectangle (28,28); 

\fill[gray!90] (20,36) rectangle (20.5,44); 
\fill[gray!90] (20,36) rectangle (28,36.5); 
\fill[gray!90] (20.5,43.5) rectangle (28,44); 
\fill[gray!90] (27.5,36.5) rectangle (28,44);

\fill[gray!90] (20,52) rectangle (20.5,60); 
\fill[gray!90] (20,52) rectangle (28,52.5); 
\fill[gray!90] (20.5,59.5) rectangle (28,60); 
\fill[gray!90] (27.5,52.5) rectangle (28,60);

\fill[gray!90] (36,4) rectangle (36.5,12); 
\fill[gray!90] (36,4) rectangle (44,4.5); 
\fill[gray!90] (36.5,11.5) rectangle (44,12); 
\fill[gray!90] (43.5,4.5) rectangle (44,12); 

\fill[gray!90] (36,20) rectangle (36.5,28); 
\fill[gray!90] (36,20) rectangle (44,20.5); 
\fill[gray!90] (36.5,27.5) rectangle (44,28); 
\fill[gray!90] (43.5,20.5) rectangle (44,28); 

\fill[gray!90] (36,36) rectangle (36.5,44); 
\fill[gray!90] (36,36) rectangle (44,36.5); 
\fill[gray!90] (36.5,43.5) rectangle (44,44); 
\fill[gray!90] (43.5,36.5) rectangle (44,44);

\fill[gray!90] (36,52) rectangle (36.5,60); 
\fill[gray!90] (36,52) rectangle (44,52.5); 
\fill[gray!90] (36.5,59.5) rectangle (44,60); 
\fill[gray!90] (43.5,52.5) rectangle (44,60);

\fill[gray!90] (52,4) rectangle (52.5,12); 
\fill[gray!90] (52,4) rectangle (60,4.5); 
\fill[gray!90] (52.5,11.5) rectangle (60,12); 
\fill[gray!90] (59.5,4.5) rectangle (60,12); 

\fill[gray!90] (52,20) rectangle (52.5,28); 
\fill[gray!90] (52,20) rectangle (60,20.5); 
\fill[gray!90] (52.5,27.5) rectangle (60,28); 
\fill[gray!90] (59.5,20.5) rectangle (60,28); 

\fill[gray!90] (52,36) rectangle (52.5,44); 
\fill[gray!90] (52,36) rectangle (60,36.5); 
\fill[gray!90] (52.5,43.5) rectangle (60,44); 
\fill[gray!90] (59.5,36.5) rectangle (60,44);

\fill[gray!90] (52,52) rectangle (52.5,60); 
\fill[gray!90] (52,52) rectangle (60,52.5); 
\fill[gray!90] (52.5,59.5) rectangle (60,60); 
\fill[gray!90] (59.5,52.5) rectangle (60,60);


\draw[-latex] (15.5,23.75) -- (12.5,23.75);
\draw[-latex] (17,23.75) -- (19.5,23.75);

\draw (11,23.75) -- (8.25,23.75); 
\draw[-latex] (8.25,23.75) -- (8.25,12.5);
\draw (23.75,11) -- (23.75,8.25); 
\draw[-latex] (23.75,8.25) -- (12.5,8.25);

\draw[-latex] (23.75,15.5) -- (23.75,12.5);

\draw[-latex] (23.75,17) -- (23.75,19.5);

\node at (16.5,4.5) {
\begin{tikzpicture}[scale=0.2] 
\fill[black] (0.5,2) rectangle (1,0.5);
\fill[black] (1,0.5) rectangle (2,1);
\draw (0,0) rectangle (2,2);
\end{tikzpicture}};

\fill[red] (15.5,39.5) rectangle (17,41);
\fill[red] (23,47) rectangle (24.5,48.5);

\fill[red] (15.5+31.5,39.5) rectangle (17+31.5,41);
\fill[red] (23+16.5,47) rectangle (24.5+16.5,48.5);

\fill[red] (15.5,39.5-16.5) rectangle (17,41-16.5);
\fill[red] (15.5+31.5,39.5-16.5) rectangle (17+31.5,41-16.5);

\fill[red] (23,47-31.5) rectangle (24.5,48.5-31.5);
\fill[red] (23+16.5,47-31.5) rectangle (24.5+16.5,48.5-31.5);


\draw[-latex] (23.75,48.5) -- (23.75,51.5);
\draw[-latex] (23.75,47) -- (23.75,44.5);
\draw (23.75,53) -- (23.75,55.75);
\draw[-latex] (23.75,55.75) -- (12.5,55.75);
\draw[-latex] (15.5,40.25) -- (12.5,40.25);
\draw[-latex] (17,40.25) -- (19.5,40.25);

\draw (11,40.25) -- (8.25,40.25);
\draw[-latex] (8.25,40.25) -- (8.25,51.5);
\node at (16.5,59) {
\begin{tikzpicture}[scale=0.2] 
\fill[black] (0.5,0) rectangle (1,1.5);
\fill[black] (1,1.5) rectangle (2,1);
\draw (0,0) rectangle (2,2);
\end{tikzpicture}};


\draw[-latex] (48.5,23.75) -- (51.5,23.75);
\draw[-latex] (47,23.75) -- (44.5,23.75);
\draw (53,23.75) -- (55.75,23.75);
\draw[-latex] (55.75,23.75) -- (55.75,12.5);
\draw[-latex] (40.25,15.5) -- (40.25,12.5);
\draw[-latex] (40.25,17) -- (40.25,19.5);

\draw (40.25,11) -- (40.25,8.25);
\draw[-latex] (40.25,8.25) -- (51.5,8.25);
\node at (47.75,4.5) {
\begin{tikzpicture}[scale=0.2] 
\fill[black] (1.5,2) rectangle (1,0.5);
\fill[black] (1,0.5) rectangle (0,1);
\draw (0,0) rectangle (2,2);
\end{tikzpicture}};


\draw[-latex] (48.5,40.25) -- (51.5,40.25);
\draw[-latex] (47,40.25) -- (44.5,40.25);
\draw[-latex] (40.25,48.5) -- (40.25,51.5);
\draw[-latex] (40.25,47) -- (40.25,44.5);
\draw (40.25,53) -- (40.25,55.75);
\draw[-latex] (40.25,55.75) -- (51.5,55.75);
\draw (53,40.25) -- (55.75,40.25);
\draw[-latex] (55.75,40.25) -- (55.75,51.5);
\node at (47.75,59) {
\begin{tikzpicture}[scale=0.2] 
\fill[black] (1.5,0) rectangle (1,1.5);
\fill[black] (1,1.5) rectangle (0,1);
\draw (0,0) rectangle (2,2);
\end{tikzpicture}};
\end{tikzpicture}\]
\caption{\label{fig.orientation} Schematic illustration of the
orientation rules, showing a support petal and the 
support petals just under this one in 
the hierarchy, 
all of them colored dark 
gray. The transmission 
petals connecting 
them are colored light gray.
Transformation positions are colored with a red square. The arrows give the natural interpretation 
of the propagation direction 
of the signal transmitting 
the orientation information.}
\end{figure}

\noindent \textbf{\textit{Symbols:}} \bigskip

The symbols are 
elements of \[\left\{\begin{tikzpicture}[scale=0.2] 
\draw (0,0) rectangle (2,2);
\end{tikzpicture}, \begin{tikzpicture}[scale=0.2] 
\fill[black] (0.5,0) rectangle (1,1.5);
\fill[black] (1,1.5) rectangle (2,1);
\draw (0,0) rectangle (2,2);
\end{tikzpicture}, \begin{tikzpicture}[scale=0.2] 
\fill[black] (1.5,0) rectangle (1,1.5);
\fill[black] (1,1.5) rectangle (0,1);
\draw (0,0) rectangle (2,2);
\end{tikzpicture}, \begin{tikzpicture}[scale=0.2] 
\fill[black] (1.5,2) rectangle (1,0.5);
\fill[black] (1,0.5) rectangle (0,1);
\draw (0,0) rectangle (2,2);
\end{tikzpicture}, \begin{tikzpicture}[scale=0.2] 
\fill[black] (0.5,2) rectangle (1,0.5);
\fill[black] (1,0.5) rectangle (2,1);
\draw (0,0) rectangle (2,2);
\end{tikzpicture} \right\},\] \[\left\{\begin{tikzpicture}[scale=0.2] 
\draw (0,0) rectangle (2,2);
\end{tikzpicture}, \begin{tikzpicture}[scale=0.2] 
\fill[black] (0.5,0) rectangle (1,1.5);
\fill[black] (1,1.5) rectangle (2,1);
\draw (0,0) rectangle (2,2);
\end{tikzpicture}, \begin{tikzpicture}[scale=0.2] 
\fill[black] (1.5,0) rectangle (1,1.5);
\fill[black] (1,1.5) rectangle (0,1);
\draw (0,0) rectangle (2,2);
\end{tikzpicture}, \begin{tikzpicture}[scale=0.2] 
\fill[black] (1.5,2) rectangle (1,0.5);
\fill[black] (1,0.5) rectangle (0,1);
\draw (0,0) rectangle (2,2);
\end{tikzpicture}, \begin{tikzpicture}[scale=0.2] 
\fill[black] (0.5,2) rectangle (1,0.5);
\fill[black] (1,0.5) rectangle (2,1);
\draw (0,0) rectangle (2,2);
\end{tikzpicture} \right\} \times \left\{\begin{tikzpicture}[scale=0.2] 
\fill[black] (0.5,0) rectangle (1,1.5);
\fill[black] (1,1.5) rectangle (2,1);
\draw (0,0) rectangle (2,2);
\end{tikzpicture}, \begin{tikzpicture}[scale=0.2] 
\fill[black] (1.5,0) rectangle (1,1.5);
\fill[black] (1,1.5) rectangle (0,1);
\draw (0,0) rectangle (2,2);
\end{tikzpicture}, \begin{tikzpicture}[scale=0.2] 
\fill[black] (1.5,2) rectangle (1,0.5);
\fill[black] (1,0.5) rectangle (0,1);
\draw (0,0) rectangle (2,2);
\end{tikzpicture}, \begin{tikzpicture}[scale=0.2] 
\fill[black] (0.5,2) rectangle (1,0.5);
\fill[black] (1,0.5) rectangle (2,1);
\draw (0,0) rectangle (2,2);
\end{tikzpicture} \right\},\] and a blank symbol. \bigskip

\noindent \textbf{\textit{Local rules:}}

\begin{itemize}
\item \textbf{Localization:} the non blank symbols are superimposed 
on and only on positions with petal symbols of the gray faces.
The symbol is transmitted through the petals, except on 
transformation positions, defined just below.
\item \textbf{Transformation positions:} the 
\textbf{transformation positions} are the 
positions where the transformation rule occurs 
(meaning that the signal is transformed). 
These positions depend on the (sub)layer.
In this sublayer, these are the positions 
where a support petal intersects 
a transmission petal just 
under in the hierarchy. 
On these positions is superimposed a \textbf{couple 
of symbols}, in 
\[\left\{\begin{tikzpicture}[scale=0.2] 
\draw (0,0) rectangle (2,2);
\end{tikzpicture}, \begin{tikzpicture}[scale=0.2] 
\fill[black] (0.5,0) rectangle (1,1.5);
\fill[black] (1,1.5) rectangle (2,1);
\draw (0,0) rectangle (2,2);
\end{tikzpicture}, \begin{tikzpicture}[scale=0.2] 
\fill[black] (1.5,0) rectangle (1,1.5);
\fill[black] (1,1.5) rectangle (0,1);
\draw (0,0) rectangle (2,2);
\end{tikzpicture}, \begin{tikzpicture}[scale=0.2] 
\fill[black] (1.5,2) rectangle (1,0.5);
\fill[black] (1,0.5) rectangle (0,1);
\draw (0,0) rectangle (2,2);
\end{tikzpicture}, \begin{tikzpicture}[scale=0.2] 
\fill[black] (0.5,2) rectangle (1,0.5);
\fill[black] (1,0.5) rectangle (2,1);
\draw (0,0) rectangle (2,2);
\end{tikzpicture} \right\} \times \left\{\begin{tikzpicture}[scale=0.2] 
\fill[black] (0.5,0) rectangle (1,1.5);
\fill[black] (1,1.5) rectangle (2,1);
\draw (0,0) rectangle (2,2);
\end{tikzpicture}, \begin{tikzpicture}[scale=0.2] 
\fill[black] (1.5,0) rectangle (1,1.5);
\fill[black] (1,1.5) rectangle (0,1);
\draw (0,0) rectangle (2,2);
\end{tikzpicture}, \begin{tikzpicture}[scale=0.2] 
\fill[black] (1.5,2) rectangle (1,0.5);
\fill[black] (1,0.5) rectangle (0,1);
\draw (0,0) rectangle (2,2);
\end{tikzpicture}, \begin{tikzpicture}[scale=0.2] 
\fill[black] (0.5,2) rectangle (1,0.5);
\fill[black] (1,0.5) rectangle (2,1);
\draw (0,0) rectangle (2,2);
\end{tikzpicture} \right\},\] 
while the other petal positions are superimposed 
with a \textbf{simple symbol}, in 
\[\left\{\begin{tikzpicture}[scale=0.2] 
\draw (0,0) rectangle (2,2);
\end{tikzpicture}, \begin{tikzpicture}[scale=0.2] 
\fill[black] (0.5,0) rectangle (1,1.5);
\fill[black] (1,1.5) rectangle (2,1);
\draw (0,0) rectangle (2,2);
\end{tikzpicture}, \begin{tikzpicture}[scale=0.2] 
\fill[black] (1.5,0) rectangle (1,1.5);
\fill[black] (1,1.5) rectangle (0,1);
\draw (0,0) rectangle (2,2);
\end{tikzpicture}, \begin{tikzpicture}[scale=0.2] 
\fill[black] (1.5,2) rectangle (1,0.5);
\fill[black] (1,0.5) rectangle (0,1);
\draw (0,0) rectangle (2,2);
\end{tikzpicture}, \begin{tikzpicture}[scale=0.2] 
\fill[black] (0.5,2) rectangle (1,0.5);
\fill[black] (1,0.5) rectangle (2,1);
\draw (0,0) rectangle (2,2);
\end{tikzpicture} \right\}.\]
On the transmission positions, 
the first symbol corresponds 
to outgoing arrows (this is 
the direction from which the signal 
comes, that is to say the support petal), 
and the second one to incoming arrows 
(this is the direction where the signal is 
transmitted, after transformation).
\item \textbf{Transformation:} on a transformation
position, the second bit of the
couple is $\begin{tikzpicture}[scale=0.2] 
\fill[black] (0.5,0) rectangle (1,1.5);
\fill[black] (1,1.5) rectangle (2,1);
\draw (0,0) rectangle (2,2);
\end{tikzpicture}$ if the six arrows symbol is 
\[\begin{tikzpicture}[scale=0.3]
\draw (0,0) rectangle (2,2) ;
\draw [-latex] (0,1) -- (2,1) ;
\draw [-latex] (0,1.5) -- (2,1.5) ; 
\draw [-latex] (1,0) -- (1,1) ; 
\draw [-latex] (1,2) -- (1,1.5) ;
\draw [-latex] (1.5,0) -- (1.5,1) ; 
\draw [-latex] (1.5,2) -- (1.5,1.5) ;
\end{tikzpicture}, \ \text{or} \ \begin{tikzpicture}[scale=0.3]
\draw (0,0) rectangle (2,2) ;
\draw [-latex] (1,2) -- (1,0) ;
\draw [latex-] (0.5,0) -- (0.5,2) ; 
\draw [-latex] (0,1) -- (0.5,1) ; 
\draw [-latex] (2,1) -- (1,1) ;
\draw [-latex] (0,1.5) -- (0.5,1.5) ; 
\draw [-latex] (2,1.5) -- (1,1.5) ;
\end{tikzpicture}.\]
These positions correspond to the 
intersection of an 
order $n+1$ support petal  
with the \textbf{north west} order $n$ 
transmission petal just under in the hierarchy. 
\textit{There are similar rules for the other orientations}.
\item \textbf{Border rule:} on the border of a 
gray face, the symbol is blank if not on a transmission 
position. On a transmission position, the
first symbol of the couple is blank.
\end{itemize}

\noindent \textbf{\textit{Global behavior:}} \bigskip

This layer supports a signal that propagates
through the petal 
hierarchy on the colored faces.
This signal is transmitted through 
the petals except on the intersections 
of a support petal and a 
transmission petal just above 
in the hierarchy. On these positions, 
the symbol transmitted by the 
signal is transformed into a symbol 
representing the orientation of the transmission 
petal with respect to the support petal. 

As a consequence, the support petals 
just under the transmission petal 
are colored with this orientation symbol.
See the schema on Figure~\ref{fig.orientation}.

\subsection{\label{subsection.functional.areas.whole.cell} 
Hierarchical constitution of functional areas}

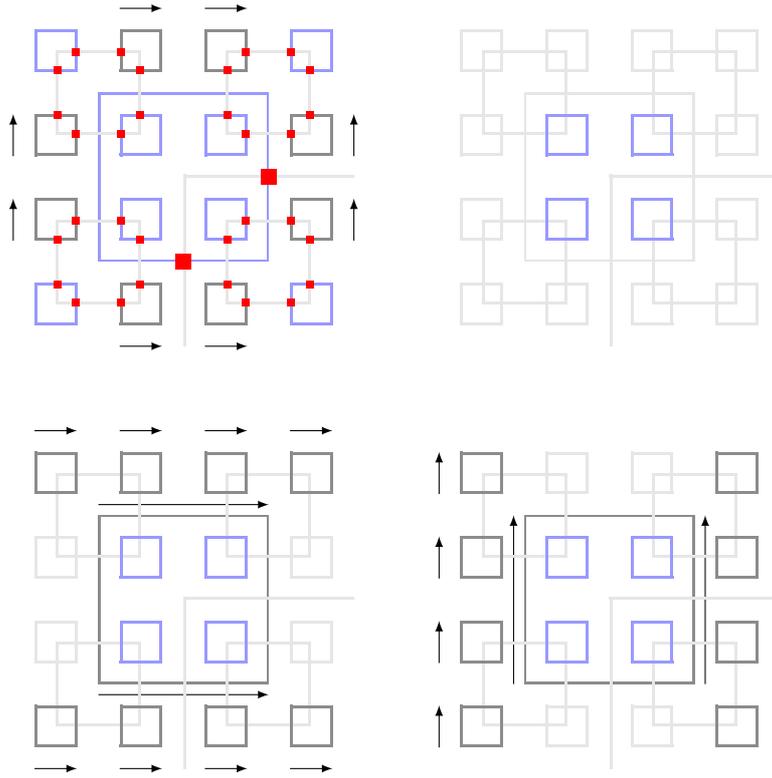
\begin{figure}[ht]
\[\begin{tikzpicture}[scale=0.07]
\begin{scope}[xshift=0cm]
\fill[blue!40] (16,16) rectangle (48,48); 
\fill[white] (16.5,16.5) rectangle (47.5,47.5);
\fill[gray!20] (32,0) rectangle (32.5,32.5);
\fill[gray!20] (32,32.5) rectangle (64,32);

\fill[gray!20] (8,8) rectangle (8.5,24); 
\fill[gray!20] (8,8) rectangle (24,8.5);
\fill[gray!20] (8,23.5) rectangle (24,24);  
\fill[gray!20] (23.5,8) rectangle (24,24);

\fill[gray!20] (40,8) rectangle (40.5,24); 
\fill[gray!20] (40,8) rectangle (56,8.5);
\fill[gray!20] (40,23.5) rectangle (56,24);  
\fill[gray!20] (55.5,8) rectangle (56,24);

\fill[gray!20] (8,40) rectangle (8.5,56); 
\fill[gray!20] (8,40) rectangle (24,40.5);
\fill[gray!20] (8,55.5) rectangle (24,56);  
\fill[gray!20] (23.5,40) rectangle (24,56);

\fill[gray!20] (40,40) rectangle (40.5,56); 
\fill[gray!20] (40,40) rectangle (56,40.5);
\fill[gray!20] (40,55.5) rectangle (56,56);  
\fill[gray!20] (55.5,40) rectangle (56,56);

\fill[blue!40] (4,4) rectangle (4.5,12); 
\fill[blue!40] (4,4) rectangle (12,4.5); 
\fill[blue!40] (4.5,11.5) rectangle (12,12); 
\fill[blue!40] (11.5,4.5) rectangle (12,12); 

\fill[gray!90] (4,20) rectangle (4.5,28); 
\fill[gray!90] (4,20) rectangle (12,20.5); 
\fill[gray!90] (4.5,27.5) rectangle (12,28); 
\fill[gray!90] (11.5,20.5) rectangle (12,28); 

\fill[gray!90] (4,36) rectangle (4.5,44); 
\fill[gray!90] (4,36) rectangle (12,36.5); 
\fill[gray!90] (4.5,43.5) rectangle (12,44); 
\fill[gray!90] (11.5,36.5) rectangle (12,44);

\fill[blue!40] (4,52) rectangle (4.5,60); 
\fill[blue!40] (4,52) rectangle (12,52.5); 
\fill[blue!40] (4.5,59.5) rectangle (12,60); 
\fill[blue!40] (11.5,52.5) rectangle (12,60);

\fill[gray!90] (20,4) rectangle (20.5,12); 
\fill[gray!90] (20,4) rectangle (28,4.5); 
\fill[gray!90] (20.5,11.5) rectangle (28,12); 
\fill[gray!90] (27.5,4.5) rectangle (28,12); 

\fill[blue!40] (20,20) rectangle (20.5,28); 
\fill[blue!40] (20,20) rectangle (28,20.5); 
\fill[blue!40] (20.5,27.5) rectangle (28,28); 
\fill[blue!40] (27.5,20.5) rectangle (28,28); 

\fill[blue!40] (20,36) rectangle (20.5,44); 
\fill[blue!40] (20,36) rectangle (28,36.5); 
\fill[blue!40] (20.5,43.5) rectangle (28,44); 
\fill[blue!40] (27.5,36.5) rectangle (28,44);

\fill[gray!90] (20,52) rectangle (20.5,60); 
\fill[gray!90] (20,52) rectangle (28,52.5); 
\fill[gray!90] (20.5,59.5) rectangle (28,60); 
\fill[gray!90] (27.5,52.5) rectangle (28,60);

\fill[gray!90] (36,4) rectangle (36.5,12); 
\fill[gray!90] (36,4) rectangle (44,4.5); 
\fill[gray!90] (36.5,11.5) rectangle (44,12); 
\fill[gray!90] (43.5,4.5) rectangle (44,12); 

\fill[blue!40] (36,20) rectangle (36.5,28); 
\fill[blue!40] (36,20) rectangle (44,20.5); 
\fill[blue!40] (36.5,27.5) rectangle (44,28); 
\fill[blue!40] (43.5,20.5) rectangle (44,28); 

\fill[blue!40] (36,36) rectangle (36.5,44); 
\fill[blue!40] (36,36) rectangle (44,36.5); 
\fill[blue!40] (36.5,43.5) rectangle (44,44); 
\fill[blue!40] (43.5,36.5) rectangle (44,44);

\fill[gray!90] (36,52) rectangle (36.5,60); 
\fill[gray!90] (36,52) rectangle (44,52.5); 
\fill[gray!90] (36.5,59.5) rectangle (44,60); 
\fill[gray!90] (43.5,52.5) rectangle (44,60);

\fill[blue!40] (52,4) rectangle (52.5,12); 
\fill[blue!40] (52,4) rectangle (60,4.5); 
\fill[blue!40] (52.5,11.5) rectangle (60,12); 
\fill[blue!40] (59.5,4.5) rectangle (60,12); 

\fill[gray!90] (52,20) rectangle (52.5,28); 
\fill[gray!90] (52,20) rectangle (60,20.5); 
\fill[gray!90] (52.5,27.5) rectangle (60,28); 
\fill[gray!90] (59.5,20.5) rectangle (60,28); 

\fill[gray!90] (52,36) rectangle (52.5,44); 
\fill[gray!90] (52,36) rectangle (60,36.5); 
\fill[gray!90] (52.5,43.5) rectangle (60,44); 
\fill[gray!90] (59.5,36.5) rectangle (60,44);

\fill[blue!40] (52,52) rectangle (52.5,60); 
\fill[blue!40] (52,52) rectangle (60,52.5); 
\fill[blue!40] (52.5,59.5) rectangle (60,60); 
\fill[blue!40] (59.5,52.5) rectangle (60,60);

\fill[red] (11,7.5) rectangle (12.5,9);
\fill[red] (7.5,11) rectangle (9,12.5);

\fill[red] (19.5,7.5) 
rectangle (21,9);
\fill[red] (23,11) 
rectangle (24.5,12.5);

\fill[red] (43,7.5) rectangle (44.5,9);
\fill[red] (39.5,11) rectangle (41,12.5);

\fill[red] (51.5,7.5) 
rectangle (53,9);
\fill[red] (55,11) 
rectangle (56.5,12.5);

\fill[red] (11,39.5) rectangle (12.5,41);
\fill[red] (7.5,43) rectangle (9,44.5);

\fill[red] (19.5,39.5) 
rectangle (21,41);
\fill[red] (23,43) 
rectangle (24.5,44.5);

\fill[red] (43,39.5) rectangle (44.5,41);
\fill[red] (39.5,43) rectangle (41,44.5);

\fill[red] (51.5,39.5) 
rectangle (53,41);
\fill[red] (55,43) 
rectangle (56.5,44.5);


\fill[red] (11,23) rectangle (12.5,24.5);
\fill[red] (7.5,19.5) rectangle (9,21);

\fill[red] (19.5,23) rectangle (21,24.5);
\fill[red] (23,19.5) rectangle (24.5,21);

\fill[red] (43,23) rectangle (44.5,24.5);
\fill[red] (39.5,19.5) rectangle (41,21);

\fill[red] (51.5,23) rectangle (53,24.5);
\fill[red] (55,19.5) rectangle (56.5,21);

\fill[red] (11,55) rectangle (12.5,56.5);
\fill[red] (7.5,51.5) rectangle (9,53);

\fill[red] (19.5,55) rectangle (21,56.5);
\fill[red] (23,51.5) rectangle (24.5,53);

\fill[red] (43,55) rectangle (44.5,56.5);
\fill[red] (39.5,51.5) rectangle (41,53);

\fill[red] (51.5,55) rectangle (53,56.5);
\fill[red] (55,51.5) rectangle (56.5,53);

\fill[red] (30.5,14.5) rectangle 
(33.5,17.5);

\fill[red] (46.5,30.5) rectangle 
(49.5,33.5);

\draw[-latex] (20,64) -- (28,64);
\draw[-latex] (36,64) -- (44,64);
\draw[-latex] (20,0) -- (28,0);
\draw[-latex] (36,0) -- (44,0);

\draw[-latex] (64,20) -- (64,28);
\draw[-latex] (64,36) -- (64,44);
\draw[-latex] (0,20) -- (0,28);
\draw[-latex] (0,36) -- (0,44);
\end{scope}

\begin{scope}[yshift=-80cm]
\fill[gray!90] (16,16) rectangle (48,48); 
\draw[-latex] (16,14) -- (48,14);
\draw[-latex] (16,50) -- (48,50);
\fill[white] (16.5,16.5) rectangle (47.5,47.5);
\fill[gray!20] (32,0) rectangle (32.5,32.5);
\fill[gray!20] (32,32.5) rectangle (64,32);

\fill[gray!20] (8,8) rectangle (8.5,24); 
\fill[gray!20] (8,8) rectangle (24,8.5);
\fill[gray!20] (8,23.5) rectangle (24,24);  
\fill[gray!20] (23.5,8) rectangle (24,24);

\fill[gray!20] (40,8) rectangle (40.5,24); 
\fill[gray!20] (40,8) rectangle (56,8.5);
\fill[gray!20] (40,23.5) rectangle (56,24);  
\fill[gray!20] (55.5,8) rectangle (56,24);

\fill[gray!20] (8,40) rectangle (8.5,56); 
\fill[gray!20] (8,40) rectangle (24,40.5);
\fill[gray!20] (8,55.5) rectangle (24,56);  
\fill[gray!20] (23.5,40) rectangle (24,56);

\fill[gray!20] (40,40) rectangle (40.5,56); 
\fill[gray!20] (40,40) rectangle (56,40.5);
\fill[gray!20] (40,55.5) rectangle (56,56);  
\fill[gray!20] (55.5,40) rectangle (56,56);

\fill[gray!90] (4,4) rectangle (4.5,12); 
\fill[gray!90] (4,4) rectangle (12,4.5); 
\fill[gray!90] (4.5,11.5) rectangle (12,12); 
\fill[gray!90] (11.5,4.5) rectangle (12,12); 

\fill[gray!20] (4,20) rectangle (4.5,28); 
\fill[gray!20] (4,20) rectangle (12,20.5); 
\fill[gray!20] (4.5,27.5) rectangle (12,28); 
\fill[gray!20] (11.5,20.5) rectangle (12,28); 

\fill[gray!20] (4,36) rectangle (4.5,44); 
\fill[gray!20] (4,36) rectangle (12,36.5); 
\fill[gray!20] (4.5,43.5) rectangle (12,44); 
\fill[gray!20] (11.5,36.5) rectangle (12,44);

\fill[gray!90] (4,52) rectangle (4.5,60); 
\fill[gray!90] (4,52) rectangle (12,52.5); 
\fill[gray!90] (4.5,59.5) rectangle (12,60); 
\fill[gray!90] (11.5,52.5) rectangle (12,60);

\fill[gray!90] (20,4) rectangle (20.5,12); 
\fill[gray!90] (20,4) rectangle (28,4.5); 
\fill[gray!90] (20.5,11.5) rectangle (28,12); 
\fill[gray!90] (27.5,4.5) rectangle (28,12); 

\fill[blue!40] (20,20) rectangle (20.5,28); 
\fill[blue!40] (20,20) rectangle (28,20.5); 
\fill[blue!40] (20.5,27.5) rectangle (28,28); 
\fill[blue!40] (27.5,20.5) rectangle (28,28); 

\fill[blue!40] (20,36) rectangle (20.5,44); 
\fill[blue!40] (20,36) rectangle (28,36.5); 
\fill[blue!40] (20.5,43.5) rectangle (28,44); 
\fill[blue!40] (27.5,36.5) rectangle (28,44);

\fill[gray!90] (20,52) rectangle (20.5,60); 
\fill[gray!90] (20,52) rectangle (28,52.5); 
\fill[gray!90] (20.5,59.5) rectangle (28,60); 
\fill[gray!90] (27.5,52.5) rectangle (28,60);

\fill[gray!90] (36,4) rectangle (36.5,12); 
\fill[gray!90] (36,4) rectangle (44,4.5); 
\fill[gray!90] (36.5,11.5) rectangle (44,12); 
\fill[gray!90] (43.5,4.5) rectangle (44,12); 

\fill[blue!40] (36,20) rectangle (36.5,28); 
\fill[blue!40] (36,20) rectangle (44,20.5); 
\fill[blue!40] (36.5,27.5) rectangle (44,28); 
\fill[blue!40] (43.5,20.5) rectangle (44,28); 

\fill[blue!40] (36,36) rectangle (36.5,44); 
\fill[blue!40] (36,36) rectangle (44,36.5); 
\fill[blue!40] (36.5,43.5) rectangle (44,44); 
\fill[blue!40] (43.5,36.5) rectangle (44,44);

\fill[gray!90] (36,52) rectangle (36.5,60); 
\fill[gray!90] (36,52) rectangle (44,52.5); 
\fill[gray!90] (36.5,59.5) rectangle (44,60); 
\fill[gray!90] (43.5,52.5) rectangle (44,60);

\fill[gray!90] (52,4) rectangle (52.5,12); 
\fill[gray!90] (52,4) rectangle (60,4.5); 
\fill[gray!90] (52.5,11.5) rectangle (60,12); 
\fill[gray!90] (59.5,4.5) rectangle (60,12); 

\fill[gray!20] (52,20) rectangle (52.5,28); 
\fill[gray!20] (52,20) rectangle (60,20.5); 
\fill[gray!20] (52.5,27.5) rectangle (60,28); 
\fill[gray!20] (59.5,20.5) rectangle (60,28); 

\fill[gray!20] (52,36) rectangle (52.5,44); 
\fill[gray!20] (52,36) rectangle (60,36.5); 
\fill[gray!20] (52.5,43.5) rectangle (60,44); 
\fill[gray!20] (59.5,36.5) rectangle (60,44);

\fill[gray!90] (52,52) rectangle (52.5,60); 
\fill[gray!90] (52,52) rectangle (60,52.5); 
\fill[gray!90] (52.5,59.5) rectangle (60,60); 
\fill[gray!90] (59.5,52.5) rectangle (60,60);

\draw[-latex] (20,64) -- (28,64);
\draw[-latex] (4,64) -- (12,64);
\draw[-latex] (36,64) -- (44,64);
\draw[-latex] (52,64) -- (60,64);
\draw[-latex] (20,0) -- (28,0);
\draw[-latex] (36,0) -- (44,0);

\draw[-latex] (4,0) -- (12,0);
\draw[-latex] (52,0) -- (60,0);

\end{scope}

\begin{scope}[xshift=80cm]
\fill[gray!20] (16,16) rectangle (48,48); 
\fill[white] (16.5,16.5) rectangle (47.5,47.5);
\fill[gray!20] (32,0) rectangle (32.5,32.5);
\fill[gray!20] (32,32.5) rectangle (64,32);

\fill[gray!20] (8,8) rectangle (8.5,24); 
\fill[gray!20] (8,8) rectangle (24,8.5);
\fill[gray!20] (8,23.5) rectangle (24,24);  
\fill[gray!20] (23.5,8) rectangle (24,24);

\fill[gray!20] (40,8) rectangle (40.5,24); 
\fill[gray!20] (40,8) rectangle (56,8.5);
\fill[gray!20] (40,23.5) rectangle (56,24);  
\fill[gray!20] (55.5,8) rectangle (56,24);

\fill[gray!20] (8,40) rectangle (8.5,56); 
\fill[gray!20] (8,40) rectangle (24,40.5);
\fill[gray!20] (8,55.5) rectangle (24,56);  
\fill[gray!20] (23.5,40) rectangle (24,56);

\fill[gray!20] (40,40) rectangle (40.5,56); 
\fill[gray!20] (40,40) rectangle (56,40.5);
\fill[gray!20] (40,55.5) rectangle (56,56);  
\fill[gray!20] (55.5,40) rectangle (56,56);

\fill[gray!20] (4,4) rectangle (4.5,12); 
\fill[gray!20] (4,4) rectangle (12,4.5); 
\fill[gray!20] (4.5,11.5) rectangle (12,12); 
\fill[gray!20] (11.5,4.5) rectangle (12,12); 

\fill[gray!20] (4,20) rectangle (4.5,28); 
\fill[gray!20] (4,20) rectangle (12,20.5); 
\fill[gray!20] (4.5,27.5) rectangle (12,28); 
\fill[gray!20] (11.5,20.5) rectangle (12,28); 

\fill[gray!20] (4,36) rectangle (4.5,44); 
\fill[gray!20] (4,36) rectangle (12,36.5); 
\fill[gray!20] (4.5,43.5) rectangle (12,44); 
\fill[gray!20] (11.5,36.5) rectangle (12,44);

\fill[gray!20] (4,52) rectangle (4.5,60); 
\fill[gray!20] (4,52) rectangle (12,52.5); 
\fill[gray!20] (4.5,59.5) rectangle (12,60); 
\fill[gray!20] (11.5,52.5) rectangle (12,60);

\fill[gray!20] (20,4) rectangle (20.5,12); 
\fill[gray!20] (20,4) rectangle (28,4.5); 
\fill[gray!20] (20.5,11.5) rectangle (28,12); 
\fill[gray!20] (27.5,4.5) rectangle (28,12); 

\fill[blue!40] (20,20) rectangle (20.5,28); 
\fill[blue!40] (20,20) rectangle (28,20.5); 
\fill[blue!40] (20.5,27.5) rectangle (28,28); 
\fill[blue!40] (27.5,20.5) rectangle (28,28); 

\fill[blue!40] (20,36) rectangle (20.5,44); 
\fill[blue!40] (20,36) rectangle (28,36.5); 
\fill[blue!40] (20.5,43.5) rectangle (28,44); 
\fill[blue!40] (27.5,36.5) rectangle (28,44);

\fill[gray!20] (20,52) rectangle (20.5,60); 
\fill[gray!20] (20,52) rectangle (28,52.5); 
\fill[gray!20] (20.5,59.5) rectangle (28,60); 
\fill[gray!20] (27.5,52.5) rectangle (28,60);

\fill[gray!20] (36,4) rectangle (36.5,12); 
\fill[gray!20] (36,4) rectangle (44,4.5); 
\fill[gray!20] (36.5,11.5) rectangle (44,12); 
\fill[gray!20] (43.5,4.5) rectangle (44,12); 

\fill[blue!40] (36,20) rectangle (36.5,28); 
\fill[blue!40] (36,20) rectangle (44,20.5); 
\fill[blue!40] (36.5,27.5) rectangle (44,28); 
\fill[blue!40] (43.5,20.5) rectangle (44,28); 

\fill[blue!40] (36,36) rectangle (36.5,44); 
\fill[blue!40] (36,36) rectangle (44,36.5); 
\fill[blue!40] (36.5,43.5) rectangle (44,44); 
\fill[blue!40] (43.5,36.5) rectangle (44,44);

\fill[gray!20] (36,52) rectangle (36.5,60); 
\fill[gray!20] (36,52) rectangle (44,52.5); 
\fill[gray!20] (36.5,59.5) rectangle (44,60); 
\fill[gray!20] (43.5,52.5) rectangle (44,60);

\fill[gray!20] (52,4) rectangle (52.5,12); 
\fill[gray!20] (52,4) rectangle (60,4.5); 
\fill[gray!20] (52.5,11.5) rectangle (60,12); 
\fill[gray!20] (59.5,4.5) rectangle (60,12); 

\fill[gray!20] (52,20) rectangle (52.5,28); 
\fill[gray!20] (52,20) rectangle (60,20.5); 
\fill[gray!20] (52.5,27.5) rectangle (60,28); 
\fill[gray!20] (59.5,20.5) rectangle (60,28); 

\fill[gray!20] (52,36) rectangle (52.5,44); 
\fill[gray!20] (52,36) rectangle (60,36.5); 
\fill[gray!20] (52.5,43.5) rectangle (60,44); 
\fill[gray!20] (59.5,36.5) rectangle (60,44);

\fill[gray!20] (52,52) rectangle (52.5,60); 
\fill[gray!20] (52,52) rectangle (60,52.5); 
\fill[gray!20] (52.5,59.5) rectangle (60,60); 
\fill[gray!20] (59.5,52.5) rectangle (60,60);

\end{scope}

\begin{scope}[yshift=-80cm,xshift=80cm]
\fill[gray!90] (16,16) rectangle (48,48); 
\draw[-latex] (14,16) -- (14,48);
\draw[-latex] (50,16) -- (50,48);
\fill[white] (16.5,16.5) rectangle (47.5,47.5);
\fill[gray!20] (32,0) rectangle (32.5,32.5);
\fill[gray!20] (32,32.5) rectangle (64,32);

\fill[gray!20] (8,8) rectangle (8.5,24); 
\fill[gray!20] (8,8) rectangle (24,8.5);
\fill[gray!20] (8,23.5) rectangle (24,24);  
\fill[gray!20] (23.5,8) rectangle (24,24);

\fill[gray!20] (40,8) rectangle (40.5,24); 
\fill[gray!20] (40,8) rectangle (56,8.5);
\fill[gray!20] (40,23.5) rectangle (56,24);  
\fill[gray!20] (55.5,8) rectangle (56,24);

\fill[gray!20] (8,40) rectangle (8.5,56); 
\fill[gray!20] (8,40) rectangle (24,40.5);
\fill[gray!20] (8,55.5) rectangle (24,56);  
\fill[gray!20] (23.5,40) rectangle (24,56);

\fill[gray!20] (40,40) rectangle (40.5,56); 
\fill[gray!20] (40,40) rectangle (56,40.5);
\fill[gray!20] (40,55.5) rectangle (56,56);  
\fill[gray!20] (55.5,40) rectangle (56,56);

\fill[gray!90] (4,4) rectangle (4.5,12); 
\fill[gray!90] (4,4) rectangle (12,4.5); 
\fill[gray!90] (4.5,11.5) rectangle (12,12); 
\fill[gray!90] (11.5,4.5) rectangle (12,12); 

\fill[gray!90] (4,20) rectangle (4.5,28); 
\fill[gray!90] (4,20) rectangle (12,20.5); 
\fill[gray!90] (4.5,27.5) rectangle (12,28); 
\fill[gray!90] (11.5,20.5) rectangle (12,28); 

\fill[gray!90] (4,36) rectangle (4.5,44); 
\fill[gray!90] (4,36) rectangle (12,36.5); 
\fill[gray!90] (4.5,43.5) rectangle (12,44); 
\fill[gray!90] (11.5,36.5) rectangle (12,44);

\fill[gray!90] (4,52) rectangle (4.5,60); 
\fill[gray!90] (4,52) rectangle (12,52.5); 
\fill[gray!90] (4.5,59.5) rectangle (12,60); 
\fill[gray!90] (11.5,52.5) rectangle (12,60);

\fill[gray!20] (20,4) rectangle (20.5,12); 
\fill[gray!20] (20,4) rectangle (28,4.5); 
\fill[gray!20] (20.5,11.5) rectangle (28,12); 
\fill[gray!20] (27.5,4.5) rectangle (28,12); 

\fill[blue!40] (20,20) rectangle (20.5,28); 
\fill[blue!40] (20,20) rectangle (28,20.5); 
\fill[blue!40] (20.5,27.5) rectangle (28,28); 
\fill[blue!40] (27.5,20.5) rectangle (28,28); 

\fill[blue!40] (20,36) rectangle (20.5,44); 
\fill[blue!40] (20,36) rectangle (28,36.5); 
\fill[blue!40] (20.5,43.5) rectangle (28,44); 
\fill[blue!40] (27.5,36.5) rectangle (28,44);

\fill[gray!20] (20,52) rectangle (20.5,60); 
\fill[gray!20] (20,52) rectangle (28,52.5); 
\fill[gray!20] (20.5,59.5) rectangle (28,60); 
\fill[gray!20] (27.5,52.5) rectangle (28,60);

\fill[gray!20] (36,4) rectangle (36.5,12); 
\fill[gray!20] (36,4) rectangle (44,4.5); 
\fill[gray!20] (36.5,11.5) rectangle (44,12); 
\fill[gray!20] (43.5,4.5) rectangle (44,12); 

\fill[blue!40] (36,20) rectangle (36.5,28); 
\fill[blue!40] (36,20) rectangle (44,20.5); 
\fill[blue!40] (36.5,27.5) rectangle (44,28); 
\fill[blue!40] (43.5,20.5) rectangle (44,28); 

\fill[blue!40] (36,36) rectangle (36.5,44); 
\fill[blue!40] (36,36) rectangle (44,36.5); 
\fill[blue!40] (36.5,43.5) rectangle (44,44); 
\fill[blue!40] (43.5,36.5) rectangle (44,44);

\fill[gray!20] (36,52) rectangle (36.5,60); 
\fill[gray!20] (36,52) rectangle (44,52.5); 
\fill[gray!20] (36.5,59.5) rectangle (44,60); 
\fill[gray!20] (43.5,52.5) rectangle (44,60);

\fill[gray!90] (52,4) rectangle (52.5,12); 
\fill[gray!90] (52,4) rectangle (60,4.5); 
\fill[gray!90] (52.5,11.5) rectangle (60,12); 
\fill[gray!90] (59.5,4.5) rectangle (60,12); 

\fill[gray!90] (52,20) rectangle (52.5,28); 
\fill[gray!90] (52,20) rectangle (60,20.5); 
\fill[gray!90] (52.5,27.5) rectangle (60,28); 
\fill[gray!90] (59.5,20.5) rectangle (60,28); 

\fill[gray!90] (52,36) rectangle (52.5,44); 
\fill[gray!90] (52,36) rectangle (60,36.5); 
\fill[gray!90] (52.5,43.5) rectangle (60,44); 
\fill[gray!90] (59.5,36.5) rectangle (60,44);

\fill[gray!90] (52,52) rectangle (52.5,60); 
\fill[gray!90] (52,52) rectangle (60,52.5); 
\fill[gray!90] (52.5,59.5) rectangle (60,60); 
\fill[gray!90] (59.5,52.5) rectangle (60,60);

\draw[-latex] (64,20) -- (64,28);
\draw[-latex] (64,4) -- (64,12);
\draw[-latex] (64,36) -- (64,44);
\draw[-latex] (64,52) -- (64,60);
\draw[-latex] (0,20) -- (0,28);
\draw[-latex] (0,36) -- (0,44);

\draw[-latex] (0,4) -- (0,12);
\draw[-latex] (0,52) -- (0,60);

\end{scope}

\end{tikzpicture}\]
\caption{\label{fig.trans.comp.area} 
Schematic illustration of 
the transmission rules of the functional areas sublayer.
The transformation positions are marked with red squares
on the first schema.}
\end{figure}

We present here the second sublayer.

\noindent \textbf{\textit{Symbols:}} \bigskip

$\begin{tikzpicture}[scale=0.3] 
\fill[blue!40] (0,0) rectangle (1,1);
\draw (0,0) rectangle (1,1);
\end{tikzpicture}$,  
\begin{tikzpicture}[scale=0.3] 
\fill[gray!20] (0,0) rectangle (1,1);
\draw (0,0) rectangle (1,1);
\end{tikzpicture},
 $\left(\begin{tikzpicture}[scale=0.3] 
\fill[gray!90] (0,0) rectangle (1,1);
\draw (0,0) rectangle (1,1);
\end{tikzpicture}, \rightarrow \right)$, $\left(\begin{tikzpicture}[scale=0.3] 
\fill[gray!90] (0,0) rectangle (1,1);
\draw (0,0) rectangle (1,1);
\end{tikzpicture},\uparrow \right)$, 
$\begin{tikzpicture}[scale=0.3] 
\draw (0,0) rectangle (1,1);
\end{tikzpicture}$ and $\left\{\begin{tikzpicture}[scale=0.3] 
\fill[blue!40] (0,0) rectangle (1,1);
\draw (0,0) rectangle (1,1);
\end{tikzpicture}, \begin{tikzpicture}[scale=0.3] 
\fill[gray!20] (0,0) rectangle (1,1);
\draw (0,0) rectangle (1,1);
\end{tikzpicture}, \left(\begin{tikzpicture}[scale=0.3] 
\fill[gray!90] (0,0) rectangle (1,1);
\draw (0,0) rectangle (1,1);
\end{tikzpicture}, \rightarrow \right), \left(\begin{tikzpicture}[scale=0.3] 
\fill[gray!90] (0,0) rectangle (1,1);
\draw (0,0) rectangle (1,1);
\end{tikzpicture},\uparrow \right)\right\}^2$. \bigskip

\noindent \textbf{\textit{Local rules:}} \bigskip
 
\begin{itemize}
\item \textbf{Localization:} 
the non blank symbols are superimposed on
and only on petal positions of the colored 
faces. 
\item the ordered pairs of symbols 
are superimposed 
on six arrows symbols positions 
where the border of a cell 
intersects the petal just 
above in the hierarchy (\textbf{transformation 
positions}).
\item the symbols are transmitted through 
the petals except on transformation 
positions.
\item {\bf{Transformation through hierarchy}}. 
On the transformation positions, 
if the first symbol is \begin{tikzpicture}[scale=0.3]
\fill[gray!20] (0,0) rectangle (1,1);
\draw (0,0) rectangle (1,1);
\end{tikzpicture}, then the second one is equal. 
For the other cases, if the symbol is 
\[\begin{tikzpicture}[scale=0.3]
\draw (0,0) rectangle (2,2) ;
\draw [-latex] (0,1) -- (2,1) ;
\draw [-latex] (0,1.5) -- (2,1.5) ; 
\draw [-latex] (1,0) -- (1,1) ; 
\draw [-latex] (1,2) -- (1,1.5) ;
\draw [-latex] (1.5,0) -- (1.5,1) ; 
\draw [-latex] (1.5,2) -- (1.5,1.5) ;
\end{tikzpicture}, \ \text{or} \ \begin{tikzpicture}[scale=0.3]
\draw (0,0) rectangle (2,2) ;
\draw [-latex] (1,2) -- (1,0) ;
\draw [latex-] (0.5,0) -- (0.5,2) ; 
\draw [-latex] (0,1) -- (0.5,1) ; 
\draw [-latex] (2,1) -- (1,1) ;
\draw [-latex] (0,1.5) -- (0.5,1.5) ; 
\draw [-latex] (2,1.5) -- (1,1.5) ;
\end{tikzpicture},\]
then second color is according to the following rules. The condition 
on the symbol above corresponds
to positions 
where a support 
petal intersects 
the transmission petal 
just above in the hierarchy 
and being oriented 
\textbf{north west} 
with respect to this 
transmission petal 
(examples of 
such positions 
are represented
with large squares on 
Figure~\ref{fig.trans.comp.area}).

\begin{enumerate}
\item if the orientation symbol is $\begin{tikzpicture}[scale=0.2] 
\fill[black] (0.5,0) rectangle (1,1.5);
\fill[black] (1,1.5) rectangle (2,1);
\draw (0,0) rectangle (2,2);
\end{tikzpicture}$, the second symbol is equal to the first one. 
\item if the orientation symbol is   $\begin{tikzpicture}[scale=0.2] 
\fill[black] (1.5,0) rectangle (1,1.5);
\fill[black] (1,1.5) rectangle (0,1);
\draw (0,0) rectangle (2,2);
\end{tikzpicture}$, the second symbol 
is a function of the first one: 
\begin{itemize}
\item if the first symbol of the ordered pair is \begin{tikzpicture}[scale=0.3]
\fill[blue!40] (0,0) rectangle (1,1);
\draw (0,0) rectangle (1,1);
\end{tikzpicture},  then the second symbol is (\begin{tikzpicture}[scale=0.3]
\fill[gray!90] (0,0) rectangle (1,1);
\draw (0,0) rectangle (1,1);
\end{tikzpicture},$\rightarrow$). 
\item if the first symbol is (\begin{tikzpicture}[scale=0.3]
\fill[gray!90] (0,0) rectangle (1,1);
\draw (0,0) rectangle (1,1);
\end{tikzpicture},$\rightarrow$), then the second one is equal.
\item if the first symbol is $(\begin{tikzpicture}[scale=0.3] 
\fill[gray!90] (0,0) rectangle (1,1);
\draw (0,0) rectangle (1,1);
\end{tikzpicture}, \uparrow)$, then the second symbol is 
\begin{tikzpicture}[scale=0.3]
\fill[gray!20] (0,0) rectangle (1,1);
\draw (0,0) rectangle (1,1);
\end{tikzpicture}. 
\end{itemize}
\item if the orientation symbol is  
$\begin{tikzpicture}[scale=0.2] 
\fill[black] (1.5,2) rectangle (1,0.5);
\fill[black] (1,0.5) rectangle (0,1);
\draw (0,0) rectangle (2,2);
\end{tikzpicture}$, the second color is 
\begin{tikzpicture}[scale=0.3]
\fill[blue!40] (0,0) rectangle (1,1);
\draw (0,0) rectangle (1,1);
\end{tikzpicture}.
\item if the orientation symbol is 
$\begin{tikzpicture}[scale=0.2] 
\fill[black] (0.5,2) rectangle (1,0.5);
\fill[black] (1,0.5) rectangle (2,1);
\draw (0,0) rectangle (2,2);
\end{tikzpicture}$, the second symbol 
is a function of the first one: 
\begin{itemize}
\item if the first symbol of the ordered pair is or \begin{tikzpicture}[scale=0.3]
\fill[blue!40] (0,0) rectangle (1,1);
\draw (0,0) rectangle (1,1);
\end{tikzpicture},  then the second symbol is (\begin{tikzpicture}[scale=0.3]
\fill[gray!90] (0,0) rectangle (1,1);
\draw (0,0) rectangle (1,1);
\end{tikzpicture},$\rightarrow$). 
\item if the first symbol is (\begin{tikzpicture}[scale=0.3]
\fill[gray!90] (0,0) rectangle (1,1);
\draw (0,0) rectangle (1,1);
\end{tikzpicture},$\rightarrow$), then the second one is \begin{tikzpicture}[scale=0.3]
\fill[gray!20] (0,0) rectangle (1,1);
\draw (0,0) rectangle (1,1);
\end{tikzpicture}. 
\item if the first symbol is $(\begin{tikzpicture}[scale=0.3] 
\fill[gray!90] (0,0) rectangle (1,1);
\draw (0,0) rectangle (1,1);
\end{tikzpicture}, \uparrow)$, then the second symbol is equal.
\end{itemize}
\end{enumerate}

For the other orientations 
of the support 
petal with respect 
to the transmission 
petal just above, 
the rules are obtained 
by rotation.

See Figure~\ref{fig.trans.comp.area} for an illustration of these rules.
\item \textbf{Coherence rules:} We impose rules that 
allow the infinite areas to be coherent with the finite ones. 
For instance, the nearest blue corner to the corner of a cell and which 
is inside this cell has to be colored blue in this sublayer. The other 
rules impose similar contraints on middles and quarters of the 
cell's walls.
\end{itemize} \bigskip

\noindent \textbf{
\textit{Global behavior:}} \bigskip

The result of the process presented in the local 
rules is 
that the borders of the order zero 
cells are colored 
with a color
which represents a \textbf{function} of 
the blue corners positions - called 
\textbf{functional positions} - 
included in the 
zero order petal just under this petal 
in the petal hierarchy.
These symbols and functions are as follows: 

\begin{itemize}
\item \textbf{blue} if the set of columns and 
the set of lines in which it is included do not
intersect larger order two-dimensional 
cells. The associated function
is to \textbf{support a step of computation} 
(which can be just to transfer the 
information in the case when 
the face support a counter),  
and the corresponding 
positions are called \textbf{computation 
positions}. 
\item an \textbf{horizontal} (resp. 
vertical)
\textbf{arrow} directed to the right (resp. to 
the top)
when the set of columns (resp. lines) containing 
this petal intersects larger order two-dimensional cells but not the set of lines (resp. 
columns) containing it. The associated 
function is to \textbf{transfer information} 
in the direction of the arrow (this 
information can be trivial in the case 
when the face support a counter. This means  
that the symbol transmitted is the blank 
symbol), and the corresponding positions 
are called \textbf{information transfer 
positions}.
\item when 
the two sets intersect larger order 
cells, the petal is colored \textbf{light gray}.
These positions have no function.
\end{itemize}

\section{\label{section.linear.counter} Linear counter layer}

The construction model of M. Hochman and 
T. Meyerovitch~\cite{Hochman-Meyerovitch-2010} implies 
degenerated behaviors of the Turing machines. 
For this reason, in order to preserve the minimality property, 
we use a counter which alternates all the possible 
behaviors of these machines. We describe this 
counter here. 

In 
Section~\ref{sec.notations.linear} 
we give some notations used in the construction 
of this layer. In Section~\ref{sec.incrementation.linear} we describe 
the incrementation mechanism of the linear 
counter. The rules for the information transport 
will be described later.

\subsubsection{\label{sec.notations.linear} 
Alphabet of the linear counter}

Let $l \ge 1$ be some integer, 
and $\mathcal{A}'$, $\mathcal{Q}$ and $\mathcal{D}$ some finite alphabets 
such that $|{\mathcal{A}}'|= |{\mathcal{Q}}|
= 2^{2^l}$, and $|\mathcal{D}|=2^{4.2^l-2}$.  
Denote  $\mathcal{A}_c$ the alphabet
$\mathcal{A}'  \times \mathcal{Q} ^3
\times \mathcal{D} \times \{\leftarrow,\rightarrow\} \times 
\{\texttt{on},\texttt{off}\}^2$. \bigskip

The alphabet $\mathcal{A}'$ will correspond to the alphabet of the working 
tape of the Turing machine after completing it such that is has cardinality 
equal to $2^{2^l}$ (this is possible by adding letters that interact 
trivially with the machine heads, 
and taking $l$ great enough). 
The alphabet $\mathcal{Q}$ will correspond 
to the set of states of the machine (after similar 
completion). The arrows 
will give the direction of the propagation of 
the error signal. The elements of the set 
$\{\texttt{on},\texttt{off}\} ^2$ are a coefficients 
telling which ones of the lines and columns 
are active for computation (which 
has an influence on how each 
computation positions work), 
 and the alphabet 
$\mathcal{D}$ is an artifact so that the 
cardinality of $\mathcal{A}_c$ is $2^{2^{l+3}}$, 
in order for the counter to have a period 
equal to a Fermat number.
\bigskip

Let us fix $s$ some cyclic permutation 
of the set $\mathcal{A}_c$, and 
$\vec{c}_{\texttt{max}}$ some 
element of $\mathcal{A}_c$.

\subsubsection{\label{sec.incrementation.linear}
Incrementation}

\noindent \textbf{\textit{Symbols:}} \bigskip

The elements of 
$({\mathcal{A}_c} \times \{0,1\}) \times \left\{\begin{tikzpicture}[scale=0.3] 
\fill[Salmon] (0,0) rectangle (1,1);
\draw (0,0) rectangle (1,1);
\end{tikzpicture}, \begin{tikzpicture}[scale=0.3]
\draw (0,0) rectangle (1,1);
\end{tikzpicture}\right\}$. 
The elements of $({\mathcal{A}_c} \times \{0,1\})$
are thought as the following tiles. 
The first symbol represents the south 
symbol in the tile, and the second one 
representing the west symbol in the tile: 
\[\begin{tikzpicture}[scale=1.5]
\draw (0,0) rectangle (1,1);
\draw (0,0) -- (1,1);
\draw (1,0) -- (0,1);
\node at (0.15,0.5) {$\vec{c}$};
\node at (0.8,0.5) {$s(\vec{c})$};
\node at (0.5,0.85) {$1$};
\node at (0.5,0.15) {$0$};
\end{tikzpicture}, \quad
\begin{tikzpicture}[scale=1.5]
\draw (0,0) rectangle (1,1);
\draw (0,0) -- (1,1);
\draw (1,0) -- (0,1);
\node at (0.15,0.5) {$\vec{c}$};
\node at (0.85,0.5) {$\vec{c}$};
\node at (0.5,0.15) {$0$};
\node at (0.5,0.85) {$0$};
\end{tikzpicture},\]
for $\vec{c} \neq \vec{c}_{\texttt{max}}$, 
and 
\[\begin{tikzpicture}[scale=1.5]
\draw (0,0) rectangle (1,1);
\draw (0,0) -- (1,1);
\draw (1,0) -- (0,1);
\node at (0.15,0.5) {$\vec{c}$};
\node at (0.8,0.5) {$s(\vec{c})$};
\node at (0.5,0.15) {$1$};
\node at (0.5,0.85) {$1$};
\end{tikzpicture},\]
for $\vec{c} = \vec{c}_{\texttt{max}}$. \bigskip

The second is called the {\bf{freezing symbol}}.
The other symbols are 
the elements of the following sets: 
$\mathcal{A}_c \times \left\{\begin{tikzpicture}[scale=0.3] 
\fill[Salmon] (0,0) rectangle (1,1);
\draw (0,0) rectangle (1,1);
\end{tikzpicture}, \begin{tikzpicture}[scale=0.3]
\draw (0,0) rectangle (1,1);
\end{tikzpicture}\right\}$, 
$\left\{\begin{tikzpicture}[scale=0.3] 
\fill[Salmon] (0,0) rectangle (1,1);
\draw (0,0) rectangle (1,1);
\end{tikzpicture}, \begin{tikzpicture}[scale=0.3]
\draw (0,0) rectangle (1,1);
\end{tikzpicture}\right\}$,$\mathcal{A}_c$, 
$\{0,1\} \times \{\begin{tikzpicture}[scale=0.3]
\fill[Salmon] (0,0) rectangle (1,1); 
\draw (0,0) rectangle (1,1); \end{tikzpicture}, 
\begin{tikzpicture}[scale=0.3] 
\draw (0,0) rectangle (1,1); \end{tikzpicture}
\}$ \bigskip

\noindent \textbf{\textit{Local rules:}} \bigskip

\begin{itemize}
\item \textbf{Localization rules:}

\begin{itemize}
\item The non-blank symbols are 
superimposed on positions of the cytoplasm in 
the 
$(\overline{2},\overline{3})$ sub-units,
in order $n \ge 4$ cells. 

\item The 
elements of $\mathcal{A}_c  \times \{0,1\} 
\times \{\begin{tikzpicture}[scale=0.3]
\fill[Salmon] (0,0) rectangle (1,1); 
\draw (0,0) rectangle (1,1); \end{tikzpicture}, 
\begin{tikzpicture}[scale=0.3] 
\draw (0,0) rectangle (1,1); \end{tikzpicture}\}$ appear on the leftmost column of the sub-unit 
having a blue symbol in the functional areas 
layer.

\item The elements of $\{0,1\} \times \{\begin{tikzpicture}[scale=0.3]
\fill[Salmon] (0,0) rectangle (1,1); 
\draw (0,0) rectangle (1,1); \end{tikzpicture}, 
\begin{tikzpicture}[scale=0.3] 
\draw (0,0) rectangle (1,1); \end{tikzpicture}\}$ appear on the other positions of the leftmost 
column. 

\item The elements $\mathcal{A}_c \times \{\begin{tikzpicture}[scale=0.3]
\fill[Salmon] (0,0) rectangle (1,1); 
\draw (0,0) rectangle (1,1); \end{tikzpicture}, 
\begin{tikzpicture}[scale=0.3] 
\draw (0,0) rectangle (1,1); \end{tikzpicture}\}$, appear on the positions of the sub-unit 
having a blue symbol or 
an horizontal arrow symbol in the functional areas 
layer and are outside the leftmost column.

\item The elements of 
$\{\begin{tikzpicture}[scale=0.3]
\fill[Salmon] (0,0) rectangle (1,1); 
\draw (0,0) rectangle (1,1); \end{tikzpicture}, 
\begin{tikzpicture}[scale=0.3] 
\draw (0,0) rectangle (1,1); \end{tikzpicture}\}$ 
are superimposed on the other positions of the 
cytoplasm. See an illustration of 
these rules on Figure~\ref{fig.loc.lin.count.face1}.
\end{itemize}

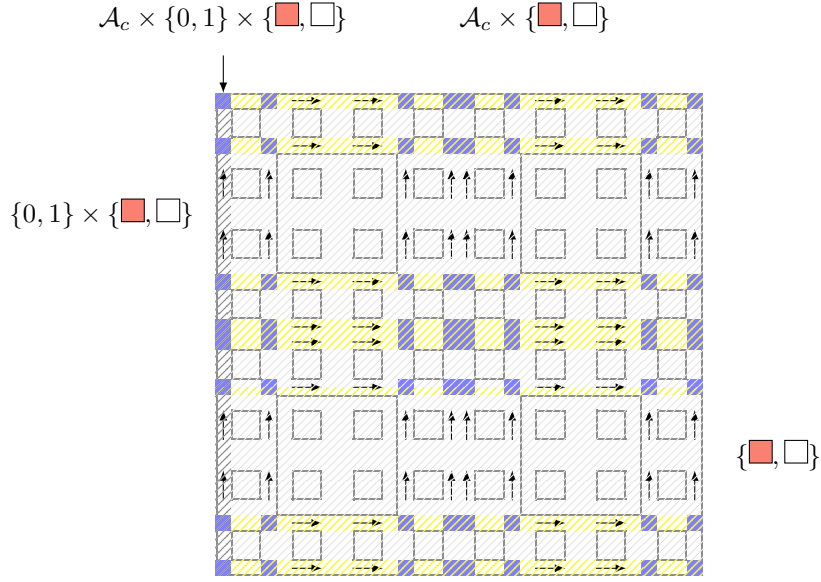
\begin{figure}[ht]
\[\begin{tikzpicture}[scale=0.05]
\fill[gray!90] (0,0) rectangle (128,128);
\fill[white] (0.5,0.5) rectangle (127.5,127.5);
\begin{scope}
\fill[gray!90] (16,16) rectangle (48,48); 
\fill[white] (16.5,16.5) rectangle (47.5,47.5);

\fill[gray!90] (4,4) rectangle (4.5,12); 
\fill[gray!90] (4,4) rectangle (12,4.5); 
\fill[gray!90] (4.5,11.5) rectangle (12,12); 
\fill[gray!90] (11.5,4.5) rectangle (12,12); 

\fill[gray!90] (4,20) rectangle (4.5,28); 
\fill[gray!90] (4,20) rectangle (12,20.5); 
\fill[gray!90] (4.5,27.5) rectangle (12,28); 
\fill[gray!90] (11.5,20.5) rectangle (12,28); 

\fill[gray!90] (4,36) rectangle (4.5,44); 
\fill[gray!90] (4,36) rectangle (12,36.5); 
\fill[gray!90] (4.5,43.5) rectangle (12,44); 
\fill[gray!90] (11.5,36.5) rectangle (12,44);

\fill[gray!90] (4,52) rectangle (4.5,60); 
\fill[gray!90] (4,52) rectangle (12,52.5); 
\fill[gray!90] (4.5,59.5) rectangle (12,60); 
\fill[gray!90] (11.5,52.5) rectangle (12,60); 

\fill[blue!50] (0,0) rectangle (4,4);
\fill[blue!50] (0,0+12) rectangle (4,4+12);
\fill[blue!50] (12,0) rectangle (15+1,4);
\fill[blue!50] (12,12) rectangle (15+1,4+12);

\fill[blue!50] (0,64) rectangle (4,64-4);
\fill[blue!50] (0,64-12) rectangle (4,64-4-12);
\fill[blue!50] (12,64) rectangle (15+1,64-4);
\fill[blue!50] (12,64-12) rectangle (15+1,64-4-12);

\fill[blue!50] (48,64) rectangle (15+1+36,64-4);
\fill[blue!50] (48,64-12) rectangle (15+1+36,64-4-12);
\fill[blue!50] (48,0) rectangle (15+1+36,4);
\fill[blue!50] (48,12) rectangle (15+1+36,4+12);


\fill[blue!50] (64,64) rectangle (60,60);
\fill[blue!50] (64,64-12) rectangle (60,60-12);

\fill[blue!50] (64,0) rectangle (60,4);
\fill[blue!50] (64,12) rectangle (60,4+12);

\fill[gray!90] (20,4) rectangle (20.5,12); 
\fill[gray!90] (20,4) rectangle (28,4.5); 
\fill[gray!90] (20.5,11.5) rectangle (28,12); 
\fill[gray!90] (27.5,4.5) rectangle (28,12); 

\fill[gray!90] (20,20) rectangle (20.5,28); 
\fill[gray!90] (20,20) rectangle (28,20.5); 
\fill[gray!90] (20.5,27.5) rectangle (28,28); 
\fill[gray!90] (27.5,20.5) rectangle (28,28); 

\fill[gray!90] (20,36) rectangle (20.5,44); 
\fill[gray!90] (20,36) rectangle (28,36.5); 
\fill[gray!90] (20.5,43.5) rectangle (28,44); 
\fill[gray!90] (27.5,36.5) rectangle (28,44);

\fill[gray!90] (20,52) rectangle (20.5,60); 
\fill[gray!90] (20,52) rectangle (28,52.5); 
\fill[gray!90] (20.5,59.5) rectangle (28,60); 
\fill[gray!90] (27.5,52.5) rectangle (28,60);

\fill[gray!90] (36,4) rectangle (36.5,12); 
\fill[gray!90] (36,4) rectangle (44,4.5); 
\fill[gray!90] (36.5,11.5) rectangle (44,12); 
\fill[gray!90] (43.5,4.5) rectangle (44,12); 

\fill[gray!90] (36,20) rectangle (36.5,28); 
\fill[gray!90] (36,20) rectangle (44,20.5); 
\fill[gray!90] (36.5,27.5) rectangle (44,28); 
\fill[gray!90] (43.5,20.5) rectangle (44,28); 

\fill[gray!90] (36,36) rectangle (36.5,44); 
\fill[gray!90] (36,36) rectangle (44,36.5); 
\fill[gray!90] (36.5,43.5) rectangle (44,44); 
\fill[gray!90] (43.5,36.5) rectangle (44,44);

\fill[gray!90] (36,52) rectangle (36.5,60); 
\fill[gray!90] (36,52) rectangle (44,52.5); 
\fill[gray!90] (36.5,59.5) rectangle (44,60); 
\fill[gray!90] (43.5,52.5) rectangle (44,60);

\fill[gray!90] (52,4) rectangle (52.5,12); 
\fill[gray!90] (52,4) rectangle (60,4.5); 
\fill[gray!90] (52.5,11.5) rectangle (60,12); 
\fill[gray!90] (59.5,4.5) rectangle (60,12); 

\fill[gray!90] (52,20) rectangle (52.5,28); 
\fill[gray!90] (52,20) rectangle (60,20.5); 
\fill[gray!90] (52.5,27.5) rectangle (60,28); 
\fill[gray!90] (59.5,20.5) rectangle (60,28); 

\fill[gray!90] (52,36) rectangle (52.5,44); 
\fill[gray!90] (52,36) rectangle (60,36.5); 
\fill[gray!90] (52.5,43.5) rectangle (60,44); 
\fill[gray!90] (59.5,36.5) rectangle (60,44);

\fill[gray!90] (52,52) rectangle (52.5,60); 
\fill[gray!90] (52,52) rectangle (60,52.5); 
\fill[gray!90] (52.5,59.5) rectangle (60,60); 
\fill[gray!90] (59.5,52.5) rectangle (60,60);

\draw[-latex] (20,62) -- (28,62);
\draw[-latex] (36,62) -- (44,62);
\draw[-latex] (20,2) -- (28,2);
\draw[-latex] (36,2) -- (44,2);

\draw[-latex] (20,50) -- (28,50);
\draw[-latex] (36,50) -- (44,50);
\draw[-latex] (20,14) -- (28,14);
\draw[-latex] (36,14) -- (44,14);

\draw[-latex] (62,20) -- (62,28);
\draw[-latex] (62,36) -- (62,44);
\draw[-latex] (2,20) -- (2,28);
\draw[-latex] (2,36) -- (2,44);

\draw[-latex] (50,20) -- (50,28);
\draw[-latex] (50,36) -- (50,44);
\draw[-latex] (14,20) -- (14,28);
\draw[-latex] (14,36) -- (14,44);
\end{scope}

\begin{scope}[yshift=64cm]
\fill[gray!90] (16,16) rectangle (48,48); 
\fill[white] (16.5,16.5) rectangle (47.5,47.5);

\fill[gray!90] (4,4) rectangle (4.5,12); 
\fill[gray!90] (4,4) rectangle (12,4.5); 
\fill[gray!90] (4.5,11.5) rectangle (12,12); 
\fill[gray!90] (11.5,4.5) rectangle (12,12); 

\fill[gray!90] (4,20) rectangle (4.5,28); 
\fill[gray!90] (4,20) rectangle (12,20.5); 
\fill[gray!90] (4.5,27.5) rectangle (12,28); 
\fill[gray!90] (11.5,20.5) rectangle (12,28);

\fill[blue!50] (48,64) rectangle (15+1+36,64-4);
\fill[blue!50] (48,64-12) rectangle (15+1+36,64-4-12);
\fill[blue!50] (48,0) rectangle (15+1+36,4);
\fill[blue!50] (48,12) rectangle (15+1+36,4+12);


\fill[gray!90] (4,36) rectangle (4.5,44); 
\fill[gray!90] (4,36) rectangle (12,36.5); 
\fill[gray!90] (4.5,43.5) rectangle (12,44); 
\fill[gray!90] (11.5,36.5) rectangle (12,44);

\fill[gray!90] (4,52) rectangle (4.5,60); 
\fill[gray!90] (4,52) rectangle (12,52.5); 
\fill[gray!90] (4.5,59.5) rectangle (12,60); 
\fill[gray!90] (11.5,52.5) rectangle (12,60); 

\fill[blue!50] (0,0) rectangle (4,4);
\fill[blue!50] (0,0+12) rectangle (4,4+12);
\fill[blue!50] (12,0) rectangle (15+1,4);
\fill[blue!50] (12,12) rectangle (15+1,4+12);

\fill[blue!50] (0,64) rectangle (4,64-4);
\fill[blue!50] (0,64-12) rectangle (4,64-4-12);
\fill[blue!50] (12,64) rectangle (15+1,64-4);
\fill[blue!50] (12,64-12) rectangle (15+1,64-4-12);

\fill[blue!50] (64,64) rectangle (60,60);
\fill[blue!50] (64,64-12) rectangle (60,60-12);

\fill[blue!50] (64,0) rectangle (60,4);
\fill[blue!50] (64,12) rectangle (60,4+12);

\fill[gray!90] (20,4) rectangle (20.5,12); 
\fill[gray!90] (20,4) rectangle (28,4.5); 
\fill[gray!90] (20.5,11.5) rectangle (28,12); 
\fill[gray!90] (27.5,4.5) rectangle (28,12); 

\fill[gray!90] (20,20) rectangle (20.5,28); 
\fill[gray!90] (20,20) rectangle (28,20.5); 
\fill[gray!90] (20.5,27.5) rectangle (28,28); 
\fill[gray!90] (27.5,20.5) rectangle (28,28); 

\fill[gray!90] (20,36) rectangle (20.5,44); 
\fill[gray!90] (20,36) rectangle (28,36.5); 
\fill[gray!90] (20.5,43.5) rectangle (28,44); 
\fill[gray!90] (27.5,36.5) rectangle (28,44);

\fill[gray!90] (20,52) rectangle (20.5,60); 
\fill[gray!90] (20,52) rectangle (28,52.5); 
\fill[gray!90] (20.5,59.5) rectangle (28,60); 
\fill[gray!90] (27.5,52.5) rectangle (28,60);

\fill[gray!90] (36,4) rectangle (36.5,12); 
\fill[gray!90] (36,4) rectangle (44,4.5); 
\fill[gray!90] (36.5,11.5) rectangle (44,12); 
\fill[gray!90] (43.5,4.5) rectangle (44,12); 

\fill[gray!90] (36,20) rectangle (36.5,28); 
\fill[gray!90] (36,20) rectangle (44,20.5); 
\fill[gray!90] (36.5,27.5) rectangle (44,28); 
\fill[gray!90] (43.5,20.5) rectangle (44,28); 

\fill[gray!90] (36,36) rectangle (36.5,44); 
\fill[gray!90] (36,36) rectangle (44,36.5); 
\fill[gray!90] (36.5,43.5) rectangle (44,44); 
\fill[gray!90] (43.5,36.5) rectangle (44,44);

\fill[gray!90] (36,52) rectangle (36.5,60); 
\fill[gray!90] (36,52) rectangle (44,52.5); 
\fill[gray!90] (36.5,59.5) rectangle (44,60); 
\fill[gray!90] (43.5,52.5) rectangle (44,60);

\fill[gray!90] (52,4) rectangle (52.5,12); 
\fill[gray!90] (52,4) rectangle (60,4.5); 
\fill[gray!90] (52.5,11.5) rectangle (60,12); 
\fill[gray!90] (59.5,4.5) rectangle (60,12); 

\fill[gray!90] (52,20) rectangle (52.5,28); 
\fill[gray!90] (52,20) rectangle (60,20.5); 
\fill[gray!90] (52.5,27.5) rectangle (60,28); 
\fill[gray!90] (59.5,20.5) rectangle (60,28); 

\fill[gray!90] (52,36) rectangle (52.5,44); 
\fill[gray!90] (52,36) rectangle (60,36.5); 
\fill[gray!90] (52.5,43.5) rectangle (60,44); 
\fill[gray!90] (59.5,36.5) rectangle (60,44);

\fill[gray!90] (52,52) rectangle (52.5,60); 
\fill[gray!90] (52,52) rectangle (60,52.5); 
\fill[gray!90] (52.5,59.5) rectangle (60,60); 
\fill[gray!90] (59.5,52.5) rectangle (60,60);

\draw[-latex] (20,62) -- (28,62);
\draw[-latex] (36,62) -- (44,62);
\draw[-latex] (20,2) -- (28,2);
\draw[-latex] (36,2) -- (44,2);

\draw[-latex] (20,50) -- (28,50);
\draw[-latex] (36,50) -- (44,50);
\draw[-latex] (20,14) -- (28,14);
\draw[-latex] (36,14) -- (44,14);

\draw[-latex] (62,20) -- (62,28);
\draw[-latex] (62,36) -- (62,44);
\draw[-latex] (2,20) -- (2,28);
\draw[-latex] (2,36) -- (2,44);

\draw[-latex] (50,20) -- (50,28);
\draw[-latex] (50,36) -- (50,44);
\draw[-latex] (14,20) -- (14,28);
\draw[-latex] (14,36) -- (14,44);
\end{scope}

\begin{scope}[xshift=64cm]
\fill[gray!90] (16,16) rectangle (48,48); 
\fill[white] (16.5,16.5) rectangle (47.5,47.5);

\fill[gray!90] (4,4) rectangle (4.5,12); 
\fill[gray!90] (4,4) rectangle (12,4.5); 
\fill[gray!90] (4.5,11.5) rectangle (12,12); 
\fill[gray!90] (11.5,4.5) rectangle (12,12); 

\fill[gray!90] (4,20) rectangle (4.5,28); 
\fill[gray!90] (4,20) rectangle (12,20.5); 
\fill[gray!90] (4.5,27.5) rectangle (12,28); 
\fill[gray!90] (11.5,20.5) rectangle (12,28);

\fill[blue!50] (48,64) rectangle (15+1+36,64-4);
\fill[blue!50] (48,64-12) rectangle (15+1+36,64-4-12);
\fill[blue!50] (48,0) rectangle (15+1+36,4);
\fill[blue!50] (48,12) rectangle (15+1+36,4+12);


\fill[gray!90] (4,36) rectangle (4.5,44); 
\fill[gray!90] (4,36) rectangle (12,36.5); 
\fill[gray!90] (4.5,43.5) rectangle (12,44); 
\fill[gray!90] (11.5,36.5) rectangle (12,44);

\fill[gray!90] (4,52) rectangle (4.5,60); 
\fill[gray!90] (4,52) rectangle (12,52.5); 
\fill[gray!90] (4.5,59.5) rectangle (12,60); 
\fill[gray!90] (11.5,52.5) rectangle (12,60); 

\fill[blue!50] (0,0) rectangle (4,4);
\fill[blue!50] (0,0+12) rectangle (4,4+12);
\fill[blue!50] (12,0) rectangle (15+1,4);
\fill[blue!50] (12,12) rectangle (15+1,4+12);

\fill[blue!50] (0,64) rectangle (4,64-4);
\fill[blue!50] (0,64-12) rectangle (4,64-4-12);
\fill[blue!50] (12,64) rectangle (15+1,64-4);
\fill[blue!50] (12,64-12) rectangle (15+1,64-4-12);

\fill[blue!50] (64,64) rectangle (60,60);
\fill[blue!50] (64,64-12) rectangle (60,60-12);

\fill[blue!50] (64,0) rectangle (60,4);
\fill[blue!50] (64,12) rectangle (60,4+12);

\fill[gray!90] (20,4) rectangle (20.5,12); 
\fill[gray!90] (20,4) rectangle (28,4.5); 
\fill[gray!90] (20.5,11.5) rectangle (28,12); 
\fill[gray!90] (27.5,4.5) rectangle (28,12); 

\fill[gray!90] (20,20) rectangle (20.5,28); 
\fill[gray!90] (20,20) rectangle (28,20.5); 
\fill[gray!90] (20.5,27.5) rectangle (28,28); 
\fill[gray!90] (27.5,20.5) rectangle (28,28); 

\fill[gray!90] (20,36) rectangle (20.5,44); 
\fill[gray!90] (20,36) rectangle (28,36.5); 
\fill[gray!90] (20.5,43.5) rectangle (28,44); 
\fill[gray!90] (27.5,36.5) rectangle (28,44);

\fill[gray!90] (20,52) rectangle (20.5,60); 
\fill[gray!90] (20,52) rectangle (28,52.5); 
\fill[gray!90] (20.5,59.5) rectangle (28,60); 
\fill[gray!90] (27.5,52.5) rectangle (28,60);

\fill[gray!90] (36,4) rectangle (36.5,12); 
\fill[gray!90] (36,4) rectangle (44,4.5); 
\fill[gray!90] (36.5,11.5) rectangle (44,12); 
\fill[gray!90] (43.5,4.5) rectangle (44,12); 

\fill[gray!90] (36,20) rectangle (36.5,28); 
\fill[gray!90] (36,20) rectangle (44,20.5); 
\fill[gray!90] (36.5,27.5) rectangle (44,28); 
\fill[gray!90] (43.5,20.5) rectangle (44,28); 

\fill[gray!90] (36,36) rectangle (36.5,44); 
\fill[gray!90] (36,36) rectangle (44,36.5); 
\fill[gray!90] (36.5,43.5) rectangle (44,44); 
\fill[gray!90] (43.5,36.5) rectangle (44,44);

\fill[gray!90] (36,52) rectangle (36.5,60); 
\fill[gray!90] (36,52) rectangle (44,52.5); 
\fill[gray!90] (36.5,59.5) rectangle (44,60); 
\fill[gray!90] (43.5,52.5) rectangle (44,60);

\fill[gray!90] (52,4) rectangle (52.5,12); 
\fill[gray!90] (52,4) rectangle (60,4.5); 
\fill[gray!90] (52.5,11.5) rectangle (60,12); 
\fill[gray!90] (59.5,4.5) rectangle (60,12); 

\fill[gray!90] (52,20) rectangle (52.5,28); 
\fill[gray!90] (52,20) rectangle (60,20.5); 
\fill[gray!90] (52.5,27.5) rectangle (60,28); 
\fill[gray!90] (59.5,20.5) rectangle (60,28); 

\fill[gray!90] (52,36) rectangle (52.5,44); 
\fill[gray!90] (52,36) rectangle (60,36.5); 
\fill[gray!90] (52.5,43.5) rectangle (60,44); 
\fill[gray!90] (59.5,36.5) rectangle (60,44);

\fill[gray!90] (52,52) rectangle (52.5,60); 
\fill[gray!90] (52,52) rectangle (60,52.5); 
\fill[gray!90] (52.5,59.5) rectangle (60,60); 
\fill[gray!90] (59.5,52.5) rectangle (60,60);

\draw[-latex] (20,62) -- (28,62);
\draw[-latex] (36,62) -- (44,62);
\draw[-latex] (20,2) -- (28,2);
\draw[-latex] (36,2) -- (44,2);

\draw[-latex] (20,50) -- (28,50);
\draw[-latex] (36,50) -- (44,50);
\draw[-latex] (20,14) -- (28,14);
\draw[-latex] (36,14) -- (44,14);

\draw[-latex] (62,20) -- (62,28);
\draw[-latex] (62,36) -- (62,44);
\draw[-latex] (2,20) -- (2,28);
\draw[-latex] (2,36) -- (2,44);

\draw[-latex] (50,20) -- (50,28);
\draw[-latex] (50,36) -- (50,44);
\draw[-latex] (14,20) -- (14,28);
\draw[-latex] (14,36) -- (14,44);
\end{scope}

\begin{scope}[xshift=64cm,yshift=64cm]
\fill[gray!90] (16,16) rectangle (48,48); 
\fill[white] (16.5,16.5) rectangle (47.5,47.5);

\fill[gray!90] (4,4) rectangle (4.5,12); 
\fill[gray!90] (4,4) rectangle (12,4.5); 
\fill[gray!90] (4.5,11.5) rectangle (12,12); 
\fill[gray!90] (11.5,4.5) rectangle (12,12); 

\fill[gray!90] (4,20) rectangle (4.5,28); 
\fill[gray!90] (4,20) rectangle (12,20.5); 
\fill[gray!90] (4.5,27.5) rectangle (12,28); 
\fill[gray!90] (11.5,20.5) rectangle (12,28); 

\fill[gray!90] (4,36) rectangle (4.5,44); 
\fill[gray!90] (4,36) rectangle (12,36.5); 
\fill[gray!90] (4.5,43.5) rectangle (12,44); 
\fill[gray!90] (11.5,36.5) rectangle (12,44);

\fill[blue!50] (48,64) rectangle (15+1+36,64-4);
\fill[blue!50] (48,64-12) rectangle (15+1+36,64-4-12);
\fill[blue!50] (48,0) rectangle (15+1+36,4);
\fill[blue!50] (48,12) rectangle (15+1+36,4+12);


\fill[gray!90] (4,52) rectangle (4.5,60); 
\fill[gray!90] (4,52) rectangle (12,52.5); 
\fill[gray!90] (4.5,59.5) rectangle (12,60); 
\fill[gray!90] (11.5,52.5) rectangle (12,60); 

\fill[blue!50] (0,0) rectangle (4,4);
\fill[blue!50] (0,0+12) rectangle (4,4+12);
\fill[blue!50] (12,0) rectangle (15+1,4);
\fill[blue!50] (12,12) rectangle (15+1,4+12);

\fill[blue!50] (0,64) rectangle (4,64-4);
\fill[blue!50] (0,64-12) rectangle (4,64-4-12);
\fill[blue!50] (12,64) rectangle (15+1,64-4);
\fill[blue!50] (12,64-12) rectangle (15+1,64-4-12);

\fill[blue!50] (64,64) rectangle (60,60);
\fill[blue!50] (64,64-12) rectangle (60,60-12);

\fill[blue!50] (64,0) rectangle (60,4);
\fill[blue!50] (64,12) rectangle (60,4+12);

\fill[gray!90] (20,4) rectangle (20.5,12); 
\fill[gray!90] (20,4) rectangle (28,4.5); 
\fill[gray!90] (20.5,11.5) rectangle (28,12); 
\fill[gray!90] (27.5,4.5) rectangle (28,12); 

\fill[gray!90] (20,20) rectangle (20.5,28); 
\fill[gray!90] (20,20) rectangle (28,20.5); 
\fill[gray!90] (20.5,27.5) rectangle (28,28); 
\fill[gray!90] (27.5,20.5) rectangle (28,28); 

\fill[gray!90] (20,36) rectangle (20.5,44); 
\fill[gray!90] (20,36) rectangle (28,36.5); 
\fill[gray!90] (20.5,43.5) rectangle (28,44); 
\fill[gray!90] (27.5,36.5) rectangle (28,44);

\fill[gray!90] (20,52) rectangle (20.5,60); 
\fill[gray!90] (20,52) rectangle (28,52.5); 
\fill[gray!90] (20.5,59.5) rectangle (28,60); 
\fill[gray!90] (27.5,52.5) rectangle (28,60);

\fill[gray!90] (36,4) rectangle (36.5,12); 
\fill[gray!90] (36,4) rectangle (44,4.5); 
\fill[gray!90] (36.5,11.5) rectangle (44,12); 
\fill[gray!90] (43.5,4.5) rectangle (44,12); 

\fill[gray!90] (36,20) rectangle (36.5,28); 
\fill[gray!90] (36,20) rectangle (44,20.5); 
\fill[gray!90] (36.5,27.5) rectangle (44,28); 
\fill[gray!90] (43.5,20.5) rectangle (44,28); 

\fill[gray!90] (36,36) rectangle (36.5,44); 
\fill[gray!90] (36,36) rectangle (44,36.5); 
\fill[gray!90] (36.5,43.5) rectangle (44,44); 
\fill[gray!90] (43.5,36.5) rectangle (44,44);

\fill[gray!90] (36,52) rectangle (36.5,60); 
\fill[gray!90] (36,52) rectangle (44,52.5); 
\fill[gray!90] (36.5,59.5) rectangle (44,60); 
\fill[gray!90] (43.5,52.5) rectangle (44,60);

\fill[gray!90] (52,4) rectangle (52.5,12); 
\fill[gray!90] (52,4) rectangle (60,4.5); 
\fill[gray!90] (52.5,11.5) rectangle (60,12); 
\fill[gray!90] (59.5,4.5) rectangle (60,12); 

\fill[gray!90] (52,20) rectangle (52.5,28); 
\fill[gray!90] (52,20) rectangle (60,20.5); 
\fill[gray!90] (52.5,27.5) rectangle (60,28); 
\fill[gray!90] (59.5,20.5) rectangle (60,28); 

\fill[gray!90] (52,36) rectangle (52.5,44); 
\fill[gray!90] (52,36) rectangle (60,36.5); 
\fill[gray!90] (52.5,43.5) rectangle (60,44); 
\fill[gray!90] (59.5,36.5) rectangle (60,44);

\fill[gray!90] (52,52) rectangle (52.5,60); 
\fill[gray!90] (52,52) rectangle (60,52.5); 
\fill[gray!90] (52.5,59.5) rectangle (60,60); 
\fill[gray!90] (59.5,52.5) rectangle (60,60);

\draw[-latex] (20,62) -- (28,62);
\draw[-latex] (36,62) -- (44,62);
\draw[-latex] (20,2) -- (28,2);
\draw[-latex] (36,2) -- (44,2);

\draw[-latex] (20,50) -- (28,50);
\draw[-latex] (36,50) -- (44,50);
\draw[-latex] (20,14) -- (28,14);
\draw[-latex] (36,14) -- (44,14);

\draw[-latex] (62,20) -- (62,28);
\draw[-latex] (62,36) -- (62,44);
\draw[-latex] (2,20) -- (2,28);
\draw[-latex] (2,36) -- (2,44);

\draw[-latex] (50,20) -- (50,28);
\draw[-latex] (50,36) -- (50,44);
\draw[-latex] (14,20) -- (14,28);
\draw[-latex] (14,36) -- (14,44);
\end{scope}

\fill[pattern=north east lines,pattern color=gray!20] (0,0) rectangle (128,128);
\fill[pattern=north east lines,pattern color=yellow] (4,0) rectangle (128,4);
\fill[pattern=north east lines,pattern color=yellow] (4,12) rectangle (128,16);
\fill[pattern=north east lines,pattern color=yellow] (4,124) rectangle (128,128);
\fill[pattern=north east lines,pattern color=yellow] (4,112) rectangle (128,116);
\fill[pattern=north east lines,pattern color=yellow] (0,68) rectangle (128,60);
\fill[pattern=north east lines,pattern color=yellow] (0,48) rectangle (128,50);
\fill[pattern=north east lines,pattern color=yellow] (0,76) rectangle (128,80);

\fill[pattern = north east lines, pattern color = gray!90] (0,0) rectangle (4,128);
\node at (2,148) {$\mathcal{A}_c  \times \{0,1\}
 \times \{\begin{tikzpicture}[scale=0.3]
\fill[Salmon] (0,0) rectangle (1,1); 
\draw (0,0) rectangle (1,1); \end{tikzpicture}, 
\begin{tikzpicture}[scale=0.3] 
\draw (0,0) rectangle (1,1); \end{tikzpicture}\}$};

\node at (84,148) {$\mathcal{A}_c \times \{\begin{tikzpicture}[scale=0.3]
\fill[Salmon] (0,0) rectangle (1,1); 
\draw (0,0) rectangle (1,1); \end{tikzpicture}, 
\begin{tikzpicture}[scale=0.3] 
\draw (0,0) rectangle (1,1); \end{tikzpicture}\}$};

\node at (148,32) {$\{\begin{tikzpicture}[scale=0.3]
\fill[Salmon] (0,0) rectangle (1,1); 
\draw (0,0) rectangle (1,1); \end{tikzpicture}, 
\begin{tikzpicture}[scale=0.3] 
\draw (0,0) rectangle (1,1); \end{tikzpicture}\}$};

\draw[-latex] (2,138) -- (2,128);
\node at (-30,96) {$\{0,1\} \times \{\begin{tikzpicture}[scale=0.3]
\fill[Salmon] (0,0) rectangle (1,1); 
\draw (0,0) rectangle (1,1); \end{tikzpicture}, 
\begin{tikzpicture}[scale=0.3] 
\draw (0,0) rectangle (1,1); \end{tikzpicture}\}$};
\end{tikzpicture}\]
\caption{\label{fig.loc.lin.count.face1} Localization 
of the linear counter symbols on face 1.}
\end{figure}

A \textbf{value} of the counter is a possible 
sequence of symbols of the 
alphabet $\mathcal{A}_c$ that can appear 
on a column, on the position having a blue 
symbol in the functional areas layer.

\item \textbf{Freezing signal:}

\begin{itemize}
\item \textbf{On the leftmost column:} 

\begin{enumerate}
\item on the leftmost upmost position, the color 
is \begin{tikzpicture}[scale=0.3] 
\fill[Salmon] (0,0) rectangle (1,1);
\draw (0,0) rectangle (1,1);
\end{tikzpicture} if and 
only if the east symbol in 
the tile is 
$\vec{c}_{\texttt{max}}$.
\item this color propagates towards bottom while the east symbol 
in the tile is $\vec{c}_{\texttt{max}}$.
When this is not true, the color becomes white. 
\item the white symbol propagates to the top.
\end{enumerate}
\item \textbf{Other positions:} 

\begin{enumerate}
\item the color part of the 
symbol propagates on the gray area 
on Figure~\ref{fig.loc.lin.count.face1}.
\item on the bottommost leftmost 
position of this area,
the color is white if the color of 
the position on left is white. 
When this color is salmon (meaning the value of the counter is maximal)
then, if the bottomost rightmost position
of the $(\overline{1},\overline{3})$ sub-unit 
is salmon, then the considered 
position is colored white. If not, 
then it is colored salmon. See Figure~\ref{fig.freezing.signal.configurations} 
for an illustration of possible freezing symbol 
configuration.
\end{enumerate}

\end{itemize} 

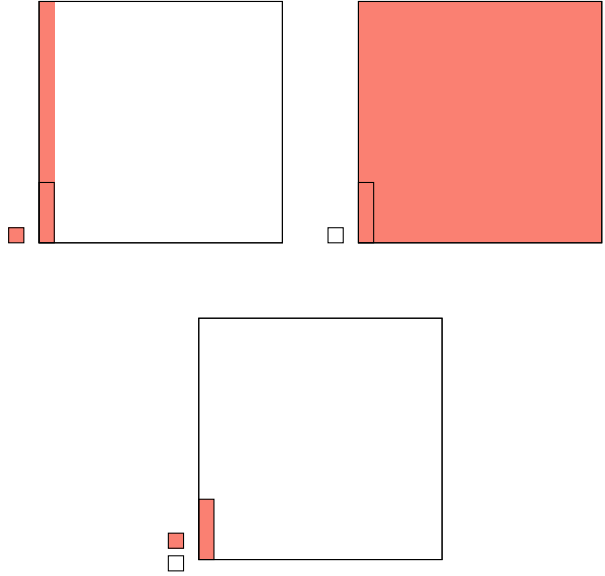
\begin{figure}[ht]
\[\begin{tikzpicture}[scale=0.05]
\begin{scope}
\fill[Salmon] (-8,0) rectangle (-4,4);
\fill[Salmon] (0,0) rectangle (4,64);
\draw (0,0) rectangle (4,16);
\draw (0,0) rectangle (64,64);
\draw (0,0) rectangle (64,64);
\draw (-8,0) rectangle (-4,4);
\end{scope}

\begin{scope}[xshift=84cm]
\fill[Salmon] (0,0) rectangle (64,64);
\draw (0,0) rectangle (4,16);
\draw (0,0) rectangle (64,64);
\draw (0,0) rectangle (64,64);
\draw (-8,0) rectangle (-4,4);
\end{scope}

\begin{scope}[xshift=42cm,yshift=-84cm]
\fill[Salmon] (-8,3) rectangle (-4,7);
\fill[Salmon] (0,0) rectangle (4,16);
\draw (0,0) rectangle (4,16);
\draw (0,0) rectangle (64,64);
\draw (0,0) rectangle (64,64);
\draw (-8,3) rectangle (-4,7);
\draw (-8,-3) rectangle (-4,1);
\end{scope}
\end{tikzpicture}\]
\caption{\label{fig.freezing.signal.configurations} Possible 
freezing symbol configurations on face $1$.}
\end{figure}

\item \textbf{Incrementation of the counter:} 

On the leftmost column 
of the area: 
\begin{itemize}
\item On the topmost position of the leftmost 
column, if the freezing signal 
of the position on the left is white, 
then the north 
symbol in the tile is $1$ (meaning that 
the counter value is incremented in the line). 
Else it is $0$ (meaning this is not incremented).
\item On a computation position of this line, 
the part in $\{0,1\}$ of symbol  
of the position on the bottom is the south 
symbol of the tile. 
The symbol on the top position is the north 
symbol of the tile. The symbol 
on the right position 
is the east symbol of the tile, 
and the symbol 
of the position on the left
is equal to the south symbol of the tile. 
\item between two computation positions, 
the symbol in $\{0,1\}$ is 
transported.
\end{itemize}

\item \textbf{Transfer of state and letter:}
on the positions with a blue symbol 
or an horizontal arrow symbol 
in the functional areas layer, 
the coefficient is transported. 
\end{itemize}

\noindent \textbf{\textit{Global behavior:}} 
\bigskip

On each of the $(\overline{2},\overline{3})$ 
sub-units of the order $\ge 4$ cells, the counter value 
is incremented on the leftmost column 
using an adding machine coded with local 
rules, except 
when the freezing signal on 
the $(\overline{1},\overline{3})$ sub-unit is
\begin{tikzpicture}[scale=0.3] 
\fill[Salmon] (0,0) rectangle (1,1);
\draw (0,0) rectangle (1,1);
\end{tikzpicture}. Then the value is 
transmitted through this sub-unit 
in direction $\vec{e}^1$. 
As a consequence of the 
information transport rules, presented 
after, the counter value is 
incremented cyclically in direction 
$\vec{e}^1$ each time going through 
a cell.
The freezing signal happens each time 
that the counter reaches 
its maximal value and stops the incrementation 
for one step, since during this step, 
the freezing signal is changed into 
\begin{tikzpicture}[scale=0.3] 
\draw (0,0) rectangle (1,1);
\end{tikzpicture}. 
Since the number of lines 
in this sub-unit is $2^{n-4}$ and the alphabet
$\mathcal{A}_c$ has cardinal ${2^{2^{l+2}}}$, 
this counter has period $2^{2^{l+n-2}}+1$.

\section{\label{section.machines} Machines layer}
In this section, we present the implementation of 
Turing machines.

The support of this layer is the 
bottom face of three-dimensional cells 
having order $qp$ for some $q \ge 0$, 
according to direction $\vec{e}^3$. 

In order 
to preserve minimality, simulate each possible 
degenerated behavior of the machines, we use 
an adaptation of 
the Turing machine model 
as follows.
The bottom line 
of the face is initialized with 
symbols in $\mathcal{A} \times \mathcal{Q}$
(we allow multiple heads). The 
sides of the face are ''initialized'' 
with elements of $\mathcal{Q}$ (we allow 
machine heads to enter at each step on 
the two sides).
As usual in this type of constructions, 
the tape is not connected. 
Between two computation positions, the information 
is transported. In our model, each 
computation position takes as 
input up to four symbols coming 
from bottom and the sides, and outputs 
up to two symbols to the top and sides.
Moreover, we add special states 
to the definition of Turing machine, 
in order to manage the presence 
of multiple machine heads.
We describe this model in 
Section~\ref{subsec.machines.min}, 
and then show how to implement 
it with local rules in Section~\ref{subsec.machines.local.rules}.

In Section~\ref{sec.computation.active.lines} 
we describe signals which 
activate or deactivate lines and columns 
of the computation areas. 
These lines and columns are used by the 
machine if and only if they are active.
These signals are determined by the 
value of the linear counter.

The machine has to take into 
account only computations starting 
on well initialized tape and no machine 
head entering during 
computation. For this purpose, 
we use error signals, described in 
Section~\ref{sec.error.signals.min}.

\subsubsection{\label{sec.computation.active.lines} Computation-active 
lines and columns}

In this section we describe the 
first sublayer.

\noindent \textbf{\textit{Symbols:}} \bigskip

Elements of $\{\texttt{on},\texttt{off}\}^2$, 
of $\{\texttt{on},\texttt{off}\}$ 
and a blank symbol. \bigskip

\noindent \textbf{\textit{Local rules:}} \bigskip

\begin{itemize}
\item \textbf{Localization rules:}

\begin{itemize}
\item 
the non-blank symbols are superimposed 
on active lines and active columns positions 
on a the bottom face according to direction 
$\vec{e}^3$, with $p$-counter equal 
to $\overline{0}$ 
and border bit equal to $1$.
\item the couples are superimposed on 
intersections of an active line 
and an active column, the simple 
symbols are superimposed on the other 
positions.
\end{itemize}
\item \textbf{Transmission rule:}
the symbol is transmitted 
along lines/columns. On 
the intersections the second symbol 
is equal to the symbol on the column. 
The first one is equal 
to the symbol on the line.
\item \textbf{Connection rule:}
Across the line 
connecting type $6,7$ (resp. 
$2,3$) face and the machine 
face, and on positions where the bottom 
line intersects with active columns, 
the symbol in $\{\texttt{on},
\texttt{off}\}$ is equal to the first (resp. second)
element of the couple in 
$\{\texttt{on},
\texttt{off}\}$ in this layer.
\end{itemize}

\noindent \textbf{\textit{Global 
behavior:}} \bigskip

On the machine face of any order $qp$ 
three-dimensional cell, the active 
columns and lines are colored 
with a symbol in $\{\texttt{on},\texttt{off}\}$
which is determined by the value of the 
counter on this cell. We call 
columns (resp.lines) colored 
with $\texttt{on}$ \textbf{computation-active}
columns (resp. lines). 

\subsubsection{\label{subsec.machines.min} Adaptation 
of computing machines 
model to minimality property}

In this section we present the 
way computing machines work in our construction. 
The model we use is adapted in order 
to have the minimality property, 
and is defined as follows: 

\begin{definition}
A \textbf{computing machine} $\mathcal{M}$
is some tuple $=(\mathcal{Q}, \mathcal{A}, 
\delta, q_0, q_e,q_s, \#)$, 
where $\mathcal{Q}$ is the state set, $\mathcal{A}$ 
the alphabet, $q_0$ the initial state, and $\#$ 
is the blank symbol, 
and $$\delta : \mathcal{A} \times \mathcal{Q} 
\rightarrow
\mathcal{A} \times \mathcal{Q}   
\times \{\leftarrow,\rightarrow,\uparrow\}.$$

The other elements $q_e,q_s$ are 
states in $\mathcal{Q}$, such that 
for all $q \in \{q_e,q_s\}$, 
and for all $a$ in $\mathcal{A}$, 
$\delta(a,q) = (a,q,\uparrow)$.

\end{definition}

The special states $q_e,q_s$ in this definition 
have the following meaning: 

\begin{itemize}
\item the error state $q_e$: a machine head 
enters this state when 
it detects an error, 
or when it collides with another 
machine head.

This state is not forbidden 
in the subshift, but this is replaced 
by the sending of an error signal, and forbidding the coexistence of the error 
signal with a well initialized tape. 
The machine stops moving when 
it enters this state.

\item shadow state $q_s$: 
because of multiple heads, we need to specify 
some state which does not act on the 
tape and does not interact with 
the other heads (acting thus as a blank 
symbol). The initial tape will 
have a head in initial state on 
the leftmost position and shadow 
states on the other ones.
\end{itemize}

Any Turing machine can be transformed 
in such a machine by adding some 
state $q_s$ verifying 
the corresponding properties 
listed above. 

Moreover, we add elements to the 
alphabet which interact 
trivially with the machine states. 
This means that for any 
added letter $a$ and any state $q$, 
$\delta(a,q) = (a,q,\uparrow)$,
and then machines states which interact 
trivially with the new alphabet, 
so that the cardinality of 
the state set and the alphabet 
are $2^{2^l}$.

When the machine is well initialized, 
none of these states and letters will be 
reached. Hence the computations 
are the ones of the initial machine.
As a consequence, one can consider that the 
machine we used has these properties.

In this section, we use a machine 
which does the following operations for 
all $n$ 
\begin{itemize}
\item write $1$ on position 
$p_n$ if $n= 2^k$ for some $k$ 
and $0$ if not.
\item write $a^{(n)}_k$ 
on positions $p_k$, $k = 1 ... n$.
\end{itemize}
The sequence $a$ is the $\Pi_1$-computable 
sequence defined at the beginning 
of the construction.
The sequence $(a^{(n)}_k)$ 
is a computable sequence such that 
for all $k$, $a_n = \inf_n a^{(n)}_k$. 
For all $n$, the position $p_n$ is 
defined to be the number of the first a
active
column from left to right which 
is just on the right of an order $n$ 
two dimensional cell on a face, amongst 
active columns.

\subsubsection{\label{subsec.machines.local.rules}Implementation 
of the machines}

In this section, we describe the second 
sublayer of this layer.

\noindent \textbf{\textit{Symbols:}}

The symbols are elements of the sets 
$\mathcal{A} \times \mathcal{Q}$,
in $\mathcal{A}$, 
$\mathcal{Q}^2$, 
and a blank symbol. \bigskip

\noindent \textbf{\textit{Local rules:}} \bigskip

\begin{itemize}
\item \textbf{Localization:}
the non-blank symbols are superimposed on 
the bottom faces of the three-dimensional cells, according to direction $\vec{e}^3$.
On these faces, they are superimposed 
on positions of 
computation-active columns and rows 
with $p$-counter value equal to $\overline{0}$ 
and border bit equal to $1$.
More precisely: 
\begin{itemize}
\item the possible symbols for 
computation active columns are elements 
of the sets $\mathcal{A}$, $\mathcal{A} \times \mathcal{Q}$
and elements of 
$\mathcal{A} \times \mathcal{Q}$ are on 
the intersection with computation-active 
rows.
\item other positions are 
superimposed with an element of 
$\mathcal{Q}^2$. See an illustration on 
Figure~\ref{fig.loc.machine}.
\end{itemize}
\item along the rows and columns, 
the symbol is transmitted while not 
on intersections of computation-active 
columns and rows.

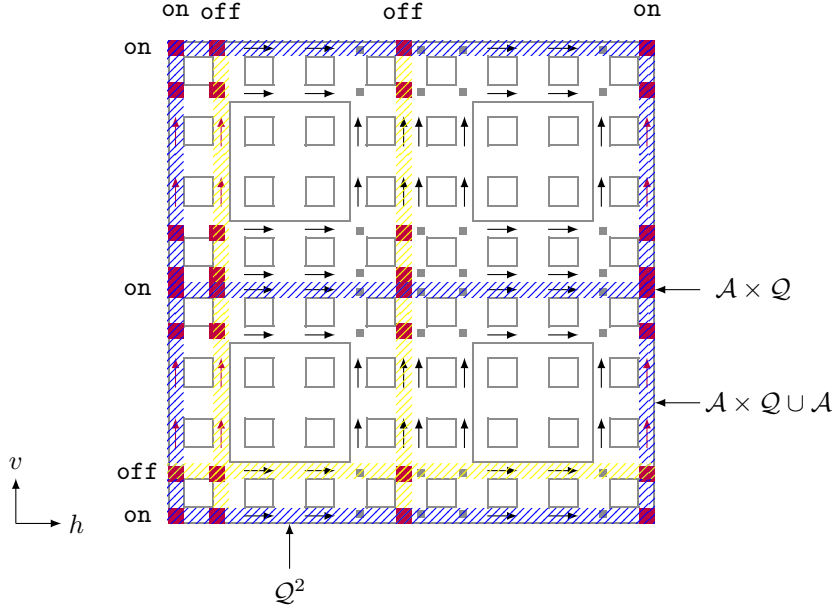
\begin{figure}[ht]
\[\begin{tikzpicture}[scale=0.05]
\fill[gray!90] (0,0) rectangle (128,128);
\fill[white] (0.5,0.5) rectangle (127.5,127.5);
\begin{scope}
\fill[gray!90] (16,16) rectangle (48,48); 
\fill[white] (16.5,16.5) rectangle (47.5,47.5);

\fill[gray!90] (4,4) rectangle (4.5,12); 
\fill[gray!90] (4,4) rectangle (12,4.5); 
\fill[gray!90] (4.5,11.5) rectangle (12,12); 
\fill[gray!90] (11.5,4.5) rectangle (12,12); 

\fill[gray!90] (4,20) rectangle (4.5,28); 
\fill[gray!90] (4,20) rectangle (12,20.5); 
\fill[gray!90] (4.5,27.5) rectangle (12,28); 
\fill[gray!90] (11.5,20.5) rectangle (12,28); 

\fill[gray!90] (4,36) rectangle (4.5,44); 
\fill[gray!90] (4,36) rectangle (12,36.5); 
\fill[gray!90] (4.5,43.5) rectangle (12,44); 
\fill[gray!90] (11.5,36.5) rectangle (12,44);

\fill[gray!90] (4,52) rectangle (4.5,60); 
\fill[gray!90] (4,52) rectangle (12,52.5); 
\fill[gray!90] (4.5,59.5) rectangle (12,60); 
\fill[gray!90] (11.5,52.5) rectangle (12,60); 

\fill[purple] (0,0) rectangle (4,4);
\fill[purple] (0,0+11) rectangle (4,4+11);
\fill[purple] (11,0) rectangle (15,4);
\fill[purple] (11,11) rectangle (15,4+11);

\fill[purple] (0,64) rectangle (4,64-4);
\fill[purple] (0,64-11) rectangle (4,64-4-11);
\fill[purple] (11,64) rectangle (15,64-4);
\fill[purple] (11,64-11) rectangle (15,64-4-11);

\fill[purple] (64,64) rectangle (60,60);
\fill[purple] (64,64-11) rectangle (60,60-11);
\fill[gray!90] (64-12.5,64-1.5) rectangle (64-14.5,64-3.5);
\fill[gray!90] (64-12.5,64-1.5-11) rectangle (64-14.5,64-3.5-11);

\fill[purple] (64,0) rectangle (60,4);
\fill[purple] (64,11) rectangle (60,4+11);
\fill[gray!90] (64-12.5,1.5) rectangle (64-14.5,3.5);
\fill[gray!90] (64-12.5,1.5+11) rectangle (64-14.5,3.5+11);

\fill[gray!90] (20,4) rectangle (20.5,12); 
\fill[gray!90] (20,4) rectangle (28,4.5); 
\fill[gray!90] (20.5,11.5) rectangle (28,12); 
\fill[gray!90] (27.5,4.5) rectangle (28,12); 

\fill[gray!90] (20,20) rectangle (20.5,28); 
\fill[gray!90] (20,20) rectangle (28,20.5); 
\fill[gray!90] (20.5,27.5) rectangle (28,28); 
\fill[gray!90] (27.5,20.5) rectangle (28,28); 

\fill[gray!90] (20,36) rectangle (20.5,44); 
\fill[gray!90] (20,36) rectangle (28,36.5); 
\fill[gray!90] (20.5,43.5) rectangle (28,44); 
\fill[gray!90] (27.5,36.5) rectangle (28,44);

\fill[gray!90] (20,52) rectangle (20.5,60); 
\fill[gray!90] (20,52) rectangle (28,52.5); 
\fill[gray!90] (20.5,59.5) rectangle (28,60); 
\fill[gray!90] (27.5,52.5) rectangle (28,60);

\fill[gray!90] (36,4) rectangle (36.5,12); 
\fill[gray!90] (36,4) rectangle (44,4.5); 
\fill[gray!90] (36.5,11.5) rectangle (44,12); 
\fill[gray!90] (43.5,4.5) rectangle (44,12); 

\fill[gray!90] (36,20) rectangle (36.5,28); 
\fill[gray!90] (36,20) rectangle (44,20.5); 
\fill[gray!90] (36.5,27.5) rectangle (44,28); 
\fill[gray!90] (43.5,20.5) rectangle (44,28); 

\fill[gray!90] (36,36) rectangle (36.5,44); 
\fill[gray!90] (36,36) rectangle (44,36.5); 
\fill[gray!90] (36.5,43.5) rectangle (44,44); 
\fill[gray!90] (43.5,36.5) rectangle (44,44);

\fill[gray!90] (36,52) rectangle (36.5,60); 
\fill[gray!90] (36,52) rectangle (44,52.5); 
\fill[gray!90] (36.5,59.5) rectangle (44,60); 
\fill[gray!90] (43.5,52.5) rectangle (44,60);

\fill[gray!90] (52,4) rectangle (52.5,12); 
\fill[gray!90] (52,4) rectangle (60,4.5); 
\fill[gray!90] (52.5,11.5) rectangle (60,12); 
\fill[gray!90] (59.5,4.5) rectangle (60,12); 

\fill[gray!90] (52,20) rectangle (52.5,28); 
\fill[gray!90] (52,20) rectangle (60,20.5); 
\fill[gray!90] (52.5,27.5) rectangle (60,28); 
\fill[gray!90] (59.5,20.5) rectangle (60,28); 

\fill[gray!90] (52,36) rectangle (52.5,44); 
\fill[gray!90] (52,36) rectangle (60,36.5); 
\fill[gray!90] (52.5,43.5) rectangle (60,44); 
\fill[gray!90] (59.5,36.5) rectangle (60,44);

\fill[gray!90] (52,52) rectangle (52.5,60); 
\fill[gray!90] (52,52) rectangle (60,52.5); 
\fill[gray!90] (52.5,59.5) rectangle (60,60); 
\fill[gray!90] (59.5,52.5) rectangle (60,60);

\draw[-latex] (20,62) -- (28,62);
\draw[-latex] (36,62) -- (44,62);
\draw[-latex] (20,2) -- (28,2);
\draw[-latex] (36,2) -- (44,2);

\draw[-latex] (20,50) -- (28,50);
\draw[-latex] (36,50) -- (44,50);
\draw[-latex] (20,14) -- (28,14);
\draw[-latex] (36,14) -- (44,14);

\draw[-latex] (62,20) -- (62,28);
\draw[-latex] (62,36) -- (62,44);
\draw[-latex,color=purple] (2,20) -- (2,28);
\draw[-latex,color=purple] (2,36) -- (2,44);

\draw[-latex] (50,20) -- (50,28);
\draw[-latex] (50,36) -- (50,44);
\draw[-latex,color=purple] (14,20) -- (14,28);
\draw[-latex,color=purple] (14,36) -- (14,44);

\draw[-latex] (-40,0) -- (-28,0);
\draw[-latex] (-40,0) -- (-40,12);
\node at (-40,16) {$v$};
\node at (-24,0) {$h$};

\node at (-8,2) {$\texttt{on}$};
\node at (-8,14) {$\texttt{off}$};
\node at (-8,62) {$\texttt{on}$};
\node at (-8,126) {$\texttt{on}$};
\node at (2,136) {$\texttt{on}$};
\node at (14,136) {$\texttt{off}$};
\node at (62,136) {$\texttt{off}$};
\node at (126,136) {$\texttt{on}$};
\end{scope}

\begin{scope}[yshift=64cm]
\fill[gray!90] (16,16) rectangle (48,48); 
\fill[white] (16.5,16.5) rectangle (47.5,47.5);

\fill[gray!90] (4,4) rectangle (4.5,12); 
\fill[gray!90] (4,4) rectangle (12,4.5); 
\fill[gray!90] (4.5,11.5) rectangle (12,12); 
\fill[gray!90] (11.5,4.5) rectangle (12,12); 

\fill[gray!90] (4,20) rectangle (4.5,28); 
\fill[gray!90] (4,20) rectangle (12,20.5); 
\fill[gray!90] (4.5,27.5) rectangle (12,28); 
\fill[gray!90] (11.5,20.5) rectangle (12,28); 

\fill[gray!90] (4,36) rectangle (4.5,44); 
\fill[gray!90] (4,36) rectangle (12,36.5); 
\fill[gray!90] (4.5,43.5) rectangle (12,44); 
\fill[gray!90] (11.5,36.5) rectangle (12,44);

\fill[gray!90] (4,52) rectangle (4.5,60); 
\fill[gray!90] (4,52) rectangle (12,52.5); 
\fill[gray!90] (4.5,59.5) rectangle (12,60); 
\fill[gray!90] (11.5,52.5) rectangle (12,60); 

\fill[purple] (0,0) rectangle (4,4);
\fill[purple] (0,0+11) rectangle (4,4+11);
\fill[purple] (11,0) rectangle (15,4);
\fill[purple] (11,11) rectangle (15,4+11);

\fill[purple] (0,64) rectangle (4,64-4);
\fill[purple] (0,64-11) rectangle (4,64-4-11);
\fill[purple] (11,64) rectangle (15,64-4);
\fill[purple] (11,64-11) rectangle (15,64-4-11);

\fill[purple] (64,64) rectangle (60,60);
\fill[purple] (64,64-11) rectangle (60,60-11);
\fill[gray!90] (64-12.5,64-1.5) rectangle (64-14.5,64-3.5);
\fill[gray!90] (64-12.5,64-1.5-11) rectangle (64-14.5,64-3.5-11);

\fill[purple] (64,0) rectangle (60,4);
\fill[purple] (64,11) rectangle (60,4+11);
\fill[gray!90] (64-12.5,1.5) rectangle (64-14.5,3.5);
\fill[gray!90] (64-12.5,1.5+11) rectangle (64-14.5,3.5+11);

\fill[gray!90] (20,4) rectangle (20.5,12); 
\fill[gray!90] (20,4) rectangle (28,4.5); 
\fill[gray!90] (20.5,11.5) rectangle (28,12); 
\fill[gray!90] (27.5,4.5) rectangle (28,12); 

\fill[gray!90] (20,20) rectangle (20.5,28); 
\fill[gray!90] (20,20) rectangle (28,20.5); 
\fill[gray!90] (20.5,27.5) rectangle (28,28); 
\fill[gray!90] (27.5,20.5) rectangle (28,28); 

\fill[gray!90] (20,36) rectangle (20.5,44); 
\fill[gray!90] (20,36) rectangle (28,36.5); 
\fill[gray!90] (20.5,43.5) rectangle (28,44); 
\fill[gray!90] (27.5,36.5) rectangle (28,44);

\fill[gray!90] (20,52) rectangle (20.5,60); 
\fill[gray!90] (20,52) rectangle (28,52.5); 
\fill[gray!90] (20.5,59.5) rectangle (28,60); 
\fill[gray!90] (27.5,52.5) rectangle (28,60);

\fill[gray!90] (36,4) rectangle (36.5,12); 
\fill[gray!90] (36,4) rectangle (44,4.5); 
\fill[gray!90] (36.5,11.5) rectangle (44,12); 
\fill[gray!90] (43.5,4.5) rectangle (44,12); 

\fill[gray!90] (36,20) rectangle (36.5,28); 
\fill[gray!90] (36,20) rectangle (44,20.5); 
\fill[gray!90] (36.5,27.5) rectangle (44,28); 
\fill[gray!90] (43.5,20.5) rectangle (44,28); 

\fill[gray!90] (36,36) rectangle (36.5,44); 
\fill[gray!90] (36,36) rectangle (44,36.5); 
\fill[gray!90] (36.5,43.5) rectangle (44,44); 
\fill[gray!90] (43.5,36.5) rectangle (44,44);

\fill[gray!90] (36,52) rectangle (36.5,60); 
\fill[gray!90] (36,52) rectangle (44,52.5); 
\fill[gray!90] (36.5,59.5) rectangle (44,60); 
\fill[gray!90] (43.5,52.5) rectangle (44,60);

\fill[gray!90] (52,4) rectangle (52.5,12); 
\fill[gray!90] (52,4) rectangle (60,4.5); 
\fill[gray!90] (52.5,11.5) rectangle (60,12); 
\fill[gray!90] (59.5,4.5) rectangle (60,12); 

\fill[gray!90] (52,20) rectangle (52.5,28); 
\fill[gray!90] (52,20) rectangle (60,20.5); 
\fill[gray!90] (52.5,27.5) rectangle (60,28); 
\fill[gray!90] (59.5,20.5) rectangle (60,28); 

\fill[gray!90] (52,36) rectangle (52.5,44); 
\fill[gray!90] (52,36) rectangle (60,36.5); 
\fill[gray!90] (52.5,43.5) rectangle (60,44); 
\fill[gray!90] (59.5,36.5) rectangle (60,44);

\fill[gray!90] (52,52) rectangle (52.5,60); 
\fill[gray!90] (52,52) rectangle (60,52.5); 
\fill[gray!90] (52.5,59.5) rectangle (60,60); 
\fill[gray!90] (59.5,52.5) rectangle (60,60);

\draw[-latex] (20,62) -- (28,62);
\draw[-latex] (36,62) -- (44,62);
\draw[-latex] (20,2) -- (28,2);
\draw[-latex] (36,2) -- (44,2);

\draw[-latex] (20,50) -- (28,50);
\draw[-latex] (36,50) -- (44,50);
\draw[-latex] (20,14) -- (28,14);
\draw[-latex] (36,14) -- (44,14);

\draw[-latex] (62,20) -- (62,28);
\draw[-latex] (62,36) -- (62,44);
\draw[-latex,color=purple] (2,20) -- (2,28);
\draw[-latex,color=purple] (2,36) -- (2,44);

\draw[-latex] (50,20) -- (50,28);
\draw[-latex] (50,36) -- (50,44);
\draw[-latex,color=purple] (14,20) -- (14,28);
\draw[-latex,color=purple] (14,36) -- (14,44);
\end{scope}

\begin{scope}[xshift=64cm]
\fill[gray!90] (16,16) rectangle (48,48); 
\fill[white] (16.5,16.5) rectangle (47.5,47.5);

\fill[gray!90] (4,4) rectangle (4.5,12); 
\fill[gray!90] (4,4) rectangle (12,4.5); 
\fill[gray!90] (4.5,11.5) rectangle (12,12); 
\fill[gray!90] (11.5,4.5) rectangle (12,12); 

\fill[gray!90] (4,20) rectangle (4.5,28); 
\fill[gray!90] (4,20) rectangle (12,20.5); 
\fill[gray!90] (4.5,27.5) rectangle (12,28); 
\fill[gray!90] (11.5,20.5) rectangle (12,28); 

\fill[gray!90] (4,36) rectangle (4.5,44); 
\fill[gray!90] (4,36) rectangle (12,36.5); 
\fill[gray!90] (4.5,43.5) rectangle (12,44); 
\fill[gray!90] (11.5,36.5) rectangle (12,44);

\fill[gray!90] (4,52) rectangle (4.5,60); 
\fill[gray!90] (4,52) rectangle (12,52.5); 
\fill[gray!90] (4.5,59.5) rectangle (12,60); 
\fill[gray!90] (11.5,52.5) rectangle (12,60); 

\fill[gray!90] (1.5,1.5) rectangle (3.5,3.5);
\fill[gray!90] (1.5,1.5+11) rectangle (3.5,3.5+11);
\fill[gray!90] (12.5,1.5) rectangle (14.5,3.5);
\fill[gray!90] (12.5,1.5+11) rectangle (14.5,3.5+11);

\fill[gray!90] (1.5,64-1.5) rectangle (3.5,64-3.5);
\fill[gray!90] (1.5,64-1.5-11) rectangle (3.5,64-3.5-11);
\fill[gray!90] (12.5,64-1.5) rectangle (14.5,64-3.5);
\fill[gray!90] (12.5,64-1.5-11) rectangle (14.5,64-3.5-11);

\fill[purple] (64,64) rectangle (64-4,64-4);
\fill[purple] (64,64-11) rectangle (64-4,64-4-11);
\fill[gray!90] (64-12.5,64-1.5) rectangle (64-14.5,64-3.5);
\fill[gray!90] (64-12.5,64-1.5-11) rectangle (64-14.5,64-3.5-11);

\fill[purple] (64,0) rectangle (64-4,4);
\fill[purple] (64,+11) rectangle (64-4,4+11);
\fill[gray!90] (64-12.5,1.5) rectangle (64-14.5,3.5);
\fill[gray!90] (64-12.5,1.5+11) rectangle (64-14.5,3.5+11);

\fill[gray!90] (20,4) rectangle (20.5,12); 
\fill[gray!90] (20,4) rectangle (28,4.5); 
\fill[gray!90] (20.5,11.5) rectangle (28,12); 
\fill[gray!90] (27.5,4.5) rectangle (28,12); 

\fill[gray!90] (20,20) rectangle (20.5,28); 
\fill[gray!90] (20,20) rectangle (28,20.5); 
\fill[gray!90] (20.5,27.5) rectangle (28,28); 
\fill[gray!90] (27.5,20.5) rectangle (28,28); 

\fill[gray!90] (20,36) rectangle (20.5,44); 
\fill[gray!90] (20,36) rectangle (28,36.5); 
\fill[gray!90] (20.5,43.5) rectangle (28,44); 
\fill[gray!90] (27.5,36.5) rectangle (28,44);

\fill[gray!90] (20,52) rectangle (20.5,60); 
\fill[gray!90] (20,52) rectangle (28,52.5); 
\fill[gray!90] (20.5,59.5) rectangle (28,60); 
\fill[gray!90] (27.5,52.5) rectangle (28,60);

\fill[gray!90] (36,4) rectangle (36.5,12); 
\fill[gray!90] (36,4) rectangle (44,4.5); 
\fill[gray!90] (36.5,11.5) rectangle (44,12); 
\fill[gray!90] (43.5,4.5) rectangle (44,12); 

\fill[gray!90] (36,20) rectangle (36.5,28); 
\fill[gray!90] (36,20) rectangle (44,20.5); 
\fill[gray!90] (36.5,27.5) rectangle (44,28); 
\fill[gray!90] (43.5,20.5) rectangle (44,28); 

\fill[gray!90] (36,36) rectangle (36.5,44); 
\fill[gray!90] (36,36) rectangle (44,36.5); 
\fill[gray!90] (36.5,43.5) rectangle (44,44); 
\fill[gray!90] (43.5,36.5) rectangle (44,44);

\fill[gray!90] (36,52) rectangle (36.5,60); 
\fill[gray!90] (36,52) rectangle (44,52.5); 
\fill[gray!90] (36.5,59.5) rectangle (44,60); 
\fill[gray!90] (43.5,52.5) rectangle (44,60);

\fill[gray!90] (52,4) rectangle (52.5,12); 
\fill[gray!90] (52,4) rectangle (60,4.5); 
\fill[gray!90] (52.5,11.5) rectangle (60,12); 
\fill[gray!90] (59.5,4.5) rectangle (60,12); 

\fill[gray!90] (52,20) rectangle (52.5,28); 
\fill[gray!90] (52,20) rectangle (60,20.5); 
\fill[gray!90] (52.5,27.5) rectangle (60,28); 
\fill[gray!90] (59.5,20.5) rectangle (60,28); 

\fill[gray!90] (52,36) rectangle (52.5,44); 
\fill[gray!90] (52,36) rectangle (60,36.5); 
\fill[gray!90] (52.5,43.5) rectangle (60,44); 
\fill[gray!90] (59.5,36.5) rectangle (60,44);

\fill[gray!90] (52,52) rectangle (52.5,60); 
\fill[gray!90] (52,52) rectangle (60,52.5); 
\fill[gray!90] (52.5,59.5) rectangle (60,60); 
\fill[gray!90] (59.5,52.5) rectangle (60,60);

\draw[-latex] (20,62) -- (28,62);
\draw[-latex] (36,62) -- (44,62);
\draw[-latex] (20,2) -- (28,2);
\draw[-latex] (36,2) -- (44,2);

\draw[-latex] (20,50) -- (28,50);
\draw[-latex] (36,50) -- (44,50);
\draw[-latex] (20,14) -- (28,14);
\draw[-latex] (36,14) -- (44,14);

\draw[-latex,color=purple] (62,20) -- (62,28);
\draw[-latex,color=purple] (62,36) -- (62,44);
\draw[-latex] (2,20) -- (2,28);
\draw[-latex] (2,36) -- (2,44);

\draw[-latex] (50,20) -- (50,28);
\draw[-latex] (50,36) -- (50,44);
\draw[-latex] (14,20) -- (14,28);
\draw[-latex] (14,36) -- (14,44);
\end{scope}

\begin{scope}[xshift=64cm,yshift=64cm]
\fill[gray!90] (16,16) rectangle (48,48); 
\fill[white] (16.5,16.5) rectangle (47.5,47.5);

\fill[gray!90] (4,4) rectangle (4.5,12); 
\fill[gray!90] (4,4) rectangle (12,4.5); 
\fill[gray!90] (4.5,11.5) rectangle (12,12); 
\fill[gray!90] (11.5,4.5) rectangle (12,12); 

\fill[gray!90] (4,20) rectangle (4.5,28); 
\fill[gray!90] (4,20) rectangle (12,20.5); 
\fill[gray!90] (4.5,27.5) rectangle (12,28); 
\fill[gray!90] (11.5,20.5) rectangle (12,28); 

\fill[gray!90] (4,36) rectangle (4.5,44); 
\fill[gray!90] (4,36) rectangle (12,36.5); 
\fill[gray!90] (4.5,43.5) rectangle (12,44); 
\fill[gray!90] (11.5,36.5) rectangle (12,44);

\fill[gray!90] (4,52) rectangle (4.5,60); 
\fill[gray!90] (4,52) rectangle (12,52.5); 
\fill[gray!90] (4.5,59.5) rectangle (12,60); 
\fill[gray!90] (11.5,52.5) rectangle (12,60); 

\fill[gray!90] (1.5,1.5) rectangle (3.5,3.5);
\fill[gray!90] (1.5,1.5+11) rectangle (3.5,3.5+11);
\fill[gray!90] (12.5,1.5) rectangle (14.5,3.5);
\fill[gray!90] (12.5,1.5+11) rectangle (14.5,3.5+11);

\fill[gray!90] (1.5,64-1.5) rectangle (3.5,64-3.5);
\fill[gray!90] (1.5,64-1.5-11) rectangle (3.5,64-3.5-11);
\fill[gray!90] (12.5,64-1.5) rectangle (14.5,64-3.5);
\fill[gray!90] (12.5,64-1.5-11) rectangle (14.5,64-3.5-11);

\fill[purple] (64,64) rectangle (64-4,64-4);
\fill[purple] (64,64-11) rectangle (64-4,64-4-11);
\fill[gray!90] (64-12.5,64-1.5) rectangle (64-14.5,64-3.5);
\fill[gray!90] (64-12.5,64-1.5-11) rectangle (64-14.5,64-3.5-11);

\fill[purple] (64,0) rectangle (64-4,4);
\fill[purple] (64,11) rectangle (64-4,4+11);
\fill[gray!90] (64-12.5,1.5) rectangle (64-14.5,3.5);
\fill[gray!90] (64-12.5,1.5+11) rectangle (64-14.5,3.5+11);

\fill[gray!90] (20,4) rectangle (20.5,12); 
\fill[gray!90] (20,4) rectangle (28,4.5); 
\fill[gray!90] (20.5,11.5) rectangle (28,12); 
\fill[gray!90] (27.5,4.5) rectangle (28,12); 

\fill[gray!90] (20,20) rectangle (20.5,28); 
\fill[gray!90] (20,20) rectangle (28,20.5); 
\fill[gray!90] (20.5,27.5) rectangle (28,28); 
\fill[gray!90] (27.5,20.5) rectangle (28,28); 

\fill[gray!90] (20,36) rectangle (20.5,44); 
\fill[gray!90] (20,36) rectangle (28,36.5); 
\fill[gray!90] (20.5,43.5) rectangle (28,44); 
\fill[gray!90] (27.5,36.5) rectangle (28,44);

\fill[gray!90] (20,52) rectangle (20.5,60); 
\fill[gray!90] (20,52) rectangle (28,52.5); 
\fill[gray!90] (20.5,59.5) rectangle (28,60); 
\fill[gray!90] (27.5,52.5) rectangle (28,60);

\fill[gray!90] (36,4) rectangle (36.5,12); 
\fill[gray!90] (36,4) rectangle (44,4.5); 
\fill[gray!90] (36.5,11.5) rectangle (44,12); 
\fill[gray!90] (43.5,4.5) rectangle (44,12); 

\fill[gray!90] (36,20) rectangle (36.5,28); 
\fill[gray!90] (36,20) rectangle (44,20.5); 
\fill[gray!90] (36.5,27.5) rectangle (44,28); 
\fill[gray!90] (43.5,20.5) rectangle (44,28); 

\fill[gray!90] (36,36) rectangle (36.5,44); 
\fill[gray!90] (36,36) rectangle (44,36.5); 
\fill[gray!90] (36.5,43.5) rectangle (44,44); 
\fill[gray!90] (43.5,36.5) rectangle (44,44);

\fill[gray!90] (36,52) rectangle (36.5,60); 
\fill[gray!90] (36,52) rectangle (44,52.5); 
\fill[gray!90] (36.5,59.5) rectangle (44,60); 
\fill[gray!90] (43.5,52.5) rectangle (44,60);

\fill[gray!90] (52,4) rectangle (52.5,12); 
\fill[gray!90] (52,4) rectangle (60,4.5); 
\fill[gray!90] (52.5,11.5) rectangle (60,12); 
\fill[gray!90] (59.5,4.5) rectangle (60,12); 

\fill[gray!90] (52,20) rectangle (52.5,28); 
\fill[gray!90] (52,20) rectangle (60,20.5); 
\fill[gray!90] (52.5,27.5) rectangle (60,28); 
\fill[gray!90] (59.5,20.5) rectangle (60,28); 

\fill[gray!90] (52,36) rectangle (52.5,44); 
\fill[gray!90] (52,36) rectangle (60,36.5); 
\fill[gray!90] (52.5,43.5) rectangle (60,44); 
\fill[gray!90] (59.5,36.5) rectangle (60,44);

\fill[gray!90] (52,52) rectangle (52.5,60); 
\fill[gray!90] (52,52) rectangle (60,52.5); 
\fill[gray!90] (52.5,59.5) rectangle (60,60); 
\fill[gray!90] (59.5,52.5) rectangle (60,60);

\draw[-latex] (20,62) -- (28,62);
\draw[-latex] (36,62) -- (44,62);
\draw[-latex] (20,2) -- (28,2);
\draw[-latex] (36,2) -- (44,2);

\draw[-latex] (20,50) -- (28,50);
\draw[-latex] (36,50) -- (44,50);
\draw[-latex] (20,14) -- (28,14);
\draw[-latex] (36,14) -- (44,14);

\draw[-latex,color=purple] (62,20) -- (62,28);
\draw[-latex,color=purple] (62,36) -- (62,44);
\draw[-latex] (2,20) -- (2,28);
\draw[-latex] (2,36) -- (2,44);

\draw[-latex] (50,20) -- (50,28);
\draw[-latex] (50,36) -- (50,44);
\draw[-latex] (14,20) -- (14,28);
\draw[-latex] (14,36) -- (14,44);
\end{scope}

\fill[pattern=north east lines,pattern color=blue] (0,4) rectangle (4,128);
\fill[pattern=north east lines,pattern color=yellow] (12,4) rectangle (16,128);
\fill[pattern=north east lines,pattern color=blue] (124,4) rectangle (128,128);
\fill[pattern=north east lines,pattern color=yellow] (64,0) rectangle (60,128);

\fill[pattern = north east lines, pattern color = blue] (0,0) rectangle (128,4);
\fill[pattern = north east lines, pattern color =yellow] (0,12) rectangle (128,16);
\fill[pattern = north east lines, pattern color = blue] (0,60) rectangle (128,64);
\fill[pattern = north east lines, pattern color = blue] (0,124) rectangle (128,128);

\node at (154,62) {$\mathcal{A} \times \mathcal{Q}$};
\draw[-latex] (140,62) -- (128,62);
\draw[-latex] (140,32) -- (128,32);
\node at (158,32) {$
\mathcal{A} \times \mathcal{Q} \cup \mathcal{A}$};

\node at (32,-18) {$\mathcal{Q}^2 $};
\draw[-latex] (32,-12) -- (32,0);
\end{tikzpicture}\]
\caption{\label{fig.loc.machine} Localization 
of the machine symbols on the bottom faces 
of the cubes, according to the direction $\vec{e}^3$. Blue columns (resp. rows) symbolize 
computation-active columns (resp. rows).}
\end{figure}

\item 
\textbf{Connection with the counter:} 
On active computation positions that 
are in the bottom
line of this area, 
the symbols 
are equal to the corresponding 
subsymbol in $\mathcal{A} \times 
\mathcal{Q}$ on face 2 of the 
linear counter. On the leftmost 
(resp. rightmost) column of the area that 
are in a computation-active line, the symbol 
is equal to the corresponding 
symbol on face 7 (resp. 6) of the 
linear counter.

\item \textbf{Computation positions rules:}

Consider 
some computation position 
which is the intersection of a 
computation-active row and a 
computation-active column.

For such a position, 
the \textbf{inputs} include: 

\begin{enumerate}
\item the symbols written 
on the south position (or 
on the corresponding position on 
face $2$ when on the bottom line), 
\item 
the first symbol written on the west position 
(or the symbol on the corresponding 
position on face $7$ when on the west border
of the machine face), 
\item and the second symbol 
on the east position (or the 
symbol on the corresponding 
position on face $6$ when on the west border
of the machine face). 
\end{enumerate} 

The \textbf{outputs}
include: 

\begin{enumerate}
\item the symbols written on the 
north position (when not in 
the topmost row), 
\item the second symbol of the west position (when 
not in the leftmost column), 
\item and the first symbol on 
the east position (when not on the 
rightmost column).
\end{enumerate}

Moreover, on the bottom line, the inputs 
from inside the area are always the 
shadow state $q_s$.

See Figure~\ref{fig.schema.inputs.outputs} 
for an illustration.  

\begin{figure}[ht]
\[\begin{tikzpicture}[scale=0.2]
\begin{scope}
\fill[purple] (-1,-1) rectangle (1,1);
\draw (-1,-1) rectangle (1,1);
\fill[pattern = north east lines, pattern color = blue] 
(-9,-1) rectangle (9,1);
\fill[pattern = north east lines, pattern color = blue] 
(-1,-9) rectangle (1,9);
\draw[-latex] (-9,-0.4) -- (-1,-0.4);
\draw[-latex,dashed] (-1,0.4) -- (-9,0.4);
\draw[-latex] (9,-0.4) -- (1,-0.4);
\draw[-latex,dashed] (1,0.4) -- (9,0.4);
\draw[-latex] (0,-9) -- (0,-1);
\draw[-latex] (0,1) -- (0,9);

\node[scale=1.5] at (-11,0) {$1$};
\end{scope}

\begin{scope}[xshift=30cm]
\fill[purple] (-1,-1) rectangle (1,1);
\draw (-1,-1) rectangle (1,1);
\fill[pattern = north east lines, pattern color = blue] 
(0,-1) rectangle (9,1);
\fill[pattern = north east lines, pattern color = blue] 
(-1,-9) rectangle (1,9);
\draw[-latex] (-2,0) -- (-1,-0);
\node at (-5,0) {face 7};
\draw[-latex] (9,-0.4) -- (1,-0.4);
\draw[-latex,dashed] (1,0.4) -- (9,0.4);
\draw[-latex] (0,-9) -- (0,-1);
\draw[-latex,dashed] (0,1) -- (0,9);

\node[scale=1.5] at (-11,0) {$2$};

\node at (16,3) {Outputs};
\node at (16,6) {Inputs};
\draw (10,3) -- (12,3); 
\draw[dashed] (10,6) -- (12,6);
\end{scope}

\begin{scope}[yshift=-25cm]
\fill[purple] (-1,-1) rectangle (1,1);
\draw (-1,-1) rectangle (1,1);
\fill[pattern = north east lines, pattern color = blue] 
(0,-1) rectangle (9,1);
\fill[pattern = north east lines, pattern color = blue] 
(-1,-1) rectangle (1,9);
\draw[-latex] (-2,0) -- (-1,-0);
\node at (-5,0) {face 7};
\draw[-latex] (9,-0.4) -- (1,-0.4);
\draw[-latex,dashed] (1,0.4) -- (9,0.4);
\draw[-latex] (0,-2) -- (0,-1);
\node at (0,-3) {face 2};
\draw[-latex,dashed] (0,1) -- (0,9);

\node[scale=1.5] at (-11,0) {$3$};
\end{scope}

\begin{scope}[yshift=-25cm,xshift=30cm]
\fill[purple] (-1,-1) rectangle (1,1);
\draw (-1,-1) rectangle (1,1);
\fill[pattern = north east lines, pattern color = blue] 
(-9,-1) rectangle (9,1);
\fill[pattern = north east lines, pattern color = blue] 
(-1,-1) rectangle (1,9);
\draw[-latex,dashed] (-1,0.4) -- (-9,0.4);
\draw[-latex] (-9,-0.4) -- (-1,-0.4);
\draw[-latex] (9,-0.4) -- (1,-0.4);
\draw[-latex,dashed] (1,0.4) -- (9,0.4);
\draw[-latex] (0,-2) -- (0,-1);
\node at (0,-3) {face 2};
\draw[-latex,dashed] (0,1) -- (0,9);
\node[scale=1.5] at (-11,0) {$4$};
\end{scope}

\begin{scope}[yshift=-37cm]
\fill[purple] (-1,-1) rectangle (1,1);
\draw (-1,-1) rectangle (1,1);
\fill[pattern = north east lines, pattern color = blue] 
(0,-1) rectangle (9,1);
\fill[pattern = north east lines, pattern color = blue] 
(1,1) rectangle (-1,-9);
\draw[-latex] (-2,0) -- (-1,-0);
\node at (-5,0) {face 7};
\draw[-latex] (9,-0.4) -- (1,-0.4);
\draw[-latex,dashed] (1,0.4) -- (9,0.4);
\draw[-latex] (0,-9) -- (0,-1);

\node[scale=1.5] at (-11,0) {$5$};
\end{scope}

\begin{scope}[yshift=-37cm,xshift=30cm]
\fill[purple] (-1,-1) rectangle (1,1);
\draw (-1,-1) rectangle (1,1);
\fill[pattern = north east lines, pattern color = blue] 
(-9,-1) rectangle (9,1);
\fill[pattern = north east lines, pattern color = blue] 
(1,1) rectangle (-1,-9);
\draw[-latex,dashed] (-1,0.4) -- (-9,0.4);
\draw[-latex] (-9,-0.4) -- (-1,-0.4);
\draw[-latex] (9,-0.4) -- (1,-0.4);
\draw[-latex,dashed] (1,0.4) -- (9,0.4);
\node[scale=1.5] at (-11,0) {$6$};
\draw[-latex] (-9,-0.4) -- (-1,-0.4);
\draw[-latex,dashed] (-1,0.4) -- (-9,0.4);
\draw[-latex] (0,-9) -- (0,-1);
\end{scope}

\end{tikzpicture}\]
\caption{\label{fig.schema.inputs.outputs}
Schema of the inputs and outputs 
directions when inside the area (1) 
and on the border of the area (2,3,4,5,6).}
\end{figure}
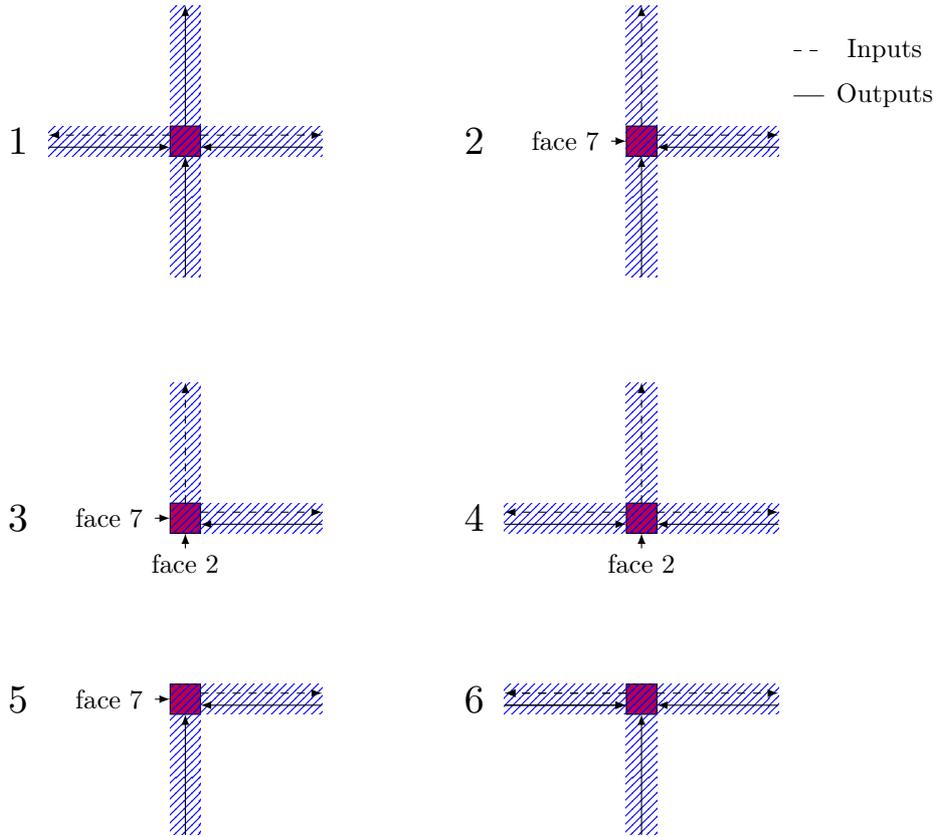

On the first row, all the inputs 
are determined by the counter and 
by the above rule. Then, each 
computation-active row is 
determined from the adjacent one 
on the bottom and the value of the 
linear counter on faces $6$ and $7$, by 
the following rules. These rules determine, on 
each computation position, the 
outputs from the inputs: 

\begin{enumerate}
\item \textbf{Collision between machine heads:} 
if there are at least 
two elements of $\mathcal{Q} \backslash \{q_s\}$ 
in the inputs, then the 
computation position is superimposed 
with $(a,q_e)$. The output on the top 
(when this exists)
is $(a,q_e)$, where $a$ is the letter input below. 
The outputs on the sides are $q_s$.
When there is a unique symbol in 
$\mathcal{Q} \backslash \{q_s\}$ in the inputs, 
this symbol is called the machine head 
state (the symbol $q_s$ is not considered 
as representing a machine head).
\item \textbf{Standard rule:}
\begin{enumerate}
\item when the head input comes from a side, 
then the functional position is superimposed with 
$(a,q)$. The above output is the couple $(a,q)$, 
where $a$ is the letter input under, and $q$ the head input.
The other outputs are $q_s$.
See Figure~\ref{fig.standard.rule.0} for 
an illustration of this rule.

\begin{figure}[ht]
\[\begin{tikzpicture}[scale=0.2]
\fill[purple] (-1,-1) rectangle (1,1);
\draw (-1,-1) rectangle (1,1);
\fill[pattern = north east lines, pattern color = blue] 
(-9,-1) rectangle (9,1);
\fill[pattern = north east lines, pattern color = blue] 
(-1,-9) rectangle (1,9);
\draw[-latex] (-9,0) -- (-1,0);
\draw[-latex] (0,-9) -- (0,-1);
\draw[-latex] (0,1) -- (0,9);
\node at (-10,0) {$q$};
\node at (0,-10) {$a$};
\node at (0,10) {$(a,q)$};
\end{tikzpicture}\]
\caption{\label{fig.standard.rule.0} Illustration 
of the standard rules (1).}
\end{figure}

\item when the head input comes from under, 
the above output is:

\begin{itemize}
\item $\delta_1 (a,q)$ when 
$\delta_3 (a,q)$ is in 
$\{\rightarrow,\leftarrow\}$ 
\item and 
$(\delta_1(a,q),\delta_2 (a,q))$ when 
$\delta_3 (a,q) = \uparrow$. 
\end{itemize}

The head output is in the 
direction of $\delta_3 (a,q)$ (when 
this output direction exists) and equal to 
$\delta_2 (a,q)$
when it is in $\{\rightarrow,\leftarrow\}$. 
The other outputs are $q_s$.
See Figure~\ref{fig.standard.rule.1} 
for an illustration.

\begin{figure}[ht]
\[\begin{tikzpicture}[scale=0.175]
\begin{scope}
\fill[purple] (-1,-1) rectangle (1,1);
\draw (-1,-1) rectangle (1,1);
\fill[pattern = north east lines, pattern color = blue] 
(-9,-1) rectangle (9,1);
\fill[pattern = north east lines, pattern color = blue] 
(-1,-9) rectangle (1,9);
\draw[-latex] (-1,0) -- (-9,0);
\draw[-latex] (0,-9) -- (0,-1);
\draw[-latex] (0,1) -- (0,9);
\node at (-13,0) {$\delta_2 (a,q)$};
\node at (0,-10) {$(a,q)$};
\node at (0,10) {$\delta_1 (a,q)$};
\node at (0,-14) {$\delta_3 (a,q) = \leftarrow$};
\end{scope}

\begin{scope}[xshift=23cm]
\fill[purple] (-1,-1) rectangle (1,1);
\draw (-1,-1) rectangle (1,1);
\fill[pattern = north east lines, pattern color = blue] 
(-9,-1) rectangle (9,1);
\fill[pattern = north east lines, pattern color = blue] 
(-1,-9) rectangle (1,9);
\draw[-latex] (1,0) -- (9,0);
\draw[-latex] (0,-9) -- (0,-1);
\draw[-latex] (0,1) -- (0,9);
\node at (13,0) {$\delta_2 (a,q)$};
\node at (0,-10) {$(a,q)$};
\node at (0,10) {$\delta_1 (a,q)$};
\node at (0,-14) {$\delta_3 (a,q) = \rightarrow$};
\end{scope}

\begin{scope}[xshift=50cm]
\fill[purple] (-1,-1) rectangle (1,1);
\draw (-1,-1) rectangle (1,1);
\fill[pattern = north east lines, pattern color = blue] 
(-9,-1) rectangle (9,1);
\fill[pattern = north east lines, pattern color = blue] 
(-1,-9) rectangle (1,9);
\draw[-latex] (0,-9) -- (0,-1);
\draw[-latex] (0,1) -- (0,9);
\node at (0,10) {$(\delta_1(a,q),\delta_2 (a,q))$};
\node at (0,-10) {$(a,q)$};
\node at (0,-14) {$\delta_3 (a,q) = \uparrow$};
\end{scope}
\end{tikzpicture}\]
\caption{\label{fig.standard.rule.1}
Illustration of the standard rules (2).}
\end{figure}
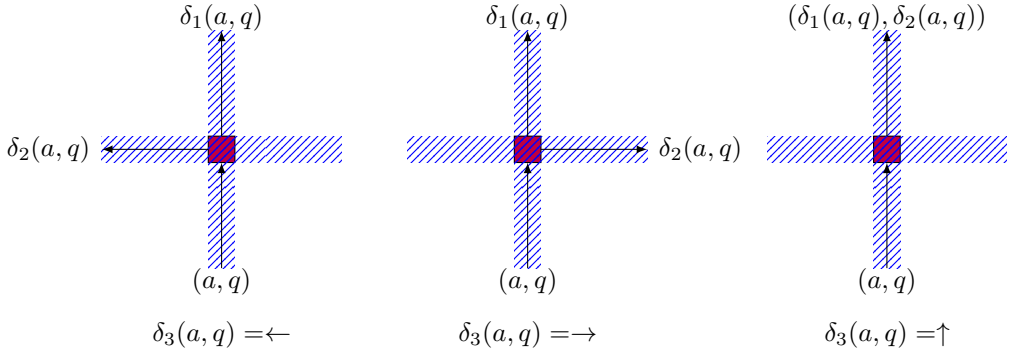
\end{enumerate}
\item \textbf{Collision with border:} 
When the output 
direction does not exist, the output 
is $(a,q_e)$ on the top, and the outputs 
on the side is $q_s$.
The computation position is 
superimposed with $(a,q)$.  
\item \textbf{No machine head:} when all 
the inputs in $\mathcal{Q}$ are 
$q_s$, and the above output is 
in $\mathcal{A}$ and equal to its input $a$.
\end{enumerate} 

\end{itemize}

\noindent {\textbf{\textit{Global behavior:}} \bigskip

On the bottom faces according to $\vec{e}^3$ 
of order $qp$ three-dimensional cells, we implemented some computations using our
modified Turing machine model. This model 
allows multiple 
machine heads on the initial tape and 
entering in each row. When there 
is a unique machine head on the leftmost 
position of the bottom line and only blank 
letters on the initial tape, and 
all the lines and columns are computation-active, then the computations 
are as intended. This means that a the 
machine write successively the 
bits $a^{(n)}_k$ 
on the $p_k$th column of its tape (in 
order to impose the value of the frequency bits),
 and moreover 
writes $1$ if $k$ is a power of two, and $0$ 
if not, in order to impose the value of 
the grouping bits. It enters in the error 
state $q_e$ when it detects an error.

When this is not the case, the computations 
are determined by the rules 
giving the outputs on computation positions 
from the inputs. When there is a collision 
of a machine head with the border, 
it enters in state $q_e$. When 
heads collide, they fusion into a unique 
head in state $q_e$.
In Section~\ref{sec.error.signals.min}, 
we describe signal errors that helps us
to take into account the computations 
only when the initial tape is 
empty, all the lines and columns are computation-active, and there is no machine 
head entering 
on the sides of the area.

\subsection{\label{sec.error.signals.min} 
Error signals}

In order to simulate any behavior that happens in infinite areas in 
finite ones, we need error signals. This means that when 
the machine detects an error (enters a halting state), it sends a signal 
to the initialization line to verify it was well initialized: that 
the tape was empty, that 
no machine head enters on the sides, 
and that the machine was initially in the leftmost position 
of the line, and in initialization state. Moreover, for the reason 
that we need to compute precisely the number of possible 
initial tape contents, we allow initialization of multiple heads. 
The first error signal will detect the first position from left 
to right in the top row of the area 
where there is a machine head in error state 
or the active column is $\texttt{off}$.
This position only will trigger an 
error signal (described 
in Section~\ref{subsec.error.signal}, according 
to the direction specified just above when it in the top line of the area 
(the word of arrows specifying the direction is a part of the counter). The empty 
tape signal detects if the 
initial tape was empty, and that there 
was a unique machine head on the leftmost 
position in initialization state $q_0$. 
The empty tape and first error
signals are described in Section~\ref{subsec.empty.tape.first.machine}. 
The empty sides signal, described in 
Section~\ref{subsec.empty.sides} detects if 
there is no machine head entering 
on the sides, and that the $\texttt{on}/
\texttt{off}$ signals on the sides 
are all equal to $\texttt{on}$.
The error signal is taken into account 
(meaning forbidden) when the empty tape, 
and empty sides signals are detecting 
an error.

\subsubsection{ \label{subsec.empty.tape.first.machine}
Empty tape, first error signals}

\noindent \textbf{\textit{Symbols:}} \bigskip

The first sublayer has the 
following symbols: 

$\begin{tikzpicture}[scale=0.3]
\fill[Salmon] (0,0) rectangle (1,1);
\draw (0,0) rectangle (1,1);
\end{tikzpicture}, \begin{tikzpicture}[scale=0.3]
\fill[YellowGreen] (0,0) rectangle (1,1);
\draw (0,0) rectangle (1,1);
\end{tikzpicture}$, symbols in 
$\left\{\begin{tikzpicture}[scale=0.3]
\fill[Salmon] (0,0) rectangle (1,1);
\draw (0,0) rectangle (1,1);
\end{tikzpicture}, \begin{tikzpicture}[scale=0.3]
\fill[YellowGreen] (0,0) rectangle (1,1);
\draw (0,0) rectangle (1,1);
\end{tikzpicture}\right\} ^2 $, and 
a blank symbol $
\begin{tikzpicture}[scale=0.3]
\draw (0,0) rectangle (1,1);
\end{tikzpicture}$. \bigskip

\noindent \textbf{\textit{Local rules:}}  

\begin{itemize}
\item \textbf{Localization:} 
non blank symbols are superimposed on the 
top line and bottom line of the
border of the machine face as a two-dimensional cell.
\item \textbf{First error signal:} this signal 
detects the first error on the top 
of the functional area, from 
the left to the right, 
where an error means a symbol 
$\texttt{off}$ or $q_e$. The rules are: 
\begin{itemize}
\item the topmost leftmost position of the top 
line of the cell is 
marked with $\begin{tikzpicture}[scale=0.3]
\fill[YellowGreen] (0,0) rectangle (1,1);
\draw (0,0) rectangle (1,1);
\end{tikzpicture}$. 
\item the symbol $\begin{tikzpicture}[scale=0.3]
\fill[YellowGreen] (0,0) rectangle (1,1);
\draw (0,0) rectangle (1,1);
\end{tikzpicture}$ propagates the the left, 
and propagates to the right
while the position under is not in error 
state $q_e$ and the symbol in $\{\texttt{on},
\texttt{off}\}$ is $\texttt{on}$.
\item when on position in the top row with an 
error, 
the position on the top right is colored 
$\begin{tikzpicture}[scale=0.3]
\fill[Salmon] (0,0) rectangle (1,1);
\draw (0,0) rectangle (1,1);
\end{tikzpicture}$.
\item the symbol $\begin{tikzpicture}[scale=0.3]
\fill[Salmon] (0,0) rectangle (1,1);
\draw (0,0) rectangle (1,1);
\end{tikzpicture}$ propagates to the right, 
and propagates to the left while 
the positions under is not in error state.
\end{itemize} 
\textbf{Empty tape signal:} this signal
detects if the initial tape of the machine is empty. 
This means that it is 
filled with the symbol $(\#,q_s)$ except 
on the leftmost position 
where it has to be $(\#,q_0)$. The signal detects 
the first symbol which is different from $(\#,q_s)$ or $(\#,q_0)$ 
when on the left, 
from left to right (first color), and from left to right 
(second color).
Concerning the first color: 
\begin{itemize}
\item on the bottom row, the leftmost position is colored with 
$\begin{tikzpicture}[scale=0.3]
\fill[YellowGreen] (0,0) rectangle (1,1);
\draw (0,0) rectangle (1,1);
\end{tikzpicture}$.
\item The symbol 
$\begin{tikzpicture}[scale=0.3]
\fill[YellowGreen] (0,0) rectangle (1,1);
\draw (0,0) rectangle (1,1);
\end{tikzpicture}$
propagates to the right unless when 
on a position under a symbol different from:
\begin{itemize}
\item 
$(\#,q_0)$ when on the leftmost functional position, 
\item $(\#,q_s)$ on another functional position.
\end{itemize}
\item When on these positions, the symbol on the right is 
$\begin{tikzpicture}[scale=0.3]
\fill[Salmon] (0,0) rectangle (1,1);
\draw (0,0) rectangle (1,1);
\end{tikzpicture}$.
\item the symbol 
$\begin{tikzpicture}[scale=0.3]
\fill[Salmon] (0,0) rectangle (1,1);
\draw (0,0) rectangle (1,1);
\end{tikzpicture}$ 
propagates to the right.
\end{itemize}
for the second one: 
\begin{itemize}
\item on the bottom row, the rightmost position is colored with 
$\begin{tikzpicture}[scale=0.3]
\fill[YellowGreen] (0,0) rectangle (1,1);
\draw (0,0) rectangle (1,1);
\end{tikzpicture}$.
\item The symbol 
$\begin{tikzpicture}[scale=0.3]
\fill[YellowGreen] (0,0) rectangle (1,1);
\draw (0,0) rectangle (1,1);
\end{tikzpicture}$
propagates to the left except when
on a position under a symbol different from:
\begin{itemize}
\item  
$(\#,q_0)$ when on the leftmost functional position, 
\item $(\#,q_s)$ on another computation 
position.
\end{itemize}
\item When on these positions, the symbol on the right is 
$\begin{tikzpicture}[scale=0.3]
\fill[Salmon] (0,0) rectangle (1,1);
\draw (0,0) rectangle (1,1);
\end{tikzpicture}$.
\item the symbol 
$\begin{tikzpicture}[scale=0.3]
\fill[Salmon] (0,0) rectangle (1,1);
\draw (0,0) rectangle (1,1);
\end{tikzpicture}$ 
propagates to the left.
\end{itemize}
\end{itemize}

\noindent \textbf{\textit{Global behavior:}} \bigskip

The top row is separated in 
two parts: before and after (from left to right) 
the first error. 
The left part is colored $\begin{tikzpicture}[scale=0.3]
\fill[YellowGreen] (0,0) rectangle (1,1);
\draw (0,0) rectangle (1,1);
\end{tikzpicture}$, and the right part 
$\begin{tikzpicture}[scale=0.3]
\fill[Salmon] (0,0) rectangle (1,1);
\draw (0,0) rectangle (1,1);
\end{tikzpicture}$. The bottom row is colored 
with a couple of color. The first one 
separates the row in two parts. The limit 
between the two parts is the first occurrence 
\textit{from left to right} 
of a symbol different from $(\#,q_s)$ or $(\#,q_0)$ when on 
the leftmost computation position above. The 
second color of the couple separates 
two similar parts \textit{from 
right to left}.

See Figure~\ref{fig.error.signal.dim.entropique}
for an illustration.

\subsubsection{\label{subsec.empty.sides}
Empty sides signals}

This second sublayer has the same 
symbols as the first sublayer.
The principle of the local rules 
is similar: the leftmost (resp. rightmost) 
column is splitted 
in two parts, the top one colored 
$\begin{tikzpicture}[scale=0.3]
\fill[YellowGreen] (0,0) rectangle (1,1);
\draw (0,0) rectangle (1,1);
\end{tikzpicture}$ and 
the bottom one colored 
$\begin{tikzpicture}[scale=0.3]
\fill[Salmon] (0,0) rectangle (1,1);
\draw (0,0) rectangle (1,1);
\end{tikzpicture}$.
The limit is the first position 
from top to bottom where 
the corresponding symbol across 
the limit with face $7$ (resp. face $6$) 
is $(q_s,\texttt{on})$ (resp. $q_s$).
Moreover, the bottom row
of the machine face is colored with 
a couple of colors. This couple 
is constant over the row. 
The first one of the colors is equal to the color 
at the bottom of the leftmost column. 
The second one is the color at the bottom 
of the rightmost one. 

See an illustration on Figure~\ref{fig.error.signal.dim.entropique}.

\subsubsection{\label{subsec.error.signal} 
Error signals}

\noindent \textbf{\textit{Symbols:}} \bigskip

$\begin{tikzpicture}[scale=0.3]
\fill[purple] (0,0) rectangle (1,1);
\draw (0,0) rectangle (1,1);
\end{tikzpicture}$ (error signal), $\begin{tikzpicture}[scale=0.3]
\draw (0,0) rectangle (1,1);
\end{tikzpicture}$. \bigskip

\noindent \textbf{\textit{Local rules:}} 

\begin{itemize}
\item  \textbf{Localization:} 
the non-blank symbols are superimposed 
on the right, left and top sides of the border of 
machine faces in three-dimensional cells.
\item \textbf{Propagation:} each of the two symbols 
propagates when inside one of these two areas: 
\begin{itemize}
\item the union of the left side of the face 
and positions colored $\begin{tikzpicture}[scale=0.3]
\fill[YellowGreen] (0,0) rectangle (1,1);
\draw (0,0) rectangle (1,1);
\end{tikzpicture}$ in the top side of the face.
\item the union of the right side of the face 
and positions colored $\begin{tikzpicture}[scale=0.3]
\fill[Salmon] (0,0) rectangle (1,1);
\draw (0,0) rectangle (1,1);
\end{tikzpicture}$ in the top side of the face.
\end{itemize}
\item \textbf{Induction:} 
\begin{itemize} 
\item on a position of the top side of the face 
which is colored $\begin{tikzpicture}[scale=0.3]
\fill[YellowGreen] (0,0) rectangle (1,1);
\draw (0,0) rectangle (1,1);
\end{tikzpicture}$ and the position on the right is 
$\begin{tikzpicture}[scale=0.3]
\fill[Salmon] (0,0) rectangle (1,1);
\draw (0,0) rectangle (1,1);
\end{tikzpicture}$, if the symbol 
above in the information transfers layer is $\rightarrow$ (resp. 
$\leftarrow$), then 
there is an error signal $\begin{tikzpicture}[scale=0.3]
\fill[purple] (0,0) rectangle (1,1);
\draw (0,0) rectangle (1,1);
\end{tikzpicture}$ on this position and none on the right (resp. 
there is no error signal on this position and there is one on the right).
\item on the rightmost topmost position of the face, if the 
first machine signal is $\begin{tikzpicture}[scale=0.3]
\fill[YellowGreen] (0,0) rectangle (1,1);
\draw (0,0) rectangle (1,1);
\end{tikzpicture}$, then 
there is no error signal.
\end{itemize} 
\item \textbf{Forbidding wrong configurations:} 

there can not be four 
symbols $\begin{tikzpicture}[scale=0.3]
\fill[YellowGreen] (0,0) rectangle (1,1);
\draw (0,0) rectangle (1,1);
\end{tikzpicture}$ and an error signal $\begin{tikzpicture}[scale=0.3]
\fill[purple] (0,0) rectangle (1,1);
\draw (0,0) rectangle (1,1);
\end{tikzpicture}$ on the same position.
\end{itemize}

\noindent \textbf{\textit{Global behavior:}} \bigskip

When there is a machine head in 
error state in the top row, the first one 
(from left to right) 
sends an error signal to the bottom row (see 
Figure~\ref{fig.error.signal.dim.entropique}) 
in the direction indicated by the arrow 
on the corresponding position on face $3$.
This signal is forbidden 
if the machine is well initialized. This means 
that the 
working tape of the machine is 
empty in the bottom row. Moreover, there is 
a unique machine in state $q_0$ in the leftmost position of the bottom row, 
all the lines and columns are $\texttt{on}$ 
and there is no machine entering on the sides. 
This means that the error signal is taken 
into account only when the computations 
have the intended behavior.

\begin{figure}[ht]
\[\begin{tikzpicture}[scale=0.5]
\begin{scope}
\fill[YellowGreen] (0,8) rectangle (3,9);
\fill[Salmon] (3,8) rectangle (9,9);
\fill[Salmon] (0,0) rectangle (6,0.5);
\fill[YellowGreen] (6,0) rectangle (9,0.5);
\fill[YellowGreen] (0,0.5) rectangle (4,1);
\fill[Salmon] (4,0.5) rectangle (9,1);
\fill[purple] (0,0) rectangle (1,9);
\fill[purple] (0,8) rectangle (2,9);
\draw (2,9) rectangle (3,10);
\draw[-latex] (3,9.5) -- (2,9.5);
\node at (2.5,8.5) {$q_e$};
\draw (0,0) grid (9,9); 
\draw[dashed] (0,0.5) -- (9,0.5);
\draw[-latex] (-1,5) -- (0,5);
\node at (-3,5) {Error signal};
\draw[-latex] (5,-1) -- (5,0);
\node at (5,-2) {Empty tape signal};
\draw[-latex] (7,10) -- (7,9) ;
\node at (7,11) {First machine signal};
\end{scope}

\begin{scope}[xshift=12cm]
\fill[Salmon] (0,0) rectangle (9,1);
\fill[Salmon] (0,0) rectangle (1,5);
\fill[YellowGreen] (0,5) rectangle (1,9);

\fill[Salmon] (8,0) rectangle (9,3);
\fill[YellowGreen] (8,3) rectangle (9,9);
\draw (0,0) grid (9,9); 

\draw[dashed] (0,0.5) -- (9,0.5);

\draw[-latex] (5,-1) -- (5,0);
\node at (5,-2) {Empty sides signal};
\end{scope}
\end{tikzpicture}\]
\caption{\label{fig.error.signal.dim.entropique} Illustration 
of the propagation of an error signal, 
where are represented the empty tape, 
first machine and empty sides signals.}
\end{figure}
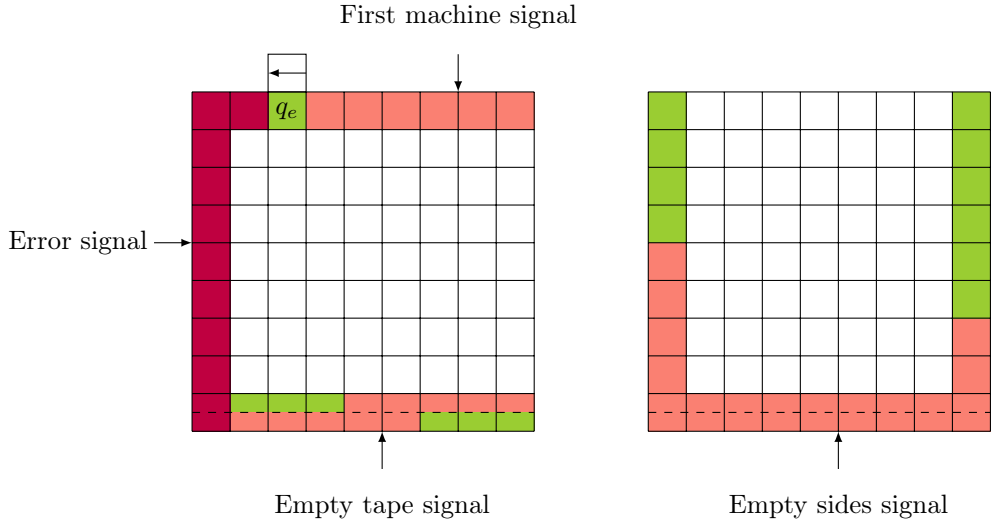

Because for any $n$ and any configuration, there 
exists some three-dimensional cell in which 
the machine is well initialized 
(because of the presence of counters). 
For any $k$, there 
exists some $n$ such that 
the machine has enough time to 
check the $k$th frequency bit and grouping 
bit.
This means that in any configuration 
of the subshift, 
the $k$th frequency bit is equal to $a_k$, 
and the $2^k$th grouping bit is $1$, 
and the other ones are $0$.

\section{\label{section.completing.machines} Completing the machines areas}

When completing the machine face, 
there are two types of difficulties. 
The first 
one is managing the various signals : machine signals, 
first error, empty tape, empty side 
signals, and error signals. 
The second one is 
managing the space-time diagram 
of the machine. When 
the machine face is all known, there 
is no completion to make. Hence 
we describe the completion 
according to the parts of the 
face that are known (meaning that 
they appear in the initial pattern).
Since there is strictly less difficulty 
to complete knowing only a part 
inside the face than on the border 
(since the difficulties come from 
completing the space-time diagram 
of the machine, in a similar way than 
for the border), we describe 
the completion only when a part 
of the border is known.

\begin{itemize}
\item When knowing the top right corner of the machine face: 

\begin{figure}[ht]
\[\begin{tikzpicture}[scale=0.04]

\draw[latex-] (16,136) -- (48,136);
\node at (32,144) {Transport of information}; 
\draw[-latex] (136,48) -- (136,16);

\begin{scope}

\fill[gray!90] (4,4) rectangle (4.5,12); 
\fill[gray!90] (4,4) rectangle (12,4.5); 
\fill[gray!90] (4.5,11.5) rectangle (12,12); 
\fill[gray!90] (11.5,4.5) rectangle (12,12); 

\fill[gray!90] (4,20) rectangle (4.5,28); 
\fill[gray!90] (4,20) rectangle (12,20.5); 
\fill[gray!90] (4.5,27.5) rectangle (12,28); 
\fill[gray!90] (11.5,20.5) rectangle (12,28); 

\fill[gray!90] (4,36) rectangle (4.5,44); 
\fill[gray!90] (4,36) rectangle (12,36.5); 
\fill[gray!90] (4.5,43.5) rectangle (12,44); 
\fill[gray!90] (11.5,36.5) rectangle (12,44);

\fill[gray!90] (4,52) rectangle (4.5,60); 
\fill[gray!90] (4,52) rectangle (12,52.5); 
\fill[gray!90] (4.5,59.5) rectangle (12,60); 
\fill[gray!90] (11.5,52.5) rectangle (12,60); 

\fill[purple] (0,0) rectangle (4,4);
\fill[purple] (0,0+11) rectangle (4,4+11);
\fill[purple] (11,0) rectangle (15,4);
\fill[purple] (11,11) rectangle (15,4+11);

\fill[purple] (0,64) rectangle (4,64-4);
\fill[purple] (0,64-11) rectangle (4,64-4-11);
\fill[purple] (11,64) rectangle (15,64-4);
\fill[purple] (11,64-11) rectangle (15,64-4-11);

\fill[purple] (64,64) rectangle (60,60);
\fill[purple] (64,64-11) rectangle (60,60-11);
\fill[gray!90] (64-12.5,64-1.5) rectangle (64-14.5,64-3.5);
\fill[gray!90] (64-12.5,64-1.5-11) rectangle (64-14.5,64-3.5-11);

\fill[purple] (64,0) rectangle (60,4);
\fill[purple] (64,11) rectangle (60,4+11);
\fill[gray!90] (64-12.5,1.5) rectangle (64-14.5,3.5);
\fill[gray!90] (64-12.5,1.5+11) rectangle (64-14.5,3.5+11);

\fill[gray!90] (20,4) rectangle (20.5,12); 
\fill[gray!90] (20,4) rectangle (28,4.5); 
\fill[gray!90] (20.5,11.5) rectangle (28,12); 
\fill[gray!90] (27.5,4.5) rectangle (28,12); 

\fill[gray!90] (20,20) rectangle (20.5,28); 
\fill[gray!90] (20,20) rectangle (28,20.5); 
\fill[gray!90] (20.5,27.5) rectangle (28,28); 
\fill[gray!90] (27.5,20.5) rectangle (28,28); 

\fill[gray!90] (20,36) rectangle (20.5,44); 
\fill[gray!90] (20,36) rectangle (28,36.5); 
\fill[gray!90] (20.5,43.5) rectangle (28,44); 
\fill[gray!90] (27.5,36.5) rectangle (28,44);

\fill[gray!90] (20,52) rectangle (20.5,60); 
\fill[gray!90] (20,52) rectangle (28,52.5); 
\fill[gray!90] (20.5,59.5) rectangle (28,60); 
\fill[gray!90] (27.5,52.5) rectangle (28,60);

\fill[gray!90] (36,4) rectangle (36.5,12); 
\fill[gray!90] (36,4) rectangle (44,4.5); 
\fill[gray!90] (36.5,11.5) rectangle (44,12); 
\fill[gray!90] (43.5,4.5) rectangle (44,12); 

\fill[gray!90] (36,20) rectangle (36.5,28); 
\fill[gray!90] (36,20) rectangle (44,20.5); 
\fill[gray!90] (36.5,27.5) rectangle (44,28); 
\fill[gray!90] (43.5,20.5) rectangle (44,28); 

\fill[gray!90] (36,36) rectangle (36.5,44); 
\fill[gray!90] (36,36) rectangle (44,36.5); 
\fill[gray!90] (36.5,43.5) rectangle (44,44); 
\fill[gray!90] (43.5,36.5) rectangle (44,44);

\fill[gray!90] (36,52) rectangle (36.5,60); 
\fill[gray!90] (36,52) rectangle (44,52.5); 
\fill[gray!90] (36.5,59.5) rectangle (44,60); 
\fill[gray!90] (43.5,52.5) rectangle (44,60);

\fill[gray!90] (52,4) rectangle (52.5,12); 
\fill[gray!90] (52,4) rectangle (60,4.5); 
\fill[gray!90] (52.5,11.5) rectangle (60,12); 
\fill[gray!90] (59.5,4.5) rectangle (60,12); 

\fill[gray!90] (52,20) rectangle (52.5,28); 
\fill[gray!90] (52,20) rectangle (60,20.5); 
\fill[gray!90] (52.5,27.5) rectangle (60,28); 
\fill[gray!90] (59.5,20.5) rectangle (60,28); 

\fill[gray!90] (52,36) rectangle (52.5,44); 
\fill[gray!90] (52,36) rectangle (60,36.5); 
\fill[gray!90] (52.5,43.5) rectangle (60,44); 
\fill[gray!90] (59.5,36.5) rectangle (60,44);

\fill[gray!90] (52,52) rectangle (52.5,60); 
\fill[gray!90] (52,52) rectangle (60,52.5); 
\fill[gray!90] (52.5,59.5) rectangle (60,60); 
\fill[gray!90] (59.5,52.5) rectangle (60,60);

\draw[-latex] (20,62) -- (28,62);
\draw[-latex] (36,62) -- (44,62);
\draw[-latex] (20,2) -- (28,2);
\draw[-latex] (36,2) -- (44,2);

\draw[-latex] (20,50) -- (28,50);
\draw[-latex] (36,50) -- (44,50);
\draw[-latex] (20,14) -- (28,14);
\draw[-latex] (36,14) -- (44,14);

\draw[-latex] (62,20) -- (62,28);
\draw[-latex] (62,36) -- (62,44);
\draw[-latex,color=purple] (2,20) -- (2,28);
\draw[-latex,color=purple] (2,36) -- (2,44);

\draw[-latex] (50,20) -- (50,28);
\draw[-latex] (50,36) -- (50,44);
\draw[-latex,color=purple] (14,20) -- (14,28);
\draw[-latex,color=purple] (14,36) -- (14,44);

\draw[-latex] (-40,0) -- (-28,0);
\draw[-latex] (-40,0) -- (-40,12);
\node at (-40,16) {$v$};
\node at (-24,0) {$h$};
\end{scope}

\begin{scope}[yshift=64cm]

\fill[gray!90] (4,4) rectangle (4.5,12); 
\fill[gray!90] (4,4) rectangle (12,4.5); 
\fill[gray!90] (4.5,11.5) rectangle (12,12); 
\fill[gray!90] (11.5,4.5) rectangle (12,12); 

\fill[gray!90] (4,20) rectangle (4.5,28); 
\fill[gray!90] (4,20) rectangle (12,20.5); 
\fill[gray!90] (4.5,27.5) rectangle (12,28); 
\fill[gray!90] (11.5,20.5) rectangle (12,28); 

\fill[gray!90] (4,36) rectangle (4.5,44); 
\fill[gray!90] (4,36) rectangle (12,36.5); 
\fill[gray!90] (4.5,43.5) rectangle (12,44); 
\fill[gray!90] (11.5,36.5) rectangle (12,44);

\fill[gray!90] (4,52) rectangle (4.5,60); 
\fill[gray!90] (4,52) rectangle (12,52.5); 
\fill[gray!90] (4.5,59.5) rectangle (12,60); 
\fill[gray!90] (11.5,52.5) rectangle (12,60); 

\fill[purple] (0,0) rectangle (4,4);
\fill[purple] (0,0+11) rectangle (4,4+11);
\fill[purple] (11,0) rectangle (15,4);
\fill[purple] (11,11) rectangle (15,4+11);

\fill[purple] (0,64) rectangle (4,64-4);
\fill[purple] (0,64-11) rectangle (4,64-4-11);
\fill[purple] (11,64) rectangle (15,64-4);
\fill[purple] (11,64-11) rectangle (15,64-4-11);

\fill[purple] (64,64) rectangle (60,60);
\fill[purple] (64,64-11) rectangle (60,60-11);
\fill[gray!90] (64-12.5,64-1.5) rectangle (64-14.5,64-3.5);
\fill[gray!90] (64-12.5,64-1.5-11) rectangle (64-14.5,64-3.5-11);

\fill[purple] (64,0) rectangle (60,4);
\fill[purple] (64,11) rectangle (60,4+11);
\fill[gray!90] (64-12.5,1.5) rectangle (64-14.5,3.5);
\fill[gray!90] (64-12.5,1.5+11) rectangle (64-14.5,3.5+11);

\fill[gray!90] (20,4) rectangle (20.5,12); 
\fill[gray!90] (20,4) rectangle (28,4.5); 
\fill[gray!90] (20.5,11.5) rectangle (28,12); 
\fill[gray!90] (27.5,4.5) rectangle (28,12); 

\fill[gray!90] (20,20) rectangle (20.5,28); 
\fill[gray!90] (20,20) rectangle (28,20.5); 
\fill[gray!90] (20.5,27.5) rectangle (28,28); 
\fill[gray!90] (27.5,20.5) rectangle (28,28); 

\fill[gray!90] (20,36) rectangle (20.5,44); 
\fill[gray!90] (20,36) rectangle (28,36.5); 
\fill[gray!90] (20.5,43.5) rectangle (28,44); 
\fill[gray!90] (27.5,36.5) rectangle (28,44);

\fill[gray!90] (20,52) rectangle (20.5,60); 
\fill[gray!90] (20,52) rectangle (28,52.5); 
\fill[gray!90] (20.5,59.5) rectangle (28,60); 
\fill[gray!90] (27.5,52.5) rectangle (28,60);

\fill[gray!90] (36,4) rectangle (36.5,12); 
\fill[gray!90] (36,4) rectangle (44,4.5); 
\fill[gray!90] (36.5,11.5) rectangle (44,12); 
\fill[gray!90] (43.5,4.5) rectangle (44,12); 

\fill[gray!90] (36,20) rectangle (36.5,28); 
\fill[gray!90] (36,20) rectangle (44,20.5); 
\fill[gray!90] (36.5,27.5) rectangle (44,28); 
\fill[gray!90] (43.5,20.5) rectangle (44,28); 

\fill[gray!90] (36,36) rectangle (36.5,44); 
\fill[gray!90] (36,36) rectangle (44,36.5); 
\fill[gray!90] (36.5,43.5) rectangle (44,44); 
\fill[gray!90] (43.5,36.5) rectangle (44,44);

\fill[gray!90] (36,52) rectangle (36.5,60); 
\fill[gray!90] (36,52) rectangle (44,52.5); 
\fill[gray!90] (36.5,59.5) rectangle (44,60); 
\fill[gray!90] (43.5,52.5) rectangle (44,60);

\fill[gray!90] (52,4) rectangle (52.5,12); 
\fill[gray!90] (52,4) rectangle (60,4.5); 
\fill[gray!90] (52.5,11.5) rectangle (60,12); 
\fill[gray!90] (59.5,4.5) rectangle (60,12); 

\fill[gray!90] (52,20) rectangle (52.5,28); 
\fill[gray!90] (52,20) rectangle (60,20.5); 
\fill[gray!90] (52.5,27.5) rectangle (60,28); 
\fill[gray!90] (59.5,20.5) rectangle (60,28); 

\fill[gray!90] (52,36) rectangle (52.5,44); 
\fill[gray!90] (52,36) rectangle (60,36.5); 
\fill[gray!90] (52.5,43.5) rectangle (60,44); 
\fill[gray!90] (59.5,36.5) rectangle (60,44);

\fill[gray!90] (52,52) rectangle (52.5,60); 
\fill[gray!90] (52,52) rectangle (60,52.5); 
\fill[gray!90] (52.5,59.5) rectangle (60,60); 
\fill[gray!90] (59.5,52.5) rectangle (60,60);

\draw[-latex] (20,62) -- (28,62);
\draw[-latex] (36,62) -- (44,62);
\draw[-latex] (20,2) -- (28,2);
\draw[-latex] (36,2) -- (44,2);

\draw[-latex] (20,50) -- (28,50);
\draw[-latex] (36,50) -- (44,50);
\draw[-latex] (20,14) -- (28,14);
\draw[-latex] (36,14) -- (44,14);

\draw[-latex] (62,20) -- (62,28);
\draw[-latex] (62,36) -- (62,44);
\draw[-latex,color=purple] (2,20) -- (2,28);
\draw[-latex,color=purple] (2,36) -- (2,44);

\draw[-latex] (50,20) -- (50,28);
\draw[-latex] (50,36) -- (50,44);
\draw[-latex,color=purple] (14,20) -- (14,28);
\draw[-latex,color=purple] (14,36) -- (14,44);
\end{scope}

\begin{scope}[xshift=64cm]

\fill[gray!90] (4,4) rectangle (4.5,12); 
\fill[gray!90] (4,4) rectangle (12,4.5); 
\fill[gray!90] (4.5,11.5) rectangle (12,12); 
\fill[gray!90] (11.5,4.5) rectangle (12,12); 

\fill[gray!90] (4,20) rectangle (4.5,28); 
\fill[gray!90] (4,20) rectangle (12,20.5); 
\fill[gray!90] (4.5,27.5) rectangle (12,28); 
\fill[gray!90] (11.5,20.5) rectangle (12,28); 

\fill[gray!90] (4,36) rectangle (4.5,44); 
\fill[gray!90] (4,36) rectangle (12,36.5); 
\fill[gray!90] (4.5,43.5) rectangle (12,44); 
\fill[gray!90] (11.5,36.5) rectangle (12,44);

\fill[gray!90] (4,52) rectangle (4.5,60); 
\fill[gray!90] (4,52) rectangle (12,52.5); 
\fill[gray!90] (4.5,59.5) rectangle (12,60); 
\fill[gray!90] (11.5,52.5) rectangle (12,60); 

\fill[gray!90] (1.5,1.5) rectangle (3.5,3.5);
\fill[gray!90] (1.5,1.5+11) rectangle (3.5,3.5+11);
\fill[gray!90] (12.5,1.5) rectangle (14.5,3.5);
\fill[gray!90] (12.5,1.5+11) rectangle (14.5,3.5+11);

\fill[gray!90] (1.5,64-1.5) rectangle (3.5,64-3.5);
\fill[gray!90] (1.5,64-1.5-11) rectangle (3.5,64-3.5-11);
\fill[gray!90] (12.5,64-1.5) rectangle (14.5,64-3.5);
\fill[gray!90] (12.5,64-1.5-11) rectangle (14.5,64-3.5-11);

\fill[purple] (64,64) rectangle (64-4,64-4);
\fill[purple] (64,64-11) rectangle (64-4,64-4-11);
\fill[gray!90] (64-12.5,64-1.5) rectangle (64-14.5,64-3.5);
\fill[gray!90] (64-12.5,64-1.5-11) rectangle (64-14.5,64-3.5-11);

\fill[purple] (64,0) rectangle (64-4,4);
\fill[purple] (64,+11) rectangle (64-4,4+11);
\fill[gray!90] (64-12.5,1.5) rectangle (64-14.5,3.5);
\fill[gray!90] (64-12.5,1.5+11) rectangle (64-14.5,3.5+11);

\fill[gray!90] (20,4) rectangle (20.5,12); 
\fill[gray!90] (20,4) rectangle (28,4.5); 
\fill[gray!90] (20.5,11.5) rectangle (28,12); 
\fill[gray!90] (27.5,4.5) rectangle (28,12); 

\fill[gray!90] (20,20) rectangle (20.5,28); 
\fill[gray!90] (20,20) rectangle (28,20.5); 
\fill[gray!90] (20.5,27.5) rectangle (28,28); 
\fill[gray!90] (27.5,20.5) rectangle (28,28); 

\fill[gray!90] (20,36) rectangle (20.5,44); 
\fill[gray!90] (20,36) rectangle (28,36.5); 
\fill[gray!90] (20.5,43.5) rectangle (28,44); 
\fill[gray!90] (27.5,36.5) rectangle (28,44);

\fill[gray!90] (20,52) rectangle (20.5,60); 
\fill[gray!90] (20,52) rectangle (28,52.5); 
\fill[gray!90] (20.5,59.5) rectangle (28,60); 
\fill[gray!90] (27.5,52.5) rectangle (28,60);

\fill[gray!90] (36,4) rectangle (36.5,12); 
\fill[gray!90] (36,4) rectangle (44,4.5); 
\fill[gray!90] (36.5,11.5) rectangle (44,12); 
\fill[gray!90] (43.5,4.5) rectangle (44,12); 

\fill[gray!90] (36,20) rectangle (36.5,28); 
\fill[gray!90] (36,20) rectangle (44,20.5); 
\fill[gray!90] (36.5,27.5) rectangle (44,28); 
\fill[gray!90] (43.5,20.5) rectangle (44,28); 

\fill[gray!90] (36,36) rectangle (36.5,44); 
\fill[gray!90] (36,36) rectangle (44,36.5); 
\fill[gray!90] (36.5,43.5) rectangle (44,44); 
\fill[gray!90] (43.5,36.5) rectangle (44,44);

\fill[gray!90] (36,52) rectangle (36.5,60); 
\fill[gray!90] (36,52) rectangle (44,52.5); 
\fill[gray!90] (36.5,59.5) rectangle (44,60); 
\fill[gray!90] (43.5,52.5) rectangle (44,60);

\fill[gray!90] (52,4) rectangle (52.5,12); 
\fill[gray!90] (52,4) rectangle (60,4.5); 
\fill[gray!90] (52.5,11.5) rectangle (60,12); 
\fill[gray!90] (59.5,4.5) rectangle (60,12); 

\fill[gray!90] (52,20) rectangle (52.5,28); 
\fill[gray!90] (52,20) rectangle (60,20.5); 
\fill[gray!90] (52.5,27.5) rectangle (60,28); 
\fill[gray!90] (59.5,20.5) rectangle (60,28); 

\fill[gray!90] (52,36) rectangle (52.5,44); 
\fill[gray!90] (52,36) rectangle (60,36.5); 
\fill[gray!90] (52.5,43.5) rectangle (60,44); 
\fill[gray!90] (59.5,36.5) rectangle (60,44);

\fill[gray!90] (52,52) rectangle (52.5,60); 
\fill[gray!90] (52,52) rectangle (60,52.5); 
\fill[gray!90] (52.5,59.5) rectangle (60,60); 
\fill[gray!90] (59.5,52.5) rectangle (60,60);

\draw[-latex] (20,62) -- (28,62);
\draw[-latex] (36,62) -- (44,62);
\draw[-latex] (20,2) -- (28,2);
\draw[-latex] (36,2) -- (44,2);

\draw[-latex] (20,50) -- (28,50);
\draw[-latex] (36,50) -- (44,50);
\draw[-latex] (20,14) -- (28,14);
\draw[-latex] (36,14) -- (44,14);

\draw[-latex,color=purple] (62,20) -- (62,28);
\draw[-latex,color=purple] (62,36) -- (62,44);
\draw[-latex] (2,20) -- (2,28);
\draw[-latex] (2,36) -- (2,44);

\draw[-latex] (50,20) -- (50,28);
\draw[-latex] (50,36) -- (50,44);
\draw[-latex] (14,20) -- (14,28);
\draw[-latex] (14,36) -- (14,44);
\end{scope}

\begin{scope}[xshift=64cm,yshift=64cm]

\fill[gray!90] (4,4) rectangle (4.5,12); 
\fill[gray!90] (4,4) rectangle (12,4.5); 
\fill[gray!90] (4.5,11.5) rectangle (12,12); 
\fill[gray!90] (11.5,4.5) rectangle (12,12); 

\fill[gray!90] (4,20) rectangle (4.5,28); 
\fill[gray!90] (4,20) rectangle (12,20.5); 
\fill[gray!90] (4.5,27.5) rectangle (12,28); 
\fill[gray!90] (11.5,20.5) rectangle (12,28); 

\fill[gray!90] (4,36) rectangle (4.5,44); 
\fill[gray!90] (4,36) rectangle (12,36.5); 
\fill[gray!90] (4.5,43.5) rectangle (12,44); 
\fill[gray!90] (11.5,36.5) rectangle (12,44);

\fill[gray!90] (4,52) rectangle (4.5,60); 
\fill[gray!90] (4,52) rectangle (12,52.5); 
\fill[gray!90] (4.5,59.5) rectangle (12,60); 
\fill[gray!90] (11.5,52.5) rectangle (12,60); 

\fill[gray!90] (1.5,1.5) rectangle (3.5,3.5);
\fill[gray!90] (1.5,1.5+11) rectangle (3.5,3.5+11);
\fill[gray!90] (12.5,1.5) rectangle (14.5,3.5);
\fill[gray!90] (12.5,1.5+11) rectangle (14.5,3.5+11);

\fill[gray!90] (1.5,64-1.5) rectangle (3.5,64-3.5);
\fill[gray!90] (1.5,64-1.5-11) rectangle (3.5,64-3.5-11);
\fill[gray!90] (12.5,64-1.5) rectangle (14.5,64-3.5);
\fill[gray!90] (12.5,64-1.5-11) rectangle (14.5,64-3.5-11);

\fill[purple] (64,64) rectangle (64-4,64-4);
\fill[purple] (64,64-11) rectangle (64-4,64-4-11);
\fill[gray!90] (64-12.5,64-1.5) rectangle (64-14.5,64-3.5);
\fill[gray!90] (64-12.5,64-1.5-11) rectangle (64-14.5,64-3.5-11);

\fill[purple] (64,0) rectangle (64-4,4);
\fill[purple] (64,11) rectangle (64-4,4+11);
\fill[gray!90] (64-12.5,1.5) rectangle (64-14.5,3.5);
\fill[gray!90] (64-12.5,1.5+11) rectangle (64-14.5,3.5+11);

\fill[gray!90] (20,4) rectangle (20.5,12); 
\fill[gray!90] (20,4) rectangle (28,4.5); 
\fill[gray!90] (20.5,11.5) rectangle (28,12); 
\fill[gray!90] (27.5,4.5) rectangle (28,12); 

\fill[gray!90] (20,20) rectangle (20.5,28); 
\fill[gray!90] (20,20) rectangle (28,20.5); 
\fill[gray!90] (20.5,27.5) rectangle (28,28); 
\fill[gray!90] (27.5,20.5) rectangle (28,28); 

\fill[gray!90] (20,36) rectangle (20.5,44); 
\fill[gray!90] (20,36) rectangle (28,36.5); 
\fill[gray!90] (20.5,43.5) rectangle (28,44); 
\fill[gray!90] (27.5,36.5) rectangle (28,44);

\fill[gray!90] (20,52) rectangle (20.5,60); 
\fill[gray!90] (20,52) rectangle (28,52.5); 
\fill[gray!90] (20.5,59.5) rectangle (28,60); 
\fill[gray!90] (27.5,52.5) rectangle (28,60);

\fill[gray!90] (36,4) rectangle (36.5,12); 
\fill[gray!90] (36,4) rectangle (44,4.5); 
\fill[gray!90] (36.5,11.5) rectangle (44,12); 
\fill[gray!90] (43.5,4.5) rectangle (44,12); 

\fill[gray!90] (36,20) rectangle (36.5,28); 
\fill[gray!90] (36,20) rectangle (44,20.5); 
\fill[gray!90] (36.5,27.5) rectangle (44,28); 
\fill[gray!90] (43.5,20.5) rectangle (44,28); 

\fill[gray!90] (36,36) rectangle (36.5,44); 
\fill[gray!90] (36,36) rectangle (44,36.5); 
\fill[gray!90] (36.5,43.5) rectangle (44,44); 
\fill[gray!90] (43.5,36.5) rectangle (44,44);

\fill[gray!90] (36,52) rectangle (36.5,60); 
\fill[gray!90] (36,52) rectangle (44,52.5); 
\fill[gray!90] (36.5,59.5) rectangle (44,60); 
\fill[gray!90] (43.5,52.5) rectangle (44,60);

\fill[gray!90] (52,4) rectangle (52.5,12); 
\fill[gray!90] (52,4) rectangle (60,4.5); 
\fill[gray!90] (52.5,11.5) rectangle (60,12); 
\fill[gray!90] (59.5,4.5) rectangle (60,12); 

\fill[gray!90] (52,20) rectangle (52.5,28); 
\fill[gray!90] (52,20) rectangle (60,20.5); 
\fill[gray!90] (52.5,27.5) rectangle (60,28); 
\fill[gray!90] (59.5,20.5) rectangle (60,28); 

\fill[gray!90] (52,36) rectangle (52.5,44); 
\fill[gray!90] (52,36) rectangle (60,36.5); 
\fill[gray!90] (52.5,43.5) rectangle (60,44); 
\fill[gray!90] (59.5,36.5) rectangle (60,44);

\fill[gray!90] (52,52) rectangle (52.5,60); 
\fill[gray!90] (52,52) rectangle (60,52.5); 
\fill[gray!90] (52.5,59.5) rectangle (60,60); 
\fill[gray!90] (59.5,52.5) rectangle (60,60);

\draw[-latex] (20,62) -- (28,62);
\draw[-latex] (36,62) -- (44,62);
\draw[-latex] (20,2) -- (28,2);
\draw[-latex] (36,2) -- (44,2);

\draw[-latex] (20,50) -- (28,50);
\draw[-latex] (36,50) -- (44,50);
\draw[-latex] (20,14) -- (28,14);
\draw[-latex] (36,14) -- (44,14);

\draw[-latex,color=purple] (62,20) -- (62,28);
\draw[-latex,color=purple] (62,36) -- (62,44);
\draw[-latex] (2,20) -- (2,28);
\draw[-latex] (2,36) -- (2,44);

\draw[-latex] (50,20) -- (50,28);
\draw[-latex] (50,36) -- (50,44);
\draw[-latex] (14,20) -- (14,28);
\draw[-latex] (14,36) -- (14,44);
\end{scope}

\fill[pattern=north east lines,pattern color=yellow] (0,4) rectangle (4,128);
\fill[pattern=north east lines,pattern color=yellow] (12,4) rectangle (16,128);
\fill[pattern=north east lines,pattern color=blue] (124,4) rectangle (128,128);
\fill[pattern=north east lines,pattern color=blue] (64,0) rectangle (60,128);

\fill[pattern = north east lines, pattern color = yellow] (0,0) rectangle (128,4);
\fill[pattern = north east lines, pattern color =yellow] (0,12) rectangle (128,16);
\fill[pattern = north east lines, pattern color = blue] (0,60) rectangle (128,64);
\fill[pattern = north east lines, pattern color = blue] (0,124) rectangle (128,128);

\draw[line width = 0.5mm] (58,130) 
rectangle (130,58);

\end{tikzpicture}\]
\caption{\label{fig.completing.off} Illustration 
of the completion 
of the $\texttt{on}/\texttt{off}$ 
signals and the space-time diagram 
of the machine. The known part is 
surrounded by a black square.}
\end{figure}
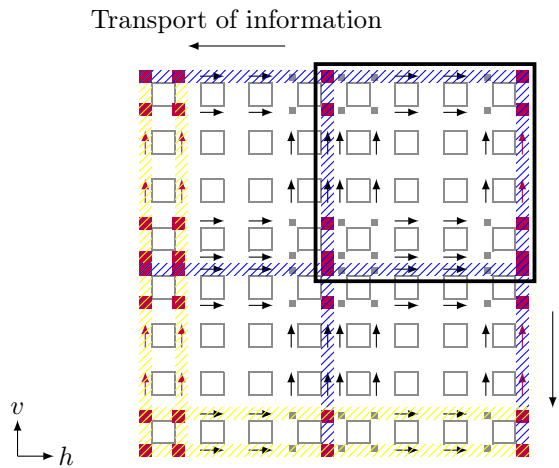

\begin{enumerate}
\item if the signal detects the first machine in error state from left to right 
(becomes \begin{tikzpicture}[scale=0.3]
\fill[Salmon] (0,0) rectangle (1,1);
\draw (0,0) rectangle (1,1);
\end{tikzpicture} after being \begin{tikzpicture}[scale=0.3]
\fill[YellowGreen] (0,0) rectangle (1,1);
\draw (0,0) rectangle (1,1);
\end{tikzpicture}), then 
we already know the propagation 
direction of 
the error signal. Then we complete 
the first machine signal and the error 
signal according to what is known.
For completing the 
space-time diagram of the machine, 
the difficulty comes from 
the fact that this is possible that when 
completing the trajectory of two machine 
heads according to the local rules, 
they have to collide reversely in time. 
This is not possible in our model.
This is where we use the 
$\texttt{on}/\texttt{off}$ signals.
We complete first the non already determined 
signals using only the symbol $\texttt{off}$, 
as illustrated on 
Figure~\ref{fig.completing.off}.

Then the space-time diagram is completed 
by only transporting the information.

In the end, we completed with any symbols 
in $\mathcal{Q}$ or $\mathcal{A} \times 
\mathcal{Q}$ where the symbols are not 
determined. We can do this since 
they do not interact with the known 
part of the space-time diagram. 
Then we complete empty tape and empty side 
signals according to the determined 
symbols.

\item if the signal is all 
\begin{tikzpicture}[scale=0.3]
\fill[YellowGreen] (0,0) rectangle (1,1);
\draw (0,0) rectangle (1,1);
\end{tikzpicture}, then one can 
complete the face without 
wondering about the error signal.
\item if the signal is all \begin{tikzpicture}[scale=0.3]
\fill[Salmon] (0,0) rectangle (1,1);
\draw (0,0) rectangle (1,1);
\end{tikzpicture}, we complete in the same way as for the first point, and 
the $\texttt{off}$ signals contribute 
to the first machine signal being 
all \begin{tikzpicture}[scale=0.3]
\fill[Salmon] (0,0) rectangle (1,1);
\draw (0,0) rectangle (1,1);
\end{tikzpicture}. If 
there is no error signal on the right side, 
one can complete so that 
the first machine in error state has above the arrow $\leftarrow$ indicating 
that the error signal has to propagate to the left. If there is an error signal 
on the right, then we set this arrow as $\rightarrow$. This is illustrated on 
Figure~\ref{fig.completion.signal.direction}.

\begin{figure}[ht]
\[\begin{tikzpicture}[scale=0.4]
\begin{scope}
\fill[purple] (6,0) rectangle (10,1);
\draw (6,0) -- (10,0);
\draw (6,1) -- (10,1);
\draw[dashed] (7,-0.5) rectangle (11,1.5);

\draw[-latex] (13,0.5) -- (18,0.5);
\end{scope}

\begin{scope}[xshift=20cm]
\draw (4,1) rectangle (5,2);
\draw[-latex] (4,1.5) -- (5,1.5);
\fill[purple] (4,0) rectangle (10,1);
\fill[Salmon] (4,0) rectangle (0,1);
\draw (0,0) -- (10,0);
\draw (0,1) -- (10,1);
\draw (4,0) -- (4,1) ; 
\draw (5,0) -- (5,1);
\draw[dashed] (7,-0.5) rectangle (11,1.5);
\end{scope}

\begin{scope}[yshift=8cm]
\fill[Salmon] (6,0) rectangle (10,1);
\draw (6,0) -- (10,0);
\draw (6,1) -- (10,1);
\draw[dashed] (7,-0.5) rectangle (11,1.5);

\draw[-latex] (13,0.5) -- (18,0.5);
\node at (15.5,1.5) {Completing};
\end{scope}

\begin{scope}[yshift=8cm,xshift=20cm]
\draw (4,1) rectangle (5,2);
\draw[latex-] (4,1.5) -- (5,1.5);
\fill[Salmon] (4,0) rectangle (10,1);
\fill[purple] (4,0) rectangle (0,1);
\draw (0,0) -- (10,0);
\draw (0,1) -- (10,1);
\draw (4,0) -- (4,1) ; 
\draw (5,0) -- (5,1);
\draw[dashed] (7,-0.5) rectangle (11,1.5);
\end{scope}

\end{tikzpicture}\]
\caption{\label{fig.completion.signal.direction}
Illustration of the completion 
of the arrows according to the error 
signal in the known part of the area, 
designated by a dashed rectangle.}
\end{figure}
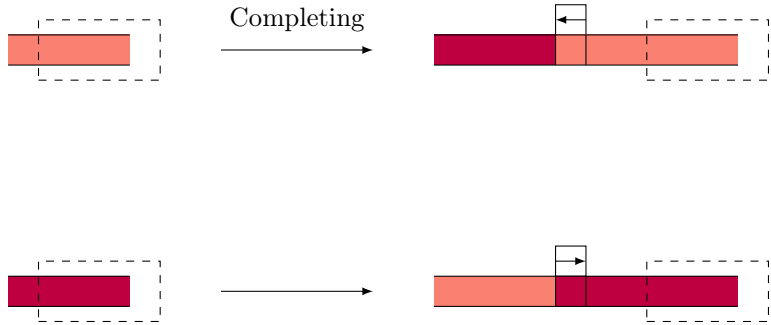

\end{enumerate}
\item when knowing the top left corner, the difficulty comes from the direction of 
propagation of the error signal. This is ruled in a similar way as point 3 
of the last case. 
\item when knowing the bottom right corner or bottom left corner: the completion 
is similar as in the last points, except 
that we have to manage the empty 
tape and sides signals. The 
difficult point comes from the signal, 
whose propagation direction 
is towards the known corner. 
If this signal detects an error before entering 
in the known area, we complete 
so that the added symbols in 
$\{\texttt{on},\texttt{off}\}$
are all $\texttt{off}$: this induces the error. 
When this signal enters without detecting an error, 
we complete all the symbols so that 
they do not introduce any error.
A particular difficulty comes from the case when 
the bottom right corner is known. Indeed, 
when the signal enters without having detecting 
an error, this means we have to complete the 
pattern so that a machine head in initial state 
is initialized in the leftmost position 
of the bottom row. Since the pattern 
is completed in the structure layer into 
a great enough cell, this head can not enter 
in the known pattern.
\item when the pattern crosses only a edge or 
the center of the face, 
the completion is similar (but easier since 
these parts have less information, 
then we need to add less to the pattern).
\end{itemize}


\begin{thebibliography}{JLKP17}

\bibitem[AS14]{aubrun-sablik-14}
N.~Aubrun and M.~Sablik.
\newblock Multidimensional effective s-adic subshift are sofic.
\newblock {\em Uniform Distribution Theory}, 9, 2014.

\bibitem[Bal09]{Ballierthesis}
A.~Ballier.
\newblock {\em Propriétés structurelles, combinatoires et logiques des
  pavages}.
\newblock PhD thesis, Aix-Marseille Université, 2009.

\bibitem[DR17]{DR17}
B.~Durand and A.~Romashchenko.
\newblock On the expressive power of quasiperiodic sft.
\newblock Discrete Mathematics, 2017.

\bibitem[GS17a]{GS17}
S.~Gangloff and M.~Sablik.
\newblock Block gluing intensity of bidimensional sft: computability of the
  entropy and periodic points.
\newblock Dynamical systems, 2017.

\bibitem[GS17b]{GS17ED}
S.~Gangloff and M.~Sablik.
\newblock A characterization of the entropy dimensions of minimal z3-sfts.
\newblock Dynamical systems, 2017.

\bibitem[HM10]{Hochman-Meyerovitch-2010}
M.~Hochman and T.~Meyerovitch.
\newblock A characterization of the entropies of multidimensional shifts of
  finite type.
\newblock {\em Annals of Mathematics}, 171:2011--2038, 2010.

\bibitem[Hoc09]{Hochman-2009}
M.~Hochman.
\newblock On the dynamics and recursive properties of multidimensional symbolic
  systems.
\newblock {\em Inventiones Mathematicae}, 176:131--167, 2009.

\bibitem[HV17]{HV14}
M.~Hochman and P.~Vanier.
\newblock Turing spectra of minimal subshifts.
\newblock In {\em Computer Science - Theory and Applications}, pages 154--161,
  2017.

\bibitem[JLKP17]{JLK17}
U.~Jung, J.~Lee, and K.~Koh~Park.
\newblock Topological entropy dimension and directional entropy dimension for
  z2-subshifts.
\newblock {\em Entropy}, 2017.

\bibitem[JV15]{Jeandel-Vanier-2014}
E.~Jeandel and P.~Vanier.
\newblock Characterizations of periods of multidimensional shifts.
\newblock {\em Ergodic Theory and Dynamical Systems}, 35:431--460, 2015.

\bibitem[Mey11]{Meyerovitch2011}
T.~Meyerovitch.
\newblock Growth-type invariants for zd subshifts of finite type and
  arithmetical classes of real numbers.
\newblock {\em Inventiones mathematicae}, 184:567--589, 2011.

\bibitem[Moz89]{Mozes1989}
S.~Mozes.
\newblock Tilings, substitution systems and dynamical systems generated by
  them.
\newblock {\em Journal d'Analyse Math{\'e}matique}, 53:139--186, 1989.

\bibitem[PS15]{Pavlov-Schraudner-2014}
R.~Pavlov and M.~Schraudner.
\newblock Entropies realizable by block gluing shifts of finite type.
\newblock {\em Journal d'Analyse Mathématique}, 126:113--174, 2015.

\bibitem[Rob71]{R71}
R.~Robinson.
\newblock Undecidability and nonperiodicity for tilings of the plane.
\newblock {\em Inventiones Mathematicae}, 12:177--209, 1971.

\bibitem[Ber66]{Berger66}
R.~Berger. 
\newblock The undecidability of the domino problem.
\newblock {\em Memoirs of the American Mathematical Society}, 66, 1966.

\bibitem[AS10]{Aubrun-Sablik-2010}
N.~Aubrun and M.~Sablik.
\newblock Simulation of effective subshifts by two-dimensional subshifts of finite type.
\newblock {\em Acta Appl. Math.}, 126:35--63,2013.

\bibitem[DRS10]{DRS}
B.~Durand, A.~Romashchenko, and A.~Shen.
\newblock Fixed-point tile sets and their applications.
\newblock {\em Journal of Computer and System Sciences}, 78:731-764,2012.


\end{thebibliography}
\end{document}